\theoremstyle{plain}
\newtheorem{thm}{Theorem}[section] \newtheorem{lem}[thm]{Lemma}
\newtheorem{corol}[thm]{Corollary} \newtheorem{prop}[thm]{Proposition}
\newtheorem*{thm*}{Theorem}
\theoremstyle{definition} \newtheorem{rmk}[thm]{Remark}
 \newtheorem{dfn}{Definition}[section]
\newtheorem*{rmk*}{Remark}
\numberwithin{equation}{section}
\begin{document}

\title[Vector bundles on quadric 3-folds]{A refined stable restriction theorem 
for vector bundles on quadric threefolds}

\author[Coand\u{a}]{Iustin~Coand\u{a}}
\address{Institute of Mathematics of the Romanian Academy, P.O.~Box 1--764,  
\newline         RO--014700 Bucharest, Romania}
\email{Iustin.Coanda@imar.ro}

\author[Faenzi]{Daniele~Faenzi}
\address{Universit\'{e} de Pau et des Pays de l'Adour, 
         Avenue de l'Universit\'{e}, BP 576, 
\newline         64012 Pau Cedex, France}
\email{daniele.faenzi@univ-pau.fr}

\begin{abstract}
Let $E$ be a stable rank 2 vector bundle on a smooth quadric threefold 
$Q$ in the projective 4-space $P$. We show that the hyperplanes $H$ in $P$ 
for which the restriction of $E$ to the hyperplane section of $Q$ by $H$ is 
not stable form, in general, a closed subset of codimension at least 2 of 
the dual projective 4-space, and we explicitly describe the bundles $E$ which 
do not enjoy this property. This refines a restriction theorem of Ein and 
Sols [\emph{Nagoya Math. J.} 96, 11--22 (1984)] in the same way the main 
result of Coand\u{a} [\emph{J. reine angew. Math.} 428, 97--110 (1992)] 
refines the restriction theorem of Barth [\emph{Math. Ann.} 226, 125--150 
(1977)]. 
\end{abstract}  

\subjclass[2000]{Primary 14J60. Secondary 14J30, 14F05, 14D20}

\keywords{Vector bundle, quadric threefold, stable bundle, hyperplane section}

\maketitle

\section{Introduction}

Let $E$ be a stable rank 2 vector bundle on the quadric threefold 
$Q = Q_3 \subset {\mathbb P}^4$, with $c_1(E) = 0$ or $-1$ and with $c_2(E) = 
c_2[L]$, where $[L]$ is the cohomology class of a line $L \subset Q$ and 
$c_2 \in {\mathbb Z}$. As it is well known, if $c_1 = 0$ then $c_2 \geq 2$ and 
it is even, and if $c_1 = -1$ then $c_2 \geq 1$. Moreover, if $c_1 = -1$ and 
$c_2 = 1$ then $E$ is isomorphic to the {\it spinor bundle} $\mathcal S$. 
Examples of such bundles can be obtained, for $c_1 = 0$, by considering 
extensions: 
\begin{equation}
\label{extconics}
0 \longrightarrow {\mathcal O}_Q(-1) \longrightarrow E \longrightarrow 
{\mathcal I}_Y(1) \longrightarrow 0
\end{equation}
where $Y$ is a disjoint union of $c_2/2 + 1$ conics contained in $Q_3$, and 
for $c_1 = -1$ by considering extensions: 
\begin{equation}
\label{extlines}
0 \longrightarrow {\mathcal O}_Q(-1) \longrightarrow E \longrightarrow 
{\mathcal I}_Y \longrightarrow 0
\end{equation}
where $Y$ is a disjoint union of $c_2$ lines. For $c_1 = 0$ or $-1$ and 
$c_2 = 2$, any stable rank 2 vector bundle $E$ on $Q$ can be obtained in this 
way (see \cite[Cor.~of~Prop.~1]{SSW91} and \cite[Prop.~4.4]{OS94}).  
Alternatively, for $c_1 = 0$ and 
$c_2 = 2$, any such bundle $E$ is the pull-back of a null-correlation bundle on 
${\mathbb P}^3$ by a linear projection $Q \rightarrow {\mathbb P}^3$ with 
centre a point of ${\mathbb P}^4 \setminus Q$.  
Let ${\mathbb P}^{4\vee}$ be the 
dual projective space parametrizing the hyperplanes in ${\mathbb P}^4$ and 
let $Q^{\vee} \subset {\mathbb P}^{4\vee}$ be the dual quadric parametrizing 
the tangent hyperplanes to $Q_3$. For $h \in {\mathbb P}^{4\vee}$, let us 
denote by $H$ the corresponding hyperplane of ${\mathbb P}^4$. Ein and Sols 
\cite[Thm.~1.6]{ES84} showed that, for a general $h \in {\mathbb P}^{4\vee} 
\setminus Q^{\vee}$, the restriction $E\vert_{H\cap Q}$ is stable on 
$H\cap Q \simeq {\mathbb P}^1 \times {\mathbb P}^1$. However, {\it they 
missed an exception}, namely the spinor bundle $\mathcal S$. 
Actually, the two authors, guided (probably) by the case of 
${\mathbb P}^3$ where the exceptions appear for $c_1 = 0$,  
worked out the details for the case $c_1 = 0$ of their result  
and left the (similar) details for the case $c_1 = -1$ to the reader. We shall 
provide, in Remark~\ref{R:correctioneinsols}, an extra argument 
showing that the spinor bundle is, in fact, the only exception. 

In this paper we prove the following refinement of the result of Ein and 
Sols (analogous to the refinement of the restriction theorem of Barth 
\cite{Ba77} from \cite{Co92}): 

\begin{thm}\label{T:nonstblhyp}
Let $E$ be as above and let $\Sigma \subset {\mathbb P}^{4\vee}$, 
$\Sigma \neq Q^{\vee}$, be an irreducible hypersurface. If, for a general 
point $h \in \Sigma \setminus Q^{\vee}$, $E\vert_{H\cap Q}$ is not stable 
then either:   

\emph{(i)} $c_1(E) = 0$, $\Sigma$ consists of the hyperplanes passing 
through a 
point $x \in {\mathbb P}^4 \setminus Q$, and $E$ is the pull-back of a 
nullcorrelation bundle on ${\mathbb P}^3$ by the linear projection of 
centre $x$ restricted to $Q \rightarrow {\mathbb P}^3$, or: 

\emph{(ii)} $c_1(E) = -1$, $E$ can be realized as an extension 
(\ref{extlines})  with all the 
components of $Y$ contained in a smooth hyperplane section $H_0 \cap Q$ of 
$Q$ and $\Sigma = (H_0 \cap Q)^{\vee}$. 
\end{thm}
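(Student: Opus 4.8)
\medskip
\noindent\emph{Proof strategy.}
The plan is to transplant to $Q$ the method by which \cite{Co92} refined Barth's theorem on $\mathbb{P}^{3}$. Throughout we assume $E\not\cong\mathcal{S}$: the spinor bundle is an exception already for the Ein--Sols theorem — every one of its smooth hyperplane sections is decomposable — and it has to be set aside here as well.

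\emph{Step 1 (the generic destabilizing subsheaf).}
First I would work on the incidence variety $\mathcal{Q}_{\Sigma}=\{(y,h)\in Q\times\Sigma : y\in H\}$, with projections $p\colon\mathcal{Q}_{\Sigma}\to Q$ and $\pi\colon\mathcal{Q}_{\Sigma}\to\Sigma$; for general $h$ the fibre $\pi^{-1}(h)=S_{h}=H\cap Q$ is a smooth quadric $\simeq\mathbb{P}^{1}\times\mathbb{P}^{1}$ on which $E|_{S_{h}}$ is not stable. Let $A_{h}\subset E|_{S_{h}}$ be the maximal destabilizing subsheaf; being saturated it is a line bundle $\mathcal{O}_{S_{h}}(p_{h},q_{h})$ with $p_{h}+q_{h}\ge c_{1}(E)$, with quotient $\mathcal{I}_{Z_{h}}(c_{1}(E)-p_{h},c_{1}(E)-q_{h})$ and $\ell(Z_{h})=c_{2}-c_{1}(E)(p_{h}+q_{h})+2p_{h}q_{h}\ge 0$. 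By the standard spreading-out argument — the relative Harder--Narasimhan filtration over the generic point of $\Sigma$, then shrinking $\Sigma$ and, if necessary, a finite base change trivializing the monodromy that exchanges the two rulings of $S_{h}$ — the unordered pair $\{p_{h},q_{h}\}$ and the length $\ell(Z_{h})$ are constant on a dense open subset of $\Sigma$. The key is to pin these down: for general $h$ the generic line $L$ of either ruling of $S_{h}$ is a non-jumping line of $E$ and avoids $Z_{h}$ — a genericity statement I would prove by an incidence/dimension count, using that every line of $Q$ lies on $S_{h}$ for a curve of $h\in\Sigma$ while the jumping lines form a proper closed subset of the Fano threefold of lines on $Q$ (the generic splitting type of a semistable bundle being balanced, by Grauert--M\"{u}lich) — so that $A_{h}|_{L}\hookrightarrow E|_{L}$ and hence $p_{h},q_{h}\le 0$. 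Together with $p_{h}+q_{h}\ge c_{1}(E)$ this forces $p_{h}=q_{h}=0$ when $c_{1}(E)=0$, and $\{p_{h},q_{h}\}=\{0,-1\}$ when $c_{1}(E)=-1$; the residual possibility $p_{h}=q_{h}=0$ in the latter case (i.e.\ $h^{0}(E|_{S_{h}})\neq 0$ generically on $\Sigma$) is discarded by the dichotomy of Step 3 applied to $\mathcal{G}=E$ itself, a ``$c_{1}=-1$'' situation which, exactly as on $\mathbb{P}^{3}$, admits no such hypersurface.

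\emph{Step 2 (reduction to a Barth-type multiplication map).}
If $c_{1}(E)=0$, Step 1 yields $h^{0}(E|_{S_{h}})\neq 0$ for general $h\in\Sigma$. If $c_{1}(E)=-1$, the destabilizing $\mathcal{O}_{S_{h}}(0,-1)$ (equivalently $\mathcal{O}_{S_{h}}(-1,0)$) is a direct summand of the twist-down of $\mathcal{S}^{\vee}|_{S_{h}}=\mathcal{O}_{S_{h}}(1,0)\oplus\mathcal{O}_{S_{h}}(0,1)$, so that $\operatorname{Hom}(\mathcal{O}_{S_{h}}(0,-1),E|_{S_{h}})\subseteq H^{0}((E\otimes\mathcal{S}^{\vee})|_{S_{h}})$; since the two summands of $\mathcal{S}^{\vee}|_{S_{h}}$ are exchanged by the same monodromy as the two rulings, this is insensitive to the ruling ambiguity, and one gets $h^{0}((E\otimes\mathcal{S}^{\vee})|_{S_{h}})\neq 0$ for general $h\in\Sigma$. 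Put $\mathcal{G}=E$ if $c_{1}(E)=0$, and $\mathcal{G}=E\otimes\mathcal{S}^{\vee}$ (rank $4$, $c_{1}=0$) if $c_{1}(E)=-1$. Using the stability of $E$ and of $\mathcal{S}$, together with the hypothesis $E\not\cong\mathcal{S}$ — this is exactly where it is used, because $\operatorname{Hom}(\mathcal{S},E)=0\iff E\not\cong\mathcal{S}$ — one checks $h^{0}(\mathcal{G})=h^{0}(\mathcal{G}(-1))=0$, whence the exact sequence $0\to\mathcal{G}(-1)\xrightarrow{\,h\,}\mathcal{G}\to\mathcal{G}|_{S_{h}}\to 0$ gives $h^{0}(\mathcal{G}|_{S_{h}})=\dim\ker\bigl(H^{1}(\mathcal{G}(-1))\xrightarrow{\,h\,}H^{1}(\mathcal{G})\bigr)$. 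Thus for general $h\in\Sigma$ multiplication by $h$ on $H^{1}(\mathcal{G}(-1))$ has a nontrivial kernel; equivalently $\Sigma$ is covered by the linear subspaces $\ell(\xi)=\{[h]\in\mathbb{P}^{4\vee}:h\xi=0\text{ in }H^{1}(\mathcal{G})\}$, $\xi\in\mathbb{P}H^{1}(\mathcal{G}(-1))$.

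\emph{Step 3 (reading off $E$).}
Here the argument parallels \cite{Co92}. Since $\Sigma$ is an irreducible hypersurface of the fourfold $\mathbb{P}^{4\vee}$ covered by the linear spaces $\ell(\xi)$, a dimension count forces one of two extreme configurations. \textbf{(a)} $\dim H^{1}(\mathcal{G}(-1))=1$ and the single $\ell(\xi)$ is a hyperplane; then $\Sigma=\ell(\xi)$, the multiplication $H^{0}(\mathcal{O}_{\mathbb{P}^{4}}(1))\otimes H^{1}(\mathcal{G}(-1))\to H^{1}(\mathcal{G})$ has rank $1$ and hence factors through evaluation at the point $x\in\mathbb{P}^{4}$ dual to $\Sigma$; one checks $x\notin Q$ (otherwise $\Sigma$ meets $Q^{\vee}$ and, both being hypersurfaces, coincides with it, which is excluded) and that the vanishing of the complementary multiplications makes $\mathcal{G}=E$ a pull-back $\pi_{x}^{*}N'$ along the $2{:}1$ projection $\pi_{x}\colon Q\to\mathbb{P}^{3}$, with $N'$ of rank $2$ and $c_{1}=0$; the rank-one condition pins $h^{1}(N'(-1))$ down to $1$, which on $\mathbb{P}^{3}$ characterizes the null-correlation bundle, so $c_{2}(E)=2$ and we are in case (i). This configuration can occur only for $c_{1}(E)=0$; for $\mathcal{G}=E$ with $c_{1}(E)=-1$ the same analysis produces no hypersurface of non-stability, which disposes of the leftover possibility from Step 1. \textbf{(b)} $\dim H^{1}(\mathcal{G}(-1))\ge 2$ and the generic $\ell(\xi)$ is a line; then $\Sigma$ is ruled by a family of lines of dimension $\ge 2$, and a sharpening of the count shows $\Sigma$ is a quadric cone all of whose rulings pass through a fixed point $h_{0}\in\mathbb{P}^{4\vee}$, i.e.\ multiplication by $h_{0}$ annihilates $H^{1}(\mathcal{G}(-1))$; writing $H_{0}$ for the hyperplane of $h_{0}$ and $S_{0}=H_{0}\cap Q$, this reads $h^{0}((E\otimes\mathcal{S}^{\vee})|_{S_{0}})\neq 0$ and $\Sigma=(S_{0})^{\vee}$, and — the smooth quadric surface $S_{0}$ being now fixed, so that there is no ruling ambiguity — it produces a destabilizing $\mathcal{O}_{S_{0}}(-1,0)\hookrightarrow E|_{S_{0}}$; tracking the corresponding quotient through an extension (\ref{extlines}) one arranges $Y$ to consist of $c_{2}$ lines all lying on $S_{0}$, which is case (ii). In either case $\Sigma$ is a priori only contained in $\ell(\xi)$, respectively in $(S_{0})^{\vee}$, but these are themselves irreducible hypersurfaces, so equality follows.

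\emph{The main obstacle.}
The hard part is Step 3: turning the soft statement ``$\Sigma$ is a hypersurface covered by the $\ell(\xi)$'' into the rigid dichotomy (a)/(b), and then, in each branch, translating the cohomological conclusion — the multiplication has rank $1$, respectively $h_{0}$ kills $H^{1}(\mathcal{G}(-1))$ — into the concrete description of $E$: a pull-back of a null-correlation bundle, respectively an extension along $c_{2}$ lines lying on a quadric surface. The $\mathbb{P}^{3}$ template of \cite{Co92} must be adapted to the geometry of $Q$ — to the $2{:}1$ projections $Q\to\mathbb{P}^{3}$, to the fact that the ``bad'' hyperplane sections $S_{0}$ are quadric surfaces rather than planes, and to the spinor bundle, which at once carries the two rulings, mediates the $c_{1}=-1$ case, and is the bundle one must exclude. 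A subsidiary but genuine technical point, handled by the dimension counts of Step 1, is the genericity claim that the generic ruling line of the generic $S_{h}$ is a non-jumping line of $E$ disjoint from $Z_{h}$.
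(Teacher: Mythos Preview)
Your strategy diverges from the paper's, and the divergence matters because Step~3 as written has a genuine gap.

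First a factual point: the method of \cite{Co92} is \emph{not} the cohomological multiplication-map argument you sketch; it is the Standard Construction (the second fundamental form of a relative destabilizing subsheaf on the incidence variety), and this paper proceeds the same way. So you are not transplanting \cite{Co92} but proposing a different route.

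The gap is your dichotomy in Step~3. Nothing in Steps~1--2 forces $\dim H^{1}(\mathcal G(-1))=1$ in case~(a); for $c_1=0$ this dimension grows with $c_2$, so (a) cannot be the mechanism behind case~(i) (which in the theorem occurs only for $c_2=2$ and is handled in the paper by the classification results quoted in the Introduction, not by a cohomological rank computation). In case~(b) you assert that the generic $\ell(\xi)$ is a line and that a hypersurface in $\mathbb P^{4\vee}$ swept by a $2$-parameter family of lines must be a quadric cone with a fixed vertex $h_0$; neither statement is justified---for the first, generic $\ell(\xi)$ could a priori be empty, a point, or a plane, and for the second, smooth cubic threefolds already carry $2$-dimensional families of lines. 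The further passage from ``$h_0$ annihilates $H^1(\mathcal G(-1))$'' to an explicit extension (\ref{extlines}) with $Y\subset H_0\cap Q$ is likewise only asserted.

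What the paper actually does after your Step~1 (carried out as Lemma~\ref{L:gms} and Prop.~\ref{P:gmss}) is geometric rather than cohomological. Over $\Sigma$ one has a relative destabilizing line subbundle $\mathcal E'\subset p'^{*}E$; stability of $E$ forces its second fundamental form $\mathcal E'\to\Omega_{X'/Q}\otimes\mathcal E''$ to be generically nonzero (Prop.~\ref{P:standconstr}). Restricting to a general fibre $S_h$ and using the explicit shape of $\Omega_{X'/Q}|_{S_h}$ (Lemma~\ref{L:sigma}(i) together with Lemma~\ref{L:efrestrq2}) one reads off that the residual $0$-cycle $Z_h$ lies on a single line $L\subset S_h$, equivalently $E|_L\simeq\mathcal O_L(c_2+c_1)\oplus\mathcal O_L(-c_2)$ (Prop.~\ref{P:applstandconstr}); these are \emph{maximal} jumping lines by Lemma~\ref{L:jump}. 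One then shows such lines are pairwise disjoint (Cor.~\ref{C:nounstablehyp}) and hence fill one ruling of a fixed smooth hyperplane section $H_0\cap Q$; an elementary transformation along that ruling produces the extension (\ref{extlines}) and identifies $\Sigma=(H_0\cap Q)^\vee$ (Prop.~\ref{P:conclusion}). The case $\Sigma=x^\vee$ with $x\in Q$ is excluded separately (Prop.~\ref{P:pointinq}), and small $c_2$---in particular case~(i)---is absorbed by the classification cited in the Introduction.
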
     

The proof follows the strategy from \cite{Co92}. It is based on a variant of 
the so called Standard Construction which is explained in 
Section~\ref{S:standard}. The sections~\ref{S:lines} and~\ref{S:auxq2} contain 
some auxiliary results, needed in the proof of Thm.~\ref{T:nonstblhyp} which 
is given in Section~\ref{S:proof}. We close Section~\ref{S:proof} with a 
remark pointing out an important simplification in the proof of the main 
result of \cite{Co92}. 

Finally, we study, in Section~\ref{S:proof2}, the restrictions of $E$ (or, 
more generally, of a stable rank 2 reflexive sheaf) to the singular hyperplane 
sections of $Q$ and prove, using the same method, the following: 

\begin{thm}\label{T:singnonstblhyp}
Let $\mathcal E$ be a stable rank $2$ reflexive sheaf on $Q$, with 
$c_1(\mathcal E) = 0$ or $-1$. Then, for a general point $y \in Q$ such that, 
in particular,  
the tangent hyperplane ${\fam0 T}_yQ$ contains no singular point of 
$\mathcal E$, the restriction of $\mathcal E$ to $Y := Q \cap {\fam0 T}_yQ$ 
is stable, unless $\mathcal E$ is isomorphic to the spinor bundle 
$\mathcal S$. 
\end{thm}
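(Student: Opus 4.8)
I would run the machinery developed for Theorem~\ref{T:nonstblhyp}, with the dual quadric $Q^{\vee}$ playing the r\^{o}le of the hypersurface $\Sigma$: the tangent hyperplanes $\mathrm{T}_yQ$ ($y\in Q$) are exactly the points of $Q^{\vee}$, so the statement to be proved is precisely the ``excluded case $\Sigma=Q^{\vee}$'' of that theorem, and what must be seen is that there the only surviving sheaf is the spinor bundle. Assume, for a contradiction, that $\mathcal E\not\cong\mathcal S$ while $\mathcal E|_{Y_y}$ is not stable for a general $y\in Q$, with $Y_y:=Q\cap\mathrm{T}_yQ$; a general such $y$ avoids the finitely many singular points of $\mathcal E$, so $\mathcal E|_{Y_y}$ is a rank $2$ bundle on the quadric cone $Y_y$, and on $Y_y$ one has $\mathrm{Cl}(Y_y)=\mathbb{Z}\,\ell$ with $\ell$ a line of the ruling, $2\ell\sim H$, and $\mu(\mathcal E|_{Y_y})=c_1(\mathcal E)$ with respect to $H$. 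Non-stability provides a saturated rank $1$ subsheaf $\mathcal O_{Y_y}(m\ell)\hookrightarrow\mathcal E|_{Y_y}$ with $m\ge c_1(\mathcal E)$; restricting to a general smooth conic section of $Y_y$ (a conic of $Q$, the intersection of two hyperplane sections) and using that $\mathcal E$ restricts semistably to such a conic --- a restriction statement provable from the stability of $\mathcal E$ with the help of the auxiliary results of Sections~\ref{S:lines} and~\ref{S:auxq2} --- forces $m=c_1(\mathcal E)$. Thus, for $c_1(\mathcal E)=0$ one gets a nonzero $s_y\in H^0(\mathcal E|_{Y_y})$ vanishing in codimension $\ge 2$, and for $c_1(\mathcal E)=-1$ a line $\ell_y\subset Q$ through $y$ with $\mathcal O_{Y_y}(-\ell_y)\hookrightarrow\mathcal E|_{Y_y}$; in both cases $0\to\mathcal E(-1)\to\mathcal E\to\mathcal E|_{Y_y}\to 0$ together with $H^0(\mathcal E(-1))=H^0(\mathcal E)=0$ embeds $H^0(\mathcal E|_{Y_y})$ into the $y$-independent space $H^1(\mathcal E(-1))$, so these destabilising data move in a bounded family.

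I first note that the exceptional bundles of Theorem~\ref{T:nonstblhyp} are not counterexamples here: a direct computation of how a null-correlation pullback (case (i)), resp.\ an extension (\ref{extlines}) with $Y$ inside a smooth hyperplane section (case (ii)), restricts to $Q\cap\mathrm{T}_yQ$ shows that for a general $y$ this restriction is stable --- the non-stability locus of such an $\mathcal E$ in $\mathbb{P}^{4\vee}$ does not contain $Q^{\vee}$. So a counterexample $\mathcal E$ is of neither type, and in running the Standard Construction below one may use this to exclude those outcomes.

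Now run the Standard Construction of Section~\ref{S:standard} relative to the symmetric incidence divisor $\mathcal I=\{(y,z)\in Q\times Q:b(y,z)=0\}$ of bidegree $(1,1)$ ($b$ the bilinear form of $Q$), with projections $p,q:\mathcal I\to Q$. In the ``section'' case --- $H^0(\mathcal E|_{Y_y})\neq 0$ for general $y$, automatic when $c_1(\mathcal E)=0$ --- the sheaf $p_{*}(q^{*}\mathcal E)$ is nonzero, with fibre $H^0(\mathcal E|_{Y_y})$ at $y$, and the construction, exactly as for a hypersurface $\Sigma$ in the proof of Theorem~\ref{T:nonstblhyp} but now with $\Sigma=Q^{\vee}$, outputs a nonzero homomorphism into $\mathcal E$ from a twist of $\mathcal O_Q$ (or of the reflexive hull of $p_{*}(q^{*}\mathcal E)$); a Chern class computation, again using Sections~\ref{S:lines}--\ref{S:auxq2}, shows this destabilises $\mathcal E$ on $Q$ --- informally, the quadric $Q^{\vee}$ is too large to carry the non-stability of a sheaf whose hyperplane restrictions acquire sections. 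Hence the section case does not occur, which already disposes of $c_1(\mathcal E)=0$. In the remaining case $c_1(\mathcal E)=-1$, $H^0(\mathcal E|_{Y_y})=0$ but $\mathcal O_{Y_y}(-\ell_y)\hookrightarrow\mathcal E|_{Y_y}$ for general $y$ --- the strictly semistable behaviour exhibited by $\mathcal S$ itself --- I would first pin down $\mathcal E|_{Y_y}$: pulling back to the minimal resolution $\sigma:\mathbb{F}_2\to Y_y$ (fibre $f$, $(-2)$-section $C_0$), the two conditions force $R^0\pi_{*}\bigl(\sigma^{*}(\mathcal E|_{Y_y})\bigr)=\mathcal O_{\mathbb{P}^1}(-1)$ (degree $0$ would give $H^0(\mathcal E|_{Y_y})\neq 0$) and $\sigma^{*}(\mathcal E|_{Y_y})|_{C_0}=\mathcal O_{C_0}^{\oplus 2}$ (local freeness of $\mathcal E$ at the vertex), which pins $\sigma^{*}(\mathcal E|_{Y_y})$ down to the unique non-split extension of $\mathcal O_{\mathbb{F}_2}(-C_0-f)$ by $\mathcal O_{\mathbb{F}_2}(-f)$ away from the codimension-$2$ locus where $\mathcal O_{Y_y}(-\ell_y)$ fails to be a subbundle; the same description holds for $\mathcal S|_{Y_y}$. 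Feeding this into the Standard Construction applied to $\mathcal{H}om(\mathcal S,\mathcal E)$ produces a nonzero homomorphism $\mathcal S\to\mathcal E$ on $Q$ (the Chern-class bookkeeping along the way also forcing $c_2(\mathcal E)=c_2(\mathcal S)$); since $\mathcal S$ and $\mathcal E$ are stable of the same slope and $\mathcal E$ is reflexive, such a homomorphism is injective with cokernel supported in codimension $\ge 2$, whence $\mathcal E\cong\mathcal S$ --- contrary to assumption.

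The step I expect to be the real obstacle is the use of the Standard Construction in this last case: turning the family of local isomorphisms $\mathcal E|_{Y_y}\cong\mathcal S|_{Y_y}$ into a genuine global homomorphism $\mathcal S\to\mathcal E$ requires controlling the intermediate cohomology along $Q^{\vee}$ so as to exclude the possibility that these local homomorphisms all die in $H^1$, and this is exactly where the detailed geometry of lines on $Q_3$ and the results on bundles on $Q_2=\mathbb{P}^1\times\mathbb{P}^1$ collected in Sections~\ref{S:lines}--\ref{S:auxq2} are indispensable. A subsidiary technical point is the semistability of $\mathcal E$ on a general conic of $Q$ used to normalise the destabilising subsheaf, conics being of too low degree for the classical restriction theorems.
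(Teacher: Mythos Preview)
Your sketch misreads what the Standard Construction delivers. You write that it ``outputs a nonzero homomorphism into $\mathcal{E}$ from a twist of $\mathcal{O}_Q$'', thereby destabilising $\mathcal{E}$ --- but that is exactly what Proposition~\ref{P:standconstr} says \emph{cannot} happen. Since $\mathcal{E}$ is stable, no such descended subsheaf exists, and the actual output is that the second fundamental form $\mathcal{E}'\to\Omega_{\overline p_1}\otimes\mathcal{E}''$ is generically nonzero. The whole content of the argument then lies in analysing what the nonvanishing of this map on a general fibre forces, and for that one must compute $\Omega_{\overline p_1}$ restricted to a fibre --- the analogue of Lemma~\ref{L:sigma}(i) and Lemma~\ref{L:efrestrq2} in the proof of Proposition~\ref{P:applstandconstr}. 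This computation is entirely absent from your sketch, and it is the heart of the matter.

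Here the computation is delicate because the fibres $Y_y=Q\cap\mathrm{T}_yQ$ are singular cones and, for $c_1=-1$, the destabilising subsheaf $\mathcal{I}_{L,Y}$ is not locally free at the vertex. The paper therefore does \emph{not} work on your incidence divisor $\mathcal{I}\subset Q\times Q$ but on its simultaneous desingularisation $\widetilde{X}=\mathbb{F}_{0,1}(Q)\times_{\mathbb{P}^3}\mathbb{F}_{0,1}(Q)$, whose fibres $\widetilde{Y}\simeq\mathbb{P}(\mathcal{O}_{\mathbb{P}^1}\oplus\mathcal{O}_{\mathbb{P}^1}(-2))$ are smooth; it then computes $\Omega_{\widetilde{p}_1}\vert_{\widetilde{Y}}$ explicitly (Proposition~\ref{P:omegap1}, Corollary~\ref{C:omegap1}). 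For $c_1=0$ this shows that a nonzero section of $\Omega_{\widetilde{p}_1}\vert_{\widetilde{Y}}$ vanishing on $\mathcal{Z}_y$ forces $\mathcal{Z}_y$ into a single ruling line, so through every general $y$ passes a line $L$ with $\mathcal{E}\vert_L\simeq\mathcal{O}_L(c_2)\oplus\mathcal{O}_L(-c_2)$, contradicting Lemma~\ref{L:dimjump}; there is no destabilisation of $\mathcal{E}$ on $Q$. For $c_1=-1$ the relevant twist $\Omega_{\widetilde{p}_1}\vert_{\widetilde{Y}}\otimes\mathcal{O}_{\widetilde{Y}}(-C_y)$ has a unique, nowhere-vanishing section, forcing $\mathcal{Z}_y=\emptyset$; then $\sigma_y^\ast\mathcal{E}$ and $\sigma_y^\ast\mathcal{S}$ sit in the same one-dimensional $\mathrm{Ext}^1$, so $\mathcal{E}\vert_Y\simeq\mathcal{S}\vert_Y$ and one concludes $\mathcal{E}\simeq\mathcal{S}$ by the characterisation recalled in Remark~\ref{R:correctioneinsols}. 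Your proposed end-game --- applying the Standard Construction to $\mathcal{H}om(\mathcal{S},\mathcal{E})$ to globalise fibrewise isomorphisms --- runs into the same misunderstanding: the construction does not manufacture global maps, it constrains $\Omega$, and you would still owe the computation you have not done.
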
 

One derives immediately, from the above two theorems, the following:  

\begin{corol}\label{C:nonstblhyp} 
Let $E$ be a stable rank $2$ vector bundle on $Q$, with $c_1(E) = 0$ or 
$-1$. Then, except for the case where $E$ is one of the bundles appearing in 
the conclusion of Theorem~\ref{T:nonstblhyp}, the set of the hyperplanes 
$H \subset {\mathbb P}^4$ for which $E\vert_{H \cap Q}$ is not stable has 
codimension $\geq 2$ in ${\mathbb P}^{4\vee}$.  
\end{corol}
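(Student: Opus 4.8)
The corollary is a formal consequence of the two preceding theorems, so the plan is simply to assemble their statements correctly, taking care of the contrapositive formulation and of the stratification of ${\mathbb P}^{4\vee}$ by $Q^{\vee}$ and its complement.

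First I would let $B \subset {\mathbb P}^{4\vee}$ denote the set of hyperplanes $H$ with $E\vert_{H\cap Q}$ not stable, and argue by contradiction: suppose $B$ has codimension $\le 1$, i.e.\ $B$ contains (the points of) an irreducible hypersurface $\Sigma$. Now split into two cases according to whether $\Sigma = Q^{\vee}$ or $\Sigma \neq Q^{\vee}$. In the first case, a general point $h \in Q^{\vee}$ corresponds to a tangent hyperplane $H$ such that $Y = H\cap Q$ is a quadric cone with a single singular point, and for $E$ locally free the singular locus of $E$ (as a reflexive sheaf, which is empty) certainly misses that point; so Theorem~\ref{T:singnonstblhyp} applies and forces $E\cong\mathcal S$. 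Since the spinor bundle on $Q$ is not among the bundles listed in the conclusion of Theorem~\ref{T:nonstblhyp} (it is ruled out there because $c_2(\mathcal S)=1$, so $\mathcal S$ cannot be an extension (\ref{extlines}) with $Y$ a union of lines — it has no line components — and $\mathcal S$ is the unique bundle with $c_1=-1,\,c_2=1$), this is exactly the exception we are allowed to exclude by hypothesis, a contradiction.

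In the second case $\Sigma \neq Q^{\vee}$ is an irreducible hypersurface with $E\vert_{H\cap Q}$ not stable for $h$ in the dense open subset $\Sigma\cap B \supseteq \Sigma$; in particular this holds for a general $h \in \Sigma \setminus Q^{\vee}$ (which is non-empty and dense in $\Sigma$ since $\Sigma$ is irreducible and $\neq Q^{\vee}$). Theorem~\ref{T:nonstblhyp} then puts $E$ into one of the two families (i), (ii), which are precisely the bundles excluded in the statement of the corollary — again a contradiction. Therefore $B$ has codimension $\ge 2$ in ${\mathbb P}^{4\vee}$, as claimed.

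There is essentially no obstacle here: the only point requiring a word of care is verifying that the spinor bundle $\mathcal S$ produced by Theorem~\ref{T:singnonstblhyp} is genuinely disjoint from the list in Theorem~\ref{T:nonstblhyp}, so that it does not need to be separately quarantined in the corollary's exception clause; this follows from the numerics $c_2(\mathcal S) = 1$ recalled in the introduction. One should also note that the hypothesis of Theorem~\ref{T:singnonstblhyp} about ${\fam0 T}_yQ$ avoiding the singular locus is automatic for a vector bundle, so the appeal is legitimate.
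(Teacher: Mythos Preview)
Your overall structure---reduce to an irreducible hypersurface $\Sigma$ and invoke Theorem~\ref{T:nonstblhyp} when $\Sigma \neq Q^{\vee}$, Theorem~\ref{T:singnonstblhyp} when $\Sigma = Q^{\vee}$---is exactly the immediate derivation the paper has in mind.

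However, your treatment of the case $\Sigma = Q^{\vee}$ contains a real error. You assert that $\mathcal{S}$ is \emph{not} among the bundles in the conclusion of Theorem~\ref{T:nonstblhyp}, on the grounds that ``$\mathcal{S}$ cannot be an extension (\ref{extlines}) with $Y$ a union of lines --- it has no line components.'' This is false: by (\ref{spinorsections}) one has $0 \to \mathcal{O}_Q(-1) \to \mathcal{S} \to \mathcal{I}_L \to 0$ for any line $L \subset Q$, which is precisely an extension of type (\ref{extlines}) with $Y = L$ a single line (the case $c_2 = 1$). Since any line lies in some smooth hyperplane section $H_0 \cap Q$, the spinor bundle \emph{is} a bundle of type (ii). Indeed, were your claim correct the corollary itself would fail for $E = \mathcal{S}$: by (\ref{spinorresthyp}) the restriction to every smooth hyperplane section is non-stable, so the bad locus is all of ${\mathbb P}^{4\vee}$, not of codimension $\geq 2$.

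The repair is immediate: once Theorem~\ref{T:singnonstblhyp} forces $E \cong \mathcal{S}$, this already contradicts the standing hypothesis that $E$ is not one of the exceptional bundles, because $\mathcal{S}$ is one. Your closing remark that this verification is ``the only point requiring a word of care'' is right in spirit---you just resolved it in the wrong direction.
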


The main result of \cite{Co92} has been used in the study of the moduli 
spaces of mathematical instanton bundles on ${\mathbb P}^3$: see, for example, 
the paper of Katsylo and Ottaviani \cite{KO03}. We hope that the results of 
the present paper might have applications in the study of the moduli spaces 
of {\it odd instanton bundles} on $Q_3 \subset {\mathbb P}^4$, introduced 
recently by Faenzi \cite{Fa11}. 

\subsection*{Notation and conventions} (i) We work only with 
quasi-projective schemes over the field $\mathbb C$ of complex numbers. By a 
{\it point} we always mean a {\it closed point}. 

(ii) If $E$ is a rank $n$ vector bundle (= locally free sheaf) on a scheme 
$X$, we denote by ${\mathbb G}_r(E)$ (resp., ${\mathbb G}^r(E)$) the relative 
Grassmannian of rank $r$ subbundles (resp., quotient bundles) of $E$. Of 
course, ${\mathbb G}_r(E) \simeq {\mathbb G}^{n-r}(E)$ and 
${\mathbb G}_r(E) \simeq {\mathbb G}^r(E^{\ast})$. We use the classical 
convention (dual to Grothendieck's convention) for projective bundles, 
namely ${\mathbb P}(E) := {\mathbb G}_1(E)$. We shall also use the 
notation: ${\mathbb G}_a({\mathbb P}^n) := 
{\mathbb G}_{a+1}({\mathbb C}^{n+1})$. 

(iii) If $X \subset {\mathbb P}^n$ is a non-singular, connected projective 
variety we denote by $X^{\vee}$ its dual variety, i.e., the set of points 
$h$ of the dual projective space ${\mathbb P}^{n\vee}$ with the property that 
the corresponding hyperplane $H \subset {\mathbb P}^n$ contains the tangent 
linear subspace $\text{T}_xX$ to $X$ at some point $x \in X$. In particular, 
if $L$ (resp., $x$) is a linear subspace (resp., point) of $X$, then 
$L^{\vee}$ (resp., $x^{\vee}$) consists of the points 
$h \in {\mathbb P}^{n\vee}$ such that $H \supset L$ (resp., $H \ni x$). 
$L^\vee$ is a linear subspace of ${\mathbb P}^{n\vee}$.  

(iv) When we say that a sheaf is ``(semi)stable'' we mean that it is 
(semi)stable in the sense of Mumford and Takemoto (or $\mu$-(semi)stable, or 
slope (semi)stable) with respect to a polarization which should be obvious in 
each of the cases under consideration. In particular, if $Q_2 \subset 
{\mathbb P}^3$ is a nonsingular quadric surface then we use the polarization 
${\mathcal O}_{Q_2}(1,1)$.        

(v) If $Y$ is a closed subscheme of a scheme $X$, we shall denote by 
${\mathcal I}_{Y,X}$ the kernel of the canonical epimorphism 
${\mathcal O}_X \rightarrow {\mathcal O}_Y$, i.e., the ideal sheaf of 
${\mathcal O}_X$ defining $Y$ as a closed subscheme of $X$. 

(vi) If $f : X \rightarrow Y$ is a morphism of schemes, we shall denote, 
occasionally, the sheaf ${\Omega}_{X/Y}$ of relative K\"{a}hler differentials 
by ${\Omega}_f$.

\section{The Standard Construction}\label{S:standard}

\begin{dfn}\label{D:secondform}
Let $p : X \rightarrow Y$ be a morphism of schemes, let $\mathcal F$ be a 
coherent sheaf on $Y$ and let: 
\begin{equation}
\label{exactseqf}
0 \longrightarrow {\mathcal F}^{\prime} \longrightarrow p^{\ast}{\mathcal F} 
\longrightarrow {\mathcal F}^{\prime \prime} \longrightarrow 0
\end{equation}
be an exact sequence of coherent sheaves on $X$. Consider the fibre product 
$X\times_YX$, the projections $p_1,p_2 : X\times_YX \rightarrow X$ and let 
$\Delta = {\Delta}_{X/Y} \subset X\times_YX$ be the image of the diagonal 
embedding $\delta = {\delta}_{X/Y} : X \rightarrow X\times_YX$. The composite 
morphism: 
\begin{equation}
\label{composition}
p_1^{\ast}{\mathcal F}^{\prime} \longrightarrow p_1^{\ast}p^{\ast}{\mathcal F} = 
p_2^{\ast}p^{\ast}{\mathcal F} \longrightarrow 
p_2^{\ast}{\mathcal F}^{\prime \prime}
\end{equation}
vanishes along $\Delta$, hence the composite morphism:
\begin{equation*}
p_1^{\ast}{\mathcal F}^{\prime} \longrightarrow 
p_2^{\ast}{\mathcal F}^{\prime \prime} \longrightarrow 
p_2^{\ast}{\mathcal F}^{\prime \prime}\vert_\Delta 
\end{equation*}
is 0. But the sequence: 
\begin{equation*}
0 \longrightarrow {\mathcal I}_{\Delta}\otimes    
p_2^{\ast}{\mathcal F}^{\prime \prime} \longrightarrow 
p_2^{\ast}{\mathcal F}^{\prime \prime} \longrightarrow 
p_2^{\ast}{\mathcal F}^{\prime \prime}\vert_\Delta \longrightarrow 0
\end{equation*}
is exact (also to the left). [{\it Indeed}, we may assume that 
$Y = \text{Spec}\, A$, $X = \text{Spec}\, B$ and ${\mathcal F} = 
{\widetilde M}$. The sequence: 
\begin{equation*}
0 \longrightarrow I \longrightarrow B\otimes_AB \longrightarrow B 
\longrightarrow 0
\end{equation*}
is a split exact sequence of right $B$-modules and if $N$ is a 
$B\otimes_AB$-module then 
\begin{equation*}
N\otimes_{B\otimes_AB}(B\otimes_AM) \simeq 
N\otimes_BM, 
\end{equation*}
hence the sequence: 
\begin{equation*}
0 \longrightarrow I\otimes_{B\otimes_AB}(B\otimes_AM) \longrightarrow 
B\otimes_AM \longrightarrow B\otimes_{B\otimes_AB}(B\otimes_AM) 
\longrightarrow 0
\end{equation*}
is exact.] One deduces that the morphism (\ref{composition}) induces a 
morphism $p_1^{\ast}{\mathcal F}^{\prime} \rightarrow 
{\mathcal I}_{\Delta}\otimes p_2^{\ast}{\mathcal F}^{\prime \prime}$ which 
restricted to $\Delta$ gives us a morphism ${\mathcal F}^{\prime} \rightarrow 
{\Omega}_{X/Y}\otimes_{{\mathcal O}_X}{\mathcal F}^{\prime \prime}$. This 
morphism is called the {\it second fundamental form} of the exact sequence 
(\ref{exactseqf}). 
\end{dfn}

\begin{lem}\label{L:diffgrass}
Let $p : X \rightarrow Y$ be a morphism of schemes, let $\mathcal E$ be a 
locally free ${\mathcal O}_Y$-module and let 
$0 \rightarrow {\mathcal E}^{\prime} \rightarrow 
p^{\ast}{\mathcal E} \rightarrow 
{\mathcal E}^{\prime \prime} \rightarrow 0$
be an exact sequence of locally free sheaves on $X$. Let $r$ be the rank of 
${\mathcal E}^{\prime \prime}$, let $\pi : {\mathbb G} = 
{\mathbb G}^r(\mathcal E) \rightarrow Y$ be the relative Grassmannian of rank 
$r$ quotients of $\mathcal E$ and let $f : X \rightarrow {\mathbb G}$ be the 
$Y$-morphism corresponding to the epimorphism $p^{\ast}{\mathcal E} 
\rightarrow {\mathcal E}^{\prime \prime}$. Then the composite morphism 
\begin{equation*}
{\mathcal Hom}_{{\mathcal O}_X}({\mathcal E}^{\prime \prime},
{\mathcal E}^{\prime})\longrightarrow 
{\mathcal Hom}_{{\mathcal O}_X}({\mathcal E}^{\prime \prime},
{\Omega}_{X/Y}\otimes {\mathcal E}^{\prime \prime}) 
\overset{\fam0 Tr}{\longrightarrow} {\Omega}_{X/Y}
\end{equation*}
deduced from the second fundamental form of the above exact sequence can be 
identified with the relative differential ${\fam0 d}f : f^{\ast}
{\Omega}_{{\mathbb G}/Y} \rightarrow {\Omega}_{X/Y}$ of $f$. 
\end{lem}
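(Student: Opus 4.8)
The plan is to reduce to the universal situation on the Grassmannian and then to check the identification fibrewise (or on the level of the universal quotient), using the functorial description of the tangent bundle of a Grassmannian bundle. First I would recall that on ${\mathbb G} = {\mathbb G}^r(\mathcal E)$ one has the universal exact sequence $0 \to {\mathcal K} \to \pi^\ast{\mathcal E} \to {\mathcal Q} \to 0$ with ${\mathcal Q}$ locally free of rank $r$, and that the relative tangent sheaf is ${\mathcal T}_{{\mathbb G}/Y} \simeq {\mathcal Hom}_{{\mathcal O}_{\mathbb G}}({\mathcal K},{\mathcal Q})$, equivalently ${\Omega}_{{\mathbb G}/Y} \simeq {\mathcal Hom}_{{\mathcal O}_{\mathbb G}}({\mathcal Q},{\mathcal K})$. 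By construction of $f$, we have $f^\ast{\mathcal Q} = {\mathcal E}^{\prime\prime}$ and $f^\ast{\mathcal K} = {\mathcal E}^\prime$, compatibly with the inclusions into $p^\ast{\mathcal E}$; hence $f^\ast{\Omega}_{{\mathbb G}/Y} \simeq {\mathcal Hom}_{{\mathcal O}_X}({\mathcal E}^{\prime\prime},{\mathcal E}^\prime)$, which already matches the source of the composite morphism in the statement. So the content is to show that, under this identification, ${\fam0 d}f$ coincides with the trace of the second fundamental form.

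The key step is to compute the second fundamental form in the universal case, i.e. for $X = {\mathbb G}$, $f = {\fam0 id}$, and the exact sequence $0 \to {\mathcal K} \to \pi^\ast{\mathcal E} \to {\mathcal Q} \to 0$. There the second fundamental form is a map ${\mathcal K} \to {\Omega}_{{\mathbb G}/Y}\otimes{\mathcal Q}$, i.e. a map ${\mathcal K} \to {\mathcal Hom}({\mathcal Q},{\mathcal K})\otimes{\mathcal Q}$, and the claim becomes that the associated element of ${\mathcal Hom}\bigl({\mathcal Hom}({\mathcal Q},{\mathcal K}), {\mathcal Hom}({\mathcal Q},{\mathcal K})\bigr)$ — obtained by composing with the trace as in the statement — is the identity. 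I would verify this by the standard local chart description of the Grassmannian: over an open set where $\pi^\ast{\mathcal E} \simeq {\mathcal K}_0 \oplus {\mathcal Q}_0$ splits, the universal quotient is the graph of a matrix of functions $\varphi \in {\mathcal Hom}({\mathcal K}_0,{\mathcal Q}_0)$ giving an affine chart with coordinates the entries of $\varphi$, and a direct bilinear-algebra computation shows the composite morphism of Definition \ref{D:secondform}, followed by ${\fam0 Tr}$, returns ${\fam0 d}\varphi$, which is exactly ${\fam0 d}({\fam0 id})$ under the identification ${\Omega}_{{\mathbb G}/Y}\vert_{\text{chart}} \simeq {\mathcal Hom}({\mathcal Q}_0,{\mathcal K}_0)$.

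Finally I would pass from the universal case to the general case by naturality: the exact sequence $0 \to {\mathcal E}^\prime \to p^\ast{\mathcal E} \to {\mathcal E}^{\prime\prime} \to 0$ is the pullback along $f : X \to {\mathbb G}$ of the universal one, the second fundamental form is compatible with pullback (this is clear from its construction in Definition \ref{D:secondform}, since $\delta_{X/Y}$, the fibre product, and ${\mathcal I}_\Delta$ are all functorial, and $f^\ast{\Omega}_{{\mathbb G}/Y}$ receives a canonical map to ${\Omega}_{X/Y}$ which is precisely ${\fam0 d}f$), and the trace commutes with pullback. Combining these, the composite in the statement equals $f^\ast(\text{universal composite}) = f^\ast({\fam0 d}\,{\fam0 id}) $ composed with the canonical map $f^\ast{\Omega}_{{\mathbb G}/Y} \to {\Omega}_{X/Y}$, which by the chain rule is ${\fam0 d}f$. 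The main obstacle I anticipate is purely bookkeeping: keeping the identifications ${\mathcal T}_{{\mathbb G}/Y} \simeq {\mathcal Hom}({\mathcal K},{\mathcal Q})$ and ${\Omega}_{{\mathbb G}/Y} \simeq {\mathcal Hom}({\mathcal Q},{\mathcal K})$ consistently oriented (including the sign/duality conventions for ${\mathbb P}(E) := {\mathbb G}_1(E)$ adopted in the paper) so that no spurious sign or transpose creeps into the local computation.
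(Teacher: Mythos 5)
Your plan is correct, and its skeleton coincides with the paper's: both arguments rest on the universal exact sequence $0 \to \mathcal{A} \to \pi^{\ast}\mathcal{E} \to \mathcal{B} \to 0$ on $\mathbb{G}$, the identification $\Omega_{\mathbb{G}/Y} \simeq {\mathcal Hom}(\mathcal{B},\mathcal{A})$, and a final naturality step identifying $\mathrm{d}f$ with the pullback of the universal datum (the paper phrases this as the identification of $\mathrm{d}f$ with $(f\times_Yf)^{\ast}(\mathcal{J}/\mathcal{J}^2) \to \mathcal{I}_{\Delta}/\mathcal{I}_{\Delta}^2$, which is exactly your pullback-compatibility of the second fundamental form plus the chain rule). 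The genuine difference lies in how the central identification is verified. You propose an explicit affine chart on the Grassmannian bundle (kernels that are graphs of $\varphi \in {\mathcal Hom}(K_0,Q_0)$) and a direct bilinear-algebra computation showing the trace of the universal second fundamental form returns $\mathrm{d}\varphi$. The paper instead stays coordinate-free on $\mathbb{G}\times_Y\mathbb{G}$: by the universal property of the Grassmannian, the diagonal is the zero scheme of the comparison morphism $q_1^{\ast}\mathcal{A} \to q_2^{\ast}\mathcal{B}$, so the image of the trace composite ${\mathcal Hom}(q_2^{\ast}\mathcal{B},q_1^{\ast}\mathcal{A}) \to \mathcal{O}$ is exactly the ideal $\mathcal{J}$ of the diagonal; restricting to the diagonal gives an epimorphism ${\mathcal Hom}(\mathcal{B},\mathcal{A}) \to \mathcal{J}/\mathcal{J}^2 = \Omega_{\mathbb{G}/Y}$, which is an isomorphism by a rank count. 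The paper's route buys immunity from the sign/transpose bookkeeping you rightly flag as the main hazard, since the isomorphism is produced by the trace map itself rather than matched against a pre-existing identification of $\mathcal{T}_{\mathbb{G}/Y}$; your chart computation buys a completely explicit formula. Both are valid, and your reduction to the universal case followed by functoriality fills the same logical role as the paper's last sentence.
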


\begin{proof}
We follow the argument from the proof of \cite[Cor.~2.2.10]{HL97}. Let 
$\mathcal B$ be the universal quotient of ${\pi}^{\ast}{\mathcal E}$ and let 
$0 \rightarrow {\mathcal A} \rightarrow {\pi}^{\ast}{\mathcal E} \rightarrow 
{\mathcal B} \rightarrow 0$ be the tautological exact sequence on 
$\mathbb G$. Let $q_1,q_2 : {\mathbb G}\times_Y{\mathbb G} \rightarrow 
{\mathbb G}$ be the canonical projections and let ${\mathcal J} \subset 
{\mathcal O}_{{\mathbb G}\times_Y{\mathbb G}}$ be the ideal sheaf of the diagonal 
${\Delta}_{{\mathbb G}/Y} \subset {\mathbb G}\times_Y{\mathbb G}$. Consider the 
morphism $q_1^{\ast}{\mathcal A} \rightarrow q_2^{\ast}{\mathcal B}$ analogous 
to the morphism (\ref{composition}) from Def.~\ref{D:secondform}. Using the 
universal property of the relative Grassmannian, one sees easily that, for 
every $Y$-scheme $Z$, an $Y$-morphism $\varphi : Z \rightarrow 
{\mathbb G}\times_Y{\mathbb G}$ factors through ${\Delta}_{{\mathbb G}/Y}$ if 
and only if the morphism ${\varphi}^{\ast}q_1^{\ast}{\mathcal A} \rightarrow 
{\varphi}^{\ast}q_2^{\ast}{\mathcal B}$ is 0. One deduces that 
${\Delta}_{{\mathbb G}/Y}$ is the zero scheme of the morphism 
$q_1^{\ast}{\mathcal A} \rightarrow q_2^{\ast}{\mathcal B}$, which means that 
the image of the composite morphism: 
\begin{equation*}
{\mathcal Hom}_{{\mathcal O}_{{\mathbb G}\times_Y{\mathbb G}}}(q_2^{\ast}{\mathcal B}, 
q_1^{\ast}{\mathcal A}) \longrightarrow 
{\mathcal Hom}_{{\mathcal O}_{{\mathbb G}\times_Y{\mathbb G}}}(q_2^{\ast}{\mathcal B}, 
q_2^{\ast}{\mathcal B}) \overset{\text{Tr}}{\longrightarrow} 
{\mathcal O}_{{\mathbb G}\times_Y{\mathbb G}}
\end{equation*}
is exactly $\mathcal J$. Restricting the epimorphism 
${\mathcal Hom}(q_2^{\ast}{\mathcal B},q_1^{\ast}{\mathcal A}) \rightarrow 
{\mathcal J}$ to ${\Delta}_{{\mathbb G}/Y}$ one gets an epimorphism 
${\mathcal Hom}_{{\mathcal O}_{\mathbb G}}({\mathcal B},{\mathcal A}) \rightarrow 
{\mathcal J}/{\mathcal J}^2 = {\Omega}_{{\mathbb G}/Y}$, which must be an 
isomorphism because its source and its target are locally free sheaves of the 
same rank. 

Now, one has only to recall that the relative differential $\text{d}f : 
f^{\ast}{\Omega}_{{\mathbb G}/Y} \rightarrow {\Omega}_{X/Y}$ can be identified 
with the morphism $(f\times_Yf)^{\ast}({\mathcal J}/{\mathcal J}^2) 
\rightarrow {\mathcal I}_{\Delta}/{\mathcal I}_{\Delta}^2$.      
\end{proof}

\begin{dfn}\label{D:satsubsheaf}
Let $\mathcal E$ be a locally free sheaf on a nonsingular, connected variety 
$X$. A coherent subsheaf ${\mathcal E}^{\prime}$ of $\mathcal E$ is called 
{\it saturated} if the quotient ${\mathcal E}/{\mathcal E}^{\prime}$ is 
torsion-free. This is equivalent to the fact that ${\mathcal E}^{\prime}$ is 
reflexive and ${\mathcal E}/{\mathcal E}^{\prime}$ is locally free outside a 
closed subset of $X$ of codimension $\geq 2$. See, for example, 
\cite[II,~Sect.~1.1]{OSS80} or \cite[Sect.~1]{Ha80}.  
\end{dfn} 

\begin{dfn}\label{D:genericzero}
We say that a morphism $\varphi : {\mathcal F} \rightarrow {\mathcal G}$ of 
coherent sheaves on a nonsingular, connected variety $X$ is {\it generically 
zero} if there exists a non-empty open subset $U$ of $X$ such that 
$\varphi \vert_U = 0$. 

Assume that such a morphism is not generically zero. There exists a 
non-empty open subset $U$ of $X$ such that ${\mathcal F}\vert_U$ and 
${\mathcal G}\vert_U$ are locally free. Since $\varphi \vert_U 
\neq 0$, it follows that there exists a non-empty open subset $U^{\prime}$ of 
$U$ such that $\varphi (x) : {\mathcal F}(x) \rightarrow {\mathcal G}(x)$ is 
non-zero, $\forall x \in U^{\prime}$ (here ${\mathcal F}(x) := 
{\mathcal F}_x/\mathfrak{m}_x{\mathcal F}_x$). We say, in this case, that 
$\varphi$ {\it has generically rank} $\geq 1$. 
\end{dfn}           

\begin{prop}\label{P:standconstr}
Let $p : X \rightarrow Y$ be a dominant morphism of nonsingular, connected 
varieties, with all the non-empty fibres of dimension ${\fam0 dim}\, X - 
{\fam0 dim}\, Y$, and with irreducible general fibres. Let $\mathcal E$ be a 
locally free sheaf on $Y$, let ${\mathcal E}^{\prime}$ be a saturated subsheaf 
of $p^{\ast}{\mathcal E}$ and let 
${\mathcal E}^{\prime \prime} := p^{\ast}{\mathcal E}/{\mathcal E}^{\prime}$. 

If the second fundamental form ${\mathcal E}^{\prime} \rightarrow 
{\Omega}_{X/Y}\otimes {\mathcal E}^{\prime \prime}$ associated to the short 
exact sequence $0 \rightarrow {\mathcal E}^{\prime} \rightarrow 
p^{\ast}{\mathcal E} \rightarrow {\mathcal E}^{\prime \prime} \rightarrow 0$ is 
generically zero then there exists a saturated subsheaf 
${\overline {\mathcal E}}^{\prime}$ of $\mathcal E$ such that 
${\mathcal E}^{\prime} = p^{\ast}{\overline {\mathcal E}}^{\prime}$. 
\end{prop}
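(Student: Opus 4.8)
The plan is to reduce the statement to the case where $p$ is smooth with connected fibres, and then to use the hypothesis that the second fundamental form is generically zero together with Lemma~\ref{L:diffgrass} to show that the classifying map $X \to {\mathbb G}^r({\mathcal E})$ is (generically) constant along the fibres of $p$, hence factors through $Y$. First I would shrink $Y$ to a non-empty open subset $Y_0$ over which the general-fibre hypotheses hold: replacing $X$ by $p^{-1}(Y_0)$ and then by a further open subset, I may assume that ${\mathcal E}^{\prime}$ and ${\mathcal E}^{\prime\prime}$ are both locally free (this uses saturatedness of ${\mathcal E}^{\prime}$, which forces ${\mathcal E}^{\prime\prime}$ to be locally free outside codimension $\geq 2$), that $p$ is smooth, and that all fibres of $p$ are irreducible. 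Because $Y$ is nonsingular, such a subsheaf ${\overline{\mathcal E}}^{\prime}$, if it exists over $Y_0$, extends uniquely to a saturated subsheaf of ${\mathcal E}$ on all of $Y$ by taking the saturation of its extension; so it suffices to produce ${\overline{\mathcal E}}^{\prime}$ over $Y_0$.

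Next, with $r = \operatorname{rank}{\mathcal E}^{\prime\prime}$, let $\pi : {\mathbb G} = {\mathbb G}^r({\mathcal E}) \to Y$ be the relative Grassmannian and $f : X \to {\mathbb G}$ the $Y$-morphism classifying the quotient $p^{\ast}{\mathcal E} \twoheadrightarrow {\mathcal E}^{\prime\prime}$. By Lemma~\ref{L:diffgrass}, the relative differential ${\fam0 d}f : f^{\ast}{\Omega}_{{\mathbb G}/Y} \to {\Omega}_{X/Y}$ is obtained from the second fundamental form ${\mathcal E}^{\prime} \to {\Omega}_{X/Y}\otimes{\mathcal E}^{\prime\prime}$ by applying ${\mathcal Hom}_{{\mathcal O}_X}({\mathcal E}^{\prime\prime},-)$ and the trace; since the second fundamental form is generically zero, so is ${\fam0 d}f$. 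Thus, shrinking $X$ once more, I may assume ${\fam0 d}f = 0$ identically. Now fix a general point $y \in Y_0$; the fibre $X_y = p^{-1}(y)$ is an irreducible nonsingular variety, and $f$ restricts to a morphism $f_y : X_y \to {\mathbb G}_y$, a Grassmannian over the residue field, whose differential is the restriction of ${\fam0 d}f$ and hence vanishes. A morphism from a reduced connected scheme to a quasi-projective variety with everywhere-zero differential is constant (in characteristic $0$); therefore $f_y$ is constant. This means that $f$ is constant on every general fibre of $p$, so $f$ factors — on a possibly smaller open subset, but one still dominating $Y$ — as $X \to Y \xrightarrow{g} {\mathbb G}$ for a rational section $g$ of $\pi$; because $Y$ is nonsingular and $\pi$ is proper, $g$ extends to a morphism $Y_0 \to {\mathbb G}$ after shrinking $Y_0$.

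Finally, $g$ corresponds to a locally free quotient ${\mathcal E} \twoheadrightarrow \overline{\mathcal E}^{\prime\prime}$ on $Y_0$ of rank $r$; setting $\overline{\mathcal E}^{\prime} := \ker({\mathcal E} \to \overline{\mathcal E}^{\prime\prime})$ and pulling back, the equality $f = g\circ p$ gives an identification of $p^{\ast}{\mathcal E} \to p^{\ast}\overline{\mathcal E}^{\prime\prime}$ with $p^{\ast}{\mathcal E} \to {\mathcal E}^{\prime\prime}$ over the open set where everything is defined, hence $p^{\ast}\overline{\mathcal E}^{\prime} = {\mathcal E}^{\prime}$ there; since both sides are reflexive and agree outside codimension $\geq 2$ in $X$ (using that ${\mathcal E}^{\prime}$ and $p^{\ast}\overline{\mathcal E}^{\prime}$ are saturated, $p$ being flat so that pullback preserves saturatedness), they are equal as subsheaves of $p^{\ast}{\mathcal E}$ on all of the original $X$. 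Extending $\overline{\mathcal E}^{\prime}$ from $Y_0$ to $Y$ by saturation inside ${\mathcal E}$ completes the argument. The main obstacle I anticipate is the bookkeeping needed to pass from statements valid over a dense open subset of $Y$ (constancy of $f$ on the general fibre, existence of $g$) to the honest equality ${\mathcal E}^{\prime} = p^{\ast}\overline{\mathcal E}^{\prime}$ of subsheaves of $p^{\ast}{\mathcal E}$ over all of $X$ — this is where reflexivity of ${\mathcal E}^{\prime}$ (i.e.\ saturatedness) and the codimension-$\geq 2$ characterization in Definition~\ref{D:satsubsheaf} do the real work, together with the fact that $p^{\ast}$ of a saturated subsheaf is saturated because $p$ is flat and has irreducible general fibres.
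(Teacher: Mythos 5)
Your proposal is correct and follows essentially the same route as the paper's proof: identify $\mathrm{d}f$ with the second fundamental form via Lemma~\ref{L:diffgrass}, conclude that $f$ is constant on general fibres, descend to a rational section of ${\mathbb G}^r(\mathcal E) \to Y$, pull back the corresponding subsheaf, and use saturatedness to promote generic agreement to equality. The only real difference is bookkeeping: the paper applies Zariski's Main Theorem to get the section over the complement of a codimension-$\geq 2$ subset of $Y$ (so the two subsheaves visibly agree outside codimension $2$), whereas you shrink $Y$ more aggressively and rely on the fact that two saturated subsheaves of $p^{\ast}\mathcal E$ agreeing on a dense open set coincide (their quotients being torsion-free), which also works since flat pullback to the integral variety $X$ preserves torsion-freeness of the quotient.
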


\begin{proof}
We follow the classical approach, originating in Van de Ven~\cite{VdV72}, 
Grauert and M\"{u}lich~\cite{GM75} and Barth~\cite{Ba77} and clearly explained 
in Forster et al.~\cite{FHS80}. For another approach, see the proof of 
Prop. 1.11 in Flenner~\cite{Fl84}. 

Let $Z$ be a closed subset of $X$, of codimension $\geq 2$, such that 
${\mathcal E}^{\prime \prime}\vert_{X\setminus Z}$ is locally free. Let $r$ 
be the rank of ${\mathcal E}^{\prime \prime}$, let $\pi : 
{\mathbb G}^r(\mathcal E) \rightarrow Y$ be the relative Grassmannian of rank 
$r$ quotients of $\mathcal E$, and let $f : X\setminus Z \rightarrow 
{\mathbb G}^r(\mathcal E)$ be the $Y$-morphism defined by the epimorphism 
$p^{\ast}{\mathcal E}\vert_{X\setminus Z} \rightarrow 
{\mathcal E}^{\prime \prime}\vert_{X\setminus Z}$. From the hypothesis and 
from Lemma~\ref{L:diffgrass} it follows that the relative differential 
$\text{d}f : f^{\ast}{\Omega}_{{\mathbb G}/Y} \rightarrow 
{\Omega}_{X/Y}\vert_{X\setminus Z}$ is {\it generically zero}. 

Now, for a general point $y \in Y$, the fibre $X_y$ is smooth (by generic 
smoothness), irreducible (by hypothesis), with $\text{codim}(Z_y,X_y) \geq 2$, 
and with $\text{d}f\vert_{X_y\setminus Z_y} = 0$ hence $f(X_y\setminus 
Z_y)$ consists of a single point. Let $Y^{\prime}$ be the closure of 
$f(X\setminus Z)$ in ${\mathbb G}^r(\mathcal E)$. Since $f(X\setminus Z)$ 
contains an open dense subset of $Y^{\prime}$, one shows easily (using the 
well-known results about the dimension of the fibres of a morphism) that, 
for a general point $y\in Y$, the fibre $Y^{\prime}_y$ is the closure of 
$f(X_y\setminus Z_y)$, hence it consists of a single point. One deduces that 
$\pi\vert_{Y^\prime} : Y^{\prime} \rightarrow Y$ is {\it birational}. Now,  
Zariski's Main Theorem (in the elementary variant from 
Mumford~\cite[III,~\S~9,~Prop.~1]{Mu99}) implies that there exists an open 
subset $V = Y\setminus Z^{\prime}$ of $Y$, with $\text{codim}(Z^{\prime},Y) 
\geq 2$, such that the restriction of  
$\pi : {\pi}^{-1}(V) \cap Y^{\prime} \rightarrow 
V$ is an isomorphism. Let $\sigma : V \rightarrow {\mathbb G}^r(\mathcal E)$ 
be the section over $V$ of $\pi : {\mathbb G}^r(\mathcal E) \rightarrow Y$ 
defined by the inverse of this isomorphism. One has $f = \sigma \circ p$ over 
$X \setminus (Z \cup p^{-1}(Z^{\prime}))$. 

Let $\mathcal A$ be the kernel of the universal quotient 
${\pi}^{\ast}{\mathcal E} \rightarrow {\mathcal B}$. ${\sigma}^{\ast}
{\mathcal A}$ (which is a vector subbundle of ${\mathcal E}\vert_V$) 
can be extended to a saturated subsheaf ${\overline {\mathcal E}}^{\prime}$ of 
$\mathcal E$. From the hypothesis on the dimensions of the fibres of $p$ it 
follows, on one hand, that $\text{codim}(p^{-1}(Z^{\prime}),X) \geq 2$ and, 
on the other hand, that $p$ is {\it flat} (see 
Hartshorne~\cite[III,~Ex.~10.9]{Ha77}). One deduces that 
$p^{\ast}{\overline {\mathcal E}}^{\prime}$ is a saturated subsheaf of 
$p^{\ast}{\mathcal E}$. Finally, since ${\mathcal E}^{\prime}$ and 
$p^{\ast}{\overline {\mathcal E}}^{\prime}$ coincide over 
$X \setminus (Z \cup p^{-1}(Z^{\prime}))$ they must coincide over $X$.    
\end{proof}

\begin{lem}\label{L:relomega}
Let ${\mathbb G}_d({\mathbb P}^n)$ be the Grassmannian of $d$-dimensional 
linear subspaces of ${\mathbb P}^n$, $1 \leq d <n$, 
and consider the incidence diagram: 
\begin{equation*}
\begin{CD}
{\mathbb F}_{0,d}({\mathbb P}^n) @>q>> {\mathbb G}_d({\mathbb P}^n)\\
@VpVV \\
{\mathbb P}^n
\end{CD}
\end{equation*}
For a point $\ell \in {\mathbb G}_d({\mathbb P}^n)$, $p$ maps isomorphically 
the fibre $q^{-1}(\ell)$ onto the corresponding linear subspace $L$ of 
${\mathbb P}^n$. Then ${\Omega}_{{\mathbb F}/{\mathbb P}}\vert_{q^{-1}(\ell)}  
\simeq {\fam0 T}_L(-1)^{\oplus \, n-d}$.
\end{lem}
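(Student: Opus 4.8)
The plan is to compute the relative cotangent sheaf of the flag variety incidence correspondence restricted to a fibre of $q$, by combining the standard Euler-type sequences for Grassmannians and projective bundles. First I would recall that ${\mathbb F}_{0,d}({\mathbb P}^n)$ can be realized as the relative projective bundle ${\mathbb P}(\mathcal R)$ over ${\mathbb G}_d({\mathbb P}^n)$, where $\mathcal R$ is the rank $d+1$ tautological subbundle of ${\mathcal O}^{n+1}$ on ${\mathbb G} := {\mathbb G}_d({\mathbb P}^n)$ (using the paper's convention ${\mathbb P}(\mathcal R) = {\mathbb G}_1(\mathcal R)$), with $q$ the bundle projection. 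Under this identification the fibre $q^{-1}(\ell)$ is exactly ${\mathbb P}(R) = L$ and $p$ restricts to the isomorphism $q^{-1}(\ell) \xrightarrow{\sim} L$ asserted in the statement. The relative cotangent sheaf ${\Omega}_q$ of the projective bundle fits into the relative Euler sequence
\begin{equation*}
0 \longrightarrow {\Omega}_q \longrightarrow (q^{\ast}\mathcal R)(-1) \longrightarrow {\mathcal O}_{\mathbb F} \longrightarrow 0,
\end{equation*}
where ${\mathcal O}_{\mathbb F}(-1)$ is the tautological subbundle of $q^{\ast}\mathcal R$. Restricting this sequence to $q^{-1}(\ell) \simeq L \simeq {\mathbb P}^d$ gives ${\Omega}_q\vert_{q^{-1}(\ell)} \simeq {\Omega}_L$, the ordinary cotangent bundle of $L = {\mathbb P}^d$, since over the fibre $q^{\ast}\mathcal R$ is trivial of rank $d+1$.

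Next I would bring in ${\Omega}_{\mathbb F/\mathbb P}$ via the morphism $p : {\mathbb F} \to {\mathbb P}^n$. The two relative cotangent sheaves are related through the sheaf $p^{\ast}{\Omega}_{\mathbb P}$, but the cleanest route is to use the transitivity / comparison triangle coming from the two projections: on ${\mathbb F}$ there is a short exact sequence
\begin{equation*}
0 \longrightarrow q^{\ast}{\Omega}_{\mathbb G} \longrightarrow {\Omega}_{\mathbb F} \longrightarrow {\Omega}_q \longrightarrow 0
\end{equation*}
and likewise
\begin{equation*}
0 \longrightarrow p^{\ast}{\Omega}_{\mathbb P} \longrightarrow {\Omega}_{\mathbb F} \longrightarrow {\Omega}_p \longrightarrow 0.
\end{equation*}
Restricting the first of these to the fibre $q^{-1}(\ell)$ kills $q^{\ast}{\Omega}_{\mathbb G}$ (it becomes trivial, but more to the point the sequence exhibits ${\Omega}_{\mathbb F}\vert_{q^{-1}(\ell)}$ as an extension of ${\Omega}_q\vert_{q^{-1}(\ell)} \simeq {\Omega}_L$ by the trivial bundle ${\Omega}_{\mathbb G}(\ell)\otimes{\mathcal O}_L$ of rank $\dim {\mathbb G} = (d+1)(n-d)$). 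Then restricting the second sequence to the same fibre and using that $p\vert_{q^{-1}(\ell)}$ is an isomorphism onto $L$ — so ${\Omega}_p\vert_{q^{-1}(\ell)} \simeq {\Omega}_{\mathbb F/\mathbb P}\vert_{q^{-1}(\ell)}$ is what we want, while $p^{\ast}{\Omega}_{\mathbb P}\vert_{q^{-1}(\ell)} \simeq {\Omega}_{\mathbb P}\vert_L$ — lets me solve for ${\Omega}_{\mathbb F/\mathbb P}\vert_{q^{-1}(\ell)}$.

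Putting the pieces together on $L \simeq {\mathbb P}^d \subset {\mathbb P}^n$: from the above one gets that ${\Omega}_{\mathbb F/\mathbb P}\vert_{q^{-1}(\ell)}$ is the cokernel of the natural inclusion ${\Omega}_{\mathbb P}\vert_L \hookrightarrow {\Omega}_{\mathbb F}\vert_{q^{-1}(\ell)}$, and dualizing it is cleanest to phrase everything in terms of tangent sheaves. Indeed the normal bundle sequence $0 \to {\rm T}_L \to {\rm T}_{\mathbb P}\vert_L \to {\rm N}_{L/{\mathbb P}} \to 0$ together with ${\rm N}_{L/{\mathbb P}} \simeq {\rm T}_L(-1)^{\oplus\,n-d}$ (the standard description of the normal bundle of a linear subspace, which follows from $0 \to {\mathcal O}_L(-1)^{\oplus\,n-d} \to {\mathcal O}_L^{\,n-d}\otimes(\cdots)$, or more directly from ${\rm N}_{L/{\mathbb P}} \simeq {\rm Hom}(R, {\mathcal O}^{n+1}/R)\vert_L$ where the quotient restricts to ${\mathcal O}_L^{\oplus\,n-d}$ twisted appropriately, giving ${\mathcal O}_L(1)^{\oplus\,n-d}$ — wait, I must track the twist carefully) will produce the claimed ${\rm T}_L(-1)^{\oplus\,n-d}$ after identifying ${\Omega}_{\mathbb F/\mathbb P}\vert_{q^{-1}(\ell)}$ with the dual of the relative tangent sheaf and matching it to the normal bundle of $L$ inside a "universal" ${\mathbb P}^n$. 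The main obstacle I anticipate is exactly this bookkeeping of Serre twists: one must be careful that the tautological sub- and quotient bundles of ${\mathcal O}_{\mathbb G}^{n+1}$ restrict trivially over $q^{-1}(\ell)$ but that the ${\mathcal O}_{\mathbb F}(1)$ coming from the ${\mathbb P}(\mathcal R)$-structure restricts to ${\mathcal O}_L(1)$, so that the relative Euler sequences and the normal bundle sequence are twisted consistently, yielding the single overall $(-1)$ twist in the final answer rather than $0$ or $(-2)$. Once the twists are pinned down, the isomorphism ${\Omega}_{\mathbb F/\mathbb P}\vert_{q^{-1}(\ell)} \simeq {\rm T}_L(-1)^{\oplus\,n-d}$ drops out, and the rank check ($(d)(n-d)$ on both sides, since ${\rm rk}\,{\rm T}_L = d$) confirms it.
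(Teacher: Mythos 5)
Your framework is a genuinely different (and in principle viable) route from the paper's: you propose to restrict the two relative cotangent sequences $0 \to q^{\ast}{\Omega}_{\mathbb G} \to {\Omega}_{\mathbb F} \to {\Omega}_q \to 0$ and $0 \to p^{\ast}{\Omega}_{\mathbb P} \to {\Omega}_{\mathbb F} \to {\Omega}_p \to 0$ to the fibre $q^{-1}(\ell)$ and solve for ${\Omega}_{{\mathbb F}/{\mathbb P}}\vert_{q^{-1}(\ell)}$, whereas the paper works over ${\mathbb P}^n$, identifying ${\mathbb F}_{0,d}({\mathbb P}^n)$ with ${\mathbb G}^{n-d}(\mathrm{T}_{\mathbb P}(-1))$ and invoking Lemma~\ref{L:diffgrass} to get the global formula ${\Omega}_{{\mathbb F}/{\mathbb P}} \simeq {\mathcal Hom}(q^{\ast}{\mathcal B},{\mathcal A}^{\prime})$, which then restricts in one line. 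The problem is that you never actually extract the answer from your two restricted sequences, and the sketch you give of that extraction is incorrect. Dualizing, $\mathrm{T}_{{\mathbb F}/{\mathbb P}}\vert_{q^{-1}(\ell)}$ is the kernel of $\mathrm{d}p : \mathrm{T}_{\mathbb F}\vert_{q^{-1}(\ell)} \to \mathrm{T}_{\mathbb P}\vert_L$; since $\mathrm{d}p$ is injective on the subbundle $\mathrm{T}_L = \mathrm{T}_{{\mathbb F}/{\mathbb G}}\vert_{q^{-1}(\ell)}$, the snake lemma identifies this kernel with the kernel of the induced surjection $\mathrm{Hom}(R,V/R)\otimes {\mathcal O}_L \to \mathrm{N}_{L/{\mathbb P}^n} = (V/R)\otimes {\mathcal O}_L(1)$ (restriction along ${\mathcal O}_L(-1) \subset R\otimes {\mathcal O}_L$, where $R \subset V = {\mathbb C}^{n+1}$ is the subspace defining $L$). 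That kernel is ${\mathcal Hom}\bigl((R\otimes{\mathcal O}_L)/{\mathcal O}_L(-1),\, (V/R)\otimes{\mathcal O}_L\bigr) \simeq {\mathcal Hom}(\mathrm{T}_L(-1),{\mathcal O}_L)^{\oplus\, n-d} \simeq {\Omega}_L(1)^{\oplus\, n-d}$, whose dual is the asserted $\mathrm{T}_L(-1)^{\oplus\, n-d}$. None of this appears in your writeup.

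What appears instead is an attempt to match ${\Omega}_{{\mathbb F}/{\mathbb P}}\vert_{q^{-1}(\ell)}$ with (the dual of) the normal bundle of $L$, supported by the claim $\mathrm{N}_{L/{\mathbb P}^n} \simeq \mathrm{T}_L(-1)^{\oplus\, n-d}$. That claim is false except for $d=1$: the normal bundle of a linear $L = {\mathbb P}^d$ is ${\mathcal O}_L(1)^{\oplus\, n-d}$, of rank $n-d$, while $\mathrm{T}_L(-1)^{\oplus\, n-d}$ has rank $d(n-d)$; similarly $\mathrm{Hom}(R,{\mathcal O}^{n+1}/R)\vert_L$ is the trivial bundle $\mathrm{T}_{[L]}{\mathbb G}\otimes{\mathcal O}_L$ of rank $(d+1)(n-d)$, not $\mathrm{N}_{L/{\mathbb P}^n}$. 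So the difficulty you defer as ``bookkeeping of Serre twists'' is not a twist ambiguity at all but a rank mismatch: the normal bundle enters only as the \emph{cokernel} of the evaluation map whose \emph{kernel} is the sheaf you want, and skipping that identification leaves the lemma unproved.
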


\begin{proof}
Consider the tautological exact sequences on ${\mathbb P}^n$ and 
${\mathbb G}_d({\mathbb P}^n)$: 
\begin{gather*}
0 \longrightarrow {\mathcal O}_{\mathbb P}(-1) \longrightarrow 
{\mathcal O}_{\mathbb P}^{\oplus \, n+1} \longrightarrow 
\text{T}_{\mathbb P}(-1) \longrightarrow 0\\
0 \longrightarrow {\mathcal A} \longrightarrow 
{\mathcal O}_{\mathbb G}^{\oplus \, n+1} \longrightarrow {\mathcal B} 
\longrightarrow 0
\end{gather*}
(with $\text{rk}\, {\mathcal A} = d+1$). The composite morphism 
$p^{\ast}{\mathcal O}_{\mathbb P}(-1) \rightarrow  
{\mathcal O}_{\mathbb F}^{\oplus \, n+1} \rightarrow q^{\ast}{\mathcal B}$ is 0, 
hence it induces an epimorphism $p^{\ast}\text{T}_{\mathbb P}(-1) \rightarrow 
q^{\ast}{\mathcal B}$. One can easily show that 
\begin{equation*}
{\mathbb F}_{0,d}({\mathbb P}^n) 
\simeq {\mathbb G}^{n-d}(\text{T}_{\mathbb P}(-1))\   \text{over}\   
{\mathbb P}^n 
\end{equation*}
such that 
the universal quotient of $p^{\ast}\text{T}_{\mathbb P}(-1)$ corresponds to 
$q^{\ast}{\mathcal B}$. Let ${\mathcal A}^{\prime}$ be the kernel of the 
epimorphism $p^{\ast}\text{T}_{\mathbb P}(-1) \rightarrow q^{\ast}{\mathcal B}$. 
Restricting to $q^{-1}(\ell)$ the exact sequence: 
\begin{equation*}
0 \longrightarrow p^{\ast}{\mathcal O}_{\mathbb P}(-1) \longrightarrow 
q^{\ast}{\mathcal A} \longrightarrow {\mathcal A}^{\prime} \longrightarrow 0 
\end{equation*}   
one gets an exact sequence $0 \rightarrow {\mathcal O}_L(-1) 
\rightarrow {\mathcal O}_L^{\oplus \, d+1} \rightarrow {\mathcal A}^{\prime}_L 
\rightarrow 0$ from which one deduces that 
${\mathcal A}^{\prime}_L \simeq \text{T}_L(-1)$. 
Now, from Lemma~\ref{L:diffgrass}, ${\Omega}_{{\mathbb F}/{\mathbb P}} \simeq 
{\mathcal Hom}_{{\mathcal O}_{\mathbb F}}
(q^{\ast}{\mathcal B},{\mathcal A}^{\prime})$ hence 
\begin{equation*}
{\Omega}_{{\mathbb F}/{\mathbb P}}\vert_{q^{-1}(\ell)} \simeq 
{\mathcal Hom}_{{\mathcal O}_L}({\mathcal O}_L^{\oplus \, n-d},
{\mathcal A}^{\prime}_L) \simeq \text{T}_L(-1)^{\oplus \, n-d}\, . 
\qedhere 
\end{equation*}
\end{proof} 

Let us illustrate the way the Standard Construction method works by an 
easy example, of Grauert-M\"{u}lich type (cf. \cite[Cor.~1.5]{ES84}). 

\begin{prop}\label{P:restrconics}
Let $\mathcal E$ be a semistable rank 2 reflexive sheaf on a smooth quadric 
hypersurface $Q = Q_{n-1} \subset {\mathbb P}^n$, $n \geq 3$, with 
${\fam0 det}\, {\mathcal E} \simeq {\mathcal O}_Q(c_1)$, 
$c_1 \in {\mathbb Z}$. Then, for a general smooth conic $C \subset Q$ 
(avoiding, in particular, the singular points of $\mathcal E$), 
${\mathcal E}\vert_C \simeq {\mathcal O}_{{\mathbb P}^1}(c_1)^{\oplus 2}$. 
\end{prop}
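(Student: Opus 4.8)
The plan is to apply the Standard Construction (Proposition~\ref{P:standconstr}) to the universal family of conics on $Q$, exactly along the lines of the Grauert--M\"{u}lich argument. Let $\mathcal{C} \subset {\mathbb P}^N$ denote the (irreducible) Hilbert scheme of smooth conics on $Q$, let $X \subset Q \times \mathcal{C}$ be the incidence variety with projections $p : X \rightarrow Q$ and $q : X \rightarrow \mathcal{C}$, so that $q$ is a ${\mathbb P}^1$-bundle whose fibre over a point $[C]$ maps isomorphically onto $C$ under $p$. First I would record that $X$ and $\mathcal{C}$ are nonsingular and irreducible, that $p$ is dominant, flat, with all fibres of the expected dimension and with irreducible general fibre (the general fibre $p^{-1}(x)$ parametrizes conics through a fixed general point $x \in Q$), so that the hypotheses of Proposition~\ref{P:standconstr} are met; I would also need the analogue of Lemma~\ref{L:relomega}, namely that ${\Omega}_{X/Q}$ restricted to a fibre $q^{-1}([C]) \simeq C \simeq {\mathbb P}^1$ is isomorphic to ${\Omega}_{q^{-1}([C])/Q}$, a direct sum of copies of ${\mathcal O}_{{\mathbb P}^1}(-1)$ twisted appropriately --- in any case a sum of line bundles of negative degree on $C$.

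The key step is the following dichotomy. Suppose that, for a general conic $C$, the restriction ${\mathcal E}\vert_C \simeq {\mathcal O}_{{\mathbb P}^1}(a) \oplus {\mathcal O}_{{\mathbb P}^1}(b)$ with $a \geq b$ and $a + b = 2c_1$ (every rank $2$ bundle on ${\mathbb P}^1$ splits). If $a > b$, then $a > c_1$, and the destabilizing sub-line-bundle ${\mathcal O}_{{\mathbb P}^1}(a) \hookrightarrow {\mathcal E}\vert_C$ varies algebraically with $C$, producing --- after passing to a suitable open set of $X$ where things are locally free and removing a codimension $\geq 2$ locus --- a saturated rank $1$ subsheaf ${\mathcal E}'$ of $p^{\ast}{\mathcal E}$ whose quotient ${\mathcal E}''$ restricts to ${\mathcal O}_{{\mathbb P}^1}(b)$ on the general fibre of $q$. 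The second fundamental form ${\mathcal E}' \rightarrow {\Omega}_{X/Q} \otimes {\mathcal E}''$ restricted to such a fibre is a map ${\mathcal O}_{{\mathbb P}^1}(a) \rightarrow (\text{sum of } {\mathcal O}_{{\mathbb P}^1}(-1)) \otimes {\mathcal O}_{{\mathbb P}^1}(b)$; since $a > b > b - 1 \geq$ the degree of each summand of the target, this map vanishes on each general fibre, hence is generically zero on $X$. Proposition~\ref{P:standconstr} then yields a saturated subsheaf ${\overline{\mathcal E}}'$ of $\mathcal{E}$ on $Q$ with $p^{\ast}{\overline{\mathcal E}}' = {\mathcal E}'$; computing degrees (using that ${\overline{\mathcal E}}'$ has rank $1$ and that its restriction to the general conic is ${\mathcal O}_{{\mathbb P}^1}(a)$ with $a > c_1$, together with $\mathrm{Pic}(Q) = {\mathbb Z}$) shows that ${\overline{\mathcal E}}'$ destabilizes $\mathcal{E}$, contradicting semistability. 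Hence $a = b = c_1$, which is the assertion.

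The main obstacle, and the place where a little care is needed, is the degree computation on $Q$ that turns the sub-line-bundle ${\overline{\mathcal E}}'$ into an actual contradiction to semistability: one must check that ${\overline{\mathcal E}}'$, being reflexive of rank $1$ on the smooth $Q$, is in fact a line bundle ${\mathcal O}_Q(t)$, and that its restriction to the general conic being ${\mathcal O}_{{\mathbb P}^1}(a)$ forces $2t = \deg({\mathcal O}_Q(t)\vert_C) = a > c_1$, so $t > c_1/2$, i.e. $\mu({\overline{\mathcal E}}') = t > c_1/2 = \mu({\mathcal E})$ with respect to the polarization ${\mathcal O}_Q(1)$ --- contradicting $\mu$-semistability. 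Two further routine points to dispatch: avoiding the finitely many singular points of the reflexive sheaf $\mathcal{E}$ is automatic since a general conic meets no fixed codimension $\geq 2$ subset, so ${\mathcal E}\vert_C$ is locally free; and the claim that the destabilizing subbundles on the fibres glue to a coherent subsheaf of $p^{\ast}{\mathcal E}$ over a large open set follows from semicontinuity of $h^0$ applied to the relative family, exactly as in \cite{FHS80}. I expect the whole argument to run in half a page once the incidence geometry of conics on $Q$ is set up.
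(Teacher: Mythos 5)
Your overall strategy is the same as the paper's: set up the incidence variety of plane sections (equivalently, smooth conics) of $Q$, observe that the destabilizing sub-line-bundles on the fibres of $q$ glue to a saturated subsheaf $\mathcal{E}'$ of $p^{\ast}\mathcal{E}$ over a large open set, and play the second fundamental form against Proposition~\ref{P:standconstr} and the semistability of $\mathcal{E}$. The paper runs this in the contrapositive direction --- semistability forbids descent, so by Proposition~\ref{P:standconstr} the second fundamental form must have generic rank $\geq 1$, and restricting it to a general fibre produces a nonzero map whose mere existence forces $a=b$ --- but that is only a cosmetic difference from your ``show it vanishes, descend, contradict'' phrasing.

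There is, however, a genuine error at the numerical heart of your argument. By Lemma~\ref{L:relomega} with $d=2$, the restriction of $\Omega_{X/Q}$ to a fibre $q^{-1}(\ell)\simeq C$ is $\mathrm{T}_L(-1)^{\oplus\, n-2}\vert_C\simeq\mathcal{O}_{\mathbb{P}^1}(1)^{\oplus\, 2(n-2)}$ --- a sum of line bundles of degree $+1$, not of negative degree as you assert. (If the degrees were negative, the same machinery applied to lines in $\mathbb{P}^n$ would show that a semistable bundle splits with gap $0$ on a general line, which is false; the classical Grauert--M\"{u}lich bound is gap $\leq 1$ precisely because the relevant twist is $+1$.) Consequently your inequality ``$a>b>b-1\geq$ the degree of each summand of the target'' does not establish the vanishing of the second fundamental form on the general fibre: the target summands are $\mathcal{O}_{\mathbb{P}^1}(b+1)$, and what you need is $a>b+1$. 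The step is rescued only by a parity observation you did not make: since $a+b=2c_1$ is even, $a>b$ forces $a\geq b+2>b+1$, so $\mathrm{Hom}(\mathcal{O}_{\mathbb{P}^1}(a),\mathcal{O}_{\mathbb{P}^1}(b+1))=0$ after all. Two smaller slips in your final degree count: $\mathrm{Pic}(Q)=\mathbb{Z}$ fails for $n=3$ (where $Q_2\simeq\mathbb{P}^1\times\mathbb{P}^1$), though the slope comparison with respect to $\mathcal{O}_{Q_2}(1,1)$ still yields the contradiction; and writing $\deg(\mathcal{O}_Q(t)\vert_C)=2t=a$ presupposes $a$ even, which need not hold a priori --- though if $a$ is odd the descent is impossible for parity reasons, which is a contradiction of its own.
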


\begin{proof}
We may assume that $c_1 = 0$ or $-1$. 
Consider the incidence diagram from the statement of Lemma~\ref{L:relomega} 
for $d = 2$. Let $X := p^{-1}(Q)$ and consider the induced diagram: 
\begin{equation*}
\begin{CD}
X @>{\overline q}>> {\mathbb G}_2({\mathbb P}^n)\\
@V{\overline p}VV \\
Q
\end{CD}
\end{equation*}
Let $U$ be the open subset of 
${\mathbb G} := {\mathbb G}_2({\mathbb P}^n)$ consisting of the points 
$\ell$ for which the corresponding 2-plane $L$ intersects $Q$ transversally, 
along a conic $C$ avoiding the singular points of $\mathcal E$. 
$\overline p$ maps ${\overline q}^{\, -1}(\ell)$ isomorphically onto $C$. 
By semicontinuity, there exists an integer $a \geq 0$ and a non-empty open 
subset $U^{\prime}$ of $U$ such that ${\mathcal E}\vert_C \simeq 
{\mathcal O}_{{\mathbb P}^1}(c_1+a) \oplus {\mathcal O}_{{\mathbb P}^1}(c_1-a)$, 
$\forall \ell \in U^{\prime}$.  
We want to show that $a = 0$. If $a \geq 1$ then the image of the canonical 
morphism ${\overline q}^{\ast}{\overline q}_{\ast}{\overline p}^{\ast}
{\mathcal E}\vert_{{\overline q}^{\, -1}(U^{\prime})} \rightarrow 
{\overline p}^{\ast}{\mathcal E}\vert_{{\overline q}^{\, -1}(U^{\prime})}$ is 
a line subbundle ${\mathcal E}^{\prime}$ of 
${\overline p}^{\ast}{\mathcal E}\vert_{{\overline q}^{\, -1}(U^{\prime})}$ 
such that ${\mathcal E}^{\prime}\vert_{{\overline q}^{\, -1}(\ell)} \simeq 
{\mathcal O}_{{\mathbb P}^1}(c_1+a)$, $\forall \ell \in U^{\prime}$. Applying 
Prop.~\ref{P:standconstr} to the restriction of 
${\overline p} : {\overline q}^{\, -1}
(U^{\prime}) \rightarrow Q\setminus \text{Sing}\, {\mathcal E}$ and to the 
exact sequence; 
\begin{equation*}
0 \longrightarrow {\mathcal E}^{\prime} \longrightarrow 
{\overline p}^{\ast}{\mathcal E}\vert_{{\overline q}^{\, -1}(U^{\prime})}  
\longrightarrow {\mathcal E}^{\prime \prime} \longrightarrow 0
\end{equation*}
(where ${\mathcal E}^{\prime \prime}$ is the cokernel of the left morphism) and 
taking into account the semistability of $\mathcal E$ one deduces that the 
second fundamental form ${\mathcal E}^{\prime} \rightarrow 
({\Omega}_{X/Q}\vert_{{\overline q}^{\, -1}(U^{\prime})}) \otimes 
{\mathcal E}^{\prime \prime}$ has generically rank $\geq 1$, hence its 
restriction to a general fibre ${\overline q}^{\, -1}(\ell)$, $\ell \in  
U^{\prime}$, must have generically rank $\geq 1$. 
Using Lemma~\ref{L:relomega}, one deduces the existence of 
a non-zero morphism:
\begin{equation*}
{\mathcal O}_{{\mathbb P}^1}(c_1+a) \longrightarrow 
(\text{T}_L(-1)^{\oplus \, n-2}\vert_C)\otimes 
{\mathcal O}_{{\mathbb P}^1}(c_1-a)\, .
\end{equation*}
Since $ \text{T}_L(-1)\vert_C \simeq 
{\mathcal O}_{{\mathbb P}^1}(1)^{\oplus 2}$, one derives a contradiction.      
\end{proof}  

\begin{lem}\label{L:sigma}
Consider the incidence diagram from the statement of Lemma~\ref{L:relomega} 
with $d = n-1$, i.e., with ${\mathbb G}_d({\mathbb P}^n) = 
{\mathbb P}^{n\vee}$. Let $\Sigma \subset {\mathbb P}^{n\vee}$ be a closed 
reduced and irreducible subscheme, of dimension $m \geq 2$, let 
$X := q^{-1}(\Sigma)$ and consider the induced incidence diagram: 
\begin{equation*}
\begin{CD}
X @>{\overline q}>> \Sigma \\
@V{\overline p}VV \\
{\mathbb P}^n
\end{CD}
\end{equation*} 

\emph{(i)} Let $h$ be a nonsingular point of $\Sigma$ and let 
${\fam0 T}_h\Sigma \subset {\mathbb P}^{n\vee}$ 
be the tangent linear space of $\Sigma$ at $h$. 
Let $H\subset {\mathbb P}^n$ be the hyperplane corresponding to $h$. 
One has ${\fam0 T}_h\Sigma = L^{\vee}$ for some linear subspace $L$ of 
$H$ with ${\fam0 codim}(L,H) = m$. Then one has an exact sequence: 
\begin{equation*}
0 \longrightarrow {\mathcal O}_H(-1) \longrightarrow {\mathcal O}_H^{\oplus m} 
\longrightarrow {\Omega}_{X/{\mathbb P}}\vert_{{\overline q}^{\, -1}(h)}  
\longrightarrow 0
\end{equation*}
where the left morphism is the dual of an epimorphism 
${\mathcal O}_H^{\oplus m} \rightarrow {\mathcal I}_{L,H}(1)$. 

\emph{(ii)} If $x \in {\mathbb P}^n$ then the fibre 
${\overline p}^{\, -1}(x)$ has pure dimension $m - 1$, 
except in the case where $\Sigma = K^{\vee}$ for 
some linear subspace $K$ of ${\mathbb P}^n$ of codimension $m + 1$ and 
$x \in K$. 
 
\emph{(iii)} The set of points $x \in {\mathbb P}^n$ for which the fibre 
${\overline p}^{\, -1}(x)$ is not irreducible and generically reduced is a 
closed subset of ${\mathbb P}^n$, of codimension $\geq m-1$. 
\end{lem}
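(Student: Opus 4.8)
Parts (i) and (ii) are direct: (i) is obtained by computing ${\Omega}_{X/{\mathbb P}}\vert_{{\overline q}^{\,-1}(h)}$ from the incidence description via Lemma~\ref{L:diffgrass}, by the method of Lemma~\ref{L:relomega} (the rank‑$m$ free sheaf $\mathcal O_H^{\oplus m}$ and the subvariety $L$ with $\mathrm T_h\Sigma=L^{\vee}$ reflect the $m$ tangent directions of $\Sigma$ at the smooth point $h$); and (ii) is a dimension count, since ${\overline p}^{\,-1}(x)=\Sigma\cap x^{\vee}$ is a hyperplane section of $\Sigma$, hence of pure dimension $m-1$ unless $x^{\vee}\supseteq\Sigma$, which forces $\langle\Sigma\rangle$ to be a hyperplane, necessarily of the form $K^{\vee}$ with $x\in K$. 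So I concentrate on (iii), which I would prove by a chain of reductions and an incidence–dimension count, the geometric input being (i). \emph{Reductions.} If $\Sigma$ spans a proper linear subspace $\langle\Sigma\rangle=K^{\vee}\subsetneq{\mathbb P}^{n\vee}$, then ${\overline p}^{\,-1}(x)=\Sigma$ for $x\in K$, so $K$ misses the exceptional locus $W$, while for $x\notin K$ the map $x\mapsto x^{\vee}\cap\langle\Sigma\rangle$ is the linear projection of ${\mathbb P}^{n}$ from $K$, with fibres of dimension $\dim K+1$; this reduces the bound to $\Sigma\subset\langle\Sigma\rangle$ nondegenerate. A nondegenerate linear $\Sigma$ is all of ${\mathbb P}^{n\vee}$, whence $W=\emptyset$; so assume $\Sigma$ nondegenerate with $2\le m\le n-1$. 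Then $X$ is a ${\mathbb P}^{n-1}$–bundle over $\Sigma$, irreducible of dimension $n+m-1$, and $\overline p$ is a proper surjection all of whose fibres $\Sigma\cap x^{\vee}$ have pure dimension $m-1$ by (ii). A Stein–factorization argument (a general hyperplane section of $\Sigma$ is irreducible since $m\ge 2$, and ${\mathbb P}^{n}$ is normal) gives ${\overline p}_{\ast}{\mathcal O}_{X}={\mathcal O}_{{\mathbb P}^{n}}$, so every fibre $\Sigma\cap x^{\vee}$ is connected; applying this to a general hyperplane section of $\Sigma$ when $m\ge 3$ shows moreover that every $\Sigma\cap x^{\vee}$ is connected in codimension $2$.

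\emph{Closedness of $W$.} Write $W=W_{\mathrm{red}}\cup W_{\mathrm{nrd}}$, where $x\in W_{\mathrm{red}}$ (resp.\ $W_{\mathrm{nrd}}$) iff $\Sigma\cap x^{\vee}$ has at least two irreducible components (resp.\ a component of multiplicity $\ge 2$). Each summand is closed by the usual semicontinuity for the proper, fibrewise–equidimensional morphism $\overline p$ (equivalently, for a proper morphism the locus of the base over which the fibre is geometrically integral is open, being constructible and stable under generization). I expect no difficulty here.

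\emph{The dimension bound $\dim W\le n-m+1$.} Let $W_{0}\subseteq W$ be an irreducible component of maximal dimension $w$; being irreducible, it lies in $W_{\mathrm{red}}$ or in $W_{\mathrm{nrd}}$. The key consequence of (i) is: if $h\in\Sigma^{\mathrm{sm}}$ lies on $\Sigma\cap x^{\vee}$ and $\Sigma\cap x^{\vee}$ is not smooth of dimension $m-1$ at $h$ — in particular if a component through $h$ has multiplicity $\ge 2$, or if two components pass through $h$ — then $\mathrm T_{h}\Sigma\subseteq x^{\vee}$, i.e.\ $x$ lies on the $(n-1-m)$–dimensional linear space $L$ with $\mathrm T_{h}\Sigma=L^{\vee}$; hence the incidence variety $\{(x,h):h\in\Sigma^{\mathrm{sm}},\ \mathrm T_{h}\Sigma\subseteq x^{\vee}\}$, which fibres over $\Sigma^{\mathrm{sm}}$ with $(n-1-m)$–dimensional fibres, has dimension $n-1$. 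If $W_{0}\subseteq W_{\mathrm{nrd}}$, pick for general $x\in W_{0}$ a component $A_{x}$ of $\Sigma\cap x^{\vee}$ of multiplicity $\ge 2$; if $A_{x}\not\subseteq\mathrm{Sing}\,\Sigma$, then $\{(x,h):x\in W_{0},\ h\in A_{x}\}$ has dimension $w+(m-1)$ and maps, off a lower–dimensional part over $\mathrm{Sing}\,\Sigma$, into the $(n-1)$–dimensional variety above, forcing $w\le n-m$; if $A_{x}\subseteq\mathrm{Sing}\,\Sigma$, then $A_{x}$ is one of the finitely many $(m-1)$–dimensional components of $\mathrm{Sing}\,\Sigma$, so $x^{\vee}$ contains a fixed $(m-1)$–fold and $x$ runs through a finite union of linear spaces of dimension $\le n-m$. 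If instead $W_{0}\subseteq W_{\mathrm{red}}$, then for general $x\in W_{0}$, connectedness in codimension $2$ of $\Sigma\cap x^{\vee}$ yields components $A_{x}\neq B_{x}$ with $\dim(A_{x}\cap B_{x})=m-2$; if $A_{x}\cap B_{x}\not\subseteq\mathrm{Sing}\,\Sigma$, then $x^{\vee}$ is tangent to $\Sigma$ along it and $\{(x,h):x\in W_{0},\ h\in A_{x}\cap B_{x}\}$, of dimension $w+(m-2)$, maps into the $(n-1)$–dimensional variety, giving $w+(m-2)\le n-1$, i.e.\ $w\le n-m+1$; if $A_{x}\cap B_{x}\subseteq\mathrm{Sing}\,\Sigma$, this forces $\dim\mathrm{Sing}\,\Sigma\ge m-2$, and when $\dim\mathrm{Sing}\,\Sigma=m-2$ one is reduced again to finitely many components of $\mathrm{Sing}\,\Sigma$, so $x$ lies in a finite union of linear spaces of dimension $\le n-m+1$.

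\emph{The main obstacle.} The one case not covered verbatim is $\Sigma$ non–normal, i.e.\ $\dim\mathrm{Sing}\,\Sigma=m-1$: then the tangency locus $A_{x}\cap B_{x}$ (or a whole component of $\Sigma\cap x^{\vee}$) may sweep out a positive–dimensional family inside $\mathrm{Sing}\,\Sigma$, where (i) provides no estimate. I would treat this by passing to the normalization $\nu\colon\widetilde\Sigma\to\Sigma$, comparing the components of $\nu^{-1}(\Sigma\cap x^{\vee})$ with those of $\Sigma\cap x^{\vee}$ over the finitely many top–dimensional components of the non–normal locus, running the incidence count on $\widetilde\Sigma$, and separately bounding the components of $\Sigma\cap x^{\vee}$ contained in $\mathrm{Sing}\,\Sigma$ by their linear spans. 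Fitting all these sub-cases together so as to obtain $w\le n-m+1$ uniformly is the delicate point; everything else is of the same nature as the dimension counts already used in the proof of Proposition~\ref{P:restrconics}.
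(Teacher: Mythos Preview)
Your treatment of (i) and (ii) matches the paper's: for (i) the paper restricts the conormal sequence $\overline q^{\,*}(\mathcal I_\Sigma/\mathcal I_\Sigma^2)\to\Omega_{\mathbb F/\mathbb P}|_X\to\Omega_{X/\mathbb P}\to 0$ to $\overline q^{\,-1}(h)$, identifies the middle term with $\mathrm T_H(-1)$ via Lemma~\ref{L:relomega}, checks injectivity on the left by looking at points of $H\setminus L$, and concludes using the Euler sequence; for (ii) the paper simply observes $\overline p^{\,-1}(x)\simeq x^\vee\cap\Sigma$.

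For (iii) your route is genuinely different from the paper's. The paper does not prove (iii) at all: it cites the Bertini-in-families theorem of Benoist \cite[Thm.~0.5(i)]{Be11}, which states exactly that the locus of hyperplanes cutting a reduced irreducible $m$-fold in a non-integral scheme has codimension $\ge m-1$. In the remark following the lemma the paper also points out that for its applications only the hypersurface case of Benoist's theorem is needed, and sketches how Benoist's own argument simplifies there: openness of the integral locus is elementary for hypersurfaces, and one reduces by generic projection to the case where $\Sigma$ is a cone with $(n-3)$-dimensional vertex over a reduced irreducible plane curve, where the bad hyperplanes are precisely those containing the vertex.

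Your direct approach via the tangency incidence $\{(x,h):\mathrm T_h\Sigma\subseteq x^\vee\}$ is an attractive idea and recovers parts of Benoist's bound, but the gap you identify is real. When $\dim\mathrm{Sing}\,\Sigma=m-1$, the intersections $A_x\cap B_x$ (or entire nonreduced components) can sweep a positive-dimensional family inside $\mathrm{Sing}\,\Sigma$, where (i) gives no control, and your normalization sketch does not close the gap: after pulling back by $\nu$ you lose the hyperplane-section structure (the preimage of $x^\vee$ is a Cartier divisor on $\widetilde\Sigma$, not a hyperplane section of a projective embedding), so neither the connectedness nor the tangency count transfers directly; and bounding the components contained in $\mathrm{Sing}\,\Sigma$ ``by their linear spans'' can fail badly since an $(m-1)$-dimensional component of $\mathrm{Sing}\,\Sigma$ may span all of $\mathbb P^{n\vee}$. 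Benoist's reduction (generic projection to a cone) sidesteps exactly this difficulty, which is why the paper defers to it rather than attempting a self-contained argument.
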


\begin{proof}
If $x \in {\mathbb P}^n$ and if $x^{\vee}$ is the hyperplane of 
${\mathbb P}^{n\vee}$ consisting of the points $h$ for which $H \ni x$, 
then $\overline q$ maps isomorphically the fibre ${\overline p}^{\, -1}(x)$ 
onto the scheme $x^{\vee} \cap \Sigma$. (ii) is, now, clear and (iii) follows 
from a recent result of O. Benoist \cite[Thm.~0.5(i)]{Be11} (see 
Remark~\ref{R:proofbenoist} below). 

(i) Let ${\mathbb F} := {\mathbb F}_{0,n-1}({\mathbb P}^n)$. One has an exact 
sequence: 
\begin{equation*}
{\overline q}^{\, \ast}({\mathcal I}_{\Sigma}/{\mathcal I}_{\Sigma}^2) 
\longrightarrow {\Omega}_{{\mathbb F}/{\mathbb P}}\vert_X 
\longrightarrow {\Omega}_{X/{\mathbb P}} \longrightarrow 0\, .
\end{equation*}
Restricting this exact sequence to ${\overline q}^{\, -1}(h)$ and taking into 
account Lemma~\ref{L:relomega}, one gets an exact sequence: 
\begin{equation*}
{\mathcal O}_H^{\oplus \, n-m} \longrightarrow \text{T}_H(-1) 
\longrightarrow  {\Omega}_{X/{\mathbb P}}\vert_{{\overline q}^{\, -1}(h)}   
\longrightarrow 0\, .
\end{equation*}
But, if $x \in H \setminus L$, $x^{\vee}$ intersects transversally 
$\Sigma$ at $h$. Identifying ${\overline q}^{\, -1}(h)$ with $H$ via 
$\overline p$, one deduces that 
${\Omega}_{X/{\mathbb P}}\vert_{{\overline q}^{\, -1}(h)}$ is locally free 
of rank $m-1$ over $H\setminus L$. One derives that the morphism 
${\mathcal O}_H^{\oplus \, n-m} \rightarrow \text{T}_H(-1)$ is a 
monomorphism. Using Euler's exact sequence on $H$: 
\begin{equation*}
0 \longrightarrow {\mathcal O}_H(-1) \longrightarrow 
{\mathcal O}_H^{\oplus \, n} \longrightarrow \text{T}_H(-1) 
\longrightarrow 0
\end{equation*}
one gets an exact sequence as in the statement. The left morphism in this 
exact sequence degenerates only along $L$, hence it must be the dual of an 
epimorphism as in the statement.   
\end{proof}

\begin{rmk}\label{R:proofbenoist}
The result of O. Benoist quoted in the proof of Lemma~\ref{L:sigma} asserts 
that if $Z$ is a reduced and irreducible closed subscheme of ${\mathbb P}^n$, 
of dimension $m \geq 2$, then the set of points $h \in {\mathbb P}^{n\vee}$ for 
which the scheme $H \cap Z$ is not generically reduced, irreducible, of 
dimension $m-1$ is a closed subset of ${\mathbb P}^{n\vee}$, of codimension 
$\geq m-1$. We shall, actually, use Lemma~\ref{L:sigma} only in the case 
where $\Sigma$ is a hypersurface in ${\mathbb P}^{n\vee}$, hence we need 
the result of Benoist only in the case where $Z$ is a hypersurface in 
${\mathbb P}^n$. In this particular case, the arguments used by Benoist 
become substantially simpler. 

Indeed, if $Z$ is a hypersurface of degree $d$, \cite[Prop.~1.1]{Be11} 
can be replaced by the following statement: let $\mathcal E$ be a locally 
free sheaf of rank $n$ on a scheme $T$ and let $\pi : {\mathbb P}(\mathcal E) 
\rightarrow T$ be the associated projective bundle. Let $X$ be an effective 
relative Cartier divisor on ${\mathbb P}(\mathcal E)/T$ such that, 
$\forall t \in T$, $X_t$ is a hypersurface of degree $d$ in 
${\mathbb P}({\mathcal E}(t))$. Then the set of the points $t \in T$ such that 
$X_t$ is reduced and irreducible is an open subset of $T$. 

The proof of this statement follows from the following easy fact: consider 
the polynomial ring  
$S := {\mathbb C}[X_0,\ldots ,X_{n-1}]$. Then the set of the points 
$[f] \in {\mathbb P}(S_d)$ such that the polynomial $f$ is irreducible is 
open because its complement is the union of the images of the morphisms 
${\mathbb P}(S_e) \times {\mathbb P}(S_{d-e}) \rightarrow {\mathbb P}(S_d)$, 
$1\leq e \leq d/2$. 

Secondly, continuing to assume that $Z$ is a hypersurface, one reduces the 
proof of the result of Benoist, as in \cite[Prop.~2.5]{Be11}, to the case 
where $Z$ is the cone over a reduced and irreducible plane curve, with 
vertex a linear subspace $L$ of ${\mathbb P}^n$ of dimension $n-3$. 

Finally, in the case where $Z$ is a cone as above, the set from the statement 
of the result of Benoist consists of the hyperplanes containing $L$.
\end{rmk}

\section{The spinor bundle and lines on a quadric threefold}
\label{S:lines}

\subsection{The variety of lines on a quadric threefold}
\label{SS:varlines} 
We identify $Q = Q_3 \subset {\mathbb P}^4$ with a smooth hyperplane section 
${\mathbb P}^4 \cap {\mathbb G}_2({\mathbb C}^4)$ of the Pl\"{u}cker 
embedding into ${\mathbb P}^5$ of the Grassmannian of lines in 
${\mathbb P}^3$, ${\mathbb G}_1({\mathbb P}^3) = 
{\mathbb G}_2({\mathbb C}^4)$. More precisely, let $U := {\mathbb C}^4$ and 
consider the Pl\"{u}cker embedding ${\mathbb G}_2(U) \hookrightarrow 
{\mathbb P}(\overset{2}{\bigwedge}U) = {\mathbb P}^5$. The image of this 
embedding is the quadric 4-fold of ${\mathbb P}^5$ of equation $w \wedge w = 
0$, $w \in \overset{2}{\bigwedge}U$. 

The restriction to ${\mathbb G}_2(U)$ of the universal skew-symmetric morphism 
on ${\mathbb P}^5$: 
\begin{equation*}
U^{\ast}\otimes_{\mathbb C}{\mathcal O}_{{\mathbb P}^5}(-1) 
\longrightarrow U^{\ast}\otimes_{\mathbb C}\overset{2}{\textstyle \bigwedge}U 
\otimes_{\mathbb C}{\mathcal O}_{{\mathbb P}^5} 
\longrightarrow U\otimes_{\mathbb C}
{\mathcal O}_{{\mathbb P}^5} 
\end{equation*}
is a morphism of constant rank 2, whose image is the universal subbundle 
$\mathcal A$ of $U\otimes_{\mathbb C}{\mathcal O}_{\mathbb G}$ and whose 
cokernel is the universal quotient bundle $\mathcal B$. One has an 
isomorphism:
\begin{equation*}
U^{\ast} = \text{Hom}_{{\mathcal O}_{\mathbb G}}(U\otimes_{\mathbb C}
{\mathcal O}_{\mathbb G}, {\mathcal O}_{\mathbb G}) \overset{\sim}
{\longrightarrow} \text{Hom}_{{\mathcal O}_{\mathbb G}}({\mathcal A}, 
{\mathcal O}_{\mathbb G}) 
\end{equation*}     
and the image of the morphism ${\mathcal A} \rightarrow 
{\mathcal O}_{\mathbb G}$ corresponding to $0 \neq \lambda \in U^{\ast}$ is the 
ideal sheaf of the 2-plane ${\mathbb P}(\overset{2}{\bigwedge}
(\text{Ker}\, \lambda)) \subset {\mathbb G}_2(U)$. Let 
${\mathbb F}_{0,1}({\mathbb P}(U)) = {\mathbb F}_{1,2}(U)$ be the flag variety 
``point $\in$ line $\subset {\mathbb P}^3$'' and consider the incidence 
diagram: 
\begin{equation*}
\begin{CD}
{\mathbb F}_{1,2}(U) @>{\widetilde q}>> {\mathbb P}(U)\\ 
@V{\widetilde p}VV \\
{\mathbb G}_2(U) 
\end{CD}
\end{equation*} 
$\widetilde p$ maps isomorphically the fibre ${\widetilde q}^{-1}([u])$ onto 
${\mathbb P}(u \wedge U) \subset {\mathbb G}_2(U)$. Considering the 
tautological coKoszul sequence and the Euler sequence on ${\mathbb P}(U)$: 
\begin{gather*}
0 \rightarrow {\mathcal O}_{\mathbb P} \rightarrow U\otimes 
{\mathcal O}_{\mathbb P}(1) \rightarrow 
\overset{2}{\textstyle{\bigwedge}}U\otimes {\mathcal O}_{\mathbb P}(2)  
\rightarrow 
\overset{3}{\textstyle{\bigwedge}}U\otimes {\mathcal O}_{\mathbb P}(3)  
\rightarrow 
\overset{4}{\textstyle{\bigwedge}}U\otimes {\mathcal O}_{\mathbb P}(4)  
\rightarrow 0\\
0 \rightarrow {\mathcal O}_{\mathbb P}(-1) \rightarrow U\otimes 
{\mathcal O}_{\mathbb P} \rightarrow \text{T}_{\mathbb P}(-1) \rightarrow 0 
\end{gather*}
one sees that 
${\mathbb F}_{1,2}(U) \simeq {\mathbb P}(\text{T}_{\mathbb P}(-2))$ 
over ${\mathbb P}(U)$ such that 
${\mathcal O}_{{\mathbb P}(\text{T}(-2))}(-1) \simeq 
{\widetilde p}^{\ast}{\mathcal O}_{\mathbb G}(-1)$. Moreover, 
${\mathbb F}_{1,2}(U) \simeq {\mathbb P}(\mathcal A)$ over ${\mathbb G}_2(U)$ 
such that ${\mathcal O}_{{\mathbb P}(\mathcal A)}(-1) \simeq 
{\widetilde q}^{\ast}{\mathcal O}_{{\mathbb P}(U)}(-1)$. 

Now, a linear form on ${\mathbb P}^5 = {\mathbb P}(\overset{2}{\bigwedge}U)$ 
can be identified with a skew-symmetric form $\omega : \overset{2}{\bigwedge}U 
\rightarrow {\mathbb C}$. The hyperplane ${\mathbb P}^4 \simeq K_{\omega} := 
{\mathbb P}(\text{Ker}\, \omega) \subset {\mathbb P}(\overset{2}{\bigwedge}U)$ 
intersects transversally ${\mathbb G}_2(U)$ if and only if $\omega$ is 
non-degenerate. Assume that this is the case and put $Q = Q_{\omega} := 
K_{\omega} \cap {\mathbb G}_2(U)$ and ${\mathcal S} = {\mathcal S}_{\omega} := 
{\mathcal A}\, \vert \, Q_{\omega}$. ${\mathcal S}_{\omega}$ is the so-called 
{\it spinor bundle} on $Q_{\omega}$. Let ${\mathbb F}_{0,1}(Q_{\omega}) := 
{\widetilde p}^{-1}(Q_{\omega})$ and consider the incidence diagram:
\begin{equation*}
\begin{CD}
{\mathbb F}_{0,1}(Q_{\omega}) @>q>> {\mathbb P}(U)\\
@VpVV \\ 
Q_{\omega}
\end{CD} 
\end{equation*}
Of course, ${\mathbb F}_{0,1}(Q_{\omega}) \simeq 
{\mathbb P}({\mathcal S}_{\omega})$ over $Q_{\omega}$. On the other hand, $p$ 
maps isomorphically the fibre $q^{-1}([u])$ onto the line $K_{\omega} \cap 
{\mathbb P}(u\wedge U) \subset Q_{\omega}$. One can associate to $\omega$ a 
so-called {\it null correlation bundle} $N_{\omega}$ over ${\mathbb P}^3 = 
{\mathbb P}(U)$ defined as the cokernel of the composite morphism: 
\begin{equation*}
\begin{CD}
{\mathcal O}_{{\mathbb P}(U)}(-1) @>{\omega \otimes \text{id}}>>   
{\overset{2}{\textstyle \bigwedge} U^{\ast}\otimes 
{\mathcal O}_{{\mathbb P}(U)}(-1)} @>>> 
{{\Omega}_{{\mathbb P}(U)}(1)\  .}
\end{CD}
\end{equation*}
It follows that the kernel of the composite morphism:
\begin{equation*}
\begin{CD}
\text{T}_{{\mathbb P}(U)}(-2) @>>> {\overset{2}{\textstyle \bigwedge} U\otimes 
{\mathcal O}_{{\mathbb P}(U)}} @>{\omega \otimes \text{id}}>>  
{\mathcal O}_{{\mathbb P}(U)}
\end{CD}
\end{equation*}
is $N_{\omega}^{\ast}(-1)$, hence ${\mathbb F}_{0,1}(Q_{\omega}) \simeq 
{\mathbb P}(N_{\omega}^{\ast}(-1))$ over ${\mathbb P}(U)$. 

\subsection{The spinor bundle}\label{SS:spinor}
Keeping the notation from par.~\ref{SS:varlines}, 
the restriction to $Q_{\omega}$ of 
the isomorphism $\omega \otimes \text{id} : U\otimes_{\mathbb C}
{\mathcal O}_{\mathbb G} \overset{\sim}{\longrightarrow} 
U^{\ast}\otimes_{\mathbb C}{\mathcal O}_{\mathbb G}$ induces an isomorphism 
${\mathcal A}\vert_{Q_{\omega}} \overset{\sim}{\rightarrow} 
{\mathcal B}^{\ast}\vert_{Q_{\omega}}$ hence one gets an exact sequence: 
\begin{equation}
\label{tautologiconq}
0 \longrightarrow {\mathcal S}_{\omega} \longrightarrow U\otimes_{\mathbb C} 
{\mathcal O}_Q \longrightarrow {\mathcal S}_{\omega}^{\ast} \longrightarrow 0
\ .
\end{equation}
Moreover, since $\text{det}\, {\mathcal A} \simeq {\mathcal O}_{\mathbb G}(-1)$ 
one has $\text{det}\, {\mathcal S}_{\omega} \simeq {\mathcal O}_Q(-1)$ hence 
${\mathcal S}_{\omega}^{\ast} \simeq {\mathcal S}_{\omega}(1)$. 

The morphism ${\mathcal A} \rightarrow {\mathcal O}_{\mathbb G}$ defined by a 
non-zero $\lambda \in U^{\ast}$ restricts to a morphism ${\mathcal S}_{\omega} 
\rightarrow {\mathcal O}_Q$ whose image is the ideal sheaf of the line 
$L := K_{\omega} \cap {\mathbb P}(\overset{2}{\bigwedge}
(\text{Ker}\, \lambda)) \subset Q_{\omega}$. One derives an exact sequence: 
\begin{equation}
\label{spinorsections}
0 \longrightarrow {\mathcal O}_Q(-1) \longrightarrow {\mathcal S}_{\omega} 
\longrightarrow {\mathcal I}_L \longrightarrow 0
\end{equation}
from which one can, of course, compute the cohomology of 
${\mathcal S}_{\omega}$ and of its twists. 

Finally, let $H$ be a hyperplane in $K_{\omega} \simeq {\mathbb P}^4$ which 
intersects $Q_{\omega}$ transversally. In this case, $H \cap Q_{\omega}$ is a 
smooth quadric in $H \simeq {\mathbb P}^3$. Let $L$ be a line belonging to 
the first ruling $\vert \, {\mathcal O}_{H \cap Q}(1,0)\, \vert$ of 
$H \cap Q_{\omega}$. From (\ref{spinorsections}) one gets an epimorphism 
${\mathcal S}_{\omega}\vert_{H \cap Q} \rightarrow 
{\mathcal O}_{H \cap Q}(-L) \simeq {\mathcal O}_{H \cap Q}(-1,0)$. Since 
$\text{det}\, {\mathcal S}_{\omega} \simeq {\mathcal O}_Q(-1)$, one deduces an 
exact sequence: 
\begin{equation*}
0 \longrightarrow {\mathcal O}_{H \cap Q}(0,-1) \longrightarrow 
{\mathcal S}_{\omega}\vert_{H \cap Q} \longrightarrow 
{\mathcal O}_{H \cap Q}(-1,0) \longrightarrow 0\  .
\end{equation*} 
This exact sequence splits because $\text{H}^1({\mathcal O}_{H \cap Q}(1,-1)) = 
0$. Consequently: 
\begin{equation}
\label{spinorresthyp}
{\mathcal S}_{\omega}\vert_{H \cap Q} \simeq {\mathcal O}_{H \cap Q}(-1,0) 
\oplus {\mathcal O}_{H \cap Q}(0,-1)\  .
\end{equation}  

\subsection{Lines on hyperplane sections of a quadric threefold}
\label{SS:linesonhyp}
We described in par.~\ref{SS:varlines} the family $q : {\mathbb F}_{0,1}(Q) 
\rightarrow {\mathbb P}(U) = {\mathbb P}^3$ of lines on $Q = Q_{\omega} 
\subset K_{\omega} = {\mathbb P}^4$. For a point $\ell = [u] \in 
{\mathbb P}(U)$, let us denote by $L = {\mathbb P}(u \wedge u^{\perp})$ the 
corresponding line on $Q$, where $u^{\perp} \subset U$ is the orthogonal of $u$ 
with respect to $\omega$. We want to describe the flag variety 
${\mathbb F}_{1,2}(Q) \subset {\mathbb P}(U) \times {\mathbb P}^{4\vee}$ 
consisting of the pairs $(\ell,h)$ with $L \subset H \cap Q$. 

One can identify ${\mathbb P}^{4\vee}$ with 
${\mathbb P}(\text{H}^0({\mathcal O}_Q(1)))$. We saw that 
${\mathbb F}_{0,1}(Q) \simeq {\mathbb P}(N_{\omega}^{\ast}(-1))$ over 
${\mathbb P}(U)$ such that ${\mathcal O}_{{\mathbb P}(N^{\ast}(-1))}(-1) \simeq 
p^{\ast}{\mathcal O}_Q(-1)$. One deduces that $q_{\ast}p^{\ast}{\mathcal O}_Q(1) 
\simeq N_{\omega}(1)$, hence $\text{H}^0({\mathcal O}_Q(1)) \simeq 
\text{H}^0(N_{\omega}(1))$. Let $0 \neq h \in \text{H}^0({\mathcal O}_Q(1))$. 
It corresponds to a section $s \in \text{H}^0(N_{\omega}(1))$. Let $\ell = [u] 
\in {\mathbb P}(U)$. $p : {\mathbb F}_{0,1}(Q) \rightarrow Q$ maps 
$q^{-1}(\ell)$ isomorphically onto the corresponding line $L \subset Q$. It 
follows that: 
\begin{equation*}
h\  \text{vanishes~on}\  L\  \Leftrightarrow \  p^{\ast}h\  \text{vanishes~on} 
\  q^{-1}(\ell)\  \Leftrightarrow \  s = q_{\ast}p^{\ast}h\  \text{vanishes~in} 
\  \ell\, .  
\end{equation*}
Consequently, ${\mathbb F}_{1,2}(Q) \subset {\mathbb P}(U) \times 
{\mathbb P}^{4\vee}$ can be identified with $Z \subset {\mathbb P}(U) \times 
{\mathbb P}(\text{H}^0(N_{\omega}(1)))$ consisting of the pairs $(\ell,[s])$ 
with $s(\ell) = 0$. Let $M_{\omega}$ be defined by the exact sequence: 
\begin{equation*}
0 \longrightarrow M_{\omega} \longrightarrow  
\text{H}^0(N_{\omega}(1))\otimes_{\mathbb C}{\mathcal O}_{{\mathbb P}(U)} 
\overset{\text{ev}}{\longrightarrow} N_{\omega}(1) 
\longrightarrow 0\  .
\end{equation*}
Consider the two projections: 
\begin{equation*}
\begin{CD}
{\mathbb F}_{1,2}(Q) @>q_1>> {\mathbb P}^{4\vee}\\
@Vp_1VV \\
{\mathbb P}(U)
\end{CD}
\end{equation*}
From the above discussion, ${\mathbb F}_{1,2}(Q) \simeq 
{\mathbb P}(M_{\omega})$ over ${\mathbb P}(U)$ and if $h \in 
{\mathbb P}^{4\vee}$ corresponds to $[s] \in {\mathbb P}(\text{H}^0
(N_{\omega}(1)))$ then $p_1$ maps isomorphically $q_1^{-1}(h)$ onto the scheme 
of zeroes $Z(s)$ of $s$. As it is well-known, if $h \in {\mathbb P}^{4\vee} 
\setminus Q^{\vee}$ then $Z(s)$ is the union of two disjoint lines (which 
correspond to the two rulings of $H \cap Q \simeq {\mathbb P}^1 \times 
{\mathbb P}^1$), and if $h \in Q^{\vee}$ then $Z(s)$ is a double structure on 
a line (in this case, $H \cap Q$ is a quadratic cone).

\section{Some auxiliary results on the quadric surface}
\label{S:auxq2} 

Let $Q_2 \subset {\mathbb P}^3$ be a smooth quadric, $Q_2 \simeq {\mathbb P}^1 
\times {\mathbb P}^1$, and let $p_1,p_2 : Q_2 \rightarrow {\mathbb P}^1$ be 
the projections. For $a,b \in {\mathbb Z}$, we put ${\mathcal O}_{Q_2}(a,b) 
:= p_1^{\ast}{\mathcal O}_{{\mathbb P}^1}(a) \otimes 
p_2^{\ast}{\mathcal O}_{{\mathbb P}^1}(b)$. Throughout this section, $F$ will 
denote a rank 2 vector bundle on $Q_2$, with $\text{det}\, F \simeq 
{\mathcal O}_{Q_2}(c_1,c_1)$, $c_1 = 0$ or $-1$, and $c_2(F) = c_2 \in 
{\mathbb Z}$. We shall denote by $\overline {\mathcal S}$ the direct sum 
${\mathcal O}_{Q_2}(-1,0) \oplus {\mathcal O}_{Q_2}(0,-1)$ (if one views $Q_2$ 
as a hyperplane section of a quadric threefold $Q_3 \subset {\mathbb P}^4$ 
then, according to (\ref{spinorresthyp}), ${\overline {\mathcal S}} \simeq 
{\mathcal S}\vert_{Q_2}$). 

\begin{dfn}\label{D:gm}
We say that a vector bundle $F$ as above (i.e., $\text{rk}(F) = 2$ and 
$\text{det}\, F \simeq {\mathcal O}_{Q_2}(c_1,c_1)$, $c_1 = 0$ or $-1$)    
satisfies the {\it Grauert-M\"{u}lich property} if, for any 
general line $L$ on each of the two rulings of $Q_2$, $F\vert_L \simeq 
{\mathcal O}_L \oplus {\mathcal O}_L(c_1)$. 
\end{dfn}

\begin{lem}\label{L:gms}
Assume that $F$ satisfies the Grauert-M\"{u}lich property. Then: 

\emph{(i)} $F$ is stable if and only if ${\fam0 H}^0(F) = 0$ in the case 
$c_1 = 0$, and if and only if ${\fam0 Hom}({\overline {\mathcal S}},F) = 0$ 
in the case $c_1 = -1$. 

\emph{(ii)} If $F$ is semistable but not stable and $c_2 > -c_1$ then 
${\fam 0 h}^0(F) = 1$ in the case $c_1 = 0$, and 
${\fam0 hom}({\overline {\mathcal S}},F) = 1$ in the case $c_1 = -1$.
\end{lem}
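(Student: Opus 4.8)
The plan is to work on the two rulings of $Q_2$ separately and use the Grauert--M\"ulich property together with the standard restriction-to-a-ruling argument for bundles on $\mathbb{P}^1\times\mathbb{P}^1$. First, recall that a rank $2$ bundle $F$ on $Q_2$ with $\det F\simeq\mathcal{O}_{Q_2}(c_1,c_1)$ is destabilized exactly by a saturated line subbundle $\mathcal{O}_{Q_2}(a,b)\hookrightarrow F$ with $(a,b)$ of slope $\geq c_1$ (w.r.t. $\mathcal{O}_{Q_2}(1,1)$), i.e. with $a+b\geq c_1$; semistable-but-not-stable means the maximal destabilizing sub has $a+b=c_1$, and by the symmetry $\det F=\mathcal{O}(c_1,c_1)$ this forces (for $c_1=0$) a subbundle $\mathcal{O}_{Q_2}\hookrightarrow F$ and (for $c_1=-1$) a subbundle of the form $\mathcal{O}(0,-1)$ or $\mathcal{O}(-1,0)$ into $F$. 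The point of the Grauert--M\"ulich hypothesis is to rule out all line subbundles of slope strictly bigger than $c_1$: if $F\vert_L\simeq\mathcal{O}_L\oplus\mathcal{O}_L(c_1)$ for a general line $L$ in each ruling, then any saturated line subbundle $\mathcal{O}(a,b)\subset F$ must restrict to a subsheaf of $\mathcal{O}_L\oplus\mathcal{O}_L(c_1)$ on a general $L$ in each ruling, which bounds $a$ and $b$ each from above by $0$ (resp. by $0$ in the other ruling); combined with $a+b\geq c_1$ this pins the possibilities down to $(0,0)$, and in the $c_1=-1$ case to $(0,-1)$ or $(-1,0)$.

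For part (i): in the case $c_1=0$, a destabilizing (or semistable-destabilizing) subbundle is then $\mathcal{O}_{Q_2}\hookrightarrow F$, i.e. a nonzero section, so $F$ is stable iff $\mathrm{H}^0(F)=0$; conversely a nonzero section of a Grauert--M\"ulich bundle automatically has its image a \emph{saturated} $\mathcal{O}_{Q_2}$ (if the image were $\mathcal{I}_Z(a,b)$ with $(a,b)\neq(0,0)$ then $a+b\geq 1$ forces $a\geq 1$ or $b\geq 1$, contradicting Grauert--M\"ulich on one of the rulings), so $F$ is strictly semistable, not stable. In the case $c_1=-1$, by the same reasoning the only possible destabilizing subbundles are $\mathcal{O}(0,-1)$ or $\mathcal{O}(-1,0)$, each of which gives a nonzero element of $\mathrm{Hom}(\overline{\mathcal{S}},F)=\mathrm{Hom}(\mathcal{O}(-1,0),F)\oplus\mathrm{Hom}(\mathcal{O}(0,-1),F)$ (again one checks the image is saturated using Grauert--M\"ulich), and conversely any nonzero map $\overline{\mathcal{S}}\to F$ restricted to one summand gives such a subbundle; hence $F$ stable $\iff$ $\mathrm{Hom}(\overline{\mathcal{S}},F)=0$.

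For part (ii): assume $F$ is semistable but not stable and $c_2>-c_1$. In the case $c_1=0$, there is a section $\mathcal{O}_{Q_2}\hookrightarrow F$ with locally free quotient $\mathcal{I}_{Z}(0,0)$... more precisely $F$ sits in $0\to\mathcal{O}_{Q_2}\to F\to\mathcal{I}_{Z}\to 0$ with $Z$ finite of length $c_2$; I would compute $\mathrm{h}^0(F)$ from the long exact sequence, getting $\mathrm{h}^0(F)=1+\mathrm{h}^0(\mathcal{I}_Z)=1$ (since $Z\neq\varnothing$ as $c_2>0$, so $\mathrm{h}^0(\mathcal{I}_Z)=0$) — but I must also show there is no \emph{second} independent section, which follows because a second section independent of the first would give a rank $\geq 2$ subsheaf $\mathcal{O}_{Q_2}^{\oplus 2}\subset F$, forcing $c_2\leq 0$, a contradiction; alternatively, two independent sections would have proportional (hence both saturating to the same) image or span a rank-2 subsheaf, and a short case analysis excludes this. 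In the case $c_1=-1$, there is an injection $\mathcal{O}(0,-1)\hookrightarrow F$ (say) with $F/\mathcal{O}(0,-1)\simeq\mathcal{I}_Z(-1,0)$, $Z$ of length $c_2-1\geq 0$... wait, $c_2(F)=c_2$, and with this sub one gets $\mathrm{length}(Z)=c_2$; again $c_2>1$ hmm — the hypothesis is $c_2>-c_1=1$, so $\mathrm{length}(Z)=c_2-1>0$, hence $\mathrm{Hom}(\mathcal{O}(-1,0),F)=\mathrm{H}^0(F(1,0))$ and $\mathrm{Hom}(\mathcal{O}(0,-1),F)=\mathrm{H}^0(F(0,1))$ are computed from the two extension sequences; one of them has dimension $1$ and the other $0$ (the "other ruling" direction is killed because the corresponding restriction would violate Grauert--M\"ulich or would force $c_2$ too small), giving $\mathrm{hom}(\overline{\mathcal{S}},F)=1$.

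The main obstacle I anticipate is part (ii), specifically proving the \emph{exact} count (that $\mathrm{h}^0$, resp. $\mathrm{hom}(\overline{\mathcal{S}},\,\cdot\,)$, is exactly $1$ and not $\geq 2$): the lower bound $\geq 1$ is immediate from "semistable but not stable", but the upper bound requires ruling out a second independent destabilizing map, which is where the hypothesis $c_2>-c_1$ is used to force the relevant length-$c_2$ (resp. length-$(c_2-1)$) zero-scheme to be nonempty and to prevent $F$ from containing $\mathcal{O}_{Q_2}^{\oplus 2}$ or $\overline{\mathcal{S}}$ as a subsheaf. Everything else is bookkeeping with the Grauert--M\"ulich restriction to the two rulings and the Euler-characteristic/long-exact-sequence computation on $Q_2$.
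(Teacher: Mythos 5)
Your proposal is correct and follows essentially the same route as the paper: use the Grauert--M\"ulich hypothesis on general lines of both rulings to force any saturated line subbundle $\mathcal{O}_{Q_2}(a,b)\subset F$ to have $a\le 0$ and $b\le 0$, hence pin down the possible destabilizing subsheaves to $\mathcal{O}_{Q_2}$ (resp.\ $\mathcal{O}_{Q_2}(-1,0)$ or $\mathcal{O}_{Q_2}(0,-1)$), and then in (ii) use $c_2>-c_1$ to force the zero scheme $Z$ in the resulting extension to be nonempty, so that the long exact cohomology sequences give the exact count $1$. Your worry about a second independent section in (ii) is already resolved by the same exact sequence ($\mathrm{h}^0(F)\le 1+\mathrm{h}^0(\mathcal{I}_Z)=1$, and in the $c_1=-1$ case $\mathrm{H}^0(F(0,1))$ is squeezed between $\mathrm{H}^0(\mathcal{O}(-1,1))=0$ and $\mathrm{H}^0(\mathcal{I}_Z)=0$), and your count $\deg Z=c_2-1$ (after your self-correction) is the right one.
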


\begin{proof}
Assume that $F$ contains a saturated subsheaf (see Def.~\ref{D:satsubsheaf}) 
of the form ${\mathcal O}_{Q_2}(a,b)$. Then one has an exact sequence: 
\begin{equation*}
0 \longrightarrow {\mathcal O}_{Q_2}(a,b) \longrightarrow F \longrightarrow 
{\mathcal I}_Z(c_1-a,c_1-b)\longrightarrow 0
\end{equation*}
where $Z$ is a 0-dimensional subscheme of $Q_2$. Restricting this exact 
sequence to a general line, avoiding $Z$, on each of the two rulings of $Q_2$ 
and using the Grauert-M\"{u}lich property, one derives that $a \leq 0$ and 
$b \leq 0$. By definition, $F$ is (semi)stable if and only if, for each 
saturated subsheaf of $F$ of the form ${\mathcal O}_{Q_2}(a,b)$, one has 
$a+b\  (\leq)< c_1$. If $a \leq 0$ and $b \leq 0$ and $a+b \geq c_1$ then 
$(a,b) = (0,0)$ in the case $c_1 = 0$, and $(a,b) = (-1,0)$ or $(0,-1)$ 
in the case $c_1 = -1$. 

(i) is, now, clear.

(ii) If $F$ is semistable but not stable then, in the case $c_1 = 0$, it can 
be realized as an extension:
\begin{equation*}
0 \longrightarrow {\mathcal O}_{Q_2} \longrightarrow F \longrightarrow 
{\mathcal I}_Z \longrightarrow 0
\end{equation*}
with $\text{deg}\, Z = c_2$, and, in the case $c_1 = -1$, it can be realized 
as an extension of one of the following two types: 
\begin{gather*}
0 \longrightarrow {\mathcal O}_{Q_2}(-1,0) \longrightarrow F \longrightarrow 
{\mathcal I}_Z(0,-1) \longrightarrow 0\\
0 \longrightarrow {\mathcal O}_{Q_2}(0,-1) \longrightarrow F \longrightarrow 
{\mathcal I}_Z(-1,0) \longrightarrow 0
\end{gather*}
with $\text{deg}\, Z = c_2-1$. The condition $c_2 > -c_1$ implies that $Z \neq 
\emptyset$, and assertion (ii) follows.
\end{proof}   

\begin{lem}\label{L:jump}
Assume that $F$ is semistable and let $L \subset Q_2$ be a line. Assume, to 
fix the ideas, that $L$ belongs to the linear system 
$\vert \, {\mathcal O}_{Q_2}(1,0)\, \vert$. 
Let $a$ be the nonnegative integer  
such that $F\vert_L \simeq {\mathcal O}_L(a) \oplus 
{\mathcal O}_L(-a+c_1)$. 

Then $a \leq c_2+c_1$ and if $a = c_2+c_1$ then there exists a $0$-dimensional 
subscheme $Z$ of $L$ such that $F$ can be realized as an extension: 
\begin{equation*}
0 \longrightarrow {\mathcal O}_{Q_2} \longrightarrow F \longrightarrow 
{\mathcal I}_Z \longrightarrow 0
\end{equation*} 
in the case $c_1 = 0$, and as an extension:
\begin{equation*}
0 \longrightarrow {\mathcal O}_{Q_2}(-1,0) \longrightarrow F \longrightarrow 
{\mathcal I}_Z(0,-1) \longrightarrow 0
\end{equation*}
in the case $c_1 = -1$. In particular, $F$ is not stable in this case. 
\end{lem}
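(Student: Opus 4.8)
The plan is to extract both conclusions from two restriction-to-$L$ exact sequences, Riemann--Roch on $Q_2$, and the vanishing of $\mathrm{H}^0$ on negative-slope twists of $F$ forced by semistability; slopes are always taken with respect to ${\mathcal O}_{Q_2}(1,1)$, so that $\mu(F)=c_1$. The preliminary ingredients I would record are: since $L\in|{\mathcal O}_{Q_2}(1,0)|$ one has ${\mathcal I}_L\simeq{\mathcal O}_{Q_2}(-1,0)$ and ${\mathcal O}_{Q_2}(m,n)|_L\simeq{\mathcal O}_L(n)$; semistability forces $\mathrm{H}^0(F(0,-1))=\mathrm{H}^0(F(-1,-1))=\mathrm{H}^0(F(-1,0))=0$, and also $\mathrm{H}^0(F)=0$ when $c_1=-1$; and, using $F^{\vee}\simeq F(-c_1,-c_1)$, Serre duality on $Q_2$ turns each $\mathrm{H}^2$ occurring below into the $\mathrm{H}^0$ of a negative-slope twist of $F$, hence into $0$. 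Combining these with Riemann--Roch gives the numerical facts $\mathrm{h}^1(F(-1,-1))=c_2+c_1$, $\mathrm{h}^1(F(-1,0))=c_2$ (for $c_1=0$), and $\mathrm{h}^1(F)=c_2-1$ (for $c_1=-1$).

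For the inequality $a\le c_2+c_1$, I would restrict to $L$ the sequence $0\to F(-1,-1)\to F(0,-1)\to F(0,-1)|_L\to 0$ (note $F(0,-1)\otimes{\mathcal I}_L=F(-1,-1)$). Since $\mathrm{H}^0(F(0,-1))=0$, a fortiori $\mathrm{H}^0(F(-1,-1))=0$, so the connecting map embeds $\mathrm{H}^0(F(0,-1)|_L)$ into $\mathrm{H}^1(F(-1,-1))$. As $F(0,-1)|_L\simeq{\mathcal O}_L(a-1)\oplus{\mathcal O}_L(-a+c_1-1)$ has $\mathrm{h}^0=a$ when $a\ge1$ (and nothing is to be proved when $a=0$, since $\mathrm{h}^1(F(-1,-1))\ge0$), one gets $a\le\mathrm{h}^1(F(-1,-1))=c_2+c_1$.

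Now assume $a=c_2+c_1$; if $a=0$ then $\deg Z=0$ below forces $Z=\varnothing$, so I may assume $a\ge1$. Put $G:=F$ if $c_1=0$ and $G:=F(1,0)$ if $c_1=-1$, so that $\mu(G)=0$ and a nonzero section of $G$ is exactly an embedding ${\mathcal O}_{Q_2}\hookrightarrow F$, resp. ${\mathcal O}_{Q_2}(-1,0)\hookrightarrow F$. Restricting $0\to G(-1,0)\to G\to G|_L\to 0$ to $L$, where $G|_L\simeq F|_L$ and $G(-1,0)$ equals $F(-1,0)$ resp. $F$, of negative slope (so $\mathrm{H}^0(G(-1,0))=0$), the long exact sequence gives $\mathrm{h}^0(G)\ge\mathrm{h}^0(F|_L)-\mathrm{h}^1(G(-1,0))=(a+1)-\mathrm{h}^1(G(-1,0))=1$, the final value coming from $a=c_2+c_1$ and the values of $\mathrm{h}^1$ recorded above, in both cases. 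Choose $0\ne s\in\mathrm{H}^0(G)$. Since $\mu(G)=0$ and $G$ is semistable, $\mathrm{H}^0(G(-D))=0$ for every nonzero effective $D$, so $Z:=Z(s)$ is $0$-dimensional (or empty) and one has $0\to{\mathcal O}_{Q_2}\to G\to{\mathcal I}_Z\otimes\det G\to 0$ with $\deg Z=c_2(G)=c_2+c_1=a$. Also $s|_L\ne0$ (otherwise $s\in\mathrm{H}^0(G(-1,0))=0$); since $\mathrm{H}^0(F|_L)=\mathrm{H}^0({\mathcal O}_L(a))$, the zero divisor of $s|_L$ on $L$ has degree $a$, and it equals $Z\cap L$, a closed subscheme of the finite scheme $Z$ of the same length $a=\deg Z$, whence $Z\subseteq L$. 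Untwisting by ${\mathcal O}_{Q_2}(-1,0)$ in the case $c_1=-1$ gives precisely the extensions in the statement; and ${\mathcal O}_{Q_2}$, resp. ${\mathcal O}_{Q_2}(-1,0)$, being a sub-line-bundle of $F$ of the same slope as $F$, shows $F$ is not stable.

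The only step I expect to require real care is the Chern-class and slope bookkeeping, which must come out so that the lower bound for $\mathrm{h}^0(G)$ is exactly $1$ in each case, together with the final length comparison promoting $Z\cap L=Z$ to $Z\subseteq L$; everything else is formal. One could instead deduce $a\le c_2+c_1$ from the Bogomolov inequality applied to the elementary modification of $F$ along $L$ with quotient ${\mathcal O}_L(-a+c_1)$, but handling the possibility that this sheaf is unstable is less transparent than the cohomological argument above, which moreover yields the boundary case at no extra cost.
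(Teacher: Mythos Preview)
Your argument is correct and follows essentially the same cohomological route as the paper: restrict to $L$, use semistability to kill the relevant $\mathrm{H}^0$'s and Serre duality to kill the $\mathrm{H}^2$'s, then read off the bound from Riemann--Roch and, in the boundary case, produce a section of $G$ whose zero scheme is forced into $L$ by a length count. The one small difference is that for the inequality you twist by $(0,-1)$ so that $\mathrm{H}^0(F(0,-1))=0$ directly and both cases $c_1=0,-1$ are handled uniformly, whereas the paper works with $G=F$ or $F(1,0)$ throughout and first proves $\mathrm{h}^0(G)\le 1$ before invoking the same restriction sequence; your variant is a mild streamlining rather than a different method.
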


\begin{proof}
Assume, firstly, that $c_1 = 0$. In this case, $\text{h}^0(F) \leq 1$, 
unless $F \simeq {\mathcal O}_{Q_2}^{\oplus 2}$. {\it Indeed}, since $F$ is 
semistable, $\text{H}^0(F(b,c)) = 0$ if $b + c < 0$. It follows that the 
zero scheme $Z$ of a non-zero global section $s$ of $F$ has no divisorial 
components, i.e., it is 0-dimensional, hence $F$ can be realized as an 
extension: 
\begin{equation}
\label{extf0}
0 \longrightarrow {\mathcal O}_{Q_2} 
\overset{s}{\longrightarrow} F \longrightarrow 
{\mathcal I}_Z \longrightarrow 0\, .
\end{equation}
In particular, $\text{deg}\, Z = c_2$. One derives that $\text{h}^0(F) \leq 1$ 
unless $Z = \emptyset$ in which case $F \simeq {\mathcal O}_{Q_2}^{\oplus 2}$. 

Assume, now, that $F$ is not trivial. Tensorizing by $F$ the short exact 
sequence:
\begin{equation}
\label{resolol}
0 \longrightarrow {\mathcal O}_{Q_2}(-1,0) \longrightarrow {\mathcal O}_{Q_2} 
\longrightarrow {\mathcal O}_L \longrightarrow 0
\end{equation}
one gets an exact sequence:
\begin{equation*}
0 = \text{H}^0(F(-1,0)) \longrightarrow \text{H}^0(F) \longrightarrow 
\text{H}^0(F\vert_L) \longrightarrow \text{H}^1(F(-1,0))
\end{equation*}
from which one deduces that:
\begin{equation*}
a + 1 = \text{h}^0(F\vert_L) \leq \text{h}^0(F) + \text{h}^1(F(-1,0))\, . 
\end{equation*}
But, by semistability, $\text{H}^0(F(-1,0)) = 0$ and 
\begin{equation*}
\text{H}^2(F(-1,0)) 
\simeq \text{H}^0(F^{\ast}(-1,-2))^{\ast} \simeq \text{H}^0(F(-1,-2))^{\ast} 
= 0\, ,
\end{equation*} 
hence $\text{h}^1(F(-1,0)) = - \chi (F(-1,0)) = c_2$ (one may use the 
exact sequence (\ref{extf0}) tensorized by ${\mathcal O}_{Q_2}(-1,0)$ to 
guess the Riemann-Roch formula in this case). One deduces that $a+1 \leq 
1+c_2$, hence $a \leq c_2$. If $a = c_2$, one must have 
$\text{h}^0(F) = 1$, and $F$ can be realized as an extension (\ref{extf0}). 
One has $s\vert_L \in \text{H}^0(F\vert_L) = 
\text{H}^0({\mathcal O}_L(c_2) \oplus {\mathcal O}_L(-c_2))$, hence 
$\text{deg}(Z\cap L) = c_2$. Since $\text{deg}\, Z = c_2$, it follows that 
$Z \subset L$ as schemes.

Consider, now, the case $c_1 = -1$. $F$ being semistable, 
$\text{H}^0(F(b,c)) = 0$ if $b + c \leq 0$. We assert, firstly, that 
$\text{h}^0(F(1,0)) \leq 1$. {\it Indeed}, as in the case $c_1 = 0$, if 
$F(1,0)$ has a non-zero global section $s$, then the zero scheme $Z$ of $s$ 
is 0-dimensional and $F$ can be realized as an extension:
\begin{equation}
\label{extf-1}
0 \longrightarrow {\mathcal O}_{Q_2}(-1,0) 
\overset{s}{\longrightarrow} F \longrightarrow 
{\mathcal I}_Z(0,-1) \longrightarrow 0\, .
\end{equation}  
In particular, $\text{deg}\, Z = c_2 - 1$. One derives that 
$\text{h}^0(F(1,0)) \leq 1$. 
  
Now, tensorizing by $F(1,0)$ the exact sequence (\ref{resolol}), one gets an 
exact sequence:
\begin{equation*}
0 = \text{H}^0(F) \longrightarrow \text{H}^0(F(1,0)) \longrightarrow 
\text{H}^0(F(1,0)\vert_L) \longrightarrow \text{H}^1(F)\, .
\end{equation*}
Since ${\mathcal O}_{Q_2}(1,0)\vert_L \simeq {\mathcal O}_L$, 
$F(1,0)\vert_L \simeq F\vert_L$. One deduces that:
\begin{equation*}
a+1 = \text{h}^0(F\vert_L) = \text{h}^0(F(1,0)\vert_L) \leq 
\text{h}^0(F(1,0)) + \text{h}^1(F)\, .
\end{equation*}
But, by semistability, $\text{H}^0(F) = 0$ and 
$\text{H}^2(F) \simeq \text{H}^0(F^{\ast}(-2,-2))^{\ast} \simeq 
\text{H}^0(F(-1,-1))^{\ast} = 0$, hence $\text{h}^1(F) = - \chi (F) = 
c_2 - 1$. One deduces that $a + 1 \leq 1 + (c_2 - 1)$, hence $a \leq c_2 - 1$. 
If $a = c_2 - 1$ then $\text{h}^0(F(1,0)) = 1$ and $F$ can be realized as an 
extension (\ref{extf-1}). $s\vert_L \in \text{H}^0(F(1,0)\vert_L) 
= \text{H}^0(F\vert_L) = \text{H}^0({\mathcal O}_L(c_2-1)\oplus 
{\mathcal O}_L(-c_2))$ hence $\text{deg}(Z\cap L) = c_2 - 1$. Since 
$\text{deg}\, Z = c_2 - 1$, it follows that $Z \subset L$ as schemes. 
\end{proof}

\begin{corol}\label{C:jump1}
Assume that $F$ is semistable and that $c_2 > -c_1$. If there is a line 
$L \subset Q_2$ such that $F\vert_L \simeq {\mathcal O}_L(c_2 + c_1) 
\oplus {\mathcal O}_L(-c_2)$ then, for any other line $L^{\prime} \subset 
Q_2$, one has $F\vert_{L^{\prime}} \simeq 
{\mathcal O}_{L^{\prime}}(a^{\prime})\oplus {\mathcal O}_{L^{\prime}}(-a^{\prime} 
+ c_1)$ with $a^{\prime} = 0$ or $1$.
\end{corol}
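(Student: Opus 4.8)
The plan is to derive everything from Lemma~\ref{L:jump} together with the structural description it provides in the extremal case. Suppose $L \subset Q_2$ is a line with $F\vert_L \simeq {\mathcal O}_L(c_2+c_1) \oplus {\mathcal O}_L(-c_2)$; we may assume, to fix the ideas, that $L \in \vert {\mathcal O}_{Q_2}(1,0)\vert$. By Lemma~\ref{L:jump}, $F$ can be realized as an extension
\begin{equation*}
0 \longrightarrow {\mathcal O}_{Q_2} \longrightarrow F \longrightarrow {\mathcal I}_Z \longrightarrow 0
\end{equation*}
in the case $c_1 = 0$, and as an extension
\begin{equation*}
0 \longrightarrow {\mathcal O}_{Q_2}(-1,0) \longrightarrow F \longrightarrow {\mathcal I}_Z(0,-1) \longrightarrow 0
\end{equation*}
in the case $c_1 = -1$, where in both cases $Z$ is a $0$-dimensional subscheme of $L$ of degree $c_2$ (resp.\ $c_2-1$), and in particular $Z \neq \emptyset$ since $c_2 > -c_1$.

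Next I would take an arbitrary line $L' \subset Q_2$ and restrict the displayed extension to $L'$. There are two cases according to the ruling of $L'$. If $L'$ is in the same ruling as $L$, i.e.\ $L' \in \vert{\mathcal O}_{Q_2}(1,0)\vert$, then (for $L' \neq L$) $L'$ is disjoint from $Z \subset L$, the restricted sequence stays exact, and one reads off $F\vert_{L'} \simeq {\mathcal O}_{L'} \oplus {\mathcal O}_{L'}(c_1)$ directly (using ${\mathcal O}_{Q_2}(1,0)\vert_{L'} \simeq {\mathcal O}_{L'}$, ${\mathcal O}_{Q_2}(0,-1)\vert_{L'}\simeq {\mathcal O}_{L'}(-1)$, and that ${\mathcal I}_{Z,Q_2}\vert_{L'} = {\mathcal O}_{L'}$), so $a' = 0$. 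If $L'$ is in the other ruling, $L' \in \vert{\mathcal O}_{Q_2}(0,1)\vert$, then $L'$ meets $L$ in a single point $x$, and ${\mathcal I}_{Z,Q_2}\otimes{\mathcal O}_{L'}$ has torsion supported at $x$ exactly when $x \in Z$; in any event $\mathop{\mathrm{Tor}}_1({\mathcal O}_Z,{\mathcal O}_{L'})$ has length $\leq 1$. Restricting the extension to $L'$ and accounting for this torsion term, one finds $F\vert_{L'}$ fits in a sequence whose sub is ${\mathcal O}_{L'}$ (resp.\ ${\mathcal O}_{L'}(-1)$) and whose quotient is ${\mathcal O}_{L'}$ (resp.\ ${\mathcal O}_{L'}(-1)$) twisted by a torsion sheaf of length $\leq 1$ modulo which it is locally free; chasing degrees gives $F\vert_{L'} \simeq {\mathcal O}_{L'}(a')\oplus{\mathcal O}_{L'}(-a'+c_1)$ with $a' \leq 1$.

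The point I expect to require the most care is the torsion bookkeeping in the second case: restricting ${\mathcal I}_{Z,Q_2}(c_1,c_1-1)$ (or the $c_1=0$ analogue) to $L'$ is not left-exact, and I must argue that the extra torsion picked up from $\mathop{\mathrm{Tor}}_1$ has length at most one — which follows because $Z \subset L$ and $L \cap L'$ is a reduced point, so $Z$ and $L'$ meet in a subscheme of length $\leq 1$ — and then show this forces the jump $a'$ up by at most one relative to the generic splitting type. Alternatively, and perhaps more cleanly, I would avoid Tor computations altogether: since $F$ is semistable and $L'$ is arbitrary, Lemma~\ref{L:jump} applied to $L'$ (in its ruling) already bounds the jump by $c_2 + c_1$, so it suffices to rule out $2 \leq a' \leq c_2+c_1$. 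But if $a'\geq 2$, then applying the extremal case of Lemma~\ref{L:jump} to $L'$ would force, by symmetry, a description of $F$ as an extension with a sub of the form ${\mathcal O}_{Q_2}$ or ${\mathcal O}_{Q_2}(0,-1)$ whose section vanishes on a scheme contained in $L'$; comparing the two extension structures (on $L$ and on $L'$) — both give $\mathrm{h}^0$ of the appropriate twist equal to $1$ — pins down the unique section and hence the unique maximal destabilizing sub, forcing its zero locus to lie in $L \cap L'$, a scheme of length $\leq 1$, whence $\deg Z \leq 1$, i.e.\ $c_2 + c_1 \leq 1$ and so $a' \leq 1$. I would present whichever of these two arguments turns out shorter; the direct restriction argument is more elementary, while the comparison-of-sections argument sidesteps the Tor subtlety entirely.
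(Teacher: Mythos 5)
Your main argument is correct and is essentially the proof in the paper: both rest on the extremal case of Lemma~\ref{L:jump}, which places the zero scheme $Z$ of the distinguished section $s$ of $F$ (resp.\ $F(1,0)$) inside $L$, so that $Z\cap L'$ has degree $0$ or $\leq 1$ according to the ruling of $L'$. The paper avoids your Tor bookkeeping entirely by noting that the zero scheme of $s\vert_{L'}$ is exactly $L'\cap Z$, and that a non-zero section of ${\mathcal O}_{L'}(a')\oplus{\mathcal O}_{L'}(-a'+c_1)$ with $a'$ too large must lie in the positive summand and hence vanish on a scheme of degree $\geq 2$. Two caveats on your write-up. First, the concluding ``chasing degrees'' step of your direct argument is not automatic: knowing that $F\vert_{L'}$ has a saturated line subsheaf of degree $t$ and a locally free quotient of degree $-d$ only gives \emph{lower} bounds on $a'$ (e.g.\ ${\mathcal O}_{L'}(1)\oplus{\mathcal O}_{L'}(-1)$ admits saturated subsheaves of arbitrarily negative degree). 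You must add that $\text{Ext}^1({\mathcal O}_{L'}(-d),{\mathcal O}_{L'}(t))=\text{H}^1({\mathcal O}_{L'}(t+d))$ vanishes here (one checks $t+d=2d+c_1\geq -1$), so the restricted extension splits and the splitting type is exactly $(t,-d)$ --- or simply use the zero-locus argument above. Second, your fallback ``comparison of sections'' argument is not sound as stated: the structural conclusion of Lemma~\ref{L:jump} is available only when the jump on $L'$ equals the extremal value $c_2+c_1$, not merely when $a'\geq 2$, so it cannot be ``applied to $L'$'' to exclude intermediate values of $a'$. Neither issue affects the validity of your primary argument once the splitting observation is supplied.
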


\begin{proof}
If $L^{\prime}$ and $L$ are on the same ruling of $Q_2$ then $L^{\prime}\cap Z 
= \emptyset$ and if they are on different rulings then $\text{deg}(L^{\prime} 
\cap Z) \leq 1$. If $s$ is the global section of $F$ (resp., $F(1,0)$) 
defining the exact sequence from the last part of the statement of 
Lemma~\ref{L:jump} then the zero scheme of $s\vert_{L^{\prime}}$ is 
$L^{\prime}\cap Z$.  
\end{proof} 

\begin{corol}\label{C:jump2}
Assume that $F$ is semistable. Then there 
exists a line $L \subset Q_2$ such that 
$F\vert_L \simeq {\mathcal O}_L(c_2 + c_1)\oplus {\mathcal O}_L(-c_2)$ 
if and only if ${\fam0 H}^0(F) \neq 0$ and 
${\fam0 hom}({\overline {\mathcal S}},F) \geq 5$ in the case 
$c_1 = 0$, and if and only if ${\fam0 Hom}({\overline {\mathcal S}},F) 
\neq 0$ and ${\fam0 h}^0(F(1,1)) \geq 3$ in the case $c_1 = -1$.
\end{corol}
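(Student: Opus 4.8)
The plan is to combine Lemma~\ref{L:gms} (which translates stability conditions into cohomological vanishing) with Lemma~\ref{L:jump} and its corollary~\ref{C:jump1} (which describe the structure of a bundle having a maximally jumping line) and then to compute the relevant Hom-dimensions using the explicit extension presentations of a properly semistable $F$. The key observation is that the existence of a line $L$ with $F\vert_L \simeq {\mathcal O}_L(c_2+c_1)\oplus{\mathcal O}_L(-c_2)$ forces, by the last part of Lemma~\ref{L:jump}, that $F$ is \emph{not} stable, hence (since we are free to assume $F$ semistable) $F$ is properly semistable. So in each case one direction amounts to: assuming such a line exists, write $F$ as the appropriate extension with $Z\subset L$, and compute $\text{hom}({\overline{\mathcal S}},F)$ (resp.\ $\text{h}^0(F(1,1))$) to verify the stated lower bound; the other direction assumes the cohomological conditions, deduces via Lemma~\ref{L:gms}(i) that $F$ is properly semistable, and then exhibits the jumping line.

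Concretely, for $c_1 = 0$: if such an $L$ exists, Lemma~\ref{L:jump} gives $0\to{\mathcal O}_{Q_2}\to F\to{\mathcal I}_Z\to0$ with $Z\subset L$ a length-$c_2$ subscheme of a line in $|{\mathcal O}_{Q_2}(1,0)|$. Then $\text{H}^0(F)\neq0$ is immediate. To get $\text{hom}({\overline{\mathcal S}},F)\geq 5$, I would compute $\text{hom}({\overline{\mathcal S}},F) = \text{h}^0(F(1,0)) + \text{h}^0(F(0,1))$ and tensor the extension by ${\mathcal O}_{Q_2}(1,0)$ and ${\mathcal O}_{Q_2}(0,1)$: since $Z\subset L$, one gets $\text{h}^0({\mathcal I}_Z(2,0)) = 3 - \min(c_2,3)$-type bounds; the point is that ${\mathcal I}_Z(0,1)$ restricted along the ruling perpendicular to $L$ behaves like ${\mathcal O}(0,1)$ on a line disjoint from $Z$, so $\text{h}^0({\mathcal I}_Z(0,1)) = 2$ and $\text{h}^0({\mathcal I}_Z(1,0)) \geq 1$, plus $\text{h}^0({\mathcal O}_{Q_2}(1,0)) + \text{h}^0({\mathcal O}_{Q_2}(0,1)) = 2+2 = 4$; adding the section-count carefully should yield $\geq 5$. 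Conversely if $\text{H}^0(F)\neq0$ and $\text{hom}({\overline{\mathcal S}},F)\geq5$, then by Lemma~\ref{L:gms}(i) $F$ is not stable, so it is properly semistable and sits in $0\to{\mathcal O}_{Q_2}\to F\to{\mathcal I}_Z\to0$ with $\text{deg}\,Z = c_2$; the bound $\text{hom}({\overline{\mathcal S}},F)\geq5$ should force $Z$ to be contained in a line, giving (via $s\vert_L$) the jumping line.

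For $c_1 = -1$ the argument is parallel but with the roles adjusted: a jumping line gives $0\to{\mathcal O}_{Q_2}(-1,0)\to F\to{\mathcal I}_Z(0,-1)\to0$ with $Z\subset L$ of length $c_2-1$, whence $\text{Hom}({\overline{\mathcal S}},F)\neq0$ (the subbundle ${\mathcal O}_{Q_2}(-1,0)$ is a summand of $\overline{\mathcal S}$, so $\text{id}$ on it gives a nonzero map); and $\text{h}^0(F(1,1))$ is computed by tensoring the extension by ${\mathcal O}_{Q_2}(1,1)$: $\text{h}^0({\mathcal O}_{Q_2}(0,1)) = 2$ and $\text{h}^0({\mathcal I}_Z(1,0))\geq1$ (since $Z$ lies on a line of the other ruling, of length possibly large, but ${\mathcal O}(1,0)$ restricted to lines in $|{\mathcal O}(1,0)|$ is trivial and $Z$ meets each in $\leq1$ point... actually here one uses that $Z\subset L\in|{\mathcal O}(1,0)|$ so $\text{h}^0({\mathcal I}_Z(1,0)) = \text{h}^0({\mathcal I}_{Z,L}(1,0)\text{-part})$; more simply $\text{h}^0({\mathcal I}_Z(1,0))\geq \text{h}^0({\mathcal O}_{Q_2}(0,0)) = 1$ via the line containing $Z$), adding up to $\geq 2+1 = 3$. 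Conversely, $\text{Hom}({\overline{\mathcal S}},F)\neq0$ plus Lemma~\ref{L:gms}(i) forces non-stability, hence a presentation with $\text{deg}\,Z = c_2-1$, and $\text{h}^0(F(1,1))\geq3$ forces $Z$ onto a line.

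The main obstacle I anticipate is pinning down the precise cohomology of ${\mathcal I}_Z$ (and its twists) when $Z$ is a length-$c_2$ (or $c_2-1$) subscheme of a line on $Q_2$, and then tracking the long exact sequences so that the Hom-bounds come out as \emph{exactly} $\geq 5$ and $\geq 3$ rather than something weaker — in particular, handling the boundary cases (small $c_2$, or $Z$ non-reduced) and making sure the converse implication really does force $Z$ into a line, which presumably uses that $\text{hom}({\overline{\mathcal S}},F)$ is subadditive in a flag/secant-variety argument or a direct dimension count on $\text{h}^0({\mathcal I}_Z(1,0)) + \text{h}^0({\mathcal I}_Z(0,1))$. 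Everything else is bookkeeping with the two standard extension sequences and the vanishing theorems on ${\mathbb P}^1\times{\mathbb P}^1$ already used in Lemma~\ref{L:jump}.
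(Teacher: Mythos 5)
Your plan is the intended argument: the paper gives no written proof of this Corollary, treating it as immediate from Lemma~\ref{L:jump} together with the twisted extension sequences, and your converse step (the cohomological bound forces $\mathrm{h}^0({\mathcal I}_Z(1,0))+\mathrm{h}^0({\mathcal I}_Z(0,1))\geq 1$, hence $Z$ lies on a single line of one ruling, and then restricting the section to that line exhibits the jump, with Lemma~\ref{L:jump} supplying the reverse inequality $a\leq c_2+c_1$) is exactly right. Two corrections to your bookkeeping, though. First, the relevant twists are ${\mathcal I}_Z(1,0)$ and ${\mathcal I}_Z(0,1)$ (not $(2,0)$), and for a nonempty $Z\subset L$ with $L\in \vert\,{\mathcal O}_{Q_2}(1,0)\,\vert$ one has $\mathrm{h}^0({\mathcal I}_Z(1,0))=1$ (distinct members of the pencil are disjoint, so only $L$ itself can contain $Z$) and $\mathrm{h}^0({\mathcal I}_Z(0,1))=0$ as soon as $\deg Z\geq 2$ (a line of the other ruling meets $L$ in a single point); your value $2$ for the latter is false, though harmless for the inequality, since the count that matters is $\mathrm{hom}({\overline{\mathcal S}},F)=\mathrm{h}^0(F(1,0))+\mathrm{h}^0(F(0,1))=(2+1)+(2+0)=5$ --- and to get this you must record that $\mathrm{H}^1({\mathcal O}_{Q_2}(1,0))=\mathrm{H}^1({\mathcal O}_{Q_2}(0,1))=0$, so that the twisted extension sequences are exact on global sections and the section of ${\mathcal I}_Z(1,0)$ cutting out $L$ actually lifts to $F(1,0)$; without that vanishing you only obtain $\geq 4$. (The same remark gives $\mathrm{h}^0(F(1,1))=2+\mathrm{h}^0({\mathcal I}_Z(1,0))\geq 3$ in the case $c_1=-1$.) Second, Lemma~\ref{L:gms}(i) assumes the Grauert--M\"ulich property, which is not a hypothesis of the Corollary; what you actually need for the converse --- that $\mathrm{H}^0(F)\neq 0$, resp.\ $\mathrm{Hom}({\overline{\mathcal S}},F)\neq 0$, together with semistability yields the extension presentation with $Z$ zero-dimensional of degree $c_2$, resp.\ $c_2-1$ --- is precisely the opening step of the proof of Lemma~\ref{L:jump}, so cite that instead. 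With these repairs the argument closes.
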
 

\begin{rmk}\label{R:condjump}
Assume that $F$ is semistable. One can show that, in the case $c_1 = 0$, the 
condition ${\fam0 hom}({\overline {\mathcal S}},F) \geq 5$ implies that 
${\fam0 H}^0(F) \neq 0$ and, in the case $c_1 = -1$ and $c_2 > 2$, the 
condition ${\fam0 h}^0(F(1,1)) \geq 3$ implies that 
${\fam0 Hom}({\overline {\mathcal S}},F) \neq 0$. 

On the other hand, let $\mathcal F$ be the rank 2 reflexive sheaf on 
${\mathbb P}^3$ considered in the statement of Lemma~\ref{L:efrestrq2} below. 
Assume that $x \notin Q_2$ and let $F := {\mathcal F}\vert_{Q_2}$. Then 
$\text{det}\, F \simeq {\mathcal O}_{Q_2}(-1,-1)$, $c_2(F) = 2$, 
$\text{h}^0(F(1,1)) = 3$ but, for every line $L \subset Q_2$, 
$F\vert_L \simeq {\mathcal O}_L \oplus {\mathcal O}_L(-1)$.  
\end{rmk} 

\begin{lem}\label{L:gieseker}
If $F$ is Gieseker-Maruyama stable then it is stable. 
\end{lem}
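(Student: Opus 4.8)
The plan is to prove the contrapositive, exploiting the fact that the only implication at issue is ``Gieseker--Maruyama stable $\Rightarrow$ $\mu$-stable'' (the reverse being a general fact). So I would \emph{assume that $F$ is not $\mu$-stable} with respect to ${\mathcal O}_{Q_2}(1,1)$ and show that it cannot be Gieseker--Maruyama stable.

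First I record the elementary numerics on $Q_2 \simeq {\mathbb P}^1 \times {\mathbb P}^1$: with polarization $H = {\mathcal O}_{Q_2}(1,1)$ one has $\deg {\mathcal O}_{Q_2}(a,b) = a+b$ and $\chi({\mathcal O}_{Q_2}(a,b)) = (a+1)(b+1)$, so in particular $\mu(F) = c_1$. If $F$ is not $\mu$-stable it contains a rank $1$ subsheaf of slope $\geq c_1$; passing to its saturation, which is a reflexive rank $1$ sheaf on the smooth surface $Q_2$ and hence a line bundle $L \simeq {\mathcal O}_{Q_2}(a,b)$ with $a+b = \mu(L) \geq c_1$, I obtain a short exact sequence
\[
0 \longrightarrow {\mathcal O}_{Q_2}(a,b) \longrightarrow F \longrightarrow {\mathcal I}_Z(c_1-a,\,c_1-b) \longrightarrow 0
\]
in which $Z \subset Q_2$ is $0$-dimensional (possibly empty), so $\deg Z \geq 0$.

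Next I twist this sequence by ${\mathcal O}_{Q_2}(m,m)$ and take Euler characteristics, using $\chi({\mathcal I}_Z(p,q)) = (p+1)(q+1) - \deg Z$. Writing $p_F(m) := \tfrac{1}{2}\chi(F\otimes {\mathcal O}_{Q_2}(m,m))$ and $p_L(m) := \chi({\mathcal O}_{Q_2}(a+m,\,b+m))$ for the reduced Hilbert polynomials, a direct computation (the only slightly nonobvious identity being $ab - (c_1-a)(c_1-b) = c_1(a+b-c_1)$) yields, with $\delta := a+b-c_1 = \mu(L)-\mu(F) \geq 0$,
\[
p_L(m) - p_F(m) = \delta\,(m+1) + \tfrac{1}{2}\bigl(c_1\,\delta + \deg Z\bigr).
\]
If $\delta > 0$ the leading coefficient is positive, so $p_L(m) > p_F(m)$ for $m \gg 0$; if $\delta = 0$ then $p_L(m) - p_F(m) = \tfrac{1}{2}\deg Z \geq 0$ for all $m$. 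In either case the strict Gieseker inequality $p_L(m) < p_F(m)$ for $m \gg 0$ fails, so $F$ is not Gieseker--Maruyama stable, which proves the lemma.

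There is no genuine obstacle here: the argument reduces to a one-line Hilbert-polynomial estimate. The only steps deserving a word of justification are that the saturation of the destabilizing subsheaf is a line bundle (a reflexive rank $1$ sheaf on a smooth surface is locally free) and that the corresponding quotient has the stated form ${\mathcal I}_Z(c_1-a,c_1-b)$ with $Z$ purely zero-dimensional (because the saturation makes the quotient torsion-free); both are standard.
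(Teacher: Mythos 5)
Your proof is correct and follows essentially the same route as the paper's: saturate a $\mu$-destabilizing subsheaf to a line bundle, write $F$ as an extension by ${\mathcal I}_Z$ twisted by a line bundle, and compare Hilbert polynomials. The only cosmetic difference is that the paper first invokes the standard implication ``Gieseker--Maruyama stable $\Rightarrow$ $\mu$-semistable'' and so only treats your boundary case $\delta = 0$ (where the comparison reduces to $P_F = 2P - \deg Z$), whereas you handle $\delta > 0$ and $\delta = 0$ uniformly in one computation.
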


\begin{proof}
$F$ is, at least, semistable. If it were not stable then it would contain a 
saturated subsheaf of the form ${\mathcal O}_{Q_2}(a+c_1,-a)$ for some 
$a \in {\mathbb Z}$ and we would have an exact sequence:
\begin{equation*}
0 \longrightarrow {\mathcal O}_{Q_2}(a+c_1,-a) \longrightarrow F 
\longrightarrow {\mathcal I}_Z(-a,a+c_1) \longrightarrow 0
\end{equation*}
with $Z$ a 0-dimensional subscheme of $Q_2$. If $P(t)$ is the Hilbert 
polynomial of ${\mathcal O}_{Q_2}(a+c_1,-a)$ then the Hilbert polynomial of 
$F$ is $P_F(t) = 2P(t) - \text{deg}\, Z$. The existence of the subsheaf 
${\mathcal O}_{Q_2}(a+c_1,-a)$ would thus contradict the Gieseker-Maruyama 
stability of $F$. 
\end{proof}

\section{Restrictions to nonsingular hyperplane sections}
\label{S:proof}   

\begin{dfn}\label{D:ue}
Let $\mathcal E$ be a rank 2 reflexive sheaf on a smooth quadric threefold 
$Q = Q_3 \subset {\mathbb P}^4$ with $\text{det}\, {\mathcal E} \simeq 
{\mathcal O}_Q(c_1)$, $c_1 = 0$ or $-1$. We put: 
\begin{equation*} 
{\mathcal U}(\mathcal E) := {\mathbb P}^{4\vee} \setminus 
(Q^{\vee} \cup {\textstyle \bigcup}\{x^{\vee}\, \vert \, x\in 
\text{Sing}\, {\mathcal E}\})\, .
\end{equation*}
We also denote by ${\mathcal U}_{\text{gm}}(\mathcal E)$ (resp., 
${\mathcal U}_{\text{ss}}(\mathcal E)$, resp., 
${\mathcal U}_{\text{s}}(\mathcal E)$) the set of those $h \in 
{\mathcal U}(\mathcal E)$ for which ${\mathcal E}\vert_{H\cap Q}$ has 
the Grauert-M\"{u}lich property (see Definition~\ref{D:gm})  
(resp. it is semistable, resp., it is stable). 
\end{dfn}

\begin{lem}\label{L:forgm}
Consider the incidence diagram from par.~\ref{SS:linesonhyp} above and let 
$Z$ be a closed subset of ${\mathbb P}(U) = {\mathbb P}^3$, 
$Z \neq {\mathbb P}^3$. Then the set of points $h \in {\mathbb P}^{4\vee}$ for 
which $Z \cap p_1(q_1^{-1}(h))$ is finite is an open subset of 
${\mathbb P}^{4\vee}$ whose complement has codimension $\geq 2$ in 
${\mathbb P}^{4\vee}$. 
\end{lem}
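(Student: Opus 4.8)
The plan is to work with the complement $\Sigma\subset{\mathbb P}^{4\vee}$ of the set in the statement, to show it is closed and that $\dim\Sigma\le 2$. First I would pass to the fibres of $q_1$. Recall from par.~\ref{SS:linesonhyp} that $p_1$ maps $q_1^{-1}(h)$ isomorphically onto the zero-scheme $Z(s)\subset{\mathbb P}(U)$ of the corresponding section of $N_\omega(1)$; hence, putting $Y:=p_1^{-1}(Z)$, the condition ``$Z\cap p_1(q_1^{-1}(h))$ is finite'' is equivalent to ``$\dim\bigl(Y\cap q_1^{-1}(h)\bigr)\le 0$''. The openness assertion is then the upper semicontinuity of fibre dimension for $q_1\vert_Y$, combined with the properness of $q_1$: the locus $\{x\in Y:\dim_x\bigl(Y\cap q_1^{-1}(q_1(x))\bigr)\ge 1\}$ is closed in $Y$, hence its image under $q_1$, which is exactly $\Sigma$, is closed in ${\mathbb P}^{4\vee}$.

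For the codimension bound I would first record the elementary fact that, since $Z\ne{\mathbb P}^3$, the closed subvariety $\mathcal L_Z\subset{\mathbb G}_1({\mathbb P}^3)$ of lines contained in $Z$ has dimension $\le 2$: if some irreducible component of $\mathcal L_Z$ had dimension $\ge 3$, the corresponding tautological family of lines would be irreducible of dimension $\ge 4$ and would map onto a subvariety of $Z$ of dimension $\le 2$, so a general point $x$ of that subvariety would lie on a two-dimensional family of lines of $\mathcal L_Z$; but the lines through $x$ form a ${\mathbb P}^2$, so all of them would be contained in $Z$, forcing $Z={\mathbb P}^3$. Next, for $h\in\Sigma$, a $1$-dimensional component of $Y\cap q_1^{-1}(h)$ is carried by $p_1$ onto an irreducible curve contained in $p_1(q_1^{-1}(h))\cap Z$; using that $p_1(q_1^{-1}(h))$ is set-theoretically a union of at most two disjoint lines of ${\mathbb P}(U)$ --- the two ruling lines when $h\notin Q^\vee$, a single line when $h\in Q^\vee$ --- this curve must be one such line $\ell$, and then $\ell\in\mathcal L_Z$.

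Finally I would introduce $\widehat\Sigma:=\{(h,\ell)\in\Sigma\times\mathcal L_Z:\ell\subseteq p_1(q_1^{-1}(h))\}$; by the previous step the projection $\widehat\Sigma\to\Sigma$ is surjective with fibres of at most two points, so $\dim\Sigma\le\dim\widehat\Sigma$. The key point is that the other projection $\widehat\Sigma\to{\mathbb G}_1({\mathbb P}^3)$ is \emph{injective}: for $(h,\ell)\in\widehat\Sigma$ the line $\ell$ is one of the (at most two) lines whose union is $p_1(q_1^{-1}(h))_{\mathrm{red}}$, so the lines of $Q$ parametrized by $\ell$ form one ruling of the smooth quadric surface $H\cap Q$ (if $h\notin Q^\vee$), or the pencil of all lines on the quadric cone $H\cap Q$ (if $h\in Q^\vee$); in either case their union is $H\cap Q$, which spans $H$, so $\ell$ determines $h$. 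Hence $\widehat\Sigma$ embeds into $\mathcal L_Z$ and $\dim\Sigma\le\dim\widehat\Sigma\le\dim\mathcal L_Z\le 2$. The main obstacle --- really the only non-formal point --- is this last injectivity, i.e. the observation that a ruling line of $H\cap Q$ (or the vertex pencil, in the tangent case) remembers $H$; everything else is the standard fibre-dimension semicontinuity together with the above count on families of lines in a proper subvariety of ${\mathbb P}^3$.
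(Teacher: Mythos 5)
Your proof is correct, and it takes a genuinely different route from the paper's for the codimension bound (the closedness part is the same in both: Chevalley semicontinuity of local fibre dimension for $q_1$ restricted to $Y:=p_1^{-1}(Z)$, followed by properness of $q_1$). The paper argues entirely by fibre-dimension bookkeeping on the incidence variety: it reduces to $Z$ an irreducible surface, so that $Y$ is irreducible, takes the open set $W\subseteq Y$ of points isolated in their $q_1$-fibre, identifies the bad locus with $q_1(Y\setminus W)$, and bounds its dimension using that $Y\setminus W$ is a proper closed subset of $Y$. You instead pin down what a positive-dimensional fibre component actually is --- a ruling line of $H\cap Q$, regarded as a line of $\mathbb{P}(U)$ lying in $Z$ --- bound the family $\mathcal{L}_Z$ of lines contained in $Z$ by dimension $2$, and exploit the injectivity of $\Lambda\mapsto h$ (a ruling sweeps out $H\cap Q$, which spans $H$). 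Your extra geometric input buys robustness: $\mathbb{F}_{1,2}(Q)\to\mathbb{P}(U)$ is in fact a $\mathbb{P}^2$-bundle (the fibre over $\ell$ is the $\mathbb{P}^2$ of hyperplanes through $L$; equivalently $M_\omega$ has rank $3$), not the $\mathbb{P}^1$-bundle asserted in the paper's proof, so $p_1^{-1}(Z)$ has dimension $4$ for $Z$ a surface and the paper's final count needs the further (easy) remark that $q_1$ restricted to $Y\setminus W$ has fibres of dimension $\geq 1$ over its image; your argument never touches $\dim Y$, since the bound comes straight from $\dim\mathcal{L}_Z\leq 2$ and the injectivity. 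The one point you should make explicit is that $\widehat\Sigma$ is constructible --- for instance, the incidence $\{(h,\Lambda):\dim\bigl(p_1^{-1}(\Lambda)\cap q_1^{-1}(h)\bigr)\geq 1\}$ is closed by the same Chevalley-plus-properness argument --- so that your set-theoretic surjection and injection really do yield $\dim\Sigma\leq\dim\mathcal{L}_Z\leq 2$.
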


\begin{proof}
As we saw in par.~\ref{SS:linesonhyp}, the restriction of  
$p_1 : q_1^{-1}(h) \rightarrow {\mathbb P}^3$ 
is a closed immersion, $\forall h \in {\mathbb P}^{4\vee}$. It 
follows that $p_1$ maps $p_1^{-1}(Z) \cap q_1^{-1}(h)$ isomorphically onto 
$Z \cap p_1(q_1^{-1}(h))$. Now, we may assume that $Z$ is an irreducible 
surface in ${\mathbb P}^3$. As ${\mathbb F}_{1,2}(Q)$ is a 
${\mathbb P}^1$-bundle over ${\mathbb P}^3$, $p_1^{-1}(Z)$ is irreducible of 
dimension 3. If $q_1(p_1^{-1}(Z))$ has dimension $\leq 2$ then it is the 
complement of the set from the conclusion of the lemma. If $q_1(p_1^{-1}(Z))$ 
has dimension 3 then, by a well-known theorem of Chevalley (see, for example, 
\cite[I,~\S~8,~Cor.~3]{Mu99}), 
the set $W$ of the points $y\in p_1^{-1}(Z)$ which are isolated in the fibre 
$p_1^{-1}(Z) \cap q_1^{-1}(q_1(y))$ is open in $p_1^{-1}(Z)$ (and non-empty). 
In this case, the complement of the set from the conclusion of the lemma is 
$q_1(p_1^{-1}(Z) \setminus W)$. 
\end{proof}

\begin{lem}\label{L:forss}
Consider the incidence diagram: 
\begin{equation*}
\begin{CD}
{\mathbb F}_{2,3}({\mathbb P}^4) @>g>> {\mathbb P}^{4\vee}\\
@VfVV \\
{\mathbb G}_2({\mathbb P}^4)
\end{CD}
\end{equation*}
and let $Z$ be a closed subset of ${\mathbb G}_2({\mathbb P}^4)$, 
$Z \neq {\mathbb G}_2({\mathbb P}^4)$. Then the set of points $h \in 
{\mathbb P}^{4\vee}$ for which $f(g^{-1}(h)) \subseteq Z$ is a closed subset 
of ${\mathbb P}^{4\vee}$ of codimension $\geq 2$. 
\end{lem}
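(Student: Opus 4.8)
The plan is to rephrase the condition ``$f(g^{-1}(h)) \subseteq Z$'' as ``$g^{-1}(h) \subseteq f^{-1}(Z)$'', to use that $f$ and $g$ are projective bundles (of relative dimensions $1$ and $3$ respectively), and then to compare dimensions inside $f^{-1}(Z)$. The only input that is not formal will be the \emph{surjectivity} of the restriction $g\vert_{f^{-1}(Z)} : f^{-1}(Z) \to {\mathbb P}^{4\vee}$, which comes from the fact that the fibres of $g$ are \emph{linear} subspaces of the Pl\"{u}cker ${\mathbb P}^9$.

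First I would reduce to the case where $Z$ is an irreducible hypersurface of ${\mathbb G} := {\mathbb G}_2({\mathbb P}^4)$. Using the Pl\"{u}cker embedding ${\mathbb G} = {\mathbb G}_3({\mathbb C}^5) \hookrightarrow {\mathbb P}(\bigwedge^3 {\mathbb C}^5) = {\mathbb P}^9$ and the hypothesis $Z \neq {\mathbb G}$, there is a form on ${\mathbb P}^9$ vanishing on $Z$ but not on ${\mathbb G}$, so $Z$ is contained in a finite union $Z_1 \cup \dots \cup Z_k$ of irreducible hypersurfaces of ${\mathbb G}$. For every $h \in {\mathbb P}^{4\vee}$ the fibre $g^{-1}(h)$ is irreducible and $f$ maps it isomorphically onto ${\mathbb G}_2(H)$, which under Pl\"{u}cker is the \emph{linear} subspace ${\mathbb P}(\bigwedge^3 V_H)$ of ${\mathbb G}$, where $V_H \subset {\mathbb C}^5$ is the $4$-dimensional subspace with ${\mathbb P}(V_H) = H$ (indeed ${\mathbb G}_2(H) = {\mathbb G}_3(V_H)$, and the Pl\"{u}cker embedding of ${\mathbb G}_3$ of a $4$-dimensional space is the identity ${\mathbb G}_3(V_H) = {\mathbb P}(\bigwedge^3 V_H) \cong {\mathbb P}^3$). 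In particular $f(g^{-1}(h))$ is irreducible, so $f(g^{-1}(h)) \subseteq Z_1 \cup \dots \cup Z_k$ iff $f(g^{-1}(h)) \subseteq Z_i$ for some $i$; hence the locus in the statement for $Z$ is contained in the union of the corresponding loci for the $Z_i$, and it suffices to treat each $Z_i$. From now on $Z$ is an irreducible hypersurface, $\dim Z = 5$, and I write $T := \{h \in {\mathbb P}^{4\vee} : g^{-1}(h) \subseteq f^{-1}(Z)\}$.

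Now $f^{-1}(Z)$ is irreducible of dimension $\dim Z + 1 = 6$ ($f$ being a ${\mathbb P}^1$-bundle over the irreducible $Z$). The key observation is that a linear ${\mathbb P}^3 \subseteq {\mathbb G}$ meets the hypersurface $Z$: otherwise ${\mathcal O}_{\mathbb G}(Z)\vert_{{\mathbb P}^3}$ would be trivial, while ${\mathcal O}_{\mathbb G}(Z) \cong {\mathcal O}_{\mathbb G}(m)$ with $m \geq 1$ restricts to ${\mathcal O}_{{\mathbb P}^3}(m) \not\cong {\mathcal O}_{{\mathbb P}^3}$ (recall $\mathrm{Pic}\,{\mathbb G} = {\mathbb Z}\cdot{\mathcal O}_{\mathbb G}(1)$ and that ${\mathbb P}^3 \subseteq {\mathbb G} \subseteq {\mathbb P}^9$ is linear). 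Applying this to each fibre of $g$, the morphism $g\vert_{f^{-1}(Z)} : f^{-1}(Z) \to {\mathbb P}^{4\vee}$ is \emph{surjective}. For the closedness of $T$ I would invoke Chevalley's theorem on the upper semicontinuity of the fibre dimension (see \cite[I,~\S~8,~Cor.~3]{Mu99}) for the proper morphism $g\vert_{f^{-1}(Z)}$: since $g^{-1}(h)$ is irreducible of dimension $3$, one has $g^{-1}(h) \subseteq f^{-1}(Z)$ if and only if $\dim(g\vert_{f^{-1}(Z)})^{-1}(h) \geq 3$, and the set of such $h$ is closed (alternatively, $g$ is an open map, so the complement $g({\mathbb F}_{2,3}({\mathbb P}^4) \setminus f^{-1}(Z))$ of $T$ is open). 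For the codimension, suppose some irreducible component $T_0$ of $T$ satisfied $\dim T_0 \geq 3$. Then $g^{-1}(T_0)$ would be irreducible of dimension $\dim T_0 + 3 \geq 6$ and, by definition of $T_0 \subseteq T$, contained in $f^{-1}(Z)$; as $f^{-1}(Z)$ is irreducible of dimension $6$, this forces $\dim T_0 = 3$ and $g^{-1}(T_0) = f^{-1}(Z)$. Applying the surjection $g\vert_{f^{-1}(Z)}$ gives $T_0 = g(f^{-1}(Z)) = {\mathbb P}^{4\vee}$, so $\dim T_0 = 4$ --- a contradiction. Hence every component of $T$ has dimension $\leq 2$, i.e. $T$ has codimension $\geq 2$ in ${\mathbb P}^{4\vee}$.

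I expect the main obstacle to be precisely the surjectivity of $g\vert_{f^{-1}(Z)}$: that the fibres ${\mathbb G}_2(H)$ of $g$ are \emph{linear} ${\mathbb P}^3$'s in Pl\"{u}cker space, so that each of them meets $Z$, is what upgrades the conclusion from ``codimension $\geq 1$'' to ``codimension $\geq 2$'' --- for a general ${\mathbb P}^3$-bundle the analogous locus can be a divisor. Everything else is routine dimension bookkeeping together with the fact that $f$ and $g$ are projective bundles.
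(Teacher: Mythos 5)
Your proof is correct and follows essentially the same route as the paper's: reduce to an irreducible hypersurface $Z$, observe that $f^{-1}(Z)$ is irreducible of dimension $6$ while each fibre $g^{-1}(h)$ maps to a linear ${\mathbb P}^3$ in the Pl\"{u}cker embedding and hence must meet $Z$, and conclude by a dimension count. The paper packages the same facts via upper semicontinuity of fibre dimension for $g\vert_{f^{-1}(Z)}$ (whose general fibre has dimension $2$) rather than via irreducible components of the locus in ${\mathbb P}^{4\vee}$, but this is only a difference in bookkeeping.
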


\begin{proof}
We have $f(g^{-1}(h)) \subseteq Z$ if and only if $g^{-1}(h) \subseteq 
f^{-1}(Z)$. We may assume that $Z$ is an irreducible hypersurface in 
${\mathbb G}_2({\mathbb P}^4)$, hence that it has dimension 5. Since 
${\mathbb F}_{2,3}({\mathbb P}^4)$ is a ${\mathbb P}^1$-bundle over 
${\mathbb G}_2({\mathbb P}^4)$, $f^{-1}(Z)$ is irreducible of dimension 6. 
For $h \in {\mathbb P}^{4\vee}$, $f(g^{-1}(h))$ is a 3-plane in the 
Pl\"{u}cker embedding of ${\mathbb G}_2({\mathbb P}^4)$ in ${\mathbb P}^9$. 
One deduces that a general fibre of the restriction of $g : f^{-1}(Z) 
\rightarrow {\mathbb P}^{4\vee}$ has dimension 2. The set $T$ of the points 
$y \in f^{-1}(Z)$ for which $\text{dim}_y(f^{-1}(Z) \cap g^{-1}(g(y))) \geq 3$ 
is a closed subset of $f^{-1}(Z)$, not equal to $f^{-1}(Z)$. The set from 
the conclusion of the lemma is exactly $f(T)$, which has dimension 
$\leq 5 - 3 = 2$.   
\end{proof}

\begin{prop}\label{P:gmss}
Let $\mathcal E$ be a semistable rank $2$ reflexive sheaf on a quadric 
threefold $Q = Q_3 \subset {\mathbb P}^4$ with ${\fam0 det}\, {\mathcal E} 
\simeq {\mathcal O}_Q(c_1)$, $c_1 = 0$ or $-1$. Then:  

\emph{(i)} ${\mathcal U}_{\fam0 gm}(\mathcal E)$ and 
${\mathcal U}_{\fam0 ss}(\mathcal E)$ are open subsets of 
${\mathcal U}(\mathcal E)$ and their complements in 
${\mathcal U}(\mathcal E)$ have codimension $\geq 2$. 

\emph{(ii)} ${\mathcal U}_{\fam0 s}(\mathcal E) \cap 
{\mathcal U}_{\fam0 gm}(\mathcal E)$ is an open subset of 
${\mathcal U}(\mathcal E)$.
\end{prop}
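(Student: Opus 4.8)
Here is the plan. The strategy is to translate each of the three conditions on the restriction $\mathcal E\vert_{H\cap Q}$ into a condition on the restrictions of $\mathcal E$ to the lines, resp.\ conics, lying on hyperplane sections of $Q$ (resp.\ into the vanishing of an auxiliary $\mathrm H^0$), and then to feed these into Prop.~\ref{P:standconstr}, Prop.~\ref{P:restrconics} and Lemmas~\ref{L:forgm}, \ref{L:forss} together with standard semicontinuity. I would begin with $\mathcal U_{\mathrm{gm}}(\mathcal E)$. First one shows that, $\mathcal E$ being semistable, for a general line $L\subset Q$ one has $\mathcal E\vert_L\simeq\mathcal O_L\oplus\mathcal O_L(c_1)$: this runs exactly as the proof of Prop.~\ref{P:restrconics} applied to $p:\mathbb F_{0,1}(Q)\to Q$ (which has irreducible one-dimensional fibres), the point being that a line subbundle of $p^{\ast}\mathcal E$ restricting to $\mathcal O_L(a)$ on the fibres of $q$ would descend to some $\mathcal O_Q(a)\hookrightarrow\mathcal E$, contradicting semistability when $a\geq 1$; hence the second fundamental form has generically rank $\geq 1$, and restricting it to a general fibre $q^{-1}(\ell)$ and using $\Omega_{\mathbb F_{0,1}(Q)/Q}\vert_{q^{-1}(\ell)}\simeq\mathcal O_{\mathbb P^1}(1)$ (computed from $\det\mathcal S\simeq\mathcal O_Q(-1)$ and $\mathcal O_{\mathbb P(\mathcal S)}(-1)\simeq q^{\ast}\mathcal O_{\mathbb P(U)}(-1)$ as in Lemma~\ref{L:relomega}) yields a nonzero morphism $\mathcal O_L(a)\to\mathcal O_L(1+c_1-a)$, forcing $a=0$. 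Let $W\subset\mathbb P(U)=\mathbb P^3$ be the proper closed subset of those $\ell$ with $\mathcal E\vert_{L_\ell}$ not balanced. By the description in par.~\ref{SS:linesonhyp} of the two rulings of $H\cap Q$, the sheaf $\mathcal E\vert_{H\cap Q}$ has the Grauert--M\"ulich property precisely when $W\cap p_1(q_1^{-1}(h))$ is finite, so $\mathcal U_{\mathrm{gm}}(\mathcal E)=\mathcal U(\mathcal E)\cap\{h:W\cap p_1(q_1^{-1}(h))\text{ finite}\}$, and Lemma~\ref{L:forgm} (with $Z=W$) proves the $\mathcal U_{\mathrm{gm}}$-part of (i).

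For $\mathcal U_{\mathrm{ss}}(\mathcal E)$, openness in $\mathcal U(\mathcal E)$ is the openness of semistability in the flat family $\{\mathcal E\vert_{H\cap Q}\}$ over $\mathcal U(\mathcal E)$ (over $\mathcal U(\mathcal E)$ the sheaf $\mathcal E$ is locally free along $H\cap Q$). For the codimension bound, when $c_1=0$ the Grauert--M\"ulich property already forces semistability (as in the proof of Lemma~\ref{L:gms}), so $\mathcal U_{\mathrm{gm}}(\mathcal E)\subseteq\mathcal U_{\mathrm{ss}}(\mathcal E)$ and the bound follows from the first paragraph. When $c_1=-1$, suppose $\mathcal E\vert_{H\cap Q}$ is not semistable for a general point $h$ of an irreducible hypersurface $\Sigma$ meeting $\mathcal U(\mathcal E)$; a general such $h$ lies in $\mathcal U_{\mathrm{gm}}(\mathcal E)$ (its complement in $\mathcal U(\mathcal E)$ having codimension $\geq 2$), and then the Grauert--M\"ulich property forces the destabilizing subsheaf to be a saturated $\mathcal O_{H\cap Q}\hookrightarrow\mathcal E\vert_{H\cap Q}$ (a saturated line subbundle $\mathcal O_{H\cap Q}(a,b)$ has $a,b\leq 0$, while destabilizing requires $a+b\geq 0$), with quotient $\mathcal I_Z(-1,-1)$, $Z$ finite. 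Restricting this extension to a general smooth conic $C\in\vert\mathcal O_{H\cap Q}(1,1)\vert$ missing $Z$ — on which $\mathcal O_{H\cap Q}(-1,-1)$ has degree $-2$, and noting the extension of $\mathcal O_C(-2)$ by $\mathcal O_C$ splits — gives $\mathcal E\vert_C\simeq\mathcal O_C\oplus\mathcal O_C(-2)\not\simeq\mathcal O_{\mathbb P^1}(-1)^{\oplus 2}$. Since the general $2$-plane $L\subset H$ meets $Q$ in such a conic and $f(g^{-1}(h))$ is irreducible, it follows that $f(g^{-1}(h))$ is contained in the proper closed subset $Z'\subset\mathbb G_2(\mathbb P^4)$ of $2$-planes along which $\mathcal E$ fails to restrict to $\mathcal O_{\mathbb P^1}(c_1)^{\oplus 2}$ on the conic ($Z'\neq\mathbb G_2(\mathbb P^4)$ by Prop.~\ref{P:restrconics}); thus $f(g^{-1}(h))\subseteq Z'$ for general $h\in\Sigma$, contradicting Lemma~\ref{L:forss} since $\Sigma$ has codimension $1$.

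For $\mathcal U_{\mathrm s}(\mathcal E)\cap\mathcal U_{\mathrm{gm}}(\mathcal E)$ I would invoke Lemma~\ref{L:gms}(i): for $h\in\mathcal U_{\mathrm{gm}}(\mathcal E)$, the restriction $\mathcal E\vert_{H\cap Q}$ is stable iff $\mathrm H^0(\mathcal E\vert_{H\cap Q})=0$ when $c_1=0$, and iff $\mathrm H^0((\mathcal S(1)\otimes\mathcal E)\vert_{H\cap Q})=0$ when $c_1=-1$ — here one identifies $\overline{\mathcal S}$ with $\mathcal S\vert_{H\cap Q}$ via \eqref{spinorresthyp}, writes $\mathrm{Hom}(\mathcal S\vert_{H\cap Q},\mathcal E\vert_{H\cap Q})=\mathrm H^0((\mathcal S^{\ast}\otimes\mathcal E)\vert_{H\cap Q})$, and uses $\mathcal S^{\ast}\simeq\mathcal S(1)$. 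In both cases the relevant $\mathrm H^0$ is an upper semicontinuous function of $h$ on $\mathcal U(\mathcal E)$, being the $\mathrm h^0$ of a flat family of sheaves on the fibres of the universal hyperplane section; hence $\mathcal U_{\mathrm s}(\mathcal E)\cap\mathcal U_{\mathrm{gm}}(\mathcal E)$ is the intersection of the open set $\mathcal U_{\mathrm{gm}}(\mathcal E)$ with an open set, which is open in $\mathcal U(\mathcal E)$; this proves (ii).

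The main obstacle is the codimension bound for $\mathcal U_{\mathrm{ss}}(\mathcal E)$ in the case $c_1=-1$: the Grauert--M\"ulich property only controls the general \emph{line} in $H\cap Q$, whereas Lemma~\ref{L:forss} needs information about \emph{all} $2$-planes, so one has to upgrade non-semistability to a statement about all conic sections — the passage through the global section of $\mathcal E\vert_{H\cap Q}$ and its restriction to a pencil of conics is exactly what bridges this gap. By contrast, the flatness verifications underlying the semicontinuity statements are routine, since over $\mathcal U(\mathcal E)$ the sheaf $\mathcal E$ restricts to locally free sheaves on smooth quadric surfaces.
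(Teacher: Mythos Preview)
Your argument is correct, and for $\mathcal U_{\mathrm{gm}}(\mathcal E)$ and for part~(ii) it coincides with the paper's (the paper simply cites \cite[Prop.~1.3]{ES84} for the balanced generic line, whereas you reprove it via the Standard Construction, but that is a matter of taste).

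For $\mathcal U_{\mathrm{ss}}(\mathcal E)$, however, you take a longer route than necessary. The paper observes, uniformly in $c_1$, that for $h\in\mathcal U(\mathcal E)$ the restriction $\mathcal E\vert_{H\cap Q}$ is semistable \emph{if and only if} $H\cap Q$ contains one smooth conic $C$ with $\mathcal E\vert_C\simeq\mathcal O_{\mathbb P^1}(c_1)^{\oplus 2}$: the ``if'' direction is immediate, since any saturated $\mathcal O_{H\cap Q}(a,b)\hookrightarrow\mathcal E\vert_{H\cap Q}$ restricts to $\mathcal O_{\mathbb P^1}(a+b)\hookrightarrow\mathcal O_{\mathbb P^1}(c_1)^{\oplus 2}$, forcing $a+b\leq c_1$; the ``only if'' is Prop.~\ref{P:restrconics} for $n=3$. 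Thus $h\notin\mathcal U_{\mathrm{ss}}(\mathcal E)$ exactly when $f(g^{-1}(h))\subseteq Z'$ (your $Z'$), and Lemma~\ref{L:forss} together with Prop.~\ref{P:restrconics} for $n=4$ finishes both openness and the codimension bound in one stroke. Your detour through the Grauert--M\"ulich property to identify the destabilizing sub as $\mathcal O_{H\cap Q}$ in the case $c_1=-1$, and the case split itself, are thus avoidable: the existence criterion via a single balanced conic replaces the genericity argument and makes the appeal to $\mathcal U_{\mathrm{gm}}$ unnecessary here. What your approach buys is an explicit description of the destabilizing subsheaf on each bad hyperplane section, but that is not needed for the statement at hand.
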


\begin{proof}
(i) Using the notation from par.~\ref{SS:linesonhyp}, the set of points 
$\ell \in {\mathbb P}(U) = {\mathbb P}^3$ for which the corresponding line 
$L \subset Q$ passes through a singular point of $\mathcal E$ is a union of 
finitely many lines in ${\mathbb P}^3$. Let $W$ be the complement of this 
union of lines. An argument of semicontinuity shows that the set of points 
$\ell \in W$ for which ${\mathcal E}\vert_L \simeq 
{\mathcal O}_L(a) \oplus {\mathcal O}_L(-a+c_1)$ with $a > 0$ 
is a closed subset of $W$ and it is not equal to $W$ by 
\cite[Prop.~1.3]{ES84}. One deduces, now, from Lemma~\ref{L:forgm}, that 
${\mathcal U}_{\text{gm}}(\mathcal E)$ is an open subset of 
${\mathcal U}(\mathcal E)$ and that its complement has codimension $\geq 2$. 

Secondly, if $h \in {\mathcal U}(\mathcal E)$, then 
${\mathcal E}\vert_{H\cap Q}$ 
is semistable if and only if $H \cap Q$ contains a smooth conic 
$C$ such that ${\mathcal E}\vert_C \simeq 
{\mathcal O}_{{\mathbb P}^1}(c_1)^{\oplus 2}$ (by the case $n = 3$ of 
Prop.~\ref{P:restrconics}). The assertion about 
${\mathcal U}_{\text{ss}}(\mathcal E)$ follows, now, from the case $n = 4$ of 
Prop.~\ref{P:restrconics} and from Lemma~\ref{L:forss}. 

(ii) follows from Lemma~\ref{L:gms}(i).      
\end{proof}

\begin{rmk}\label{R:useopen}
${\mathcal U}_{\text{s}}({\mathcal E})$ is, actually, itself an open subset of 
${\mathcal U}({\mathcal E})$. Indeed, this follows from 
Lemma~\ref{L:gieseker} and from the openess of the Gieseker-Maruyama 
stability, a result due to Maruyama~\cite[Thm.~2.8]{Ma76}.
\end{rmk}

\begin{prop}\label{P:nounstablehyp}
Let $E$ be a stable rank $2$ vector bundle on $Q = Q_3 \subset {\mathbb P}^4$ 
with $c_1(E) = c_1 = 0$ or $-1$ and with $c_2 > -c_1$. Let $\Sigma \subset 
{\mathbb P}^{4\vee}$ be an irreducible hypersurface, $\Sigma \neq Q^{\vee}$ 
and $\Sigma \neq x^{\vee}$, $\forall x \in Q$, such that 
${\mathcal U}_{\fam0 s}(E) \cap {\mathcal U}_{\fam0 gm}(E) \cap \Sigma = 
\emptyset$. Let $H_0 \subset {\mathbb P}^4$ be a hyperplane intersecting $Q$ 
transversally. 

If $E\vert_{H_0\cap Q}$ is unstable (i.e., not semistable) then 
$\Sigma = (H_0\cap Q)^{\vee}$. 
\end{prop}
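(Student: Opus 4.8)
The plan is to analyze what happens when $E\vert_{H_0\cap Q}$ is unstable and to use the Standard Construction to force $\Sigma$ to be the dual variety of $H_0\cap Q$. Since $E$ is stable on $Q$ but $E\vert_{H_0\cap Q}$ is unstable, there is a saturated line subsheaf of $E\vert_{H_0\cap Q}$ of the form $\mathcal O_{H_0\cap Q}(a+c_1,-a)$ with $2a - c_1 > 0$, i.e. with a positive ``jump''; by Lemma~\ref{L:jump} (applied on the smooth quadric surface $Q_2 = H_0\cap Q$) one has $a \le c_2 + c_1$, and the extreme case $a = c_2+c_1$ already gives, via the explicit extension in that lemma, that all of $Z$ lies on a line — I will keep this in mind but the main thrust is different. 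First I would set $\Sigma' := (H_0\cap Q)^\vee$ and show $\Sigma'$ is an irreducible hypersurface in $\mathbb P^{4\vee}$ (it is the dual of a smooth quadric surface sitting linearly in $\mathbb P^4$, so it is a quadric cone with vertex the point $h_0$ dual to $H_0$). The goal becomes: show $\Sigma = \Sigma'$.

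\textbf{Key steps.} The idea is to run the Standard Construction of Section~\ref{S:standard} over the incidence variety parametrizing pairs (hyperplane $H$ with $h\in\Sigma$, hyperplane section $H\cap Q$). Concretely, take $X = g^{-1}(\Sigma) \cap (\text{stuff over } Q)$ inside the flag variety $\mathbb F_{2,3}(\mathbb P^4)$ restricted to $Q$, with $p : X \to Q$ the natural projection whose fibre over $y\in Q$ is the set of $h\in\Sigma$ with $H\ni y$; this is $\Sigma \cap y^\vee$, which by Lemma~\ref{L:sigma}(ii)--(iii) is, for general $y$, irreducible of dimension $\dim\Sigma - 1 = 2$ (the exceptional case $\Sigma = K^\vee$ is excluded by hypothesis since $\Sigma \neq x^\vee$ and, being a hypersurface, $K$ would be a point). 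On $X$ we have $p^\ast E$ together with, over the open locus where $h\in\mathcal U_{\mathrm s}(E)\cap\mathcal U_{\mathrm{gm}}(E)$ fails on all of $\Sigma$, a destabilizing line subsheaf of $(p^\ast E)\vert_{\text{fibre over } h}$ — but here is the catch: over a general $h\in\Sigma\setminus Q^\vee$ the bundle $E\vert_{H\cap Q}$ is, by the hypothesis $\mathcal U_{\mathrm s}\cap\mathcal U_{\mathrm{gm}}\cap\Sigma=\emptyset$ together with Proposition~\ref{P:gmss}(i) (which says the gm-locus and ss-locus are big), at worst semistable-not-stable and gm, OR unstable. I would split into these two cases. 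In the semistable-not-stable case, Lemma~\ref{L:gms}(ii) gives a canonical destabilizing subsheaf ($\mathrm h^0 = 1$ resp. $\hom(\overline{\mathcal S},-)=1$), so these patch to a genuine saturated subsheaf $\mathcal E' \subset p^\ast E$ on a dense open of $X$; apply Proposition~\ref{P:standconstr}. Either the second fundamental form is generically zero, whence $\mathcal E' = p^\ast\overline{\mathcal E}{}'$ descends to a saturated subsheaf of $E$ on $Q$, contradicting stability of $E$ — so the second fundamental form has generically rank $\ge 1$. Then, exactly as in the proof of Proposition~\ref{P:restrconics}, restrict the second fundamental form to a general fibre $q^{-1}(h)$, identify $\Omega_{X/Q}\vert_{q^{-1}(h)}$ via Lemma~\ref{L:sigma}(i) with the cokernel of $\mathcal O_{H}(-1)\hookrightarrow\mathcal O_H^{\oplus 3}$ (here $m=3$, $n=4$, so this cokernel is a rank-$2$ sheaf supported with its degeneracy locus along a line $L\subset H$), and derive a numerical contradiction \emph{unless} the line $L$ in question is forced to vary in a very constrained way — and the only way to avoid the contradiction is that $H\cap Q$ carries the relevant destabilizing line bundle for \emph{every} $h\in\Sigma$ in a compatible family, which pins down $\Sigma$.

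\textbf{The main obstacle.} The genuinely hard part is the unstable case and the bookkeeping that converts ``$E\vert_{H\cap Q}$ destabilized for all $h\in\Sigma$'' into the single equation $\Sigma = (H_0\cap Q)^\vee$. For this I expect to argue as follows: being unstable is a closed condition on $\mathcal U(E)$ (complement of the open $\mathcal U_{\mathrm{ss}}$), and by Proposition~\ref{P:gmss}(i) its locus has codimension $\ge 2$ in $\mathcal U(E)$; hence a hypersurface $\Sigma$ on which $E$ restricts unstably at the general point cannot exist — wait, that is too strong, so instead: the general point of $\Sigma$ is semistable-not-stable (the unstable locus being codimension $\ge 2$ cannot fill a hypersurface), and we are back in the first case, where the Standard-Construction contradiction forces the destabilizing sub-line-bundle of $E\vert_{H\cap Q}$ to have its zero scheme on a fixed line, independent of $h$; tracking which hyperplanes $H$ contain that line, via the description of $\mathbb F_{1,2}(Q)$ from par.~\ref{SS:linesonhyp}, identifies $\Sigma$ with the dual of the span of that line's ambient quadric surface, i.e. $(H_0\cap Q)^\vee$. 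The delicate point is ensuring the line is genuinely independent of $h\in\Sigma$ and that $H_0$ is recovered correctly — this is where the hypothesis that $E\vert_{H_0\cap Q}$ is \emph{unstable} (not merely non-stable) is used, since it provides the ``seed'' hyperplane and the corresponding $Q_2$ whose dual must coincide with $\Sigma$ by the irreducibility of both.
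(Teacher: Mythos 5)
There is a genuine gap: your argument never actually uses the hypothesis that $E\vert_{H_0\cap Q}$ is unstable in a way that forces $\Sigma=(H_0\cap Q)^{\vee}$. What you sketch is, in essence, the Standard Construction argument of Proposition~\ref{P:applstandconstr} (producing, for general $h\in\Sigma$, a maximal jumping line in $H\cap Q$), followed by the assertion that ``tracking which hyperplanes contain that line pins down $\Sigma$.'' But passing from ``every $H\cap Q$ with $h\in\Sigma$ contains a jumping line'' to the identification of $\Sigma$ with the dual of one specific hyperplane section is exactly the content of Proposition~\ref{P:conclusion}, whose proof in the paper \emph{invokes} Proposition~\ref{P:nounstablehyp} (via Corollary~\ref{C:nounstablehyp} and directly) — so your route is circular relative to the logical architecture, and moreover Proposition~\ref{P:applstandconstr} requires $c_2>2+c_1$, whereas the statement you are proving only assumes $c_2>-c_1$ (e.g.\ $c_1=0$, $c_2=2$ is not covered by your machinery). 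The one correct observation you make — that the general point of $\Sigma$ lies in ${\mathcal U}_{\mathrm{ss}}(E)\cap{\mathcal U}_{\mathrm{gm}}(E)$, hence is semistable-not-stable — is necessary but nowhere near sufficient.

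The paper's proof is much more elementary and hinges on an idea absent from your proposal: an elementary modification along the destabilizing quotient over $H_0$. From the maximal destabilizing subsheaf ${\mathcal O}_{H_0\cap Q}(a,b)\subset E\vert_{H_0\cap Q}$ with $a+b>c_1$ one defines a rank $2$ reflexive sheaf ${\mathcal E}'$ on $Q$ by
$0\to{\mathcal E}'(c_1)\to E\to{\mathcal I}_{Z,H_0\cap Q}(c_1-a,c_1-b)\to 0$;
stability of $E$ gives ${\mathrm H}^0({\mathcal E}'(c_1))=0$, so ${\mathcal E}'$ is semistable. If $\Sigma\neq(H_0\cap Q)^{\vee}$, Proposition~\ref{P:gmss}(i) lets one pick $h\in{\mathcal U}_{\mathrm{ss}}({\mathcal E}')\cap{\mathcal U}_{\mathrm{ss}}(E)\cap{\mathcal U}_{\mathrm{gm}}(E)\cap\Sigma$ outside $(H_0\cap Q)^{\vee}$, so that $C:=H\cap H_0\cap Q$ is a smooth conic avoiding $Z$. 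By Lemma~\ref{L:gms}(ii) the (semistable, gm, non-stable) bundle $E\vert_{H\cap Q}$ contains a line subbundle $\mathcal L$ of slope $c_1$ on $C$; since the quotient $E\vert_{H\cap Q}/({\mathcal E}'(c_1)\vert_{H\cap Q})\simeq{\mathcal O}_{{\mathbb P}^1}(2c_1-a-b)$ has smaller degree, $\mathcal L$ must land inside ${\mathcal E}'(c_1)\vert_{H\cap Q}$, contradicting the semistability of ${\mathcal E}'\vert_{H\cap Q}$. No Standard Construction, no second fundamental form, and no case analysis on jumping lines is needed; the whole weight is carried by the auxiliary sheaf ${\mathcal E}'$, which your proposal does not construct.
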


\begin{proof}
Let ${\mathcal O}_{H_0 \cap Q}(a,b) \subset E\vert_{H_0 \cap Q}$ be a 
maximal destabilizing subsheaf. One has $a+b > c_1$ and an exact sequance: 
\begin{equation*}
0 \longrightarrow {\mathcal O}_{H_0 \cap Q}(a,b) \longrightarrow  
E\vert_{H_0 \cap Q} \longrightarrow 
{\mathcal I}_{Z,H_0 \cap Q}(c_1-a,c_1-b) \longrightarrow 0 
\end{equation*}
where $Z$ is a 0-dimensional subscheme of $H_0 \cap Q$. Let 
${\mathcal E}^{\prime}$ be the rank 2 reflexive sheaf on $Q$ defined by the 
exact sequence:
\begin{equation*}
0 \longrightarrow {\mathcal E}^{\prime}(c_1) \longrightarrow E \longrightarrow 
{\mathcal I}_{Z,H_0 \cap Q}(c_1-a,c_1-b) \longrightarrow 0\, 
\end{equation*}
One has $c_1^{\prime} := c_1({\mathcal E}^{\prime}) = -1$ if $c_1 =0$,  
$c_1^{\prime} = 0$ if $c_1 = -1$, $\text{Sing}\, {\mathcal E}^{\prime} = Z$, and 
$E$ stable implies that $\text{H}^0(E) = 0$, hence 
$\text{H}^0({\mathcal E}^{\prime}(c_1)) = 0$ hence ${\mathcal E}^{\prime}$ is 
semistable. 

We will show that if one assumes that $\Sigma \neq (H_0 \cap Q)^{\vee}$ then 
one gets a contradiction. Indeed, by Prop.~\ref{P:gmss}(i), 
${\mathcal U}_{\text{ss}}({\mathcal E}^{\prime}) \cap \Sigma \neq \emptyset$. 
Recall that, by definiton, ${\mathcal U}_{\text{ss}}({\mathcal E}^{\prime})$ 
is an open subset of ${\mathcal U}({\mathcal E}^{\prime}) = 
{\mathbb P}^{4\vee} \setminus (Q^{\vee} \cup \bigcup \{x^{\vee}\, \vert \, 
x \in Z\})$. If 
$\Sigma \neq (H_0 \cap Q)^{\vee}$, choose a point 
\begin{equation*}
h \in ({\mathcal U}_{\text{ss}}({\mathcal E}^{\prime}) \cap 
{\mathcal U}_{\text{ss}}(E) \cap {\mathcal U}_{\text{gm}}(E) \cap \Sigma) 
\setminus (H_0 \cap Q)^{\vee}\, .
\end{equation*} 
If $H \subset {\mathbb P}^4$ is the corresponding hyperplane, then 
$H \cap H_0 \cap Q$ is a smooth conic $C \simeq {\mathbb P}^1$, avoiding $Z$. 
Restricting to $H \cap Q$ the exact sequence defining ${\mathcal E}^{\prime}$, 
one gets an exact sequence: 
\begin{equation*}
0 \longrightarrow {\mathcal E}^{\prime}(c_1)\vert_{H \cap Q}  
\longrightarrow 
E\vert_{H \cap Q} \longrightarrow {\mathcal O}_{{\mathbb P}^1}(2c_1-a-b) 
\longrightarrow 0\, .
\end{equation*}
By the proof of Lemma~\ref{L:gms}(ii), $E\vert_{H \cap Q}$ has a subsheaf 
$\mathcal L$ of the form ${\mathcal O}_{H \cap Q}$ in the case $c_1 = 0$ and of 
the form ${\mathcal O}_{H \cap Q}(-1,0)$ or ${\mathcal O}_{H \cap Q}(0,-1)$, in 
the case $c_1 = -1$. Since ${\mathcal L}\vert_C \simeq 
{\mathcal O}_{{\mathbb P}^1}(c_1)$, it follows that 
$\text{Hom}_{{\mathcal O}_{H \cap Q}}({\mathcal L},
{\mathcal O}_{{\mathbb P}^1}(2c_1-a-b)) = 0$, hence 
${\mathcal L} \subset {\mathcal E}^{\prime}(c_1)\vert_{H \cap Q}$ 
and this contradicts the semistability of 
${\mathcal E}^{\prime}\vert_{H \cap Q}$.   
\end{proof}

\begin{corol}\label{C:nounstablehyp}
Under the hypothesis of Prop.~\ref{P:nounstablehyp}, minus the assumption 
about $H_0$$:$   

\emph{(i)} For every line 
$L \subset Q$ one has $E\vert_L \simeq {\mathcal O}_L(a) \oplus 
{\mathcal O}_L(-a+c_1)$, with $a \leq c_2 + c_1$. 

\emph{(ii)} Assuming $c_2 > -c_1 + 1$, if $L_1,L_2 \subset Q$ 
are two distinct lines such that $E\vert_{L_i} \simeq 
{\mathcal O}_{L_i}(c_2+c_1) \oplus {\mathcal O}_{L_i}(-c_2)$, $i=1,2$, then 
$L_1\cap L_2 = \emptyset$.
\end{corol}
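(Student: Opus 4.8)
{\it Proof of Corollary~\ref{C:nounstablehyp}.} The plan is to deduce (i) and (ii) by restricting a general hyperplane section $H\cap Q$ to a line in one of its two rulings, and then invoking the results of Section~\ref{S:auxq2}. For a line $L\subset Q$, the lines on $Q$ that meet $L$ form, via the incidence diagram of par.~\ref{SS:linesonhyp}, a proper closed subset of ${\mathbb P}(U)={\mathbb P}^3$ (in fact a surface), so by Lemma~\ref{L:forgm} the set of $h\in{\mathbb P}^{4\vee}$ for which $p_1(q_1^{-1}(h))$ meets this surface only in finitely many points is open with complement of codimension $\geq 2$; intersecting with the open sets from Proposition~\ref{P:gmss} we can choose $h\in{\mathcal U}_{\fam0 s}(E)\cap{\mathcal U}_{\fam0 gm}(E)$ (which is nonempty since $\Sigma$ is a hypersurface and ${\mathcal U}_{\fam0 s}(E)\cap{\mathcal U}_{\fam0 gm}(E)$ has complement of codimension $\leq 1$ in ${\mathcal U}(E)$ by Proposition~\ref{P:gmss} and Remark~\ref{R:useopen}) such that, moreover, $L\subset H\cap Q$, i.e. $L$ is one of the two rulings of the smooth quadric surface $F:=E\vert_{H\cap Q}$. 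Since $h\in{\mathcal U}_{\fam0 s}(E)$, this $F$ is stable on $H\cap Q\simeq{\mathbb P}^1\times{\mathbb P}^1$, hence in particular semistable, and $F\vert_L=E\vert_L$.

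For (i): once $L$ sits in a smooth quadric section $H\cap Q$ on which $F=E\vert_{H\cap Q}$ is semistable, Lemma~\ref{L:jump} applied to $F$ and the ruling line $L$ gives directly $E\vert_L=F\vert_L\simeq{\mathcal O}_L(a)\oplus{\mathcal O}_L(-a+c_1)$ with $a\leq c_2(F)+c_1$. It remains only to check that $c_2(F)=c_2$, i.e. that $c_2(E\vert_{H\cap Q})$ equals the integer $c_2$ with $c_2(E)=c_2[L]$; this is the standard computation of Chern classes of a restriction to a smooth hyperplane section of $Q$ (one has $c_2(E)\cdot[H\cap Q]=c_2$ in the appropriate sense), so $a\leq c_2+c_1$ follows.

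For (ii): suppose $c_2>-c_1+1$ and that $L_1,L_2$ are distinct lines on $Q$ with $E\vert_{L_i}\simeq{\mathcal O}_{L_i}(c_2+c_1)\oplus{\mathcal O}_{L_i}(-c_2)$. The idea is to choose a single smooth hyperplane section containing \emph{both} $L_1$ and $L_2$ and then apply Corollary~\ref{C:jump1}. If $L_1\cap L_2\neq\emptyset$ they span a plane, whose intersection with $Q$ is either a conic (impossible, as it would contain two lines) or the union $L_1\cup L_2$; in the latter case $L_1$ and $L_2$ lie on a common smooth hyperplane section only if they are on different rulings of it. So I would argue: if $L_1\cap L_2\neq\emptyset$, choose (using Lemma~\ref{L:forgm} and Proposition~\ref{P:gmss} as above, now with the proper closed set of lines meeting $L_1$ or $L_2$) a point $h\in{\mathcal U}_{\fam0 s}(E)\cap{\mathcal U}_{\fam0 gm}(E)$ with $L_1,L_2\subset H\cap Q$; then $F:=E\vert_{H\cap Q}$ is semistable and has ruling lines $L_1,L_2$ on the two distinct rulings with $F\vert_{L_i}\simeq{\mathcal O}_{L_i}(c_2+c_1)\oplus{\mathcal O}_{L_i}(-c_2)$. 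But Corollary~\ref{C:jump1} (applied with $L=L_1$, $L'=L_2$, noting $c_2(F)=c_2>-c_1$) forces $F\vert_{L_2}\simeq{\mathcal O}_{L_2}(a')\oplus{\mathcal O}_{L_2}(-a'+c_1)$ with $a'\in\{0,1\}$, whereas $c_2+c_1>1$, a contradiction. Hence $L_1\cap L_2=\emptyset$.

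The main obstacle I anticipate is verifying that one can simultaneously arrange a \emph{smooth} hyperplane section $H\cap Q$ containing the given line(s) while staying inside ${\mathcal U}_{\fam0 s}(E)\cap{\mathcal U}_{\fam0 gm}(E)$: one needs the lines-meeting-$L_i$ locus to give, via $p_1(q_1^{-1}(\cdot))$, a codimension-$\geq 2$ bad locus in ${\mathbb P}^{4\vee}$ (Lemma~\ref{L:forgm}), and one must know that the \emph{hyperplanes through $L_i$} form a linear ${\mathbb P}^{2}$ in ${\mathbb P}^{4\vee}$ that is not contained in that bad locus nor in ${\mathbb P}^{4\vee}\setminus({\mathcal U}_{\fam0 s}(E)\cap{\mathcal U}_{\fam0 gm}(E))$; this uses that the latter has codimension $\leq 1$ so its intersection with the ${\mathbb P}^2$ has codimension $\geq 1$ there, leaving room. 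Dealing carefully with the case where $L_1,L_2$ are on the same ruling of every smooth section containing them (which cannot happen when they meet, by the plane-section dichotomy above) is the only genuinely geometric point.
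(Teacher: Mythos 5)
Your reduction of both parts to Lemma~\ref{L:jump} and Corollary~\ref{C:jump1}, via a smooth hyperplane section containing the given line(s) on which $E$ restricts semistably, is exactly the paper's strategy, and the way you apply those two results (including the ruling dichotomy for two incident lines) is correct. The gap is in the existence of such a hyperplane section, which is precisely the point you flag as the ``main obstacle'': your proposed resolution does not work. You argue that the complement of ${\mathcal U}_{\mathrm s}(E)\cap{\mathcal U}_{\mathrm{gm}}(E)$ has codimension $\leq 1$, hence meets the plane $L^{\vee}\simeq{\mathbb P}^2$ (or the pencil of hyperplanes through the span of $L_1\cup L_2$) in codimension $\geq 1$. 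That is a non sequitur: a codimension-one closed set can contain a $2$-plane or a line of ${\mathbb P}^{4\vee}$ entirely. Worse, the hypothesis of the corollary states that this complement \emph{contains the hypersurface} $\Sigma$, and in the exceptional case that the theorem is designed to isolate --- $\Sigma=(H_0\cap Q)^{\vee}$ --- one checks that $L^{\vee}\subset(H_0\cap Q)^{\vee}$ for \emph{every} line $L\subset H_0\cap Q$ (any hyperplane $H\supset L$, $H\neq H_0$, cuts $H_0$ in a plane that is tangent to $H_0\cap Q$ at the point where $L$ meets the residual line). So for exactly the lines that carry the maximal jump, ${\mathcal U}_{\mathrm s}(E)\cap{\mathcal U}_{\mathrm{gm}}(E)\cap L^{\vee}=\emptyset$ and your choice of $h$ is impossible. (Using ${\mathcal U}_{\mathrm{ss}}(E)$, whose complement has codimension $\geq 2$ by Proposition~\ref{P:gmss}, does not save the count either, since that complement could still a priori contain the two-dimensional set $L^{\vee}\cap{\mathcal U}(E)$.)

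The missing ingredient is Proposition~\ref{P:nounstablehyp} itself, which is why the corollary is stated under its hypothesis: it shows that the set of $h\in{\mathcal U}(E)$ with $E\vert_{H\cap Q}$ not semistable consists of \emph{at most one point} (an unstable section forces $\Sigma=(H_0\cap Q)^{\vee}$, and $\Sigma$ being fixed determines $H_0$). Given that, one takes two lines $L,L'$ meeting at a point $x$ (for (i), an auxiliary $L'$ through a point of $L$; for (ii), $L_1,L_2$ themselves) and restricts to the pencil of hyperplanes containing the plane they span: this pencil is a line in ${\mathbb P}^{4\vee}$ meeting $Q^{\vee}$ only at the point corresponding to ${\mathrm T}_xQ$, so its general member gives a smooth section with semistable restriction, and Lemma~\ref{L:jump} and Corollary~\ref{C:jump1} apply. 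Note that only semistability of $E\vert_{H\cap Q}$ is needed, not stability; insisting on $h\in{\mathcal U}_{\mathrm s}(E)$ as you do only makes the existence problem harder. Your invocation of Lemma~\ref{L:forgm} (finiteness of the set of lines of $H\cap Q$ meeting a fixed surface of lines) is also beside the point here: what is needed is $H\supset L$, i.e.\ $h\in L^{\vee}$, not a condition on which lines of $H\cap Q$ meet $L$.
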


\begin{proof}
By Prop.~\ref{P:nounstablehyp}, the complement of ${\mathcal U}_{\text{ss}}(E)$ 
in ${\mathcal U}(E) = {\mathbb P}^{4\vee} \setminus Q^{\vee}$ consists of one 
point or is empty. Let $L,L^{\prime} \subset Q$ be two lines intersecting in a  
point $x$. The set of hyperplanes $H \subset {\mathbb P}^4$ containing 
$L \cup L^{\prime}$ is a line in ${\mathbb P}^{4\vee}$ which intersects 
$Q^{\vee}$ in only one point (namely, the one corresponding to the tangent 
hyperplane to $Q$ at $x$). It follows that, for a general $H$ as above, 
$H \cap Q$ is nonsingular and $E\vert_{H \cap Q}$ is semistable. 
One can apply, now, Lemma~\ref{L:jump} and Cor.~\ref{C:jump1}.    
\end{proof}

\begin{lem}\label{L:efrestrq2}
Let $Q_2 \subset {\mathbb P}^3$ be a smooth quadric, let $x$ be a point of 
${\mathbb P}^3$ and let $\mathcal F$ be a rank $2$ reflexive sheaf on 
${\mathbb P}^3$ defined by an exact sequence: 
\begin{equation*}
0 \longrightarrow {\mathcal O}_{{\mathbb P}^3}(-1) \longrightarrow 
{\mathcal O}_{{\mathbb P}^3}^{\oplus 3} \longrightarrow {\mathcal F} 
\longrightarrow 0
\end{equation*}
where the left morphism is the dual of an epimorphism 
${\mathcal O}_{{\mathbb P}^3}^{\oplus 3} \rightarrow {\mathcal I}_{\{x\}}(1)$.  

\emph{(i)} If $x \in Q_2$ then one has an exact sequence: 
\begin{equation*}
0 \longrightarrow {\mathcal F}\vert_{Q_2} \longrightarrow 
{\mathcal O}_{Q_2}(1,0) \oplus {\mathcal O}_{Q_2}(0,1) \longrightarrow 
{\mathcal O}_{\{x\}} \longrightarrow 0\, .
\end{equation*}

\emph{(ii)} If $x \notin Q_2$ then the zero scheme of any non-zero global 
section of ${\mathcal F}\vert_{Q_2}$ is $0$-dimensional, of degree $2$.   
Moreover, $({\mathcal F}\vert_{Q_2})(1,-1)$ and 
$({\mathcal F}\vert_{Q_2})(-1,1)$ have only one non-zero global section 
(up to multiplication by scalars) and these sections vanish nowhere.
\end{lem}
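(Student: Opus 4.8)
The starting point for both parts is the restriction to $Q_2$ of the defining sequence. Choose coordinates on ${\mathbb P}^3$ with $x = [0:0:0:1]$, so that the left morphism ${\mathcal O}_{{\mathbb P}^3}(-1) \to {\mathcal O}_{{\mathbb P}^3}^{\oplus 3}$ is given by the three linear forms $X_0,X_1,X_2$; in particular $\mathcal F$ is locally free exactly on ${\mathbb P}^3 \setminus \{x\}$. Since $X_0,X_1,X_2$ have no common zero on $Q_2$ other than possibly $x$ (a set of codimension $\geq 2$), the restricted left morphism stays injective, ${\mathrm{Tor}}_1^{{\mathcal O}_{{\mathbb P}^3}}(\mathcal F,{\mathcal O}_{Q_2}) = 0$, and one gets
\[
0 \longrightarrow {\mathcal O}_{Q_2}(-1,-1) \overset{\alpha}{\longrightarrow} {\mathcal O}_{Q_2}^{\oplus 3} \longrightarrow {\mathcal F}\vert_{Q_2} \longrightarrow 0 ,
\]
where $\alpha$ corresponds to $(X_0,X_1,X_2)$. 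Thus ${\mathcal F}\vert_{Q_2}$ is torsion-free of rank $2$, with $c_1 = (1,1)$, $c_2 = 2$, ${\mathrm h}^0 = 3$, globally generated, and locally free precisely off $Q_2 \cap \{x\}$. Restricting the sequence further to a ruling line $\ell$ not passing through $x$ (so $X_0,X_1,X_2$ have no common zero on $\ell$) gives ${\mathcal F}\vert_{Q_2}\vert_\ell \simeq {\mathcal O}_\ell \oplus {\mathcal O}_\ell(1)$.

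For part (i) assume $x \in Q_2$. The differentials of $X_0,X_1,X_2$ span $T^{\ast}_x{\mathbb P}^3$, hence their images span $T^{\ast}_xQ_2$, so $X_0,X_1,X_2$ generate ${\mathfrak m}_{x,Q_2}$; therefore the image of the transpose map ${\mathcal O}_{Q_2}^{\oplus 3} \to {\mathcal O}_{Q_2}(1,1)$ is ${\mathcal I}_{x,Q_2}(1,1)$, and dualizing the displayed sequence yields $0 \to ({\mathcal F}\vert_{Q_2})^{\vee} \to {\mathcal O}_{Q_2}^{\oplus 3} \to {\mathcal I}_{x,Q_2}(1,1) \to 0$ together with ${\mathcal Ext}^1({\mathcal F}\vert_{Q_2},{\mathcal O}_{Q_2}) \simeq {\mathcal O}_x$. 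Put ${\mathcal G} := ({\mathcal F}\vert_{Q_2})^{\vee\vee}$, a rank $2$ vector bundle (we are on a smooth surface) with $\det {\mathcal G} = {\mathcal O}_{Q_2}(1,1)$, equal to ${\mathcal F}\vert_{Q_2}$ off $x$, hence also restricting to ${\mathcal O}\oplus{\mathcal O}(1)$ on general ruling lines of either family. Dualizing the last sequence once more produces
\[
0 \longrightarrow {\mathcal O}_{Q_2}(-1,-1) \overset{\alpha}{\longrightarrow} {\mathcal O}_{Q_2}^{\oplus 3} \longrightarrow {\mathcal G} \longrightarrow {\mathcal O}_x \longrightarrow 0
\]
(the left morphism being again $\alpha$), which exhibits the biduality inclusion ${\mathcal F}\vert_{Q_2} \hookrightarrow {\mathcal G}$ with cokernel ${\mathcal O}_x$; in particular $c_2({\mathcal G}) = c_2({\mathcal F}\vert_{Q_2}) - 1 = 1$. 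It remains to identify ${\mathcal G}$. From Riemann--Roch ${\mathrm h}^0({\mathcal G}(-1,0)) \geq \chi({\mathcal G}(-1,0)) = 1$ (one checks ${\mathrm h}^2({\mathcal G}(-1,0)) = 0$ by Serre duality), so there is a nonzero ${\mathcal O}_{Q_2}(1,0) \hookrightarrow {\mathcal G}$; restricting its saturation ${\mathcal O}_{Q_2}(a,b)$ ($a\geq 1$, $b\geq 0$) to general ruling lines forces $a = 1$ and $b \leq 1$; the case $b = 1$ is excluded because then the quotient ${\mathcal G}/{\mathcal O}_{Q_2}(1,1)$ would be ${\mathcal I}_W$ with $\mathrm{length}(W)=1$, forcing ${\mathcal G}\vert_\ell$ to have torsion on a ruling line $\ell$ through $W$, contradicting local freeness of ${\mathcal G}$; and the case $(a,b) = (1,0)$ gives $0 \to {\mathcal O}_{Q_2}(1,0) \to {\mathcal G} \to {\mathcal O}_{Q_2}(0,1) \to 0$, which splits since ${\mathrm{Ext}}^1({\mathcal O}_{Q_2}(0,1),{\mathcal O}_{Q_2}(1,0)) = {\mathrm H}^1({\mathcal O}_{Q_2}(1,-1)) = 0$. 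This is the exact sequence of (i).

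For part (ii) assume $x \notin Q_2$, so ${\mathcal F}\vert_{Q_2}$ is a vector bundle. For the first claim, twist the displayed resolution by ${\mathcal O}_{Q_2}(-D)$ for an effective nonzero divisor $D = (\delta_1,\delta_2)$: then ${\mathrm H}^0({\mathcal O}_{Q_2}(-\delta_1,-\delta_2)^{\oplus 3}) = 0$ and ${\mathrm H}^1({\mathcal O}_{Q_2}(-1-\delta_1,-1-\delta_2)) = 0$ by K\"{u}nneth (each $-1-\delta_i < 0$), hence ${\mathrm H}^0({\mathcal F}\vert_{Q_2}(-D)) = 0$; so no nonzero section of ${\mathcal F}\vert_{Q_2}$ vanishes along a divisor, and since $c_2 = 2 \neq 0$ the zero scheme of any nonzero section is $0$-dimensional of degree $2$. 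For the second claim, twist by ${\mathcal O}_{Q_2}(1,-1)$: since ${\mathrm H}^0({\mathcal O}_{Q_2}(1,-1)) = {\mathrm H}^1({\mathcal O}_{Q_2}(1,-1)) = 0$ while ${\mathrm h}^1({\mathcal O}_{Q_2}(0,-2)) = 1$, the connecting homomorphism gives an isomorphism ${\mathrm H}^0({\mathcal F}\vert_{Q_2}(1,-1)) \xrightarrow{\ \sim\ } {\mathrm H}^1({\mathcal O}_{Q_2}(0,-2))$, so ${\mathrm h}^0({\mathcal F}\vert_{Q_2}(1,-1)) = 1$. The unique section cannot vanish on a divisor $D = (\delta_1,\delta_2)$, for that would give ${\mathcal O}_{Q_2}(\delta_1-1,\delta_2+1) \hookrightarrow {\mathcal F}\vert_{Q_2}$, whereas ${\mathcal F}\vert_{Q_2} \simeq (\pi\vert_{Q_2})^{\ast}T_{{\mathbb P}^2}(-1)$ for the projection $\pi$ from $x$, and every ruling line of $Q_2$ maps isomorphically onto a line of ${\mathbb P}^2$, so ${\mathcal F}\vert_{Q_2}\vert_\ell \simeq {\mathcal O}_\ell \oplus {\mathcal O}_\ell(1)$ for all ruling lines $\ell$ and every sub-line-bundle of ${\mathcal F}\vert_{Q_2}$ has degree $\leq 1$ on each of them, making $\delta_2 + 1 \leq 1$ absurd. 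Hence the section's zero scheme is $0$-dimensional of length $c_2({\mathcal F}\vert_{Q_2}(1,-1)) = 0$, i.e. empty; the case of ${\mathcal O}_{Q_2}(-1,1)$ follows by exchanging the two rulings of $Q_2$.

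The main obstacle lies in part (i): determining the reflexive hull ${\mathcal G} = ({\mathcal F}\vert_{Q_2})^{\vee\vee}$, equivalently the length of the cokernel ${\mathcal G}/{\mathcal F}\vert_{Q_2}$, equivalently $c_2({\mathcal G})$. The coarse invariants of ${\mathcal F}\vert_{Q_2}$ (its Chern classes, its cohomology, its restrictions to lines) do not distinguish ${\mathcal O}(1,0)\oplus{\mathcal O}(0,1)$ from ${\mathcal O}\oplus{\mathcal O}(1,1)$; one genuinely has to use the specific presentation of ${\mathcal F}\vert_{Q_2}$, via the double dualization above — or, equivalently, a local computation of ${\mathcal Ext}^1({\mathcal F}\vert_{Q_2},{\mathcal O}_{Q_2})$ at $x$ — to see that this cokernel is exactly ${\mathcal O}_x$. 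Everything else reduces to routine K\"{u}nneth and Riemann--Roch bookkeeping on ${\mathbb P}^1 \times {\mathbb P}^1$.
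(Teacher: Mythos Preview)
Your overall strategy is sound, but there are two genuine gaps.

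In part~(i), your exclusion of $(a,b)=(1,1)$ is wrong. A saturated inclusion ${\mathcal O}_{Q_2}(1,1)\hookrightarrow{\mathcal G}$ with quotient ${\mathcal I}_W$ does \emph{not} force ${\mathcal G}\vert_\ell$ to have torsion on a line $\ell$ through $W$: ${\mathcal G}$ is locally free, so ${\mathcal G}\vert_\ell$ is locally free for every $\ell$; only the restricted \emph{quotient} ${\mathcal I}_W\vert_\ell$ acquires torsion, and that contradicts nothing. A clean fix is already in your hands: you showed $({\mathcal F}\vert_{Q_2})^\vee\simeq{\mathcal G}(-1,-1)$ sits in $0\to({\mathcal F}\vert_{Q_2})^\vee\to{\mathcal O}_{Q_2}^{\oplus3}\to{\mathcal I}_{x,Q_2}(1,1)\to0$, and since $X_0,X_1,X_2$ stay linearly independent on $Q_2$ the map on $H^0$ is injective, so $h^0({\mathcal G}(-1,-1))=0$, ruling out ${\mathcal O}_{Q_2}(1,1)\hookrightarrow{\mathcal G}$. (The paper takes a different route: it builds a resolution $0\to{\mathcal O}_{Q_2}(-1,0)\oplus{\mathcal O}_{Q_2}(0,-1)\to{\mathcal O}_{Q_2}^{\oplus3}\to{\mathcal I}_{x,Q_2}(1,1)\to0$ directly via the Snake Lemma and then dualizes; since $h^0({\mathcal I}_{x,Q_2}(1,1))=3$, any such presentation has the same kernel up to isomorphism, so this also identifies $({\mathcal F}\vert_{Q_2})^\vee$.)

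In part~(ii), your argument that the section of $({\mathcal F}\vert_{Q_2})(1,-1)$ cannot vanish on a divisor is incomplete: restricting to ruling lines only yields $\delta_2+1\le1$, i.e.\ $\delta_2=0$, which is not ``absurd'' --- it still allows $D=(\delta_1,0)$ with $\delta_1\ge1$. You can close this with the same resolution trick you used for the first claim: $H^0(({\mathcal F}\vert_{Q_2})(1-\delta_1,-1))=0$ for $\delta_1\ge1$ because $H^0({\mathcal O}_{Q_2}(1-\delta_1,-1))=0$ and $H^1({\mathcal O}_{Q_2}(-\delta_1,-2))=0$. The paper's alternative: if the section vanished at a point $y$, then on the line $L\in\vert{\mathcal O}_{Q_2}(1,0)\vert$ through $y$ one has $({\mathcal F}\vert_{Q_2})(1,-1)\vert_L\simeq{\mathcal O}_L(-1)\oplus{\mathcal O}_L$, so the section would vanish on all of $L$, contradicting $H^0(({\mathcal F}\vert_{Q_2})(0,-1))=0$.
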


\begin{proof}
(i) Let $p_1,\, p_2 : Q_2 \rightarrow {\mathbb P}^1$ be the projections 
and put, as in Section~\ref{S:auxq2}, ${\overline {\mathcal S}} = 
{\mathcal O}_{Q_2}(-1,0) \oplus {\mathcal O}_{Q_2}(0,-1)$. Taking the direct 
sum of the exact sequences obtained by applying $p_1^{\ast}$ and $p_2^{\ast}$ 
to the exact sequence:
\begin{equation*}
0 \longrightarrow {\mathcal O}_{{\mathbb P}^1}(-1) \longrightarrow 
{\mathcal O}_{{\mathbb P}^1}^{\oplus 2} \longrightarrow 
{\mathcal O}_{{\mathbb P}^1}(1) \longrightarrow 0
\end{equation*}
one gets an exact sequence 
\begin{equation*}
0 \longrightarrow {\overline {\mathcal S}} \longrightarrow 
{\mathcal O}_{Q_2}^{\oplus 4} \longrightarrow 
{\overline {\mathcal S}}(1,1) \longrightarrow 0\, .
\end{equation*}   
On the other hand, $\{x\}$ is the intersection of the lines $p_1^{-1}(p_1(x))$ 
and $p_2^{-1}(p_2(x))$, hence one has an exact sequence: 
\begin{equation*}
0 \longrightarrow {\mathcal O}_{Q_2}(-1,-1) \longrightarrow 
{\overline {\mathcal S}} \longrightarrow {\mathcal I}_{\{x\},Q_2} 
\longrightarrow 0\, .
\end{equation*}
Applying the Snake Lemma to the diagram:
\begin{equation*}
\begin{CD}
0 @>>> {\mathcal O}_{Q_2} @>>> {\mathcal O}_{Q_2}^{\oplus 4} @>>> 
{\mathcal O}_{Q_2}^{\oplus 3} @>>> 0\\
@. @VVV @VVV @VVV \\
0 @>>> {\mathcal O}_{Q_2} @>>> {\overline {\mathcal S}}(1,1)   
@>>> {\mathcal I}_{\{x\},Q_2}(1,1) @>>> 0
\end{CD}
\end{equation*}
(in which the right vertical morphism is the evaluation 
morphism) one deduces an exact sequence:
\begin{equation*}
0 \longrightarrow  {\mathcal O}_{Q_2}(-1,0) \oplus {\mathcal O}_{Q_2}(0,-1) 
\longrightarrow {\mathcal O}_{Q_2}^{\oplus 3} \longrightarrow 
{\mathcal I}_{\{x\},Q_2}(1,1) \longrightarrow 0\, .
\end{equation*}
Applying ${\mathcal Hom}_{{\mathcal O}_{Q_2}}(-,{\mathcal O}_{Q_2})$ to this 
exact sequence one gets and exact sequence:
\begin{equation*}
0 \longrightarrow {\mathcal O}_{Q_2}(-1,-1) \longrightarrow 
{\mathcal O}_{Q_2}^{\oplus 3} \longrightarrow 
{\mathcal O}_{Q_2}(1,0) \oplus {\mathcal O}_{Q_2}(0,1) \longrightarrow 
{\mathcal O}_{\{x\}} \longrightarrow 0
\end{equation*}
because ${\mathcal Ext}^1_{{\mathcal O}_{Q_2}}({\mathcal I}_{\{x\},Q_2},
{\mathcal O}_{Q_2}) \simeq {\mathcal O}_{\{x\}}$. 

(ii) The zero scheme of any non-zero global section of $\mathcal F$  
is a line $L$ in ${\mathbb P}^3$ passing through $x$ and one has an 
exact sequence: 
\begin{equation*}
0 \longrightarrow {\mathcal O}_{{\mathbb P}^3} \longrightarrow 
{\mathcal F} \longrightarrow {\mathcal I}_L(1) \longrightarrow 0\, .
\end{equation*}
Such a line intersects $Q_2$ in a 0-dimensional scheme of degree 2.  
The first assertion from the 
second point of the conclusion is now clear. As for the last assertion, 
tensorizing by ${\mathcal O}_{Q_2}(1,-1)$ the exact sequence:
\begin{equation*}
0 \longrightarrow {\mathcal O}_{Q_2}(-1,-1) \longrightarrow 
{\mathcal O}_{Q_2}^{\oplus 3} \longrightarrow {\mathcal F}\vert_{Q_2}  
\longrightarrow 0
\end{equation*}
and taking global sections, one deduces that 
$\text{h}^0(({\mathcal F}\vert_{Q_2})(1,-1)) = 1$. 
Let $s$ be a non-zero global section of 
$({\mathcal F}\vert_{Q_2})(1,-1)$ and suppose that $s$ vanishes in a 
point $y \in Q_2$. Let $L$ be the line belonging to the linear system 
$\vert \, {\mathcal O}_{Q_2}(1,0)\, \vert$ passing through $y$. Since 
$x \notin L$, one sees easily that ${\mathcal F}\vert_L \simeq 
{\mathcal O}_L \oplus {\mathcal O}_L(1)$, hence 
$({\mathcal F}\vert_{Q_2})(1,-1)\vert_L \simeq {\mathcal O}_L(-1) 
\oplus {\mathcal O}_L$. Since $s$ vanishes in $y \in L$, $s$ must vanish on 
$L$. But: 
\begin{equation*}
({\mathcal F}\vert_{Q_2})(1,-1)\otimes {\mathcal O}_{Q_2}(-L) \simeq 
({\mathcal F}\vert_{Q_2})(0,-1)
\end{equation*}
and $\text{H}^0(({\mathcal F}\vert_{Q_2})(0,-1)) = 0$, hence $s = 0$, 
a contradiction. 
\end{proof} 

The next result, in which we use the method described in 
Section~\ref{S:standard}, is the key point in the proof of 
Theorem~\ref{T:nonstblhyp}. 

\begin{prop}\label{P:applstandconstr}
Let $\mathcal E$ be a stable rank $2$ reflexive sheaf on a nonsingular 
quadric threefold $Q = Q_3 \subset {\mathbb P}^4$, with 
${\fam0 det}\, {\mathcal E} \simeq {\mathcal O}_Q(c_1)$, $c_1 = 0$ or $-1$, 
and with $c_2 > 2 + c_1$. Let $\Sigma \subset {\mathbb P}^{4\vee}$ be a 
reduced and irreducible hypersurface, $\Sigma \neq Q^{\vee}$ and 
$\Sigma \neq x^{\vee}$, $\forall x \in {\fam0 Sing}\, {\mathcal E}$. 

If ${\mathcal U}_{\fam0 s}(\mathcal E) \cap 
{\mathcal U}_{\fam0 gm}(\mathcal E) \cap \Sigma = \emptyset$ then, 
$\forall h \in {\mathcal U}_{\fam0 ss}(\mathcal E) \cap \Sigma$, there exists 
a line $L \subset H \cap Q$ such that ${\mathcal E}\vert_L \simeq 
{\mathcal O}_L(c_2 + c_1) \oplus {\mathcal O}_L(-c_2)$. 
\end{prop}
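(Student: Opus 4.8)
The plan is to reduce the statement to a \emph{general} point $h \in \Sigma$ and then to produce the jumping line by the Standard Construction of Section~\ref{S:standard}.

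\emph{Step 1: reduction to general $h$.} First I would observe that the set of $h \in {\mathbb P}^{4\vee}$ for which $H \cap Q$ contains a line $L$ avoiding ${\fam0 Sing}\, {\mathcal E}$ with ${\mathcal E}\vert_L \simeq {\mathcal O}_L(a)\oplus{\mathcal O}_L(-a+c_1)$, $a \geq c_2+c_1$, is \emph{closed}: take the locus $J$ (closed by semicontinuity) inside the open subset $W$ of ${\mathbb P}(U) = {\mathbb P}^3$ parametrizing such lines, form its closure ${\overline J}$ in ${\mathbb P}^3$, and push ${\overline J}$ forward along $q_1 \circ p_1^{-1}$ using the incidence diagram of par.~\ref{SS:linesonhyp}, on whose fibres $p_1$ is a closed immersion. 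For $h \in {\mathcal U}_{\fam0 ss}({\mathcal E}) \cap \Sigma$ the restriction $F_h := {\mathcal E}\vert_{H\cap Q}$ is semistable, so Lemma~\ref{L:jump} gives $a \leq c_2+c_1$ for \emph{every} line $L \subseteq H \cap Q$; hence such an $L$, if it exists, automatically has ${\mathcal E}\vert_L \simeq {\mathcal O}_L(c_2+c_1)\oplus{\mathcal O}_L(-c_2)$. Thus it suffices to prove that a general $h \in \Sigma$ admits a line of this type in $H \cap Q$.

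\emph{Step 2: the Standard Construction.} For general $h$, by Prop.~\ref{P:gmss}(i) (the complements of ${\mathcal U}_{\fam0 gm}$, ${\mathcal U}_{\fam0 ss}$ in ${\mathcal U}({\mathcal E})$ have codimension $\geq 2$) and the hypothesis, $F_h$ is semistable, not stable, with the Grauert--M\"ulich property, so by Lemma~\ref{L:gms}(ii) it has a \emph{unique} destabilizing line subsheaf ${\mathcal L}_h$, equal to ${\mathcal O}_{H\cap Q}$ if $c_1 = 0$ and to one of ${\mathcal O}_{H\cap Q}(-1,0)$, ${\mathcal O}_{H\cap Q}(0,-1)$ if $c_1 = -1$. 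I would then introduce the incidence variety ${\mathcal Y} \subseteq Q \times \Sigma$ of pairs $(y,h)$ with $y \in H$, with projections $\alpha : {\mathcal Y} \to Q$, $\beta : {\mathcal Y} \to \Sigma$; the fibres of $\beta$ are the hyperplane sections $H \cap Q$ and those of $\alpha$ over $y$ are $y^\vee \cap \Sigma$, of pure dimension $2$ with irreducible general member by Lemma~\ref{L:sigma} (after deleting from $Q$ the at most one point $y_0$ with $\Sigma = y_0^\vee$, should it lie outside ${\fam0 Sing}\, {\mathcal E}$). Over $Q \setminus {\fam0 Sing}\, {\mathcal E}$ and a dense open of $\Sigma$ the sheaves ${\mathcal L}_h$ glue into a saturated rank $1$ subsheaf ${\mathcal E}^{\prime} \subseteq \alpha^{\ast}{\mathcal E}$, obtained as the saturation of the image of the evaluation map coming from $\beta_{\ast}(\alpha^{\ast}{\mathcal E})$ (case $c_1 = 0$), resp.\ from $\beta_{\ast}{\mathcal Hom}({\mathcal S}',\alpha^{\ast}{\mathcal E})$ (case $c_1 = -1$, where ${\mathcal S}'$ is the rank $2$ bundle on ${\mathcal Y}$ restricting to ${\overline {\mathcal S}} = {\mathcal S}\vert_{H\cap Q}$ on the fibres of $\beta$; passing to ${\mathcal S}'$ circumvents the monodromy between the two rulings), with ${\mathcal E}^{\prime}\vert_{H\cap Q} = {\mathcal L}_h$ on the general fibre. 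Applying Prop.~\ref{P:standconstr} to $\alpha$ (restricted to suitable smooth dense open subsets): were the second fundamental form $\psi : {\mathcal E}^{\prime} \to {\Omega}_{{\mathcal Y}/Q}\otimes{\mathcal E}^{\prime\prime}$ generically zero, then ${\mathcal E}^{\prime} = \alpha^{\ast}{\mathcal O}_Q(\delta)$ and ${\mathcal L}_h = {\mathcal O}_{H\cap Q}(\delta,\delta)$ on a general fibre — impossible for $c_1 = -1$, and forcing ${\mathcal O}_Q \hookrightarrow {\mathcal E}$ for $c_1 = 0$, contradicting the stability of ${\mathcal E}$. Hence $\psi$ has generic rank $\geq 1$, so its restriction to a general fibre $H\cap Q$ is a \emph{non-zero} map ${\mathcal L}_h \to {\Omega}_{{\mathcal Y}/Q}\vert_{H\cap Q}\otimes{\mathcal E}^{\prime\prime}_h$.

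\emph{Step 3: reading off the line.} Since the fibres of ${\mathcal Y} \to Q$ coincide with those of $q^{-1}(\Sigma) \to {\mathbb P}^4$ (notation of Lemma~\ref{L:sigma}), ${\Omega}_{{\mathcal Y}/Q}$ is the restriction of ${\Omega}_{q^{-1}(\Sigma)/{\mathbb P}^4}$, and combining Lemma~\ref{L:sigma}(i) (here $m = 3$, so $T_h\Sigma = p(h)^\vee$ for a point $p(h) \in H$) with Lemma~\ref{L:efrestrq2} gives ${\Omega}_{{\mathcal Y}/Q}\vert_{H\cap Q} \simeq {\mathcal F}\vert_{H\cap Q}$, where ${\mathcal F}$ is the rank $2$ reflexive sheaf on ${\mathbb P}^3 = H$ of Lemma~\ref{L:efrestrq2} attached to $x = p(h)$. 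Writing $F_h$ as in Lemma~\ref{L:gms}(ii) with cokernel ${\mathcal I}_{Z_h}$ (case $c_1 = 0$, $\deg Z_h = c_2$) or ${\mathcal I}_{Z_h}(0,-1)$ (case $c_1 = -1$, ${\mathcal L}_h = {\mathcal O}(-1,0)$, $\deg Z_h = c_2-1$), with $p(h) \notin Z_h$ for general $h$, the non-zero map of Step 2 produces a non-zero section of ${\mathcal F}\vert_{H\cap Q}$, resp.\ of $({\mathcal F}\vert_{H\cap Q})(1,-1)$, vanishing on $Z_h$. If $p(h) \notin Q$, Lemma~\ref{L:efrestrq2}(ii) says such a section vanishes on a length-$2$ scheme, resp.\ nowhere, contradicting $\deg Z_h = c_2 > 2$, resp.\ $\deg Z_h = c_2 - 1 \geq 1$; hence $p(h) \in Q$, and Lemma~\ref{L:efrestrq2}(i) presents ${\mathcal F}\vert_{H\cap Q}$ as $\ker({\mathcal O}_{H\cap Q}(1,0)\oplus{\mathcal O}_{H\cap Q}(0,1) \to {\mathcal O}_{p(h)})$. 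Using $H^0({\mathcal O}(2,-1)) = 0$ in the twisted case, one of the two components of our section must vanish, for otherwise $Z_h$ would lie on the intersection of a line from each ruling; therefore $Z_h$ lies on a line $L \subseteq H\cap Q$ through $p(h)$. Finally, restricting to $L$ the section defining the extension (viewed as a section of $F_h(1,0)$ when $c_1 = -1$, so that on the ruling line $L$ one has $F_h(1,0)\vert_L \simeq F_h\vert_L$), its zero scheme is $Z_h \cap L = Z_h$ of length $c_2 + c_1$, so the positive summand of $F_h\vert_L$ has degree $\geq c_2 + c_1$; Lemma~\ref{L:jump} gives equality, i.e.\ $F_h\vert_L \simeq {\mathcal O}_L(c_2+c_1)\oplus{\mathcal O}_L(-c_2)$. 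With Step 1 this finishes the proof.

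The main obstacle I anticipate is Step 3: the identification ${\Omega}_{{\mathcal Y}/Q}\vert_{H\cap Q} \simeq {\mathcal F}\vert_{H\cap Q}$ and the case distinction according to whether $p(h)$ lies on $Q$. A secondary but delicate point is the construction of ${\mathcal E}^{\prime}$ — in particular dealing with the monodromy between the two rulings when $c_1 = -1$ — and verifying that the hypotheses of Prop.~\ref{P:standconstr} (smoothness, dimensions and irreducibility of the fibres of $\alpha$) really hold after the various restrictions to open subsets.
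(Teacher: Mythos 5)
Your proof is correct and follows essentially the same route as the paper's: the Standard Construction applied to the incidence variety over $Q$ of the hyperplanes parametrized by $\Sigma$, the identification of the restricted relative differentials with ${\mathcal F}\vert_{H\cap Q}$ via Lemma~\ref{L:sigma}(i) and Lemma~\ref{L:efrestrq2}, and the dichotomy according to whether the point dual to ${\fam0 T}_h\Sigma$ lies on $Q$. The only divergence is cosmetic: you pass from general $h$ to all $h\in{\mathcal U}_{\fam0 ss}({\mathcal E})\cap\Sigma$ by a properness/semicontinuity argument on the incidence variety of lines, whereas the paper invokes the cohomological closedness criterion of Corollary~\ref{C:jump2}.
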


\begin{proof}
Consider the incidence diagram from Lemma~\ref{L:relomega} for $n = 4$ and 
$d = 3$, i.e, for ${\mathbb P}^n = {\mathbb P}^4$ and 
${\mathbb G}_d({\mathbb P}^n) = {\mathbb P}^{4\vee}$. Let 
${\Sigma}_{\text{reg}}$ be the set of nonsingular points of $\Sigma$, let 
${\Sigma}^{\prime} := {\mathcal U}_{\text{ss}}(\mathcal E) \cap 
{\mathcal U}_{\text{gm}}(\mathcal E) \cap {\Sigma}_{\text{reg}}$, let 
$X^{\prime} := p^{-1}(Q) \cap q^{-1}({\Sigma}^{\prime})$ and consider the 
induced diagram: 
\begin{equation*}
\begin{CD}
X^{\prime} @>{q^{\prime}}>> {\Sigma}^{\prime}\\
@V{p^{\prime}}VV \\
Q
\end{CD}
\end{equation*}
Proposition \ref{P:gmss}(i) implies that $\Sigma^\prime \neq \emptyset$. 
By the proof of Lemma~\ref{L:gms}, it follows that, $\forall h \in 
{\Sigma}^{\prime}$, one has, in the case $c_1 = 0$, an exact sequence: 
\begin{equation*}
0 \longrightarrow {\mathcal O}_{H \cap Q} \longrightarrow 
{\mathcal E}\vert_{H \cap Q} \longrightarrow 
{\mathcal I}_{Z,H \cap Q} \longrightarrow 0 
\end{equation*}
where $Z$ is a 0-dimensional subscheme of $H \cap Q$ with $\text{deg}\, Z = 
c_2$, and, in the case $c_1 = -1$, one has an exact sequence of one of the 
forms:
\begin{gather*}
0 \longrightarrow {\mathcal O}_{H \cap Q}(-1,0) \longrightarrow 
{\mathcal E}\vert_{H \cap Q} \longrightarrow 
{\mathcal I}_{Z,H \cap Q}(0,-1) \longrightarrow 0 \\
0 \longrightarrow {\mathcal O}_{H \cap Q}(0,-1) \longrightarrow 
{\mathcal E}\vert_{H \cap Q} \longrightarrow 
{\mathcal I}_{Z,H \cap Q}(-1,0) \longrightarrow 0
\end{gather*}
where $Z$ is a 0-dimensional subscheme of $H \cap Q$ with $\text{deg}\, Z = 
c_2 - 1$. 

It follows that, in the case $c_1 = 0$ (resp., $c_1 = -1$),  
$q^{\prime}_{\ast}p^{\prime \ast}{\mathcal E}$ (resp., 
$q^{\prime}_{\ast}p^{\prime \ast}{\mathcal Hom}_{{\mathcal O}_Q}
({\mathcal S},{\mathcal E})$) is a line bundle on ${\Sigma}^{\prime}$. In the 
case $c_1 = 0$, the image of the canonical morphism 
$q^{\prime \ast}q^{\prime}_{\ast}p^{\prime \ast}{\mathcal E} \rightarrow 
p^{\prime \ast}{\mathcal E}$ is a saturated subsheaf ${\mathcal E}^{\prime}$ of 
$p^{\prime \ast}{\mathcal E}$ such that 
${\mathcal E}^{\prime}\vert_{q^{\prime\, -1}(h)} \simeq 
{\mathcal O}_{H \cap Q}$, $\forall h \in {\Sigma}^{\prime}$. In the case 
$c_1 = -1$, the image of the composite morphism:
\begin{equation*}
p^{\prime \ast}{\mathcal S} \otimes 
q^{\prime \ast}q^{\prime}_{\ast}p^{\prime \ast}{\mathcal Hom}
({\mathcal S},{\mathcal E}) 
\longrightarrow 
p^{\prime \ast}{\mathcal S} \otimes 
p^{\prime \ast}{\mathcal Hom}({\mathcal S},{\mathcal E})
\longrightarrow 
p^{\prime \ast}{\mathcal E}
\end{equation*}
is a saturated subsheaf ${\mathcal E}^{\prime}$ of 
$p^{\prime \ast}{\mathcal E}$ such that 
${\mathcal E}^{\prime}\vert_{q^{\prime\, -1}(h)} \simeq 
{\mathcal O}_{H \cap Q}(-1,0)$ or to ${\mathcal O}_{H \cap Q}(0,-1)$, 
$\forall h \in {\Sigma}^{\prime}$. Let ${\mathcal E}^{\prime \prime} := 
p^{\prime \ast}{\mathcal E}/{\mathcal E}^{\prime}$. 

We have to consider two cases: 1) $\Sigma \neq x^{\vee}$, $\forall x \in Q$, 
and 2) $\Sigma = x_0^{\vee}$, for some $x_0 \in Q$. In Case 1, one applies 
Prop.~\ref{P:standconstr} to $p^{\prime} : X^{\prime} \rightarrow Y := 
Q \setminus \text{Sing}\, {\mathcal E}$ and to the exact sequence 
$0 \rightarrow {\mathcal E}^{\prime} \longrightarrow p^{\prime \ast} 
{\mathcal E} \rightarrow {\mathcal E}^{\prime \prime} \rightarrow 0$. 
In Case 2, one applies Prop.~\ref{P:standconstr} to the restriction of  
$p^{\prime} : X^{\prime}\setminus p^{\prime\, -1}(x_0) \rightarrow 
Y\setminus \{x_0\}$ and to the above exact sequence restricted to 
$X^{\prime}\setminus p^{\prime\, -1}(x_0)$. The hypothesis of 
Prop.~\ref{P:standconstr} is satisfied by Lemma~\ref{L:sigma}. Since there 
exists 
no saturated subsheaf ${\overline {\mathcal E}}^{\, \prime}$ of 
${\mathcal E}\vert_Y$ 
(resp., of ${\mathcal E}\vert_{Y\setminus \{x_0\}}$ in 
Case 2) such that ${\overline {\mathcal E}}^{\, \prime}\vert_{H \cap Q}$ 
(resp., ${\overline {\mathcal E}}^{\, \prime}\vert_{H \cap Q \setminus \{x_0\}}$  
in Case 2) is isomorphic to ${\mathcal O}_{H \cap Q}$ for $c_1 = 0$ 
and to ${\mathcal O}_{H \cap Q}(-1,0)$ or to ${\mathcal O}_{H \cap Q}(0,-1)$ 
for $c_1 = -1$ (resp., to their restrictions to $H \cap Q \setminus \{x_0\}$ 
in Case 2), $\forall h \in {\Sigma}^{\prime}$,  
one deduces, from Prop.~\ref{P:standconstr}, that the second 
fundamental form ${\mathcal E}^{\prime} \rightarrow {\Omega}_{X^{\prime}/Q} 
\otimes {\mathcal E}^{\prime \prime}$ has generically rank $\geq 1$. 

It follows that, for a general $h \in {\Sigma}^{\prime}$, the restriction of 
the second fundamental form to $q^{\prime \, -1}(h)$ has generically rank 
$\geq 1$. By Lemma~\ref{L:sigma}(i), this restriction can be identified with 
a morphism of one of the forms: 
\begin{gather*}
{\mathcal O}_{H\cap Q} \longrightarrow ({\mathcal F}\vert_{H\cap Q}) 
\otimes {\mathcal I}_{Z,H\cap Q} \  \  (\text{for}\  c_1 = 0)\\
{\mathcal O}_{H\cap Q}(-1,0) \longrightarrow ({\mathcal F}\vert_{H\cap Q}) 
\otimes {\mathcal I}_{Z,H\cap Q}(0,-1) \  \  (\text{for}\  c_1 = -1)\\
{\mathcal O}_{H\cap Q}(0,-1) \longrightarrow ({\mathcal F}\vert_{H\cap Q}) 
\otimes {\mathcal I}_{Z,H\cap Q}(-1,0) \  \  (\text{for}\  c_1 = -1)
\end{gather*}
where $\mathcal F$ is the rank 2 reflexive sheaf on $H \simeq {\mathbb P}^3$ 
defined by an exact sequence: 
\begin{equation*}
0 \longrightarrow {\mathcal O}_H(-1) \longrightarrow 
{\mathcal O}_H^{\oplus \, 3} \longrightarrow {\mathcal F} \longrightarrow 0
\end{equation*}
with the left morphism equal to the dual of an epimorphism 
${\mathcal O}_H^{\oplus \, 3} \rightarrow {\mathcal I}_{\{x\},H}(1)$, $x$ being 
the point of $H$ corresponding to the tangent hyperplane 
$\text{T}_h\Sigma \subset {\mathbb P}^{4\vee}$. From the fact that the 
restriction of the second fundamental form to $q^{\prime \, -1}(h)$  has 
generically rank $\geq 1$, one deduces that 
$({\mathcal F}\vert_{H \cap Q})^{\ast \ast}$ (for $c_1 = 0$) and 
$({\mathcal F}\vert_{H \cap Q})^{\ast \ast}(1,-1)$ or 
$({\mathcal F}\vert_{H \cap Q})^{\ast \ast}(-1,1)$ (for $c_1 = -1$) has a 
global section vanishing on $Z$. Using Lemma~\ref{L:efrestrq2} and the fact 
that $c_2 > 2 + c_1$, one excludes the case where $x \notin H \cap Q$. It 
remains that $x \in H \cap Q$ and that 
$({\mathcal F}\vert_{H \cap Q})^{\ast \ast} \simeq 
{\mathcal O}_{H \cap Q}(1,0) \oplus {\mathcal O}_{H \cap Q}(0,1)$. One deduces, 
in the case $c_1 = 0$, that 
$\text{H}^0({\mathcal I}_{Z,H \cap Q}(1,0)) \neq 0$ or 
$\text{H}^0({\mathcal I}_{Z,H \cap Q}(0,1)) \neq 0$, and, in the case 
$c_1 = -1$, one deduces that 
$\text{H}^0({\mathcal I}_{Z,H \cap Q}(1,0)) \neq 0$ resp., that 
$\text{H}^0({\mathcal I}_{Z,H \cap Q}(0,1)) \neq 0$. It results that $H \cap Q$ 
contains a line $L$ as in the statement. 

We have proved, so far, that, for a general $h \in {\mathcal U}_{\text{ss}}
(\mathcal E) \cap \Sigma$, the conclusion of the Proposition is fulfilled. 
But, by Cor.~\ref{C:jump2}, the set of the points 
$h \in {\mathcal U}_{\text{ss}}(\mathcal E) \cap \Sigma$ satisfying the 
conclusion of the Proposition is closed in 
${\mathcal U}_{\text{ss}}(\mathcal E) \cap \Sigma$, hence it must be the whole 
of ${\mathcal U}_{\text{ss}}(\mathcal E) \cap \Sigma$. 
\end{proof}

\begin{rmk}\label{R:correctioneinsols}
Let $\mathcal E$ be a stable rank two reflexive sheaf on $Q = Q_3 \subset 
{\mathbb P}^4$ with ${\fam0 det}\, {\mathcal E} \simeq {\mathcal O}_Q(c_1)$, 
$c_1 = 0$ or $-1$. If ${\mathcal U}_{\fam0 s}({\mathcal E}) \cap 
{\mathcal U}_{\fam0 gm}({\mathcal E}) = \emptyset$ then ${\mathcal E} \simeq 
{\mathcal S}$. 

{\it Indeed}, the case $c_1 = 0$ was settled in \cite[Thm.~1.6]{ES84}. Let us 
assume that $c_1 = -1$. Consider the incidence diagram from 
Lemma~\ref{L:relomega}, for $n = 4$ and $d = 3$. It induces a diagram: 
\begin{equation*}
\begin{CD}
X @>{\overline q}>> {\mathbb P}^{4\vee}\\ 
@V{\overline p}VV \\ 
Q
\end{CD}
\end{equation*}
where $X = p^{-1}(Q)$. It follows, from the first part of the proof of 
Prop.~\ref{P:applstandconstr}, that if $c_2({\mathcal E}) = c_2[L]$, with 
$c_2 > 1$, then there exists a line bundle ${\mathcal E}^{\prime}$ on 
${\overline q}^{\, -1}({\mathcal U}_{\fam0 ss}({\mathcal E}) \cap 
{\mathcal U}_{\fam0 gm}({\mathcal E}))$ such that 
${\mathcal E}^{\prime}\vert_{{\overline q}^{\, -1}(h)} \simeq 
{\mathcal O}_{H \cap Q}(-1,0)$ or to ${\mathcal O}_{H \cap Q}(0,-1)$, 
$\forall h \in {\mathcal U}_{\fam0 ss}({\mathcal E}) \cap  
{\mathcal U}_{\fam0 gm}({\mathcal E})$. But this is not possible because, 
$X$ being a ${\mathbb P}^3$-bundle over $Q$, $\text{Pic}\, X \simeq 
{\overline p}^{\, \ast}\, \text{Pic}\, Q \oplus 
{\overline q}^{\, \ast}\, \text{Pic}\, {\mathbb P}^{4\vee}$. 

It remains that $c_2 = 1$ and that ${\mathcal E}\vert_{H \cap Q} \simeq 
{\mathcal O}_{H \cap Q}(-1,0) \oplus {\mathcal O}_{H \cap Q}(0,-1)$, 
$\forall h \in {\mathcal U}_{\fam0 ss}({\mathcal E}) \cap  
{\mathcal U}_{\fam0 gm}({\mathcal E})$. One uses, now, the fact, which can 
be verified as in the proof of \cite[Theorem~2.11(ii)]{Ott88}, that if 
${\mathcal F}$ is a rank 2 reflexive sheaf on $Q$ such that 
${\mathcal F}\vert_{H \cap Q} \simeq {\mathcal S}\vert_{H \cap Q}$ for some 
hyperplane $H \subset {\mathbb P}^4$ avoiding the singular points of 
${\mathcal F}$ (but not necessarily cutting $Q$ transversally) then 
${\mathcal F} \simeq {\mathcal S}$.       
\end{rmk}

\begin{lem}\label{L:dimjump}
Let $\mathcal E$ be a stable rank two reflexive sheaf on $Q = Q_3 \subset 
{\mathbb P}^4$ with ${\fam0 det}\, {\mathcal E} \simeq {\mathcal O}_Q(c_1)$, 
$c_1 = 0$ or $-1$, and with $c_2 > - c_1$. Then the set of points $\ell \in 
{\mathbb P}(U) = {\mathbb P}^3$ corresponding to the lines $L \subset Q$ 
which either pass through a singular point of $\mathcal E$ or have the 
property that ${\mathcal E}\vert_L \simeq {\mathcal O}_L(a) \oplus 
{\mathcal O}_L(-a+c_1)$, $a \geq c_2 + c_1$, is a closed subset of 
${\mathbb P}^3$, of dimension $\leq 1$. 
\end{lem}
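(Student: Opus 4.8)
The plan is to write the set under consideration as $B_0 \cup B_1$, where $B_0$ is the set of $\ell$ for which $L$ meets the finite set $\text{Sing}\, {\mathcal E}$ and $B_1$ is the set of $\ell \notin B_0$ for which ${\mathcal E}\vert_L \simeq {\mathcal O}_L(a) \oplus {\mathcal O}_L(-a+c_1)$ with $a \geq c_2+c_1$. Since ${\mathcal E}$ is reflexive of rank $2$ on the smooth threefold $Q$, $\text{Sing}\, {\mathcal E}$ is finite, and for each $x \in \text{Sing}\, {\mathcal E}$ the lines of $Q$ through $x$ are the lines of the quadric cone $\text{T}_xQ \cap Q$, hence are parametrized by a conic in ${\mathbb P}(U) = {\mathbb P}^3$; thus $B_0$ is closed of dimension $\leq 1$. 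Using the ${\mathbb P}^1$-bundle $q : {\mathbb F}_{0,1}(Q) \to {\mathbb P}^3$ of par.~\ref{SS:varlines} and the upper semicontinuity of $\ell \mapsto \text{h}^0({\mathcal E}\vert_L(-(c_2+c_1)))$ along its fibres (note $c_2+c_1 \geq 1$, as $c_2 > -c_1$), one sees that $B_1$ is closed in the open set ${\mathbb P}^3 \setminus B_0$; hence $\overline{B_1} \setminus B_1 \subseteq B_0$, so $B_0 \cup B_1 = B_0 \cup \overline{B_1}$ is closed and it suffices to prove that $\dim B_1 \leq 1$.

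Assume, for contradiction, that $\dim B_1 \geq 2$, and let $V \subset {\mathbb P}^3$ be the closure of an irreducible component of $B_1$ of dimension $\geq 2$; then $V$ is irreducible and $V \setminus B_0 \subset B_1$ is dense in $V$. Consider the incidence variety ${\mathbb F}_{1,2}(Q) \simeq {\mathbb P}(M_\omega) \subset {\mathbb P}^3 \times {\mathbb P}^{4\vee}$ of par.~\ref{SS:linesonhyp}, with its projections $p_1$ and $q_1$: the fibre of $p_1$ over $\ell$ is the $2$-plane $L^\vee$, while the fibres of $q_1$ have dimension $\leq 1$ (for $h \notin Q^\vee$, the disjoint union of the two rulings of $H \cap Q$, as recalled there). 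Therefore $p_1^{-1}(V) \simeq {\mathbb P}(M_\omega\vert_V)$ is irreducible of dimension $\dim V + 2 \geq 4$, and $\Sigma := \overline{q_1(p_1^{-1}(V))}$ is irreducible of dimension $3$ or $4$ and contains $L^\vee$ for every $\ell \in V$. Since a smooth quadric threefold contains no $2$-plane, $\Sigma \neq Q^\vee$; and $\Sigma = x^\vee$ for some point $x$ would force $x \in L$ for all $\ell \in V$, i.e.\ $V$ inside the conic of lines through $x$, contradicting $\dim V \geq 2$. As $\Sigma$ is thus neither $Q^\vee$ nor any $x^\vee$, it is not contained in the complement of ${\mathcal U}({\mathcal E})$, and since the complement of ${\mathcal U}_{\text{ss}}({\mathcal E})$ in ${\mathcal U}({\mathcal E})$ has codimension $\geq 2$ (Proposition~\ref{P:gmss}(i)), a general point $h \in \Sigma$ lies in ${\mathcal U}_{\text{ss}}({\mathcal E})$ and has $H \cap Q$ a smooth quadric surface disjoint from $\text{Sing}\, {\mathcal E}$; moreover Lemma~\ref{L:jump} and the other results of Section~\ref{S:auxq2} apply to $F := {\mathcal E}\vert_{H\cap Q}$, which has the same invariants $c_1$, $c_2$ as ${\mathcal E}$.

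Fix such a general $h$. Because $q_1(p_1^{-1}(V))$ is dense in $\Sigma$, there is a line $L \subset H\cap Q$ with $\ell \in V$; since $L$ avoids $\text{Sing}\, {\mathcal E}$, in fact $\ell \in V\setminus B_0 \subset B_1$, so $a({\mathcal E}\vert_L) \geq c_2+c_1$, while Lemma~\ref{L:jump} (applied to the semistable $F$) gives $a({\mathcal E}\vert_L) \leq c_2+c_1$; hence $a({\mathcal E}\vert_L) = c_2+c_1 \geq 1$ and, again by Lemma~\ref{L:jump}, $F$ is not stable. If $\Sigma = {\mathbb P}^{4\vee}$ and $\dim V = 2$, this already contradicts the density in ${\mathbb P}^{4\vee}$ of ${\mathcal U}_{\text{s}}({\mathcal E})$, which is open (Remark~\ref{R:useopen}) and nonempty --- the latter because ${\mathcal E} \not\simeq {\mathcal S}$ (the spinor bundle has $c_2 = -c_1$), so ${\mathcal U}_{\text{s}}({\mathcal E}) \cap {\mathcal U}_{\text{gm}}({\mathcal E}) \neq \emptyset$ by Remark~\ref{R:correctioneinsols}. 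In the remaining cases a dimension count shows that, for a general $h \in \Sigma$, the lines $L \subset H\cap Q$ with $\ell \in V$ form a $1$-dimensional subset of $q_1^{-1}(h) \simeq {\mathbb P}^1 \sqcup {\mathbb P}^1$, hence contain a full ruling ${\mathcal R}$; as above, every $L \in {\mathcal R}$ has $a({\mathcal E}\vert_L) = c_2+c_1$, so, fixing $L_0 \in {\mathcal R}$, Lemma~\ref{L:jump} exhibits $F$ as an extension whose quotient is a twist of the ideal sheaf of a $0$-dimensional subscheme $Z \subset L_0$; restricting that extension to a line $L' \in {\mathcal R}$ distinct from (hence disjoint from) $L_0$, and therefore disjoint from $Z$, yields ${\mathcal E}\vert_{L'} \simeq {\mathcal O}_{L'} \oplus {\mathcal O}_{L'}(c_1)$, so $a({\mathcal E}\vert_{L'}) = 0$, against $a({\mathcal E}\vert_{L'}) = c_2+c_1 \geq 1$. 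This contradiction gives $\dim B_1 \leq 1$.

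The crux is the second paragraph: from a hypothetical $2$-dimensional family of lines on which ${\mathcal E}$ is as unbalanced as allowed, one has to manufacture a hypersurface $\Sigma$ that is \emph{generic enough} --- in particular distinct from $Q^\vee$ and from every $x^\vee$ --- so that the restriction of ${\mathcal E}$ to a general hyperplane section $H\cap Q$ with $h \in \Sigma$ remains semistable; that is precisely what allows Lemma~\ref{L:jump} and the rest of Section~\ref{S:auxq2} to be invoked. The subsequent dimension counts, and the disposal of the two cases by the squeeze $c_2+c_1 \leq a \leq c_2+c_1$ combined with the structure of the extension from Lemma~\ref{L:jump}, are routine.
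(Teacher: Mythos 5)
Your proof is correct, but it takes a genuinely different — and considerably longer — route to the dimension bound than the paper does. The paper's argument is direct: by \cite[Thm.~1.6]{ES84} together with Remark~\ref{R:correctioneinsols}, a \emph{general} hyperplane section $H\cap Q$ is smooth, misses $\mathrm{Sing}\,\mathcal{E}$, and carries a stable restriction; the lines contained in $H\cap Q$ are parametrized by two disjoint lines $\Lambda_1\cup\Lambda_2\subset\mathbb{P}^3$, and the last assertion of Lemma~\ref{L:jump} (jump $a=c_2+c_1$ forces non-stability) gives $(\Lambda_1\cup\Lambda_2)\cap Z=\emptyset$; since any surface in $\mathbb{P}^3$ meets every line, $\dim Z\leq 1$ follows at once. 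You instead argue by contradiction from a $2$-dimensional component $V$ of $Z$, push it through the incidence variety $\mathbb{F}_{1,2}(Q)$ to produce $\Sigma\subset\mathbb{P}^{4\vee}$ of dimension $\geq 3$, verify $\Sigma$ is neither $Q^\vee$ nor any $x^\vee$ so that Proposition~\ref{P:gmss}(i) puts a general $h\in\Sigma$ in $\mathcal{U}_{\mathrm{ss}}(\mathcal{E})$, and then either contradict the nonemptiness of $\mathcal{U}_{\mathrm{s}}(\mathcal{E})$ or extract a full ruling of $H\cap Q$ along which $a=c_2+c_1$, which is incompatible with the extension structure of Lemma~\ref{L:jump} (essentially Corollary~\ref{C:jump1}). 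Both arguments rest on the same two inputs — generic stability of hyperplane restrictions and Lemma~\ref{L:jump} — but the paper exploits them in the ``forward'' direction (general $H$ avoids $Z$, plus the elementary fact that a line meets every surface in $\mathbb{P}^3$), which collapses your entire second paragraph to three lines; your version is in effect a foreshadowing of the machinery used later in Propositions~\ref{P:pointinq} and~\ref{P:conclusion}. One small factual slip, harmless here: the lines of $Q$ through a point $x\in Q$ are parametrized in $\mathbb{P}(U)=\mathbb{P}^3$ by a \emph{line} $L_x$ (a jumping line of the null-correlation bundle, see par.~\ref{SS:varlines}), not by a conic; either way the locus is closed of dimension $1$, so your estimate for $B_0$ stands.
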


\begin{proof}
The set of points $\ell \in {\mathbb P}(U) = {\mathbb P}^3$ 
corresponding to the lines $L \subset Q$ passing through a singular point of 
$\mathcal E$ is a union of lines in ${\mathbb P}^3$ (see 
par.~\ref{SS:varlines}). Let $V \subseteq {\mathbb P}^3$ be the complement of 
this union of lines. By a semicontinuity argument, the set of points $\ell 
\in V$ such that ${\mathcal E}\vert_L \simeq {\mathcal O}_L(a) \oplus 
{\mathcal O}_L(-a+c_1)$, $a \geq c_2 + c_1$, is a closed subset of $V$. 
Consequently, the set $Z$ from the statement of the lemma is a closed subset 
of ${\mathbb P}^3$. 

Now, by \cite[Thm.~1.6]{ES84} (complete with 
Remark~\ref{R:correctioneinsols}), for a general hyperplane $H \subset 
{\mathbb P}^4$, $H \cap Q$ is smooth, contains no singular point of 
$\mathcal E$ and ${\mathcal E}\vert_{H \cap Q}$ is stable. We have seen in 
par.~\ref{SS:linesonhyp} that the set of points $\ell \in {\mathbb P}^3$ 
such that $L \subset H \cap Q$ is a union of two disjoint line ${\Lambda}_1 
\cup {\Lambda}_2$. By the last assertion of Lemma~\ref{L:jump}, 
$({\Lambda}_1 \cup {\Lambda}_2) \cap Z = \emptyset$, hence 
$\text{dim}\, Z \leq 1$. 
\end{proof}

\begin{prop}\label{P:pointinq}
Let $\mathcal E$ be a stable rank two reflexive sheaf on $Q = Q_3 \subset 
{\mathbb P}^4$ with ${\fam0 det}\, {\mathcal E} \simeq {\mathcal O}_Q(c_1)$, 
$c_1 = 0$ or $-1$, and with $c_2 > 2$. Consider a point $x_0 \in Q 
\setminus {\fam0 Sing}\, {\mathcal E}$. 

Then, for a general hyperplane $H \subset {\mathbb P}^4$ passing through 
$x_0$, ${\mathcal E}\vert_{H \cap Q}$ is stable.
\end{prop}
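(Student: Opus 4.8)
The plan is to argue by contradiction, exploiting that $c_2 > 2$ forces $c_2 + c_1 \ge 2$, as well as $c_2 > 2 + c_1$ and $c_2 > -c_1$. Suppose that, for a general hyperplane $H$ through $x_0$, $\mathcal{E}\vert_{H\cap Q}$ is not stable, and set $\Sigma := x_0^{\vee} \subset \mathbb{P}^{4\vee}$. This is a reduced irreducible hypersurface with $\Sigma \neq Q^{\vee}$ and, since $x_0 \notin \mathrm{Sing}\,\mathcal{E}$, with $\Sigma \neq x^{\vee}$ for every $x \in \mathrm{Sing}\,\mathcal{E}$. First I would note that a general point of $\Sigma$ lies in $\mathcal{U}_{\mathrm{ss}}(\mathcal{E})$: the subsets $\Sigma \cap Q^{\vee}$ and $\Sigma \cap x^{\vee}$ ($x \in \mathrm{Sing}\,\mathcal{E}$) are proper closed subsets of $\Sigma \cong \mathbb{P}^3$, while by Prop.~\ref{P:gmss}(i) the complement of $\mathcal{U}_{\mathrm{ss}}(\mathcal{E})$ in $\mathcal{U}(\mathcal{E})$ has dimension $\le 2$, hence cannot contain the $3$-fold $\Sigma$. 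Combined with the assumption and the openness of $\mathcal{U}_{\mathrm{s}}(\mathcal{E})$ (Remark~\ref{R:useopen}), this yields $\mathcal{U}_{\mathrm{s}}(\mathcal{E}) \cap \Sigma = \emptyset$, a fortiori $\mathcal{U}_{\mathrm{s}}(\mathcal{E}) \cap \mathcal{U}_{\mathrm{gm}}(\mathcal{E}) \cap \Sigma = \emptyset$, and also that $\mathcal{U}_{\mathrm{ss}}(\mathcal{E}) \cap \Sigma$ is dense in $\Sigma$.

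Next I would apply Prop.~\ref{P:applstandconstr} to this $\Sigma$ (which is exactly its Case~2, since $\Sigma = x_0^{\vee}$ with $x_0 \in Q$): for every $h \in \mathcal{U}_{\mathrm{ss}}(\mathcal{E}) \cap \Sigma$ there is a line $L \subset H\cap Q$ with $\mathcal{E}\vert_L \simeq \mathcal{O}_L(c_2+c_1) \oplus \mathcal{O}_L(-c_2)$, and by Lemma~\ref{L:dimjump} the point of $\mathbb{P}(U) = \mathbb{P}^3$ parametrizing such an $L$ lies in a set $Z$ with $\dim Z \le 1$. Let $Z_0 \subseteq Z$ be the constructible set of such points obtained as $h$ runs over the dense subset $\mathcal{U}_{\mathrm{ss}}(\mathcal{E}) \cap \Sigma$ of $\Sigma$. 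Since each such $h$ lies in $L^{\vee} \cap x_0^{\vee}$, the incidence variety $\{(\ell,h) : \ell \in \overline{Z_0},\ L_{\ell} \subset H,\ x_0 \in H\}$ must dominate $\Sigma \cong \mathbb{P}^3$; its fibre over $\ell$ is $L_{\ell}^{\vee} \cap x_0^{\vee}$, which has dimension $1$ if $x_0 \notin L_{\ell}$ and $2$ if $x_0 \in L_{\ell}$. As $\dim \overline{Z_0} \le 1$, this is possible only if $\dim \overline{Z_0} = 1$ and $x_0 \in L_{\ell}$ for the general $\ell \in \overline{Z_0}$. Because the lines of $Q$ through $x_0$ form a smooth (hence irreducible) conic $\Gamma \subset \mathbb{P}(U)$ (see par.~\ref{SS:varlines}), it follows that $\overline{Z_0} = \Gamma$; in particular the general line $L \subset Q$ through $x_0$ satisfies $\mathcal{E}\vert_L \simeq \mathcal{O}_L(c_2+c_1) \oplus \mathcal{O}_L(-c_2)$.

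To reach a contradiction I would place two such lines inside a common, semistable, hyperplane section. Choose general lines $L_1 \neq L_2$ of $Q$ through $x_0$; then both avoid $\mathrm{Sing}\,\mathcal{E}$ and have the above splitting type, and $P := \langle L_1, L_2 \rangle$ is a plane contained in $\mathrm{T}_{x_0}Q$ with $P \cap Q = L_1 \cup L_2$ (for degree reasons). For $H \supseteq P$ with $H \neq \mathrm{T}_{x_0}Q$ one has $H \notin Q^{\vee}$ — indeed $H = \mathrm{T}_vQ$ would force $L_1,L_2 \subset \mathrm{T}_vQ \cap Q$, hence $v \in L_1 \cap L_2 = \{x_0\}$ — so $H \cap Q$ is a smooth quadric surface. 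Moreover, as $(L_1,L_2)$ vary the pencils $P^{\vee} \subset x_0^{\vee}$ sweep out a dense subset of $x_0^{\vee}$, and each $P^{\vee}$ meets $Q^{\vee}$ only at $\mathrm{T}_{x_0}Q$ and $\bigcup_{x\in\mathrm{Sing}\,\mathcal{E}}x^{\vee}$ only in finitely many points; since the non-semistable locus in $\mathcal{U}(\mathcal{E})$ has dimension $\le 2$, for a general choice of $(L_1,L_2)$ a general $H \supseteq P$ has $H\cap Q$ smooth, avoiding $\mathrm{Sing}\,\mathcal{E}$, with $\mathcal{E}\vert_{H\cap Q}$ semistable. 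Fix such an $H$. In $H\cap Q$ the lines $L_1,L_2$ meet at $x_0$, hence lie in distinct rulings; since $\mathcal{E}\vert_{L_1}$ has jumping integer $a = c_2+c_1$, Lemma~\ref{L:jump} shows that $\mathcal{E}\vert_{H\cap Q}$ is an extension of the type occurring in Cor.~\ref{C:jump1}, whence every line of $H\cap Q$ other than $L_1$ — in particular $L_2$ — restricts to $\mathcal{O}(a')\oplus\mathcal{O}(-a'+c_1)$ with $a' \le 1$. This contradicts $\mathcal{E}\vert_{L_2} \simeq \mathcal{O}_{L_2}(c_2+c_1)\oplus\mathcal{O}_{L_2}(-c_2)$, because $c_2 + c_1 \ge 2$.

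I expect the delicate step to be the second paragraph: one must argue that the jumping lines produced by Prop.~\ref{P:applstandconstr}, which a priori only fill out a curve in $\mathbb{P}^3$, can be ``seen'' by a $3$-dimensional family of hyperplanes through $x_0$ only if that curve is precisely the conic $\Gamma$ of lines through $x_0$ — this is where the dimension bound of Lemma~\ref{L:dimjump} is essential. The remaining work is elementary geometry of the tangent quadric cone $\mathrm{T}_{x_0}Q \cap Q$ together with the surface lemmas of Section~\ref{S:auxq2}.
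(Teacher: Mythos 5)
Your proof is correct and follows essentially the same strategy as the paper's: reduce to $\mathcal{U}_{\mathrm{s}}(\mathcal{E})\cap\mathcal{U}_{\mathrm{gm}}(\mathcal{E})\cap x_0^{\vee}=\emptyset$, combine Prop.~\ref{P:applstandconstr} with the dimension bound of Lemma~\ref{L:dimjump} and an incidence count to force the whole curve of lines through $x_0$ into the maximal-jumping locus, and then contradict Lemma~\ref{L:jump}/Cor.~\ref{C:jump1} by putting two such lines into a common semistable smooth hyperplane section. Two inessential remarks: in the paper's parametrization (par.~\ref{SS:varlines}, par.~\ref{SS:stabcone}) the lines of $Q$ through $x_0$ correspond to a \emph{line} $\Lambda_{x_0}\subset\mathbb{P}(U)$ rather than a conic, but your argument only uses that this locus is an irreducible curve; and the paper's final step is a bit shorter than yours, taking the two rulings of $H\cap Q$ through $x_0$ for an arbitrary $h\in\mathcal{U}_{\mathrm{ss}}(\mathcal{E})\cap x_0^{\vee}$ instead of building a suitable $H$ from a general pair of lines (which costs you the extra genericity argument about the pencils $P^{\vee}$).
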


\begin{proof}
Let $\Sigma := x_0^{\vee} \subset {\mathbb P}^{4\vee}$. Assume that 
${\mathcal U}_{\fam0 s}(\mathcal E) \cap 
{\mathcal U}_{\fam0 gm}(\mathcal E) \cap \Sigma = \emptyset$. Then 
Prop.~\ref{P:applstandconstr} implies that, 
$\forall h \in {\mathcal U}_{\fam0 ss}(\mathcal E) \cap \Sigma$, $H \cap Q$ 
contains a line $L$ such that ${\mathcal E}\vert_L \simeq 
{\mathcal O}_L(c_2 + c_1) \oplus {\mathcal O}_L(-c_2)$. Let $Z \subset 
{\mathbb P}(U) = {\mathbb P}^3$ be the set of lines in $Q$ from 
Lemma~\ref{L:dimjump}. The lines in $Q$ passing through $x_0$ correspond to 
a line ${\Lambda}_{x_0} \subset {\mathbb P}^3$. 

We assert that ${\Lambda}_{x_0} \subseteq Z$. {\it Indeed}, otherwise 
${\Lambda}_{x_0} \cap Z$ is a finite set. It would follow, from 
Lemma~\ref{L:dimjump}, that the set $\mathcal H$ of hyperplanes $H \subset 
{\mathbb P}^4$ containing $x_0$ and a line $L \subset Q$ corresponding to a 
point $\ell \in Z$, has dimension $\leq 2$ (in ${\mathbb P}^{4\vee}$), which 
would contradict the fact that ${\mathcal U}_{\fam0 ss}(\mathcal E) \cap 
\Sigma \subseteq {\mathcal H}$. 

Now, let $h \in {\mathcal U}_{\fam0 ss}(\mathcal E) \cap \Sigma$. Let 
$L_1,L_2$ be the two lines in $H \cap Q$ passing through $x_0$. Since 
${\Lambda}_{x_0} \subseteq Z$, it follows that 
${\mathcal E}\vert_{L_i} \simeq {\mathcal O}_{L_i}(a_i) \oplus 
{\mathcal O}_{L_i}(-a_i+c_1)$, with $a_i \geq c_2 + c_1$, $i = 1,2$. Since 
$c_2 + c_1 > 1$, this contradicts Cor.~\ref{C:jump1}. 

It thus remains that 
${\mathcal U}_{\fam0 s}(\mathcal E) \cap 
{\mathcal U}_{\fam0 gm}(\mathcal E) \cap x_0^{\vee} \neq \emptyset$.   
\end{proof}

The next proposition concludes the proof of Thm.~\ref{T:nonstblhyp} (taking 
into account Prop.~\ref{P:pointinq} and the results quoted in the 
Introduction, before the statement of Thm.~\ref{T:nonstblhyp}). 

\begin{prop}\label{P:conclusion}
Let $E$ be a stable rank $2$ vector bundle on $Q = Q_3 \subset {\mathbb P}^4$ 
with ${\fam0 det}\, E \simeq {\mathcal O}_Q(c_1)$, $c_1 = 0$ or $-1$, and 
with $c_2 > 2$. Let $\Sigma \subset {\mathbb P}^{4\vee}$ be an irreducible 
hypersurface, $\Sigma \neq Q^{\vee}$ and $\Sigma \neq x^{\vee}$, 
$\forall x \in Q$. 

If ${\mathcal U}_{\fam0 s}(\mathcal E) \cap 
{\mathcal U}_{\fam0 gm}(\mathcal E) \cap \Sigma = \emptyset$ then $E$ is as 
in the statement of Thm.~\ref{T:nonstblhyp}$({\fam0 ii})$. 
\end{prop}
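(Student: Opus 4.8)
The plan is to follow the scheme of the proof of the main result of \cite{Co92}: show first that the hypothesis forces $\Sigma$ to be swept out by the dual $2$-planes $L^{\vee}$ of a one-dimensional family of maximal jumping lines $L\subset Q$ of $E$, then identify this family geometrically, and finally reconstruct $E$ from it. Since $E$ is locally free, ${\mathcal U}(E) = {\mathbb P}^{4\vee}\setminus Q^{\vee}$, and the numerical hypotheses give $c_2 > 2+c_1$ and $c_2 > -c_1+1$, so Propositions~\ref{P:gmss} and~\ref{P:applstandconstr}, Corollary~\ref{C:nounstablehyp} and Lemma~\ref{L:dimjump} all apply. By Prop.~\ref{P:gmss}(i) the complement of ${\mathcal U}_{\text{ss}}(E)$ in ${\mathbb P}^{4\vee}\setminus Q^{\vee}$ has codimension $\geq 2$, so ${\mathcal U}_{\text{ss}}(E)\cap\Sigma$ is dense in $\Sigma$. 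By Prop.~\ref{P:applstandconstr}, for every $h\in{\mathcal U}_{\text{ss}}(E)\cap\Sigma$ the quadric surface $H\cap Q$ contains a line $L$ with $E\vert_L\simeq{\mathcal O}_L(c_2+c_1)\oplus{\mathcal O}_L(-c_2)$; in the notation of par.~\ref{SS:linesonhyp} this says that the closed set $Z\subset{\mathbb P}(U) = {\mathbb P}^3$ of Lemma~\ref{L:dimjump} meets $p_1(q_1^{-1}(h))$. Working in the incidence variety ${\mathbb F}_{1,2}(Q)\simeq{\mathbb P}(M_{\omega})$, which is a ${\mathbb P}^2$-bundle over ${\mathbb P}^3$ via $p_1$ and satisfies $q_1(p_1^{-1}(\{\ell\})) = L_{\ell}^{\vee}$, one sees that $q_1(p_1^{-1}(Z))$ --- closed, as $q_1$ is proper --- contains the dense set ${\mathcal U}_{\text{ss}}(E)\cap\Sigma$, hence equals $\Sigma$. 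As $\dim p_1^{-1}(Z)\leq\dim Z+2\leq 3 = \dim\Sigma$, the restriction of $q_1$ to $p_1^{-1}(Z)$ is generically finite onto $\Sigma$; since $\Sigma$ is irreducible, $\dim Z = 1$ and there is an irreducible component $Z_0$ of $Z$, a curve, with
\begin{equation*}
\Sigma\ =\ q_1(p_1^{-1}(Z_0))\ =\ \bigcup_{\ell\in Z_0}L_{\ell}^{\vee}\, ,
\end{equation*}
where $\{L_{\ell}:\ell\in Z_0\}$ is a one-dimensional family of maximal jumping lines of $E$.

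Next, by Corollary~\ref{C:nounstablehyp}(ii) the lines $L_{\ell}$ ($\ell\in Z_0$) are pairwise disjoint; consequently $p\vert_{q^{-1}(Z_0)}$ (notation of par.~\ref{SS:varlines}) is injective, $R := \bigcup_{\ell\in Z_0}L_{\ell}$ is an irreducible surface ruled by this disjoint family, and --- since $\text{Pic}\, Q = {\mathbb Z}\cdot{\mathcal O}_Q(1)$ --- $R\in\vert{\mathcal O}_Q(d)\vert$ for some $d\geq 1$, so $R$ has degree $2d$ in ${\mathbb P}^4$. The heart of the argument, and the step I expect to be the main obstacle, is to prove that $d = 1$, that $R = H_0\cap Q$ for a hyperplane $H_0$ meeting $Q$ transversally, and that $\{L_{\ell}:\ell\in Z_0\}$ is (the closure of) one of the two rulings of the smooth quadric surface $H_0\cap Q$; granting this, the identity $(H_0\cap Q)^{\vee} = \bigcup_{L}L^{\vee}$ (the union over the lines of either ruling of $H_0\cap Q$) gives $\Sigma = (H_0\cap Q)^{\vee}$. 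I would prove it as follows. From the Pl\"{u}cker description of par.~\ref{SS:varlines} one checks that, for $[u]\neq[u']\in{\mathbb P}(U)$, the lines $L_{[u]}$ and $L_{[u']}$ meet if and only if $\omega(u,u') = 0$; thus $Z_0$ parametrizes a disjoint family exactly when the polar plane (for the null-polarity $\omega$) of each point of $Z_0$ meets $Z_0$ only at that point. If $d\geq 2$, the scroll $R$ has degree $2d\geq 4$ and, by a degree argument for linearly normal scrolls together with the fact that $Q_3$ contains no $2$-plane, it would have to span ${\mathbb P}^4$; then $\Sigma = R^{\vee}$ would be the dual of a non-degenerate scroll in ${\mathbb P}^4$, and using the above criterion and the resulting restrictions one checks that such a $\Sigma$ cannot be an irreducible hypersurface distinct from $Q^{\vee}$ and from every $x^{\vee}$ --- a contradiction. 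If $d = 1$ and $H_0$ is tangent to $Q$, the ruling of the quadric cone $H_0\cap Q$ consists of lines through its vertex, which are not disjoint --- excluded. This leaves $d = 1$ with $H_0\cap Q$ smooth, as wanted.

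Put $Q_2^0 := H_0\cap Q$. By semicontinuity (and the closedness of $Z$), every line of the relevant ruling of $Q_2^0$ is a maximal jumping line of $E$, i.e. $E\vert_L\simeq{\mathcal O}_L(c_2+c_1)\oplus{\mathcal O}_L(-c_2)$ --- indeed, for a general $h\in\Sigma$ the line furnished by Prop.~\ref{P:applstandconstr} can only be the unique such line lying in $H\cap Q\cap H_0$. Pushing $E\vert_{Q_2^0}$ forward along this ruling produces a destabilizing sub-line-bundle ${\mathcal A}\hookrightarrow E\vert_{Q_2^0}$ restricting to the degree $c_2+c_1$ summand on the ruling lines, and combining this with the Chern-class identities on $Q_2^0$ ($c_1(E\vert_{Q_2^0}) = (c_1,c_1)$, $c_2(E\vert_{Q_2^0}) = c_2$) pins down $E\vert_{Q_2^0}$ (a split sum of line bundles when $c_1 = -1$). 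In the case $c_1 = 0$ one derives a contradiction with the stability of $E$: ${\mathcal A}$ is effective with ${\text H}^0\neq 0$ and (by restriction from $Q$ to $Q_2^0$, using $c_2>2$) the corresponding section lifts to ${\text H}^0(E)$, which is zero --- equivalently, retracing the construction would realize $E$ as an extension (\ref{extconics}) with all the conics of $Y$ lying in $Q_2^0\simeq{\mathbb P}^1\times{\mathbb P}^1$, which is impossible since any two smooth conics on a quadric surface meet while $c_2/2+1\geq 3$ of them are needed. Hence $c_1 = -1$, and one computes $E\vert_{Q_2^0}\simeq{\mathcal O}_{Q_2^0}(c_2-1,-1)\oplus{\mathcal O}_{Q_2^0}(-c_2,0)$, up to interchanging the two rulings.

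Finally, for $c_1 = -1$: the twist $E(1)\vert_{Q_2^0}\simeq p_1^{\ast}{\mathcal O}_{{\mathbb P}^1}(c_2)\oplus{\mathcal O}_{Q_2^0}(1-c_2,1)$ has ${\text h}^0 = c_2+1>0$, and any section of $E(1)\vert_{Q_2^0}$ lies in the first summand (the second has no sections). A cohomology computation bounding ${\text h}^1(E)$ (via restriction to $Q_2^0$ and to a general hyperplane section, together with the exact sequences (\ref{spinorsections}), (\ref{tautologiconq}) for the spinor bundle and the vanishing theorems on $Q_3$) shows that the restriction ${\text H}^0(E(1))\to{\text H}^0(E(1)\vert_{Q_2^0})$ is nonzero; pick $\sigma\in{\text H}^0(E(1))$ with $\sigma\vert_{Q_2^0}$ a nonzero element of $p_1^{\ast}{\text H}^0({\mathcal O}_{{\mathbb P}^1}(c_2))$. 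Then $\sigma\vert_{Q_2^0}$ vanishes on $c_2$ lines of the ruling of $Q_2^0$, so the (codimension-two) zero scheme of $\sigma$, which has class $c_2(E(1)) = c_2[L]$, coincides with this disjoint union $Y$ of $c_2$ lines. This exhibits $E$ as an extension (\ref{extlines}) with all components of $Y$ contained in the smooth hyperplane section $H_0\cap Q$, and, together with $\Sigma = (H_0\cap Q)^{\vee}$ established above, this is exactly the conclusion of Theorem~\ref{T:nonstblhyp}(ii).
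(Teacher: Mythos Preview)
Your overall strategy---produce a curve of maximal jumping lines, show they form one ruling of a smooth hyperplane section $H_0\cap Q$, then reconstruct $E$ from $E\vert_{H_0\cap Q}$---is the same as the paper's. But there are two genuine gaps, and in each case you overlook a tool already available in the paper.

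\textbf{Identifying $H_0$.} Your ``$d=1$'' step is the real problem, and you are right to flag it as the main obstacle: the scroll/degree argument you sketch is not a proof, and the assertion that $\Sigma=R^{\vee}$ ``cannot be an irreducible hypersurface distinct from $Q^{\vee}$ and from every $x^{\vee}$'' is left unchecked. The paper bypasses this entirely by using Proposition~\ref{P:nounstablehyp}. Once you have two distinct points $\ell_1,\ell_2\in Z$, Corollary~\ref{C:nounstablehyp}(ii) gives $L_1\cap L_2=\emptyset$; the hyperplane $H_0$ they span meets $Q$ transversally (a singular hyperplane section contains no pair of disjoint lines), and by Corollary~\ref{C:jump1} $E\vert_{H_0\cap Q}$ is \emph{unstable}. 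Proposition~\ref{P:nounstablehyp} then forces $\Sigma=(H_0\cap Q)^{\vee}$ in one line. Repeating the argument with $L_1$ and any other $L$ from $Z$ shows every such $L$ lies in $H_0\cap Q$, hence $Z$ is one ruling. No scroll argument is needed.

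\textbf{Eliminating $c_1=0$ and reconstructing $E$.} Your argument that, for $c_1=0$, ``the corresponding section lifts to $\mathrm{H}^0(E)$'' does not follow: you would need $\mathrm{H}^1(E(-1))=0$, which is not available, and the alternative via extension~(\ref{extconics}) is not established by anything you have shown. The paper instead writes $E\vert_{H_0\cap Q}$ as an extension of $\mathcal{O}_{H_0\cap Q}(-a+c_1,-c_2)$ by $\mathcal{O}_{H_0\cap Q}(a,c_2+c_1)$ (the quotient is locally free because the splitting type is constant along the ruling) and reads off from $c_2(E\vert_{H_0\cap Q})=c_2$ the equation $(2c_2+c_1)a=(c_2+c_1)c_1-c_2$; for $c_1=0$ this gives $a=-\tfrac12$, a contradiction, and for $c_1=-1$ it gives $a=-1$. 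For the reconstruction, your plan to lift a section of $E(1)\vert_{Q_2^0}$ requires controlling $\mathrm{h}^1(E)$, which you only gesture at. The paper's route is shorter: form the elementary modification $E'$ by the kernel of $E\twoheadrightarrow\mathcal{O}_{H_0\cap Q}(0,-c_2)$, compute $c_1(E')=0$, $c_2(E')=0$; stability of $E$ gives $E'$ semistable, hence $E'\simeq\mathcal{O}_Q^{\oplus2}$. Dualizing yields $0\to E\to\mathcal{O}_Q^{\oplus2}\to\mathcal{O}_{H_0\cap Q}(0,c_2)\to 0$, and a generic linear combination of the two components of the surjection produces the required extension~(\ref{extlines}).
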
 

\begin{proof}
Prop.~\ref{P:applstandconstr} implies that, 
$\forall h \in {\mathcal U}_{\fam0 ss}(\mathcal E) \cap \Sigma$, $H \cap Q$ 
contains a line $L$ such that $E\vert_L \simeq 
{\mathcal O}_L(c_2 + c_1) \oplus {\mathcal O}_L(-c_2)$. 
Cor.~\ref{C:nounstablehyp}(i) implies that the set $Z$ of the points 
$\ell \in {\mathbb P}(U) = {\mathbb P}^3$ corresponding to the lines 
$L \subset Q$ such that $E\vert_L \simeq 
{\mathcal O}_L(c_2 + c_1) \oplus {\mathcal O}_L(-c_2)$ is a closed subset of 
${\mathbb P}^3$. Since $\text{dim}({\mathcal U}_{\fam0 ss}(\mathcal E) \cap 
\Sigma) = 3$, one must have $\text{dim}\, Z = 1$ (taking into account 
Lemma~\ref{L:dimjump}). Choose two distinct points ${\ell}_1, {\ell}_2 \in Z$. 
By Cor.~\ref{C:nounstablehyp}(ii), $L_1 \cap L_2 = \emptyset$. Let $H_0 
\subset {\mathbb P}^4$ be the hyperplane spanned by $L_1$ and $L_2$. $H_0$ 
intersects $Q$ transversally (because $H_0 \cap Q$ contains two disjoint 
lines) and, by Cor.~\ref{C:jump1}, $E\vert_{H_0 \cap Q}$ is unstable. From 
Prop.~\ref{P:nounstablehyp} it follows that $\Sigma = (H_0 \cap Q)^{\vee}$. 
Let $\ell \in Z$ be another point and let $L \subset Q$ be the corresponding 
line. As above, $L \cap L_1 = \emptyset$ and if $H \subset {\mathbb P}^4$ is 
the hyperplane spanned by $L$ and $L_1$ then $\Sigma = (H \cap Q)^{\vee}$. One 
deduces that $H = H_0$. 

Consequently, the points of $Z$ correspond to the lines from one ruling of 
$H_0 \cap Q$. Assume that this ruling is the linear system 
$\vert \, {\mathcal O}_{H_0 \cap Q}(1,0)\, \vert$. Then there exists an integer 
$a$ such that $E\vert_{H_0 \cap Q}$ can be realized as an extension: 
\begin{equation*}
0 \longrightarrow {\mathcal O}_{H_0 \cap Q}(a,c_2+c_1) \longrightarrow 
E\vert_{H_0 \cap Q} \longrightarrow 
{\mathcal O}_{H_0 \cap Q}(-a+c_1,-c_2) \longrightarrow 0\, .
\end{equation*}
Computing Chern classes, one gets that:
\begin{equation*}
c_2 = c_2(E\vert_{H_0 \cap Q}) = -ac_2 + (c_2+c_1)(-a+c_1) = 
-(2c_2+c_1)a + (c_2+c_1)c_1\, ,
\end{equation*}
hence $(2c_2+c_1)a = (c_2+c_1)c_1 - c_2$. It follows that $c_1 = -1$ and 
$a = -1$. Let $E^{\prime}$ be the rank 2 vector bundle on $Q$ defined by the 
exact sequence:
\begin{equation}
\label{eprime}
0 \longrightarrow E^{\prime}(-1) \longrightarrow E \longrightarrow 
{\mathcal O}_{H_0 \cap Q}(0,-c_2) \longrightarrow 0\, .
\end{equation}
One has $\text{det}\, E^{\prime} \simeq {\mathcal O}_Q$. Let $c_2^{\prime} \in 
{\mathbb Z}$ be defined by $c_2(E^{\prime}) = c_2^{\prime}[L]$. Let $H \subset 
{\mathbb P}^4$ be a general hyperplane intersecting $Q$ transversally and 
such that $H \cap H_0 \cap Q$ is a smooth conic $C \simeq {\mathbb P}^1$. 
Restricting to $H \cap Q$ the exact sequence (\ref{eprime}) 
one gets an exact sequence: 
\begin{equation*}
0 \longrightarrow E^{\prime}(-1)\vert_{H \cap Q}  \longrightarrow 
E\vert_{H \cap Q} \longrightarrow 
{\mathcal O}_{{\mathbb P}^1}(-c_2) \longrightarrow 0\, .
\end{equation*} 
By Riemann-Roch on $H \cap Q$, $\chi (E\vert_{H \cap Q}) = -c_2 + 1$, 
and $\chi (E^{\prime}(-1)\vert_{H \cap Q}) = 
- c_2(E^{\prime}\vert_{H \cap Q}) = - c_2^{\prime}$. Since 
$\chi ({\mathcal O}_{{\mathbb P}^1}(-c_2)) = - c_2 + 1$, one deduces that 
$c_2^{\prime} = 0$. $E$ being stable, $E^{\prime}$ is semistable, hence 
$E^{\prime} \simeq {\mathcal O}_Q^{\oplus \, 2}$. Dualizing the exact sequence 
(\ref{eprime}) and twisting by $-1$, one gets an exact sequence:
\begin{equation*}
0 \longrightarrow E \longrightarrow {\mathcal O}_Q^{\oplus \, 2} 
\overset{\varepsilon}{\longrightarrow} {\mathcal O}_{H_0 \cap Q}(0,c_2) 
\longrightarrow 0\, .
\end{equation*}
The epimorphism $\varepsilon$ is defined by two global sections 
${\varepsilon}_1, {\varepsilon}_2$ of ${\mathcal O}_{H_0 \cap Q}(0,c_2)$ 
generating this sheaf. For a general choice of constants ${\alpha}_1, 
{\alpha}_2 \in {\mathbb C}$, the divisor $Y$ on $H_0 \cap Q$ associated to 
the global section ${\alpha}_1{\varepsilon}_1 + {\alpha}_2{\varepsilon}_2$ 
of ${\mathcal O}_{H_0 \cap Q}(0,c_2)$ consists of $c_2$ disjoint lines 
belonging to the linear system $\vert \, {\mathcal O}_{H_0 \cap Q}(0,1)\, 
\vert$. One sees easily that the image of the composite morphism 
\begin{equation*}
\begin{CD} 
E @>>> {\mathcal O}_Q^{\oplus \, 2} @>(-{\alpha}_2,{\alpha}_1)>>  
{\mathcal O}_Q 
\end{CD}
\end{equation*}
is exactly the ideal sheaf ${\mathcal I}_Y$ of $Y$ in $Q$. 
Consequently $E$ can be realized as an extension:
\begin{equation*}
0 \longrightarrow {\mathcal O}_Q(-1) \longrightarrow E \longrightarrow 
{\mathcal I}_Y \longrightarrow 0\, .
\qedhere 
\end{equation*}  
\end{proof} 

\begin{rmk*}
We take this opportunity to point out some simplifications in the proof of 
the main result of Coand\u{a}~\cite{Co92}. 

(i) The approach to the Standard Construction used in 
Section~\ref{S:standard} of the present paper clarifies, hopefully, the proof 
of \cite[Lemma~1]{Co92}. 

(ii) The proof of \cite[Lemma~2]{Co92} is too complicated. 
{\it Indeed}, 
as it was noticed by Vall\`{e}s~\cite{Va95}, one can use the following easy 
argument: let $L$ be a line in ${\mathbb P}^3$ such that $E\vert_L  
\simeq {\mathcal O}_L \oplus {\mathcal O}_L(c_1)$. Let $L^{\vee}$ be the line 
in ${\mathbb P}^{3\vee}$ consisting of the planes $H \subset {\mathbb P}^3$ 
containing $L$. Then, obviously, $L^{\vee} \cap W_1(E) = \emptyset$, hence 
$\text{dim}\, W_1(E) \leq 1$. 

(iii) More important, one can get rid of the Case B in the proof of 
\cite[Prop.~2]{Co92} and, consequently, there's no need of 
\cite[Lemma~6]{Co92}. {\it Indeed}, under the hypothesis of 
\cite[Prop.~2]{Co92}, the first part of the proof of \cite[Prop.~2]{Co92} 
(on page 105) and 
\cite[Lemma~5]{Co92} show that $E$ is a {\it mathematical instanton}. 
We {\it claim} that, in this case, the set $\Gamma$ of points of 
${\mathbb G}_1({\mathbb P}^3)$ corresponding to the lines $L \subset 
{\mathbb P}^3$ such that $E\vert_L \simeq {\mathcal O}_L(c_2) \oplus 
{\mathcal O}_L(-c_2)$ is a linear section of ${\mathbb G}_1({\mathbb P}^3)$ 
in its Pl\"{u}cker embedding in ${\mathbb P}^5$. 

Assume the claim, for the moment. Then, by \cite[Lemma~3]{Co92} and 
\cite[Remark~2]{Co92}, $\text{dim}\, \Gamma  = 1$ hence $\Gamma$ is a smooth 
conic, the union of two lines or a line. But, by \cite[Lemma~4(b)]{Co92}, the 
lines in ${\mathbb P}^3$ corresponding to the points of $\Gamma$ are mutually 
disjoint. Since the lines in ${\mathbb P}^3$ corresponding to a line in 
${\mathbb G}_1({\mathbb P}^3)$ are contained in a fixed plane and pass through 
a fixed point in that plane, it follows that $\Gamma$ is a smooth conic, 
i.e., the lines in ${\mathbb P}^3$ corresponding to the points of $\Gamma$ 
form one ruling of a nonsingular quadric surface. Consequently, only the 
Case A from the proof of \cite[Prop.~2]{Co92} can occur. 

Finally, let us {\it prove the above claim} (which is, actually, a well known 
fact). One has ${\mathbb P}^3 = {\mathbb P}(U)$, where $U = {\mathbb C}^4$. 
Let $L \subset {\mathbb P}^3$ be a line corresponding to a 2-dimensional 
vector subspace ${\mathbb C}u \oplus {\mathbb C}v$ of $U$. Using the coKoszul 
and Euler exact sequences recalled in par.~\ref{SS:varlines}, one sees 
easily that the image of the composite morphism: 
\begin{equation*}
\begin{CD}
U \otimes {\mathcal O}_{\mathbb P} @>>> 
\overset{2}{\textstyle \bigwedge}U \otimes {\mathcal O}_{\mathbb P}(1) 
@>{u\wedge v \wedge}>> 
\overset{4}{\textstyle \bigwedge}U \otimes {\mathcal O}_{\mathbb P}(1)
\end{CD}
\end{equation*}
is $\overset{4}{\bigwedge}U \otimes {\mathcal I}_L(1)$, hence one gets an 
exact sequence: 
\begin{equation*}
0 \longrightarrow ({\mathbb C}u \oplus {\mathbb C}v)\otimes 
{\mathcal O}_{\mathbb P} \longrightarrow \text{T}_{\mathbb P}(-1) 
\longrightarrow  
\overset{4}{\textstyle \bigwedge}U \otimes {\mathcal I}_L(1) 
\longrightarrow 0\, .
\end{equation*}
Tensorizing the Euler sequence by $E(-2)$ and 
taking into account that $E$ is a mathematical instanton, one gets that 
$\text{H}^1(\text{T}_{\mathbb P}(-3) \otimes E) \overset{\sim}{\rightarrow} 
\text{H}^2(E(-3))$ and $\text{H}^2(\text{T}_{\mathbb P}(-3) \otimes E) = 0$. 
Then, tensorizing the last exact sequence by $E(-2)$ one gets that: 
\begin{equation*}
\text{H}^1(\text{T}_{\mathbb P}(-3)\otimes E) \overset{\sim}{\longrightarrow} 
\overset{4}{\textstyle \bigwedge}U \otimes 
\text{H}^1({\mathcal I}_L \otimes E(-1))\  \  \text{and}\  \  
\text{H}^2({\mathcal I}_L \otimes E(-1)) = 0 \, .
\end{equation*}
Finally, tensorizing by $\overset{4}{\bigwedge}U\otimes_{\mathbb C}E(-1)$ the 
short exact sequence $0 \rightarrow {\mathcal I}_L \rightarrow 
{\mathcal O}_{\mathbb P} \rightarrow {\mathcal O}_L \rightarrow 0$, one deduces 
an exact sequence: 
\begin{equation*}
0 \rightarrow \overset{4}{\textstyle \bigwedge}U\otimes \text{H}^0(E_L(-1)) 
\rightarrow \text{H}^1(\text{T}_{\mathbb P}(-3)\otimes E) 
\overset{\psi (\ell)}{\longrightarrow} 
\overset{4}{\textstyle \bigwedge}U\otimes \text{H}^1(E(-1)) \rightarrow 
\overset{4}{\textstyle \bigwedge}U\otimes \text{H}^1(E_L(-1)) \rightarrow 0 
\end{equation*}
where $\psi (\ell)$ is the composite morphism: 
\begin{equation*}
\begin{CD}
\text{H}^1(\text{T}_{\mathbb P}(-3)\otimes E) @>>> 
\overset{2}{\textstyle \bigwedge}U\otimes \text{H}^1(E(-1)) 
@>{u \wedge v \wedge}>> 
\overset{4}{\textstyle \bigwedge}U\otimes \text{H}^1(E(-1))\, .
\end{CD}
\end{equation*}
Let $\psi$ be the composite morphism on ${\mathbb P}^5 = 
{\mathbb P}(\overset{2}{\bigwedge}U)$: 
\begin{gather*}
{\mathcal O}_{{\mathbb P}^5}(-1)\otimes 
\text{H}^1(\text{T}_{\mathbb P}(-3) \otimes E) 
\longrightarrow 
{\mathcal O}_{{\mathbb P}^5}\otimes \overset{2}{\textstyle \bigwedge}U \otimes 
\text{H}^1(T_{\mathbb P}(-3) \otimes E) \longrightarrow \\
\longrightarrow 
{\mathcal O}_{{\mathbb P}^5}\otimes \overset{2}{\textstyle \bigwedge}U \otimes 
\overset{2}{\textstyle \bigwedge}U \otimes \text{H}^1(E(-1)) \longrightarrow  
{\mathcal O}_{{\mathbb P}^5}\otimes \overset{4}{\textstyle \bigwedge}U \otimes 
\text{H}^1(E(-1))\, .
\end{gather*}
Since $\text{h}^1(E(-1)) = c_2 = \text{h}^2(E(-3))$, one deduces that the 
above defined set $\Gamma$ is the intersection of 
${\mathbb G}_1({\mathbb P}^3)$ with the zero scheme of the morphism $\psi$. 
\end{rmk*}

\section{Restrictions to singular hyperplane sections}\label{S:proof2}

In this section we prove the second theorem from the Introduction. We 
explicitate, firstly, the notion of stability for a rank 2 
vector bundle on a singular hyperplane section of a quadric threefold 
$Q\subset {\mathbb P}^4$ (which is a quadratic cone) in terms of the 
pull-back of the bundle on the desingularization of the cone. 
Then, we consider a simultaneous desingularization of the 
family of singular hyperplane sections of  
$Q$ and describe its sheaf of relative K\"{a}hler 
differentials over $Q$. These are preparatory results for the Standard 
Construction which we, finally, apply in order to get a 
proof of Theorem~\ref{T:singnonstblhyp}.   

\subsection{Stability on the quadratic cone}\label{SS:stabcone}
We use the notation from par.~\ref{SS:varlines}. Recall, also, that we 
follow {\it the classical convention for projective bundles}.  
Let $y$ be a point of $Q$ and let $Y := Q \cap \text{T}_yQ$. One, usually, 
desingularizes $Y$ by blowing-up its vertex $y$. In our situation, one can 
obtain, geometrically, this desingularization as it follows. We viewed $Q$ as 
the intersection of the Grassmannian ${\mathbb G}_1({\mathbb P}^3) 
\hookrightarrow {\mathbb P}^5$ by a hyperplane ${\mathbb P}^4\subset 
{\mathbb P}^5$. Recall the incidence diagram: 
\begin{equation}
\label{f01q}
\begin{CD}
{\mathbb F}_{0,1}(Q) @>q>> {\mathbb P}^3\\ 
@VpVV\\ 
Q
\end{CD}
\end{equation} 
If $\ell \in {\mathbb P}^3$ then $p$ maps $q^{-1}(\ell)$ isomorphically onto 
a line $L \subset Q$ (and, in this way, one gets all the lines contained in 
$Q$) and if $x\in Q$ then $q$ maps $p^{-1}(x)$ isomorphically onto a line 
$L_x\subset {\mathbb P}^3$ (and, in this way, one gets all the jumping lines 
of the null-correlation bundle $N_{\omega}$ on ${\mathbb P}^3$). Put 
${\widetilde Y} := q^{-1}(L_y)$. 
From diagram~(\ref{f01q}) one deduces an incidence diagram: 
\begin{equation*}
\begin{CD}
{\widetilde Y} @>{\pi}_y>> L_y \simeq {\mathbb P}^1\\
@V{\sigma}_yVV\\ 
Y
\end{CD}
\end{equation*}
Since ${\mathbb F}_{0,1}(Q) \simeq 
{\mathbb P}(N^{\ast}_{\omega}(-1))$ over ${\mathbb P}^3$ and since $L_y$ is 
a jumping line for the null-correlation bundle $N_{\omega}$, it follows that 
${\widetilde Y} \simeq {\mathbb P}({\mathcal O}_{L_y} \oplus 
{\mathcal O}_{L_y}(-2))$ over $L_y$ (such that 
${\mathcal O}_{{\mathbb P}({\mathcal O} \oplus {\mathcal O}(-2))}(-1) \simeq 
{\sigma}_y^{\ast}{\mathcal O}_Y(-1)$). $\pi$ maps isomorphically 
$C_y := {\sigma}_y^{-1}(y) = p^{-1}(y)$ onto $L_y$. As  
${\sigma}_y^{\ast}{\mathcal O}_Y(-1)\vert_{C_y} \simeq {\mathcal O}_{C_y}$, 
it follows that $C_y$ can be identified with ${\mathbb P}({\mathcal O}_{L_y}) 
\subset {\mathbb P}({\mathcal O}_{L_y}\oplus {\mathcal O}_{L_y}(-2))$. One 
deduces that: 
\begin{equation*}
{\mathcal O}_{\widetilde Y}(C_y) \simeq {\sigma}_y^{\ast}{\mathcal O}_Y(1) 
\otimes {\pi}_y^{\ast}{\mathcal O}_{L_y}(-2)
\end{equation*}    
({\it in general}, if $E$ is a vector bundle over a scheme $T$, if $f : 
{\mathbb P}(E) \rightarrow T$ is the associated projective bundle, and if 
$E^{\prime}$ is a vector subbundle of $E$ then ${\mathbb P}(E^{\prime}) 
\subset {\mathbb P}(E)$ is the zero scheme of the composite morphism 
${\mathcal O}_{{\mathbb P}(E)}(-1) \hookrightarrow f^{\ast}E \rightarrow 
f^{\ast}(E/E^{\prime})$). Moreover, the restriction of 
${\sigma}_y : {\widetilde Y} \setminus C_y \rightarrow Y \setminus \{y\}$ 
is an isomorphism. As $Y$ is a 
normal variety, one deduces easily (see, for example, the proof of 
\cite[III,~Cor.~11.4]{Ha77}) that 
${\mathcal O}_Y \overset{\sim}{\rightarrow} 
{\sigma}_{y\ast}{\mathcal O}_{\widetilde Y}$. Besides, since 
${\mathcal I}_{C_y}/{\mathcal I}_{C_y}^2 \simeq 
{\mathcal O}_{{\mathbb P}^1}(2)$, one derives, as in the proof of 
\cite[V,~Prop.~3.4]{Ha77}, that 
$\text{R}^1{\sigma}_{y\ast}{\pi}_y^{\ast}{\mathcal O}_{L_y}(a) = 0$ for 
$a\geq -1$.   

\begin{lem}\label{L:sigmapiOpm1}
Let $\ell$ be a point of $L_y$ and put ${\widetilde L} := {\pi}_y^{-1}(\ell) 
\subset {\widetilde Y}$. ${\sigma}_y$ maps $\widetilde L$ isomorphically 
onto a line $L \subset Y$. Then 
${\sigma}_{y\ast}{\pi}_y^{\ast}{\mathcal O}_{L_y}(-1)\simeq {\mathcal I}_{L,Y}$, 
${\sigma}_{y\ast}{\pi}_y^{\ast}{\mathcal O}_{L_y}(1)\simeq 
{\mathcal I}_{L,Y}(1)$, and ${\mathcal I}_{L,Y}$ is a rank 1 reflexive sheaf on 
$Y$.  
\end{lem}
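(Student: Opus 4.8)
The plan is to express both sheaves $\sigma_{y*}\pi_y^{*}\mathcal O_{L_y}(\pm1)$ as direct images along $\sigma_y$ of the ideal, resp. the structure, sequence of the fibre $\widetilde L$, and to feed in the three facts recorded just above the statement: $\sigma_{y*}\mathcal O_{\widetilde Y}\overset{\sim}{\rightarrow}\mathcal O_Y$, the vanishing $\mathrm R^1\sigma_{y*}\pi_y^{*}\mathcal O_{L_y}(a)=0$ for $a\ge-1$, and $\mathcal O_{\widetilde Y}(C_y)\simeq\sigma_y^{*}\mathcal O_Y(1)\otimes\pi_y^{*}\mathcal O_{L_y}(-2)$. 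First I would record that $\widetilde L=\pi_y^{-1}(\ell)$ is a fibre of the $\mathbb P^1$-bundle $\pi_y$, so that $\mathcal O_{\widetilde Y}(\widetilde L)\simeq\pi_y^{*}\mathcal O_{L_y}(1)$, and that, $\sigma_y$ mapping $\widetilde L$ isomorphically onto the line $L\subset Y$ (as in the statement), $\sigma_{y*}\mathcal O_{\widetilde L}\simeq\mathcal O_L$ as a sheaf on $Y$.

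For the first isomorphism, I would apply $\sigma_{y*}$ to the structure sequence $0\to\pi_y^{*}\mathcal O_{L_y}(-1)\to\mathcal O_{\widetilde Y}\to\mathcal O_{\widetilde L}\to0$ of $\widetilde L$ (using $\mathcal O_{\widetilde Y}(-\widetilde L)\simeq\pi_y^{*}\mathcal O_{L_y}(-1)$). Using $\sigma_{y*}\mathcal O_{\widetilde Y}\simeq\mathcal O_Y$, $\sigma_{y*}\mathcal O_{\widetilde L}\simeq\mathcal O_L$ and $\mathrm R^1\sigma_{y*}\pi_y^{*}\mathcal O_{L_y}(-1)=0$ one obtains a short exact sequence
\[
0\longrightarrow\sigma_{y*}\pi_y^{*}\mathcal O_{L_y}(-1)\longrightarrow\mathcal O_Y\overset{\rho}{\longrightarrow}\mathcal O_L\longrightarrow0\, .
\]
A local section of $\ker\rho$ is a regular function on $\sigma_y^{-1}(V)$ vanishing on $\widetilde L\cap\sigma_y^{-1}(V)$, hence --- read on $V$ via $\mathcal O_Y\simeq\sigma_{y*}\mathcal O_{\widetilde Y}$ --- a function vanishing on $L\setminus\{y\}$, and therefore on $L$; thus $\ker\rho\subseteq\mathcal I_{L,Y}$. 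So $\rho$ factors as $\mathcal O_Y\twoheadrightarrow\mathcal O_Y/\mathcal I_{L,Y}=\mathcal O_L\overset{\phi}{\rightarrow}\mathcal O_L$, with $\phi$ a surjective endomorphism of a coherent sheaf, hence an isomorphism; consequently $\ker\rho=\mathcal I_{L,Y}$ and $\sigma_{y*}\pi_y^{*}\mathcal O_{L_y}(-1)\simeq\mathcal I_{L,Y}$. This sheaf has rank $1$, and since $Y$ is normal and $L$ is an integral divisor on $Y$, $\mathcal I_{L,Y}$ is a divisorial, hence reflexive, sheaf.

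For the second isomorphism, the relation $\mathcal O_{\widetilde Y}(C_y)\simeq\sigma_y^{*}\mathcal O_Y(1)\otimes\pi_y^{*}\mathcal O_{L_y}(-2)$ lets me write $\pi_y^{*}\mathcal O_{L_y}(1)\simeq\mathcal G\otimes\mathcal O_{\widetilde Y}(-C_y)$, where $\mathcal G:=\pi_y^{*}\mathcal O_{L_y}(-1)\otimes\sigma_y^{*}\mathcal O_Y(1)$. Tensoring the structure sequence of the Cartier divisor $C_y$ by $\mathcal G$ then gives
\[
0\longrightarrow\pi_y^{*}\mathcal O_{L_y}(1)\longrightarrow\mathcal G\longrightarrow\mathcal G\vert_{C_y}\longrightarrow0\, .
\]
Since $\pi_y$ restricts to an isomorphism $C_y\overset{\sim}{\rightarrow}L_y$ while $\sigma_y(C_y)=\{y\}$, one has $\mathcal G\vert_{C_y}\simeq\mathcal O_{\mathbb P^1}(-1)$, whence $\sigma_{y*}(\mathcal G\vert_{C_y})=0$ and $\sigma_{y*}\pi_y^{*}\mathcal O_{L_y}(1)\overset{\sim}{\rightarrow}\sigma_{y*}\mathcal G$. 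Finally, the projection formula (for the locally free $\mathcal O_Y(1)$) together with the first isomorphism gives $\sigma_{y*}\mathcal G\simeq\bigl(\sigma_{y*}\pi_y^{*}\mathcal O_{L_y}(-1)\bigr)\otimes\mathcal O_Y(1)\simeq\mathcal I_{L,Y}(1)$, which is the claim.

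All the ingredients used (classes of fibres in a ruled surface, restrictions of line bundles to $C_y$, the cohomology vanishings) are routine or already available; the two points deserving care are pinning down the map $\rho$ in the first step --- i.e.\ verifying $\ker\rho=\mathcal I_{L,Y}$ and not merely some subsheaf of $\mathcal O_Y$ agreeing with it away from the vertex $y$ --- and the bookkeeping of twists that makes the vanishing $\sigma_{y*}(\mathcal G\vert_{C_y})=0$ absorb the difference between $\pi_y^{*}\mathcal O_{L_y}(1)$ and $\mathcal G$ in the second step.
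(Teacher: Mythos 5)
Your proof is correct and, for the two pushforward isomorphisms, follows essentially the same route as the paper: push forward the structure sequence of the fibre $\widetilde L$ (using $\sigma_{y*}\mathcal O_{\widetilde Y}\simeq\mathcal O_Y$ and the vanishing of $\mathrm R^1\sigma_{y*}\pi_y^{*}\mathcal O_{L_y}(-1)$) for the first, and twist the structure sequence of $C_y$ so that the restriction to $C_y$ becomes $\mathcal O_{\mathbb P^1}(-1)$ and dies under $\sigma_{y*}$, then apply the projection formula, for the second --- your $\mathcal G$ is exactly the sheaf $\mathcal O_{\widetilde Y}(C_y)\otimes\pi_y^{*}\mathcal O_{L_y}(1)$ the paper uses. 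Your extra care in identifying $\ker\rho$ with $\mathcal I_{L,Y}$ (rather than a sheaf agreeing with it off the vertex) is a legitimate point the paper leaves implicit. The only genuine divergence is the reflexivity claim: the paper pushes forward the Euler sequence on $L_y$ to exhibit $\mathcal I_{L,Y}$ as the kernel of $\mathcal O_Y^{\oplus 2}\to\mathcal I_{L,Y}(1)$ with torsion-free cokernel and invokes \cite[Prop.~1.1]{Ha80}, whereas you appeal to $\mathcal I_{L,Y}=\mathcal O_Y(-L)$ being the divisorial sheaf of a prime Weil divisor on the normal surface $Y$; both are valid, yours is shorter, while the paper's has the side benefit of producing the two-term resolution of $\mathcal I_{L,Y}$ by $\mathcal O_Y^{\oplus 2}$ explicitly.
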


\begin{proof}
Applying ${\sigma}_{y\ast}$ to the short exact sequence: 
\begin{equation*}
0 \longrightarrow {\pi}_y^{\ast}{\mathcal O}_{L_y}(-1) \longrightarrow 
{\mathcal O}_{\widetilde Y} \longrightarrow {\mathcal O}_{\widetilde L} 
\longrightarrow 0
\end{equation*}
one gets an exact sequence: 
\begin{equation*}
0 \longrightarrow {\sigma}_{y\ast}{\pi}_y^{\ast}{\mathcal O}_{L_y}(-1) 
\longrightarrow {\mathcal O}_Y \longrightarrow {\mathcal O}_L 
\longrightarrow 0
\end{equation*}
from which one deduces that 
${\sigma}_{y\ast}{\pi}_y^{\ast}{\mathcal O}_{L_y}(-1)\simeq {\mathcal I}_{L,Y}$. 
In order to prove the second isomorphism, one tensorizes by 
${\pi}_y^{\ast}{\mathcal O}_{L_y}(1)$ the exact sequence: 
\begin{equation*}
0 \longrightarrow {\mathcal O}_{\widetilde Y} \longrightarrow 
{\mathcal O}_{\widetilde Y}(C_y) \longrightarrow 
{\mathcal O}_{\widetilde Y}(C_y)\vert_{C_y} \longrightarrow 0
\end{equation*}
and one applies, then, ${\sigma}_{y\ast}$. Since 
$({\mathcal O}_{\widetilde Y}(C_y)\otimes {\pi}_y^{\ast}{\mathcal O}_{L_y}(1)) 
\vert_{C_y} \simeq {\mathcal O}_{{\mathbb P}^1}(-1)$, one deduces that: 
\begin{gather*}
{\sigma}_{y\ast}{\pi}_y^{\ast}{\mathcal O}_{L_y}(1)\simeq {\sigma}_{y\ast}
({\mathcal O}_{\widetilde Y}(C_y)\otimes {\pi}_y^{\ast}{\mathcal O}_{L_y}(1)) 
\simeq {\sigma}_{y\ast}({\sigma}_y^{\ast}{\mathcal O}_Y(1)\otimes 
{\pi}_y^{\ast}{\mathcal O}_{L_y}(-1)) \simeq\\
{\mathcal O}_Y(1)\otimes 
{\sigma}_{y\ast}{\pi}_y^{\ast}{\mathcal O}_{L_y}(-1)\simeq {\mathcal I}_{L,Y}(1) 
\, .
\end{gather*}
Finally, applying ${\sigma}_{y\ast}{\pi}_y^{\ast}$ to the exact sequence: 
\begin{equation*}
0 \longrightarrow {\mathcal O}_{L_y}(-1) \longrightarrow 
{\mathcal O}_{L_y}^{\oplus 2} \longrightarrow {\mathcal O}_{L_y}(1) 
\longrightarrow 0\, ,
\end{equation*}
one gets an exact sequence: 
\begin{equation*}
0 \longrightarrow {\mathcal I}_{L,Y} \longrightarrow {\mathcal O}_Y^{\oplus 2} 
\longrightarrow {\mathcal I}_{L,Y}(1) \longrightarrow 0\, .
\end{equation*}
Since ${\mathcal I}_{L,Y}(1)$ is torsion free one deduces that 
${\mathcal I}_{L,Y}$ is reflexive (see \cite[Prop.~1.1]{Ha80}). 
\end{proof}

\begin{lem}\label{L:rk1reflexive}
Let $L \subset Y$ be a line. If $\mathcal L$ is a rank 1 reflexive sheaf on 
$Y$ then there exists $a \in {\mathbb Z}$ such that ${\mathcal L} \simeq 
{\mathcal O}_Y(a)$ or ${\mathcal L} \simeq {\mathcal I}_{L,Y}(a)$. Moreover, 
the dual of ${\mathcal I}_{L,Y}$ is ${\mathcal I}_{L,Y}(1)$. 
\end{lem}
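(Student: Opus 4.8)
The plan is to reduce the statement to a computation in the divisor class group of $Y$. Since $Y = Q \cap \text{T}_yQ$ is a quadratic cone, it is a normal projective surface whose only singular point is its vertex $y$; hence, by the standard correspondence on a normal variety between rank $1$ reflexive sheaves (up to isomorphism) and Weil divisor classes --- the sheaf attached to a class $[D]$ being $\mathcal{O}_Y(D)$, and every rank $1$ reflexive sheaf arising this way (see, e.g., \cite[Sect.~1]{Ha80}) --- it suffices to determine $\text{Cl}(Y)$ together with the classes of $\mathcal{O}_Y(1)$ and of $\mathcal{I}_{L,Y}$.

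First I would compute $\text{Cl}(Y)$ by means of the desingularization $\sigma_y : \widetilde Y \to Y$ constructed above, with $\widetilde Y \simeq \mathbb{P}(\mathcal{O}_{L_y} \oplus \mathcal{O}_{L_y}(-2))$. As $\{y\}$ has codimension $2$ in the normal variety $Y$ and $\sigma_y$ restricts to an isomorphism $\widetilde Y \setminus C_y \xrightarrow{\sim} Y \setminus \{y\}$, excision gives $\text{Cl}(Y) = \text{Cl}(Y \setminus \{y\}) = \text{Cl}(\widetilde Y \setminus C_y) = \text{Cl}(\widetilde Y)/\mathbb{Z}[C_y]$. Now $\text{Cl}(\widetilde Y) = \text{Pic}(\widetilde Y)$ is freely generated by the class $f$ of a fibre $\widetilde L = \pi_y^{-1}(\ell)$ of $\pi_y$ and by $[C_y]$, with $C_y^2 = -2$ and $C_y \cdot f = 1$; hence $\text{Cl}(Y)$ is infinite cyclic, generated by the image of $f$. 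By Lemma~\ref{L:sigmapiOpm1}, $\sigma_y$ maps $\widetilde L$ isomorphically onto a line $L \subset Y$, so $\text{Cl}(Y) = \mathbb{Z}\,[L]$. Moreover $\sigma_y^{\ast}\mathcal{O}_Y(1) \simeq \mathcal{O}_{\mathbb{P}(\mathcal{O}\oplus\mathcal{O}(-2))}(1)$ (because $\mathcal{O}_{\mathbb{P}(\mathcal{O}\oplus\mathcal{O}(-2))}(-1) \simeq \sigma_y^{\ast}\mathcal{O}_Y(-1)$), and intersecting this line bundle with a fibre and with $C_y$ one finds its class in $\text{Pic}(\widetilde Y)$ to be $C_y + 2f$, whose image in $\text{Cl}(Y)$ is $2[L]$; thus $[\mathcal{O}_Y(1)] = 2[L]$. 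Finally, the sequence $0 \to \mathcal{I}_{L,Y} \to \mathcal{O}_Y \to \mathcal{O}_L \to 0$ shows $[\mathcal{I}_{L,Y}] = -[L]$.

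With these facts the first assertion is immediate: any rank $1$ reflexive $\mathcal{L}$ has $[\mathcal{L}] = n[L]$ for a unique $n \in \mathbb{Z}$; writing $n = 2a$, one gets $\mathcal{L} \simeq \mathcal{O}_Y(a)$, while writing $n = 2a-1$ one gets $[\mathcal{L}] = 2a[L] + (-[L]) = [\mathcal{O}_Y(a) \otimes \mathcal{I}_{L,Y}]$, so $\mathcal{L} \simeq \mathcal{I}_{L,Y}(a)$ --- here $\mathcal{O}_Y(a) \otimes \mathcal{I}_{L,Y}$ is again reflexive, being the twist of a reflexive sheaf by a line bundle. For the last assertion, dualizing a rank $1$ reflexive sheaf negates its class in $\text{Cl}(Y)$, so $[\mathcal{I}_{L,Y}^{\vee}] = [L] = 2[L] - [L] = [\mathcal{O}_Y(1) \otimes \mathcal{I}_{L,Y}] = [\mathcal{I}_{L,Y}(1)]$ and hence $\mathcal{I}_{L,Y}^{\vee} \simeq \mathcal{I}_{L,Y}(1)$. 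Alternatively, one can avoid class-group language here: the natural multiplication map $\sigma_{y\ast}\pi_y^{\ast}\mathcal{O}_{L_y}(1) \otimes \sigma_{y\ast}\pi_y^{\ast}\mathcal{O}_{L_y}(-1) \to \sigma_{y\ast}\mathcal{O}_{\widetilde Y} = \mathcal{O}_Y$ yields, via Lemma~\ref{L:sigmapiOpm1}, a morphism $\mathcal{I}_{L,Y}(1) \to \mathcal{I}_{L,Y}^{\vee}$ which is an isomorphism over $Y \setminus \{y\}$ between two reflexive sheaves on the normal surface $Y$, hence an isomorphism.

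The step that needs the most care is the passage through the singular point $y$ --- both in invoking the reflexive-sheaves/$\text{Cl}(Y)$ dictionary there and in the assertion that $\mathcal{O}_Y(a)$ and $\mathcal{I}_{L,Y}(a)$ are genuinely different, i.e. that $L$ is not Cartier on $Y$. The latter is exactly the statement that $Y$ is not locally factorial at its vertex (equivalently, $\text{Pic}(Y) = \mathbb{Z}[\mathcal{O}_Y(1)]$ has index $2$ in $\text{Cl}(Y) = \mathbb{Z}[L]$), which is clear since the local ring of a quadratic cone at its vertex is not a unique factorization domain; so no two of the sheaves listed coincide and the description is exhaustive.
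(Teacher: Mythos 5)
Your proof is correct and follows essentially the same route as the paper: both identify rank~$1$ reflexive sheaves on $Y$ with divisor classes on the smooth locus (via \cite[Prop.~1.6]{Ha80}) and compute that this group is $\mathbb{Z}\cdot[L]$, with $[{\mathcal O}_Y(1)]=2[L]$ and $[{\mathcal I}_{L,Y}]=-[L]$, using the resolution ${\widetilde Y}\simeq {\mathbb P}({\mathcal O}_{L_y}\oplus{\mathcal O}_{L_y}(-2))$ and the excision of $C_y$. The only cosmetic difference is the detection mechanism: the paper shows that restriction $\text{Pic}(Y\setminus\{y\})\rightarrow\text{Pic}(C)$ to a smooth conic $C\subset Y\setminus\{y\}$ is an isomorphism and reads off degrees on $C$, whereas you read off the classes from intersection numbers with $f$ and $C_y$ on ${\widetilde Y}$.
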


\begin{proof}
One deduces from \cite[Prop.~1.6]{Ha80} that the map ${\mathcal L} \mapsto 
{\mathcal L}\vert_{Y\setminus \{y\}}$ is a bijection between the set of 
isomorphism classes of rank 1 reflexive sheaves on $Y$ and 
$\text{Pic}(Y\setminus \{y\})$ (the inverse bijection being $M \mapsto 
j_{\ast}M$, where $j : Y\setminus \{y\} \rightarrow Y$ is the inclusion map). 
On the other hand, if $C\subset Y\setminus \{y\}$ is a smooth conic then, 
applying \cite[II,~Prop.~6.5]{Ha77} and \cite[III,~Ex.~12.5]{Ha77} to 
$\widetilde Y$ and $C_y$, one deduces that the restriction map 
$\text{Pic}(Y\setminus \{y\}) \rightarrow \text{Pic}(C)$ is an isomorphism.  
We notice, now, that ${\mathcal O}_Y(1)\vert_C \simeq 
{\mathcal O}_{{\mathbb P}^1}(2)$, that ${\mathcal I}_{L,Y}\vert_C \simeq 
{\mathcal O}_{{\mathbb P}^1}(-1)$ and that ${\mathcal Hom}_{{\mathcal O}_Y}
({\mathcal I}_{L,Y},{\mathcal O}_Y)\vert_C \simeq {\mathcal I}_{L,Y}(1) 
\vert_C$. Fixing an isomorphism ${\mathbb P}^1 \overset{\sim}{\rightarrow} C$, 
one gets that if ${\mathcal L}\vert_C \simeq {\mathcal O}_{{\mathbb P}^1}(2a)$ 
then ${\mathcal L} \simeq {\mathcal O}_Y(a)$ and if ${\mathcal L}\vert_C 
\simeq {\mathcal O}_{{\mathbb P}^1}(2a-1)$ then ${\mathcal L} \simeq 
{\mathcal I}_{L,Y}(a)$.   
\end{proof}

\begin{prop}\label{P:nonstblonthecone}
Let $F$ be a rank 2 vector bundle on $Y$, with ${\fam0 det}\, F \simeq 
{\mathcal O}_Y(c_1)$, $c_1 = 0$ or $-1$. Assume that, for a general line 
$L\subset Y$, $F\vert_L \simeq {\mathcal O}_L\oplus {\mathcal O}_L(c_1)$ 
and that, for a general conic $C\subset Y\setminus \{y\}$, 
$F\vert_C \simeq {\mathcal O}_{{\mathbb P}^1}(c_1)^{\oplus 2}$. 

\emph{(i)} If $c_1 = 0$ and $F$ is not stable then ${\sigma}_y^{\ast}F$ can be 
realized as an extension: 
\begin{equation*}
0\longrightarrow {\mathcal O}_{\widetilde Y} \longrightarrow 
{\sigma}_y^{\ast}F \longrightarrow {\mathcal I}_Z \longrightarrow 0
\end{equation*}
where $Z$ is a $0$-dimensional (or empty) subscheme of $\widetilde Y$ with 
$Z\cap C_y = \emptyset$. 

\emph{(ii)} If $c_1 = -1$ and $F$ is not stable then ${\sigma}_y^{\ast}F$ can 
be realized as an extension: 
\begin{equation*}
0 \longrightarrow {\pi}_y^{\ast}{\mathcal O}_{L_y}(-1) \longrightarrow 
{\sigma}_y^{\ast}F \longrightarrow {\mathcal I}_Z\otimes 
{\sigma}_y^{\ast}{\mathcal O}_Y(-1)\otimes {\pi}_y^{\ast}{\mathcal O}_{L_y}(1) 
\longrightarrow 0
\end{equation*}
where $Z$ is a $0$-dimensional (or empty) subscheme of $\widetilde Y$ such 
that $Z\cap C_y = \emptyset$ or $Z\cap C_y = 1$ simple point.
\end{prop}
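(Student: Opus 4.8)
The plan is to produce the destabilizing subsheaf on $Y$, pin down its isomorphism class with Lemma~\ref{L:rk1reflexive}, and then transport the resulting inclusion to the resolution $\widetilde Y$, keeping careful track of what happens along $C_y$.

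\emph{Step 1 (the destabilizing subsheaf).} Since $F$ is not stable, it contains a saturated rank one reflexive subsheaf $\mathcal L$ with $\mu(\mathcal L)\ge\mu(F)=c_1$, slopes taken with respect to $\mathcal O_Y(1)$ (so that $\mathcal O_Y(1)^2=2$). By Lemma~\ref{L:rk1reflexive}, $\mathcal L$ is $\mathcal O_Y(a)$ or $\mathcal I_{L_0,Y}(a)$. Restricting $\mathcal L\hookrightarrow F$ to a general smooth conic $C\subset Y\setminus\{y\}$ keeps the sequence $0\to\mathcal L\to F\to F/\mathcal L\to 0$ exact, because $F/\mathcal L$ is torsion free and $C$ is a Cartier divisor; using $\mathcal O_Y(1)|_C\simeq\mathcal O_{\mathbb P^1}(2)$ and $\mathcal I_{L_0,Y}|_C\simeq\mathcal O_{\mathbb P^1}(-1)$ against the hypothesis $F|_C\simeq\mathcal O_{\mathbb P^1}(c_1)^{\oplus2}$ gives $2a\le c_1$, resp.\ $2a-1\le c_1$. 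Together with $\mu(\mathcal L)\ge c_1$ this forces: if $c_1=0$ then $\mathcal L\simeq\mathcal O_Y$, and (the same bound applying to every saturated rank one reflexive subsheaf) $F$ is semistable; if $c_1=-1$ then the option $\mathcal O_Y(a)$ is impossible and $\mathcal L\simeq\mathcal I_{L_0,Y}$ for some line $L_0\subset Y$, so $F$ is semistable but not stable.

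\emph{Step 2 (the case $c_1=0$).} From $\mathcal O_Y\hookrightarrow F$ and $H^1(\mathcal O_Y)=0$ I get $0\to\mathcal O_Y\xrightarrow{\,s\,}F\to\mathcal I_{Z',Y}\to0$ with $Z'$ a (possibly empty) $0$-dimensional subscheme, no divisorial part by semistability. The section $s$ does not vanish at $y$: otherwise, for a general line $L\subset Y$ (all of which contain $y$) the restriction $s|_L$ would be a section of $F|_L\simeq\mathcal O_L\oplus\mathcal O_L$ vanishing at a point, hence $s|_L=0$, and since the general lines cover $Y$ this would give $s=0$. Now pull back along $\sigma_y$: as $F$ is locally free at $y$, $\sigma_y^\ast F|_{C_y}\simeq\mathcal O_{C_y}^{\oplus2}$ and $\sigma_y^\ast s|_{C_y}$ is the constant nonzero section $s(y)$, so $\sigma_y^\ast s$ is nowhere zero along $C_y$; its cokernel $\mathcal Q$ is therefore torsion free and locally free near $C_y$, and since $\det\sigma_y^\ast F\simeq\mathcal O_{\widetilde Y}$ we get $\mathcal Q\simeq\mathcal I_Z$ with $Z$ $0$-dimensional and $Z\cap C_y=\varnothing$ (indeed $Z=Z'$ under $\widetilde Y\setminus C_y\simeq Y\setminus\{y\}$). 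This is (i).

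\emph{Step 3 (the case $c_1=-1$).} Using $\sigma_{y\ast}\pi_y^\ast\mathcal O_{L_y}(1)\simeq\mathcal I_{L,Y}(1)$ (Lemma~\ref{L:sigmapiOpm1}), $\mathcal I_{L_0,Y}^\vee\simeq\mathcal I_{L_0,Y}(1)$, and the projection formula for the locally free $F$, one identifies $\mathrm{Hom}_{\widetilde Y}(\pi_y^\ast\mathcal O_{L_y}(-1),\sigma_y^\ast F)\simeq H^0(Y,F\otimes\mathcal I_{L_0,Y}(1))\simeq\mathrm{Hom}_Y(\mathcal I_{L_0,Y},F)$, which is nonzero by Step 1; equivalently, applying $\sigma_y^\ast$ to $0\to\mathcal I_{L_0,Y}\to F\to\mathcal G\to0$ and killing torsion produces an injection $(\sigma_y^\ast\mathcal I_{L_0,Y})/\mathrm{tors}\hookrightarrow\sigma_y^\ast F$ whose double dual is $\pi_y^\ast\mathcal O_{L_y}(-1)$ by the explicit structure of $\widetilde Y=\mathbb P(\mathcal O_{L_y}\oplus\mathcal O_{L_y}(-2))$. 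After replacing the inclusion by a generator of the saturation of its image — and checking, via Step 1 applied to the $\sigma_{y\ast}$-push-forward (which forbids $\mathcal O_Y$) together with a degree count on $\widetilde Y$ (which forbids a divisorial correction along $C_y$), that this saturation is still $\pi_y^\ast\mathcal O_{L_y}(-1)$ — I obtain $0\to\pi_y^\ast\mathcal O_{L_y}(-1)\to\sigma_y^\ast F\to\mathcal Q\to0$ with $\mathcal Q$ torsion free of double dual $\sigma_y^\ast\mathcal O_Y(-1)\otimes\pi_y^\ast\mathcal O_{L_y}(1)$, hence $\mathcal Q\simeq\mathcal I_Z\otimes\sigma_y^\ast\mathcal O_Y(-1)\otimes\pi_y^\ast\mathcal O_{L_y}(1)$ with $Z$ $0$-dimensional. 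Finally, the restriction of the inclusion to $C_y$ is a nonzero map $\mathcal O_{C_y}(-1)=\pi_y^\ast\mathcal O_{L_y}(-1)|_{C_y}\to\sigma_y^\ast F|_{C_y}\simeq\mathcal O_{C_y}^{\oplus2}$, i.e.\ a pair of linear forms on $C_y\simeq\mathbb P^1$, so $Z\cap C_y$, being their common zero locus, is empty or a single reduced point. This is (ii).

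The step I expect to be the main obstacle is the third one: verifying that after pulling $\mathcal I_{L_0,Y}\hookrightarrow F$ back to $\widetilde Y$ and saturating, the subsheaf is \emph{exactly} $\pi_y^\ast\mathcal O_{L_y}(-1)$ (with no extra divisorial twist along $C_y$) and that its restriction to $C_y$ is not identically zero. This is where one has to exploit the precise geometry of the resolution — $\widetilde Y=\mathbb P(\mathcal O_{L_y}\oplus\mathcal O_{L_y}(-2))$, $\mathcal O_{\widetilde Y}(C_y)\simeq\sigma_y^\ast\mathcal O_Y(1)\otimes\pi_y^\ast\mathcal O_{L_y}(-2)$, $\sigma_{y\ast}\mathcal O_{\widetilde Y}\simeq\mathcal O_Y$, and $R^1\sigma_{y\ast}\pi_y^\ast\mathcal O_{L_y}(a)=0$ for $a\ge-1$ — in combination with Lemma~\ref{L:rk1reflexive}, which is what keeps the various push-forwards under control.
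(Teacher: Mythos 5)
Your proposal is correct and follows essentially the same route as the paper: classify the destabilizing subsheaf as $\mathcal O_Y$ (resp.\ $\mathcal I_{L,Y}$) via Lemma~\ref{L:rk1reflexive} and restriction to a general conic, transport it to $\widetilde Y$ through $\sigma_{y\ast}\pi_y^{\ast}\mathcal O_{L_y}(\pm 1)$ (Lemma~\ref{L:sigmapiOpm1}), kill divisorial components of the zero locus by intersecting with the classes of $C_y$ and a general fibre $\widetilde L$ (using the splitting types $\mathcal O\oplus\mathcal O(c_1)$ on general fibres and the exclusion of $\mathcal O_Y\hookrightarrow F$), and control $Z\cap C_y$ via $\sigma_y^{\ast}F\vert_{C_y}\simeq\mathcal O_{C_y}^{\oplus 2}$. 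The only differences are organizational (you verify non-vanishing at $y$ downstairs and saturate before restricting to $C_y$, while the paper restricts the section to general fibres of $\pi_y$ first), and the step you flag as the main obstacle is handled by exactly the degree count and push-forward argument you indicate.
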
 

\begin{proof}
By stability we understand, of course, the Mumford-Takemoto stability with 
respect to ${\mathcal O}_Y(1)$. A general member of the linear system 
$\vert \, {\mathcal O}_Y(1)\, \vert$ is a smooth conic $C\subset Y\setminus 
\{y\}$. 

(i) Since $F$ is not stable it must contain a rank 1 reflexive sheaf 
$\mathcal L$ with $\text{deg}({\mathcal L}\vert_C) = 0$. By 
Lemma~\ref{L:rk1reflexive}, ${\mathcal L}\simeq {\mathcal O}_Y$ hence  
$\text{H}^0(F) \neq 0$. As ${\sigma}_{y\ast}{\sigma}_y^{\ast}F \simeq F$, it 
follows that $\text{H}^0({\sigma}_y^{\ast}F) \neq 0$. Let $s$ be a non-zero 
global section of ${\sigma}_y^{\ast}F$ and let $Z\subset {\widetilde Y}$ be 
its zero scheme. 

We show, firstly, that $Z\cap C_y = \emptyset$. Indeed, let $\ell$ be a point 
of $L_y$ and ${\widetilde L} := {\pi}_y^{-1}(\ell)\subset {\widetilde Y}$. 
If $\ell$ is general then $s\vert_{{\widetilde L}} \neq 0$. Since, by 
hypothesis, ${\sigma}_y^{\ast}F\vert_{{\widetilde L}} \simeq 
{\mathcal O}_{{\mathbb P}^1}^{\oplus 2}$, it follows that $s$ vanishes at no 
point of $\widetilde L$. It particular, it does not vanish at ${\widetilde L} 
\cap C_y$. Since ${\sigma}_y^{\ast}F\vert_{C_y} \simeq   
{\mathcal O}_{{\mathbb P}^1}^{\oplus 2}$, one deduces that $s$ vanishes at no 
point of $C_y$, hence $Z\cap C_y = \emptyset$. 

Next, we show that $\text{dim}\, Z \leq 0$. Indeed, assume that there exists 
an effective irreducible divisor $D \subset {\widetilde Y}$ such that $s$ 
vanishes on $D$. As divisors, $D \sim aC_y+b{\widetilde L}$. Since 
$D\cap {\widetilde L} = \emptyset$, $a = (D\cdot {\widetilde L}) = 0$. 
Since $D \cap C_y = \emptyset$, $b = b-2a = (D\cdot C_y) = 0$. It follows 
that $D \sim 0$, a contradiction. 

Consequently, ${\sigma}_y^{\ast}F$ can be realized as an extension: 
\begin{equation*}
0\longrightarrow {\mathcal O}_{\widetilde Y} \overset{s}{\longrightarrow}  
{\sigma}_y^{\ast}F \longrightarrow {\mathcal I}_Z \longrightarrow 0\, .
\end{equation*} 

(ii) It follows, as in (i), that $F$ has a subsheaf isomorphic to 
${\mathcal I}_{L,Y}$, $L$ being a (any) line contained in $Y$. From 
Lemma~\ref{L:sigmapiOpm1} and the last part of Lemma~\ref{L:rk1reflexive}: 
\begin{equation*}
{\sigma}_{y\ast}({\sigma}_y^{\ast}F\otimes {\pi}_y^{\ast}{\mathcal O}_{L_y}(1)) 
\simeq F\otimes {\mathcal I}_{L,Y}(1) \simeq 
{\mathcal Hom}_{{\mathcal O}_Y}({\mathcal I}_{L,Y},F)\, .
\end{equation*}
One deduces that $\text{H}^0({\sigma}_y^{\ast}F\otimes {\pi}_y^{\ast} 
{\mathcal O}_{L_y}(1)) \neq 0$. Let $s$ be a non-zero global section of 
${\sigma}_y^{\ast}F\otimes {\pi}_y^{\ast}{\mathcal O}_{L_y}(1)$ and 
$Z \subset {\widetilde Y}$ its zero scheme. 

We show, firstly, that $Z\cap C_y$ is empty or consists of one simple point. 
Indeed, for a general $\ell \in L_y$, the restriction of $s$ to 
${\widetilde L}:= {\pi}_y^{-1}(\ell)$ is non-zero. Since 
\begin{equation*}
{\sigma}_y^{\ast}F\otimes {\pi}_y^{\ast}{\mathcal O}_{L_y}(1)\vert_{{\widetilde L}} 
\simeq {\mathcal O}_{{\mathbb P}^1} \oplus 
{\mathcal O}_{{\mathbb P}^1}(-1)
\end{equation*} 
it follows that $s$ vanishes at no point of $\widetilde L$. In particular, it 
does not vanish at ${\widetilde L} \cap C_y$. As: 
\begin{equation*}
{\sigma}_y^{\ast}F\otimes {\pi}_y^{\ast}{\mathcal O}_{L_y}(1)\vert_{C_y}
\simeq {\mathcal O}_{{\mathbb P}^1}(1)^{\oplus 2} 
\end{equation*} 
one deduces that $Z\cap C_y$ is empty or consists of one simple point. 

Next, we show that $Z$ is 0-dimensional or empty. Indeed, assume that there 
exists an effective irreducible divisor $D \subset {\widetilde Y}$ such that 
$s$ vanishes on $D$. As a divisor, $D\sim aC_y+b{\widetilde L}$. 
$D\cap {\widetilde L} = \emptyset$ implies that $a = (D\cdot {\widetilde L}) 
= 0$. Since $D \cap C_y$ is empty or consists of one simple point, it follows 
that $b = b-2a = (D\cdot C_y) \in \{0,1\}$. One deduces that $D$ must be 
a fibre of ${\pi}_y : {\widetilde Y} \rightarrow L_y$. But this would imply 
that ${\sigma}_y^{\ast}F \simeq ({\sigma}_y^{\ast}F\otimes {\pi}_y^{\ast}
{\mathcal O}_{L_y}(1))\otimes {\mathcal O}_{\widetilde Y}(-D)$ has a non-zero 
global section, which contradicts the fact that $\text{H}^0(F) = 0$ 
(because $F\vert_C \simeq {\mathcal O}_{{\mathbb P}^1}(-1)^{\oplus 2}$). 

Consequently, ${\sigma}_y^{\ast}F$ can be realized as an extension:
\begin{equation*}
0 \longrightarrow {\pi}_y^{\ast}{\mathcal O}_{L_y}(-1) 
\overset{s}{\longrightarrow}  
{\sigma}_y^{\ast}F \longrightarrow {\mathcal I}_Z\otimes 
{\sigma}_y^{\ast}{\mathcal O}_Y(-1)\otimes {\pi}_y^{\ast}{\mathcal O}_{L_y}(1) 
\longrightarrow 0\, .  \qedhere 
\end{equation*}   
\end{proof}

\begin{corol}\label{C:nonstblonthecone}
Under the hypothesis of Prop.~\ref{P:nonstblonthecone}\emph{(ii)}, one has 
${\fam0 h}^0({\sigma}_y^{\ast}F\otimes {\pi}_y^{\ast}{\mathcal O}_{L_y}(1)) 
= 1$. 
\end{corol}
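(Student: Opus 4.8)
The plan is to read the statement off directly from the short exact sequence produced in the proof of Proposition~\ref{P:nonstblonthecone}(ii). Recall that under that hypothesis ${\sigma}_y^{\ast}F$ was realized as an extension
\begin{equation*}
0 \longrightarrow {\pi}_y^{\ast}{\mathcal O}_{L_y}(-1) \longrightarrow {\sigma}_y^{\ast}F \longrightarrow {\mathcal I}_Z\otimes {\sigma}_y^{\ast}{\mathcal O}_Y(-1)\otimes {\pi}_y^{\ast}{\mathcal O}_{L_y}(1) \longrightarrow 0
\end{equation*}
with $Z$ a $0$-dimensional (or empty) subscheme of ${\widetilde Y}$. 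First I would tensor this sequence by ${\pi}_y^{\ast}{\mathcal O}_{L_y}(1)$, obtaining
\begin{equation*}
0 \longrightarrow {\mathcal O}_{\widetilde Y} \longrightarrow {\sigma}_y^{\ast}F\otimes {\pi}_y^{\ast}{\mathcal O}_{L_y}(1) \longrightarrow {\mathcal I}_Z\otimes {\sigma}_y^{\ast}{\mathcal O}_Y(-1)\otimes {\pi}_y^{\ast}{\mathcal O}_{L_y}(2) \longrightarrow 0\, .
\end{equation*}

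Next I would pass to global sections. On the one hand ${\widetilde Y}$ is a smooth, connected, projective surface (a ${\mathbb P}^1$-bundle over $L_y \simeq {\mathbb P}^1$), so $\text{h}^0({\mathcal O}_{\widetilde Y}) = 1$; in particular the left-hand inclusion already gives $\text{h}^0({\sigma}_y^{\ast}F\otimes {\pi}_y^{\ast}{\mathcal O}_{L_y}(1)) \geq 1$ (this can also be seen from the section $s$ constructed in the proof of Proposition~\ref{P:nonstblonthecone}). On the other hand, since $Z$ has codimension $\geq 2$ in ${\widetilde Y}$, the ideal sheaf ${\mathcal I}_Z$ embeds into ${\mathcal O}_{\widetilde Y}$, so the space of global sections of the rightmost term embeds into $\text{H}^0({\sigma}_y^{\ast}{\mathcal O}_Y(-1)\otimes {\pi}_y^{\ast}{\mathcal O}_{L_y}(2))$. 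Using the identification ${\widetilde Y} \simeq {\mathbb P}({\mathcal O}_{L_y}\oplus {\mathcal O}_{L_y}(-2))$ over $L_y$ with ${\sigma}_y^{\ast}{\mathcal O}_Y(-1) \simeq {\mathcal O}_{{\mathbb P}({\mathcal O}\oplus {\mathcal O}(-2))}(-1)$, the projection formula gives ${\pi}_{y\ast}({\sigma}_y^{\ast}{\mathcal O}_Y(-1)\otimes {\pi}_y^{\ast}{\mathcal O}_{L_y}(2)) \simeq ({\pi}_{y\ast}{\mathcal O}_{\widetilde Y}(-1))\otimes {\mathcal O}_{L_y}(2) = 0$, so that $\text{H}^0$ vanishes. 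Hence $\text{h}^0({\sigma}_y^{\ast}F\otimes {\pi}_y^{\ast}{\mathcal O}_{L_y}(1)) \leq \text{h}^0({\mathcal O}_{\widetilde Y}) = 1$, and combining the two estimates yields the asserted equality.

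There is essentially no obstacle here: the only point requiring a little care is the vanishing $\text{H}^0({\sigma}_y^{\ast}{\mathcal O}_Y(-1)\otimes {\pi}_y^{\ast}{\mathcal O}_{L_y}(2)) = 0$, for which one must respect the classical convention for projective bundles, under which ${\sigma}_y^{\ast}{\mathcal O}_Y(-1)$ is the tautological sub-line-bundle of ${\pi}_y^{\ast}({\mathcal O}_{L_y}\oplus {\mathcal O}_{L_y}(-2))$; it restricts to ${\mathcal O}_{{\mathbb P}^1}(-1)$ on every fibre of ${\pi}_y$ and therefore has vanishing direct image under ${\pi}_y$. Equivalently, one could push everything forward by ${\sigma}_y$, computing ${\sigma}_{y\ast}({\sigma}_y^{\ast}{\mathcal O}_Y(-1)\otimes {\pi}_y^{\ast}{\mathcal O}_{L_y}(2)) \simeq {\mathcal I}_{\{y\},Y}$ by the method of Lemma~\ref{L:sigmapiOpm1} (using ${\mathcal O}_{\widetilde Y}(C_y)\simeq {\sigma}_y^{\ast}{\mathcal O}_Y(1)\otimes {\pi}_y^{\ast}{\mathcal O}_{L_y}(-2)$), and noting that ${\mathcal I}_{\{y\},Y}$ has no non-zero global section.
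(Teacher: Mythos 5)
Your proposal is correct and follows essentially the same route as the paper: tensor the extension from Proposition~\ref{P:nonstblonthecone}(ii) by ${\pi}_y^{\ast}{\mathcal O}_{L_y}(1)$ and observe that the quotient term has no global sections. The paper sees this vanishing instantly from the identification ${\sigma}_y^{\ast}{\mathcal O}_Y(-1)\otimes {\pi}_y^{\ast}{\mathcal O}_{L_y}(2)\simeq {\mathcal O}_{\widetilde Y}(-C_y)$, while you compute the direct image under ${\pi}_y$ (and note the ${\mathcal O}_{\widetilde Y}(-C_y)$ alternative yourself); these are equivalent one-line observations.
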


\begin{proof}
One tensorizes the exact sequence from the conclusion of 
Prop.~\ref{P:nonstblonthecone}(ii) by ${\pi}_y^{\ast}{\mathcal O}_{L_y}(1)$ and 
one uses the fact that ${\sigma}_y^{\ast}{\mathcal O}_Y(-1) \otimes 
{\pi}_y^{\ast}{\mathcal O}_{L_y}(2) \simeq {\mathcal O}_{\widetilde Y}(-C_y)$.  
\end{proof}

\begin{rmk*}
For general results concerning the structure of rank 2 vector bundles on 
ruled surfaces, one may consult the papers of Brossius~\cite{Bro83} and 
Br\^{\i}nz\u{a}nescu~\cite{Br91}. 
\end{rmk*}

\subsection{Desingularization of the family of singular hyperplane 
sections}\label{SS:desing}
Consider the subset $X\subset Q\times Q$ defined by $X := \{(x,y)\, \vert \, 
\overline{xy}\subset Q\}$, where $\overline{xy}$ is the linear span of 
$\{x,y\}$ in ${\mathbb P}^4$. If the equation of $Q \subset {\mathbb P}^4$ is 
${\mathbf q}(x,x) = 0$, where ${\mathbf q} : {\mathbb C}^5\times {\mathbb C}^5 
\rightarrow {\mathbb C}$ is a non-degenerate symmetric bilinear form, then: 
\begin{equation*}
X = (Q\times Q)\cap \{(x,y)\in {\mathbb P}^4\times {\mathbb P}^4\, \vert \, 
{\mathbf q}(x,y) = 0\}\, .
\end{equation*} 
Let $p_1,\, p_2 : X \rightarrow Q$ be the restrictions of the canonical 
projections. If $y\in Q$ then $p_1 : X \rightarrow Q$ maps $p_2^{-1}(y)$ 
isomorphically onto $Y := Q\cap \text{T}_yQ$. One parametrizes, in this way, 
the singular hyperplane sections of $Q$ in ${\mathbb P}^4$. 

Recalling the diagram~(\ref{f01q}), consider the fibre product 
${\widetilde X} := {\mathbb F}_{0,1}(Q)\times_{{\mathbb P}^3}
{\mathbb F}_{0,1}(Q)$. Since 
\begin{equation*}
{\mathbb F}_{0,1}(Q) = \{(x,\ell) \in Q\times {\mathbb P}^3\, \vert \, 
x\in L\}\, ,
\end{equation*}
it follows that 
\begin{equation*}
{\widetilde X} = \{(x,y,\ell) \in Q\times Q\times {\mathbb P}^3\, \vert \, 
x\in L,\, y\in L\}\, .
\end{equation*}

Let ${\widetilde p}_1,\, {\widetilde p}_2 : {\widetilde X}\rightarrow Q$ be 
the restrictions of the canonical projections $\text{pr}_1, \text{pr}_2 : 
Q\times Q\times {\mathbb P}^3 \rightarrow Q$, and let $\pi : {\widetilde X} 
\rightarrow {\mathbb P}^3$ be the restriction of the canonical projection 
$\text{pr}_3 : Q\times Q\times {\mathbb P}^3 \rightarrow {\mathbb P}^3$. The 
canonical projection $\text{pr}_{12} : Q\times Q\times {\mathbb P}^3 
\rightarrow Q\times Q$ induces a morphism $\sigma : {\widetilde X} \rightarrow 
X$. If $\Delta$ is the diagonal of $Q\times Q$ then ${\sigma}^{-1}(\Delta) = 
\{(x,x,\ell)\, \vert \, x\in L\} \simeq {\mathbb F}_{0,1}(Q)$ and 
$\sigma$ induces an isomorphism 
${\widetilde X}\setminus {\sigma}^{-1}(\Delta) 
\overset{\sim}{\rightarrow} X\setminus \Delta$. Finally, the canonical 
projections $\text{pr}_{13}, \text{pr}_{23} : Q\times Q\times {\mathbb P}^3 
\rightarrow Q\times {\mathbb P}^3$ induce morphisms $p_{13}, p_{23} : 
{\widetilde X} \rightarrow {\mathbb F}_{0,1}(Q)$ and one gets a diagram: 
\begin{equation}
\label{tildex}
\begin{CD}
{\widetilde X} @>{p_{23}}>> {\mathbb F}_{0,1} @>p>> Q\\ 
@V{p_{13}}VV @VVqV\\
{\mathbb F}_{0,1}(Q) @>q>> {\mathbb P}^3\\ 
@VpVV\\
Q
\end{CD}
\end{equation}
having a cartesian square and with ${\widetilde p}_1 = p\circ p_{13} = p_1\circ 
\sigma$, ${\widetilde p}_2 = p \circ p_{23} = p_2 \circ \sigma$, 
$\pi = q \circ p_{13} = q \circ p_{23}$. 

Now, let $y$ be a point of $Q$. $p_{13} : {\widetilde X} \rightarrow 
{\mathbb F}_{0,1}(Q)$ maps ${\widetilde p}_2^{\, -1}(y)$ isomorphically onto the 
desingularization $\widetilde Y$ of $Y := Q\cap \text{T}_yQ$ considered in 
par.~\ref{SS:stabcone}. Under this identification, the restriction of 
$\sigma : {\widetilde X} \rightarrow X$ (resp., $\pi : {\widetilde X} 
\rightarrow {\mathbb P}^3$) to ${\widetilde p}_2^{\, -1}(y)$ is identified with 
the morphism ${\sigma}_y : {\widetilde Y} \rightarrow Y$ (resp., 
${\pi}_y : {\widetilde Y} \rightarrow L_y$) from par.~\ref{SS:stabcone}. 

Let ${\Omega}_{{\widetilde p}_1}$ be the sheaf of relative K\"{a}hler 
differentials of the morphism ${\widetilde p}_1 : {\widetilde X} \rightarrow 
Q$. We want to describe, in this paragraph, the restriction of 
${\Omega}_{{\widetilde p}_1}$ to ${\widetilde p}_2^{\, -1}(y) \simeq 
{\widetilde Y}$. 

\begin{lem}\label{L:fy}
Let $H := {\fam0 T}_yQ \simeq {\mathbb P}^3$ and let ${\mathcal F}_y$ be the 
${\mathcal O}_H$-module defined by the exact sequence: 
\begin{equation*}
0 \longrightarrow {\mathcal O}_H(-1) \longrightarrow {\mathcal O}_H^{\oplus 3} 
\longrightarrow {\mathcal F}_y \longrightarrow 0
\end{equation*}
where the left morphism is the dual of an epimorphism 
${\mathcal O}_H^{\oplus 3} \rightarrow {\mathcal I}_{\{y\},H}(1)$. Then 
${\Omega}_{{\widetilde p}_1}\vert_{{\widetilde Y}\setminus C_y}$ can be 
identified, via the isomorphism ${\widetilde Y}\setminus C_y 
\overset{\sim}{\rightarrow} Y\setminus \{y\}$ induced by $\sigma_y$,  
with ${\mathcal F}_y\vert_{Y\setminus \{y\}}$. 
\end{lem}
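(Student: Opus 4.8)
The plan is to observe that, away from the diagonal, $\widetilde p_1$ differs from $p_1$ only by an isomorphism, so that the assertion reduces to identifying $\Omega_{p_1}$ on the fibre $p_2^{-1}(y)$, and then to read off this restriction from the conormal sequence of $X$ inside $Q\times Q$. Since $\widetilde p_1=p_1\circ\sigma$ and $\sigma$ restricts to an isomorphism $\widetilde X\setminus\sigma^{-1}(\Delta)\xrightarrow{\ \sim\ }X\setminus\Delta$, one has $\Omega_{\widetilde p_1}\simeq\sigma^{\ast}\Omega_{p_1}$ over $\widetilde X\setminus\sigma^{-1}(\Delta)$. From the description of diagram~(\ref{tildex}), $\widetilde p_2^{\,-1}(y)$ is identified via $p_{13}$ with $\widetilde Y$, its intersection with $\sigma^{-1}(\Delta)$ is $C_y$, and $\sigma$ restricts there to $\sigma_y\colon\widetilde Y\to Y$; composing with $p_1\colon p_2^{-1}(y)\xrightarrow{\ \sim\ }Y$, the isomorphism $\sigma_y\colon\widetilde Y\setminus C_y\xrightarrow{\ \sim\ }Y\setminus\{y\}$ of par.~\ref{SS:stabcone} carries $\Omega_{\widetilde p_1}|_{\widetilde Y\setminus C_y}$ to the restriction to $Y\setminus\{y\}$ of $\Omega_{p_1}|_{p_2^{-1}(y)}$, transported to $Y$ through $p_1$. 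Hence it is enough to show $\Omega_{p_1}|_{p_2^{-1}(y)}\simeq\mathcal F_y|_Y$ as sheaves on $Y$.

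By par.~\ref{SS:desing}, $X$ is the divisor in $Q\times Q$ of the bilinear form $\mathbf q(x,y)$, a section of $\mathrm{pr}_1^{\ast}\mathcal O_Q(1)\otimes\mathrm{pr}_2^{\ast}\mathcal O_Q(1)$; in particular $p_2^{-1}(y)=X\cap(Q\times\{y\})$ is the reduced hyperplane section $Q\cap\mathrm T_yQ=Y$. The relative cotangent sequence of the Cartier divisor $X\subset Q\times Q$ over the first projection reads
\begin{equation*}
(\mathrm{pr}_1^{\ast}\mathcal O_Q(-1)\otimes\mathrm{pr}_2^{\ast}\mathcal O_Q(-1))|_X\longrightarrow\mathrm{pr}_2^{\ast}\Omega_Q|_X\longrightarrow\Omega_{p_1}\longrightarrow0,
\end{equation*}
the first map being the differential of $\mathbf q(x,y)$ in the $y$-directions. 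Restricting to $p_2^{-1}(y)$, where $\mathrm{pr}_1$ becomes the inclusion $Y\hookrightarrow Q$ and $\mathrm{pr}_2$ the constant map $y$, and absorbing the one-dimensional fibres of $\mathcal O_Q(\pm1)$ at $y$, one obtains a right-exact sequence
\begin{equation*}
\mathcal O_Y(-1)\xrightarrow{\ \psi\ }\mathcal O_Y\otimes_{\mathbb C}\mathrm T_y^{\ast}Q\longrightarrow\Omega_{p_1}|_{p_2^{-1}(y)}\longrightarrow0,
\end{equation*}
where, writing $H=\mathrm T_yQ$, the map $\psi$ is the restriction to $Y$ of the $\mathcal O_H$-linear map $\mathcal O_H(-1)\to\mathcal O_H\otimes_{\mathbb C}\mathrm T_y^{\ast}Q$ given by $\widehat x\mapsto\mathbf q(\widehat x,-)|_{\mathrm T_yQ}$. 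Since the restriction functor is right exact, $\Omega_{p_1}|_{p_2^{-1}(y)}=\mathrm{coker}(\psi)$, and similarly $\mathcal F_y|_Y$ is the cokernel of the restriction to $Y$ of $\mathcal O_H(-1)\to\mathcal O_H^{\oplus3}$.

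It remains to identify $\psi$; this is the only computational point, and is where care with the $\mathcal O(1)$-twists is needed. Choosing homogeneous coordinates with $y=[1:0:0:0:0]$ and $\mathbf q(x,x)=x_0x_4+x_1x_3+x_2^2$, one gets $H=\{x_4=0\}$, the linear forms on $H\simeq{\mathbb P}^3$ vanishing at $y$ are $x_1,x_2,x_3$, so the epimorphism $\mathcal O_H^{\oplus3}\to\mathcal I_{\{y\},H}(1)$ in the definition of $\mathcal F_y$ may be taken to be $(x_1,x_2,x_3)$; in the affine chart $x_0=1$ a direct computation gives that $\psi$ sends $1$ to $(\tfrac12x_3,x_2,\tfrac12x_1)$ in the basis $\mathrm dx_1,\mathrm dx_2,\mathrm dx_3$ of $\mathrm T_y^{\ast}Q$. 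Hence $\psi$ equals, up to a linear automorphism of ${\mathbb C}^3$, the restriction to $Y$ of the dual $\mathcal O_H(-1)\to\mathcal O_H^{\oplus3}$ of that epimorphism, and passing to cokernels yields $\Omega_{p_1}|_{p_2^{-1}(y)}\simeq\mathcal F_y|_Y$, which finishes the proof. (One may also argue without coordinates: $\psi$ is induced by the polarity isomorphism $\mathbf q\colon{\mathbb C}^5\xrightarrow{\ \sim\ }({\mathbb C}^5)^{\vee}$ followed by $({\mathbb C}^5)^{\vee}\to\mathrm T_y^{\ast}Q$, which under $\mathrm T_yQ=\mathbb P(\widehat y^{\perp})$ and $\mathrm T_y^{\ast}Q\simeq(\widehat y^{\perp}/\widehat y)\otimes\widehat y$ becomes the tautological surjection $\widehat y^{\perp}\to\widehat y^{\perp}/\widehat y$ restricted to $Y$, whose dual is the evaluation $\mathcal O_H\otimes(\widehat y^{\perp}/\widehat y)^{\vee}\to\mathcal O_H(1)$ with image $\mathcal I_{\{y\},H}(1)$.)
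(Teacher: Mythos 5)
Your proof is correct, and its first step coincides with the paper's: since $\widetilde p_1 = p_1\circ\sigma$ and $\sigma$ is an isomorphism off $\sigma^{-1}(\Delta)$, with $\widetilde p_2^{\,-1}(y)\cap\sigma^{-1}(\Delta)=C_y$, the statement reduces to identifying $\Omega_{p_1}\vert_{p_2^{-1}(y)}$ with $\mathcal F_y\vert_Y$. Where you diverge is in how that identification is obtained. The paper simply invokes Lemma~\ref{L:sigma}(i), applied (after base change to $Q\subset{\mathbb P}^4$) to $\Sigma=Q^{\vee}$, for which $\text{T}_hQ^{\vee}=\{y\}^{\vee}$, so the $L$ of that lemma is the point $y$ and the exact sequence there is literally the defining presentation of $\mathcal F_y$; the content thus rests on the Grassmannian computation of $\Omega_{{\mathbb F}/{\mathbb P}}$ in Lemma~\ref{L:relomega} together with the Euler sequence on $H$. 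You instead rederive this special case from scratch: you present $\Omega_{p_1}$ via the relative conormal sequence of the $(1,1)$-divisor $X=\{{\mathbf q}(x,y)=0\}$ in $Q\times Q$, restrict to $p_2^{-1}(y)=Y$ using right-exactness, and identify the resulting map $\mathcal O_Y(-1)\to\mathcal O_Y\otimes_{\mathbb C}\text{T}_y^{\ast}Q$ in explicit coordinates as (up to an automorphism of ${\mathbb C}^3$) the restriction of the dual of $(x_1,x_2,x_3):\mathcal O_H^{\oplus 3}\to\mathcal I_{\{y\},H}(1)$. Both routes are sound and, like the paper's, yours actually yields the identification over all of $Y$ rather than only over $Y\setminus\{y\}$; your version is self-contained and makes the polarity map whose cokernel is $\mathcal F_y\vert_Y$ completely explicit, at the price of a coordinate computation that the paper avoids by reusing an already-established general lemma.
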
 

\begin{proof}
${\Omega}_{{\widetilde p}_1}\vert_{{\widetilde Y}\setminus C_y}$ can be  
identified with ${\Omega}_{p_1}\vert_{Y\setminus \{y\}}$ and, by 
Lemma~\ref{L:sigma}(i), ${\Omega}_{p_1}\vert_Y$ can be 
identified with ${\mathcal F}_y\vert_Y$. 
\end{proof}

\begin{rmk}\label{R:h1el}
Let $T$ be a scheme, $E$ a vector bundle on $T$ and $f : {\mathbb P}(E) 
\rightarrow T$ the associated (classical) projective bundle. If $L$ is a line 
bundle on $T$ then $\text{R}^if_{\ast}({\mathcal O}_{{\mathbb P}(E)}(1)\otimes 
f^{\ast}L) = 0$ for $i \geq 1$. One deduces that the canonical morphism: 
\begin{equation*}
\text{H}^1(E^{\ast}\otimes L) \simeq 
\text{H}^1(f_{\ast}({\mathcal O}_{{\mathbb P}(E)}(1)\otimes f^{\ast}L)) 
\longrightarrow \text{H}^1({\mathcal O}_{{\mathbb P}(E)}(1)\otimes f^{\ast}L)
\end{equation*}  
is an isomorphism.

Now, let $E^{\prime}$ be a vector subbundle of $E$ and let 
$f^{\prime} : {\mathbb P}(E^{\prime}) \rightarrow T$ be the associated 
projective bundle. One has 
\begin{equation*}
({\mathcal O}_{{\mathbb P}(E)}(1)\otimes f^{\ast}L)\vert_{{\mathbb P}(E^{\prime})} 
\simeq {\mathcal O}_{{\mathbb P}(E^{\prime})}(1)\otimes 
f^{\prime \ast}L
\end{equation*}
and, from the above observation, the restriction morphism: 
\begin{equation*}
\text{H}^1({\mathcal O}_{{\mathbb P}(E)}(1)\otimes f^{\ast}L) \longrightarrow 
\text{H}^1({\mathcal O}_{{\mathbb P}(E^{\prime})}(1)\otimes f^{\prime \ast}L)
\end{equation*} 
can be identified with the morphism $\text{H}^1(E^{\ast}\otimes L) \rightarrow 
\text{H}^1(E^{\prime \ast}\otimes L)$. 
\end{rmk}

\begin{prop}\label{P:omegap1}
One has a commutative diagram with exact rows: 
\begin{equation*}
\begin{CD}
0 @>>> {\mathcal O}_{\widetilde Y} @>>> 
{\pi}_y^{\ast}({\mathcal O}_{L_y}(1)^{\oplus 2}) @>>> 
{\pi}_y^{\ast}{\mathcal O}_{L_y}(2) @>>> 0\\
@. @VVV @VVV @VV{\fam0 id}V \\
0 @>>> {\mathcal O}_{\widetilde Y}(C_y) @>>> 
{\Omega}_{{\widetilde p}_1}\vert_{{\widetilde p}_2^{\, -1}(y)} @>>> 
{\pi}_y^{\ast}{\mathcal O}_{L_y}(2) @>>> 0
\end{CD}
\end{equation*}
where the left vertical morphism is the canonical one. 
\end{prop}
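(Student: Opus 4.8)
\emph{Step 1: obtaining the bottom row.} The morphism $p_{13}:{\widetilde X}\to{\mathbb F}_{0,1}(Q)$ is the base change along $q$ of the $\mathbb P^1$-bundle $q:{\mathbb F}_{0,1}(Q)\to{\mathbb P}^3$, hence it is smooth; consequently the canonical sequence of relative differentials for ${\widetilde X}\xrightarrow{p_{13}}{\mathbb F}_{0,1}(Q)\xrightarrow{p}Q$ is short exact:
\begin{equation*}
0\longrightarrow p_{13}^{\ast}{\Omega}_{p}\longrightarrow {\Omega}_{{\widetilde p}_1}\longrightarrow {\Omega}_{p_{13}}=p_{23}^{\ast}{\Omega}_{q}\longrightarrow 0 .
\end{equation*}
Since ${\widetilde p}_1$ is smooth with fibres ${\mathbb F}_2$, all three terms are locally free, so the sequence stays exact after restriction to ${\widetilde Y}={\widetilde p}_2^{\,-1}(y)$. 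Using ${\mathbb F}_{0,1}(Q)\simeq{\mathbb P}({\mathcal S}_{\omega})$ over $Q$ and ${\mathbb F}_{0,1}(Q)\simeq{\mathbb P}(N_{\omega}^{\ast}(-1))$ over ${\mathbb P}^3$ (paragraph~\ref{SS:varlines}) together with the relative Euler sequences, one computes ${\Omega}_{p}\simeq p^{\ast}{\mathcal O}_Q(1)\otimes q^{\ast}{\mathcal O}_{{\mathbb P}^3}(-2)$ and ${\Omega}_{q}\simeq p^{\ast}{\mathcal O}_Q(-2)\otimes q^{\ast}{\mathcal O}_{{\mathbb P}^3}(2)$. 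Now $p_{13}\vert_{\widetilde Y}$ identifies ${\widetilde Y}$ with $q^{-1}(L_y)$ in such a way that $p$ becomes $\sigma_y$ and $q$ becomes $\pi_y$, while $p_{23}\vert_{\widetilde Y}$ factors as $\pi_y$ followed by the isomorphism $L_y\simeq p^{-1}(y)$ (paragraph~\ref{SS:desing}). Hence, using $\mathcal O_{\widetilde Y}(C_y)\simeq\sigma_y^{\ast}\mathcal O_Y(1)\otimes\pi_y^{\ast}\mathcal O_{L_y}(-2)$ from paragraph~\ref{SS:stabcone}, one gets $p_{13}^{\ast}{\Omega}_{p}\vert_{\widetilde Y}\simeq\mathcal O_{\widetilde Y}(C_y)$ and $p_{23}^{\ast}{\Omega}_{q}\vert_{\widetilde Y}\simeq\pi_y^{\ast}\mathcal O_{L_y}(2)$, which is precisely the bottom row.

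\emph{Step 2: the top row.} This is simply the $\pi_y^{\ast}$-pullback of the Euler sequence $0\to\mathcal O_{L_y}\to\mathcal O_{L_y}(1)^{\oplus 2}\to\mathcal O_{L_y}(2)=\mathrm T_{L_y}\to 0$ on $L_y\simeq{\mathbb P}^1$; it remains exact since $\pi_y$ is flat.

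\emph{Step 3: the vertical maps.} Producing the diagram with the prescribed left vertical $\alpha:\mathcal O_{\widetilde Y}\hookrightarrow\mathcal O_{\widetilde Y}(C_y)$ and right vertical $\mathrm{id}$ amounts to showing that the class $e_b$ of the bottom row in $\mathrm{Ext}^1(\pi_y^{\ast}\mathcal O_{L_y}(2),\mathcal O_{\widetilde Y}(C_y))$ equals $\alpha_{\ast}(e_t)$, where $e_t$ is the class of the top row. By pushing forward along $\pi_y$ (fibres $\mathbb P^1$, bundles pulled back) one finds $\mathrm{Ext}^1(\pi_y^{\ast}\mathcal O_{L_y}(2),\mathcal O_{\widetilde Y})\simeq \mathrm H^1(\mathcal O_{{\mathbb P}^1}(-2))$ is one-dimensional, spanned by $e_t\ne 0$ (the Euler sequence being non-split). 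The cokernel of $\alpha$ is $\mathcal O_{C_y}(C_y\!\cdot\!C_y)$; since $C_y$ is a $(-2)$-curve, its twist by $\pi_y^{\ast}\mathcal O_{L_y}(-2)$ is $\mathcal O_{{\mathbb P}^1}(-4)$, which has no sections, so $\alpha_{\ast}$ is injective and its image is the line ${\mathbb C}\cdot\alpha_{\ast}(e_t)$. Next, $e_b$ lies in this image iff the restriction of the bottom row to $C_y$ splits: since ${\widetilde p}_1$ is smooth, ${\Omega}_{{\widetilde p}_1}\vert_{C_y}={\Omega}_{{\widetilde p}_1^{-1}(y)}\vert_{C_y}$, and ${\widetilde p}_1^{-1}(y)$ is the minimal desingularization ${\widetilde Y}_y\simeq{\mathbb F}_2$ of the cone $Q\cap\mathrm T_yQ$ with $C_y$ a $(-2)$-section (the same description as in paragraph~\ref{SS:stabcone}, by the symmetry of the two factors of ${\widetilde X}$); the relative cotangent sequence of ${\widetilde Y}_y\to L_y$ restricted to $C_y$ gives ${\Omega}_{{\widetilde Y}_y}\vert_{C_y}\simeq\mathcal O_{{\mathbb P}^1}(2)\oplus\mathcal O_{{\mathbb P}^1}(-2)$, so the restricted bottom row $0\to\mathcal O_{{\mathbb P}^1}(-2)\to\mathcal O_{{\mathbb P}^1}(2)\oplus\mathcal O_{{\mathbb P}^1}(-2)\to\mathcal O_{{\mathbb P}^1}(2)\to 0$ splits (a degree $-2$ sub-line-bundle of $\mathcal O(2)\oplus\mathcal O(-2)$ with locally free quotient is a direct summand). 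Finally the bottom row is non-split, so $e_b\ne 0$: by Lemma~\ref{L:fy}, ${\Omega}_{{\widetilde p}_1}\vert_{{\widetilde Y}\setminus C_y}\simeq{\mathcal F}_y\vert_{Y\setminus\{y\}}$, which is not isomorphic to $\mathcal O\oplus\mathcal O_Y(1)$ (the decomposition that a split $e_b$ would force on ${\widetilde Y}\setminus C_y$). Hence $e_b=c\cdot\alpha_{\ast}(e_t)$ with $c\in{\mathbb C}^{\ast}$, and rescaling the (a priori only up to scalar) identifications of the line bundles in Step~1 and of the Euler presentation in Step~2 we may take $c=1$, i.e. $e_b=\alpha_{\ast}(e_t)$; this yields the claimed commutative diagram.

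\emph{Expected main obstacle.} The delicate points are in Step~3: recognizing the fibre of ${\widetilde p}_1$ over $y$ together with $C_y$ inside it so as to compute ${\Omega}_{{\widetilde p}_1}\vert_{C_y}$, and ruling out the split case $e_b=0$ via the splitting type of ${\mathcal F}_y$ on the cone $Y$. Keeping the two ``classical'' projective bundle conventions and the various Euler/tautological twists consistent throughout is the other source of bookkeeping.
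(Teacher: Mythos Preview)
Your proposal is correct and follows essentially the same route as the paper: derive the bottom row from the cotangent sequence of ${\widetilde p}_1 = p\circ p_{13}$ together with the identifications ${\Omega}_p\simeq p^{\ast}{\mathcal O}_Q(1)\otimes q^{\ast}{\mathcal O}_{{\mathbb P}^3}(-2)$ and ${\Omega}_q\simeq q^{\ast}{\mathcal O}_{{\mathbb P}^3}(2)\otimes p^{\ast}{\mathcal O}_Q(-2)$, then pin down the extension class by showing (a) its restriction to $C_y$ splits and (b) it is nonzero. The only cosmetic differences are that the paper argues (a) via the retraction ${\widetilde p}_1^{\,-1}(y)\to C_y$ induced by $\pi$ (giving ${\Omega}_{C_y}$ as a direct summand) rather than computing ${\Omega}_{{\widetilde Y}_y}\vert_{C_y}$, and settles (b) by restricting to a smooth conic $C\subset Y\setminus\{y\}$ and checking ${\mathcal F}_y\vert_C\simeq{\mathcal O}_{{\mathbb P}^1}(1)^{\oplus 2}$, which is exactly the concrete verification your ``expected obstacle'' paragraph anticipates.
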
 

\begin{proof}
Using the diagram~(\ref{tildex}), one gets an exact sequence: 
\begin{equation*}
0 \longrightarrow p_{13}^{\ast}{\Omega}_p \longrightarrow 
{\Omega}_{{\widetilde p}_1} \longrightarrow 
{\Omega}_{p_{13}} \longrightarrow 0 
\end{equation*}
and  an isomorphism ${\Omega}_{p_{13}} \simeq p_{23}^{\ast}{\Omega}_q$. Since 
${\mathbb F}_{0,1}(Q) \simeq {\mathbb P}(\mathcal S)$ over $Q$ and 
${\mathbb F}_{0,1}(Q) \simeq {\mathbb P}(N_{\omega}^{\ast}(-1))$ over 
${\mathbb P}^3$, it follows, from \cite[III,~Ex.~8.4(b)]{Ha77} (recall that 
Hartshorne uses Grothendieck's convention for projective bundles), that 
${\Omega}_p \simeq p^{\ast}{\mathcal O}_Q(1)\otimes 
q^{\ast}{\mathcal O}_{{\mathbb P}^3}(-2)$ and ${\Omega}_q \simeq 
q^{\ast}{\mathcal O}_{{\mathbb P}^3}(2) \otimes p^{\ast}{\mathcal O}_Q(-2)$. 
One deduces an exact sequence: 
\begin{equation*}
0 \longrightarrow {\widetilde p}_1^{\, \ast}{\mathcal O}_Q(1) \otimes 
{\pi}^{\ast}{\mathcal O}_{{\mathbb P}^3}(-2) \longrightarrow 
{\Omega}_{{\widetilde p}_1} \longrightarrow 
{\pi}^{\ast}{\mathcal O}_{{\mathbb P}^3}(2) \otimes 
{\widetilde p}_2^{\, \ast}{\mathcal O}_Q(-2) \longrightarrow 0
\end{equation*}
which restricted to ${\widetilde p}_2^{\, -1}(y)$ gives us an exact sequence: 
\begin{equation}
\label{restomega}
0 \longrightarrow {\sigma}_y^{\ast}{\mathcal O}_Y(1) \otimes 
{\pi}_y^{\ast}{\mathcal O}_{L_y}(-2) \longrightarrow 
{\Omega}_{{\widetilde p}_1}\vert_{{\widetilde p}_2^{\, -1}(y)} 
\longrightarrow {\pi}_y^{\ast}{\mathcal O}_{L_y}(2) \longrightarrow 0 \, .
\end{equation}
This extension of line bundles correspondes to an element 
$\varepsilon \in \text{H}^1({\sigma}_y^{\ast}{\mathcal O}_Y(1) \otimes 
{\pi}_y^{\ast}{\mathcal O}_{L_y}(-4))$. Recalling that ${\widetilde Y} \simeq 
{\mathbb P}({\mathcal O}_{L_y}\oplus {\mathcal O}_{L_y}(-2))$ over $L_y$, one 
gets, from Remark~\ref{R:h1el}, a canonical isomorphism: 
\begin{equation*}
\text{H}^1({\sigma}_y^{\ast}{\mathcal O}_Y(1) \otimes 
{\pi}_y^{\ast}{\mathcal O}_{L_y}(-4)) \simeq 
\text{H}^1({\mathcal O}_{L_y}(-4) \oplus {\mathcal O}_{L_y}(-2))\, .
\end{equation*}

Now, $C_y \subset {\widetilde Y}$ can be (and was) identified with the 
projective subbundle ${\mathbb P}({\mathcal O}_{L_y})$ of 
${\mathbb P}({\mathcal O}_{L_y}\oplus {\mathcal O}_{L_y}(-2))$. One deduces, 
from the second part of Remark~\ref{R:h1el}, that the image of $\varepsilon$ 
by the canonical morphism $\text{H}^1({\mathcal O}_{L_y}(-4)\oplus 
{\mathcal O}_{L_y}(-2)) \rightarrow \text{H}^1({\mathcal O}_{L_y}(-4))$ 
corresponds to the restriction of the extension~(\ref{restomega}) to $C_y$. 
But $C_y := {\sigma}_y^{-1}(y) = {\widetilde p}_1^{\, -1}(y) \cap 
{\widetilde p}_2^{\, -1}(y)$ hence: 
\begin{equation*}
{\Omega}_{{\widetilde p}_1}\vert_{C_y} \simeq 
({\Omega}_{{\widetilde p}_1}\vert_{{\widetilde p}_1^{\, -1}(y)})\vert_{C_y} 
\simeq {\Omega}_{{\widetilde p}_1^{\, -1}(y)}\vert_{C_y} \, .
\end{equation*} 
Since $\pi : {\widetilde X} \rightarrow {\mathbb P}^3$ maps 
${\widetilde p}_1^{\, -1}(y)$ onto $L_y$ and induces an isomorphism 
$C_y \overset{\sim}{\rightarrow} L_y$, it follows that the inclusion 
$C_y \hookrightarrow {\widetilde p}_1^{\, -1}(y)$ admits a left inverse 
${\widetilde p}_1^{\, -1}(y) \rightarrow C_y$ which implies that 
${\Omega}_{C_y} \simeq {\mathcal O}_{{\mathbb P}^1}(-2)$ is a direct summand of 
${\Omega}_{{\widetilde p}_1^{\, -1}(y)}\vert_{C_y}$. One deduces that the 
restriction to $C_y$ of the exact sequence~(\ref{restomega}) {\it splits}, 
hence the image of $\varepsilon$ into $\text{H}^1({\mathcal O}_{L_y}(-4))$ is 
0. The kernel of the canonical morphism $\text{H}^1({\mathcal O}_{L_y}(-4)
\oplus {\mathcal O}_{L_y}(-2)) \rightarrow \text{H}^1({\mathcal O}_{L_y}(-4))$ 
is a 1-dimensional $\mathbb C$-vector space. Consequently, it remains only to 
decide whether the exact sequence~(\ref{restomega}) splits or not. 

Let $C\subset Y\setminus \{y\}$ be a smooth conic and let ${\widetilde C} := 
{\sigma}_y^{-1}(C) \subset {\widetilde Y}$. By Lemma~\ref{L:fy}, 
${\Omega}_{{\widetilde p}_1}\vert_{{\widetilde C}} \simeq {\mathcal F}_y \vert_C$. 
Let $H := \text{T}_yQ$ and let $P \subset H$ be the 2-plane 
spanned by $C$. One has ${\mathcal F}_y\vert_P \simeq \text{T}_P(-1)$ 
and $\text{T}_P(-1)\vert_C \simeq 
{\mathcal O}_{{\mathbb P}^1}(1)^{\oplus 2}$ 
(because $\text{H}^0(\text{T}_P(-2)\vert_C) = 0$) 
hence the restriction of (\ref{restomega}) to 
$\widetilde C$ {\it does not split}. 

Recalling that ${\sigma}_y^{\ast}{\mathcal O}_Y(1)\otimes {\pi}_y^{\ast} 
{\mathcal O}_{L_y}(-2) \simeq {\mathcal O}_{\widetilde Y}(C_y)$, one concludes 
that the exact sequence~(\ref{restomega}) is obtained by pushing-forward 
the exact sequence: 
\begin{equation*}
0 \longrightarrow {\mathcal O}_{\widetilde Y} \longrightarrow 
{\pi}_y^{\ast}({\mathcal O}_{L_y}(1)^{\oplus 2}) \longrightarrow 
{\pi}_y^{\ast}{\mathcal O}_{L_y}(2) \longrightarrow 0 
\end{equation*} 
via a non-zero morphism ${\mathcal O}_{\widetilde Y} \rightarrow 
{\mathcal O}_{\widetilde Y}(C_y)$. 
\end{proof}

\begin{corol}\label{C:omegap1}
Let $F$ denote the rank 2 vector bundle 
${\Omega}_{{\widetilde p}_1}\vert_{{\widetilde p}_2^{\, -1}(y)}$ on 
${\widetilde p}_2^{\, -1}(y) \simeq {\widetilde Y}$. 

\emph{(a)} If $s$ is a non-zero global section of $F$ and $Z\subset 
{\widetilde Y}$ its zero scheme then $Z \cap ({\widetilde Y}\setminus C_y) = 
\emptyset$ or there exists a fibre ${\widetilde L} = {\pi}_y^{-1}(\ell)$ of 
${\pi}_y : {\widetilde Y}\rightarrow L_y$ such that 
$Z \cap ({\widetilde Y}\setminus C_y) = {\widetilde L}\setminus C_y$ as closed 
subschemes of ${\widetilde Y}\setminus C_y$. 

\emph{(b)} ${\fam0 h}^0(F\otimes {\mathcal O}_{\widetilde Y}(-C_y)) = 1$ and  
a non-zero global section $s$ of $F\otimes {\mathcal O}_{\widetilde Y}(-C_y)$ 
vanishes nowhere. 
\end{corol}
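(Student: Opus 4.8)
The plan is to read everything off the commutative diagram of Proposition~\ref{P:omegap1}, which presents $F:={\Omega}_{{\widetilde p}_1}\vert_{{\widetilde p}_2^{\,-1}(y)}$ as the pushout of the exact sequence $0\to{\mathcal O}_{\widetilde Y}\to{\pi}_y^{\ast}({\mathcal O}_{L_y}(1)^{\oplus 2})\to{\pi}_y^{\ast}{\mathcal O}_{L_y}(2)\to 0$ along the canonical inclusion ${\mathcal O}_{\widetilde Y}\hookrightarrow{\mathcal O}_{\widetilde Y}(C_y)$. The key preliminary observation I would record is that the morphism $\gamma:{\pi}_y^{\ast}({\mathcal O}_{L_y}(1)^{\oplus 2})\to F$ induced by this pushout is a monomorphism whose cokernel is ${\mathcal O}_{\widetilde Y}(C_y)\vert_{C_y}$ --- this is the general fact that the pushout of a monomorphism along an arbitrary morphism is again a monomorphism with the same cokernel. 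Since ${\mathcal I}_{C_y}/{\mathcal I}_{C_y}^2\simeq{\mathcal O}_{{\mathbb P}^1}(2)$ (par.~\ref{SS:stabcone}), that cokernel is ${\mathcal O}_{{\mathbb P}^1}(-2)$, a sheaf supported on $C_y$ with vanishing $\text{H}^0$; hence $\gamma$ is an isomorphism over ${\widetilde Y}\setminus C_y$ and the long exact cohomology sequence yields an isomorphism $\text{H}^0({\pi}_y^{\ast}({\mathcal O}_{L_y}(1)^{\oplus 2}))\overset{\sim}{\rightarrow}\text{H}^0(F)$. Via ${\pi}_{y\ast}{\pi}_y^{\ast}{\mathcal O}_{L_y}(1)\simeq{\mathcal O}_{L_y}(1)$ the source is $\text{H}^0(L_y,{\mathcal O}_{L_y}(1))^{\oplus 2}$.

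For part (a) I would then take a non-zero $s\in\text{H}^0(F)$, write it as $s=\gamma\circ t$ with $t=({\pi}_y^{\ast}u_1,{\pi}_y^{\ast}u_2)$ for a pair $(u_1,u_2)\neq(0,0)$ of linear forms on $L_y\simeq{\mathbb P}^1$, and note that, $\gamma$ being an isomorphism off $C_y$, the zero scheme $Z$ of $s$ agrees off $C_y$ with the zero scheme of $t$, namely ${\pi}_y^{-1}$ of the common zero locus of $u_1$ and $u_2$ (with its reduced structure). If $u_1,u_2$ are linearly independent they span $\text{H}^0({\mathcal O}_{{\mathbb P}^1}(1))$, have no common zero, and $Z\cap({\widetilde Y}\setminus C_y)=\emptyset$; if $u_1,u_2$ are proportional, their common zero is a single reduced point $\ell\in L_y$, the zero scheme of $t$ is the reduced fibre ${\widetilde L}:={\pi}_y^{-1}(\ell)$, and $Z\cap({\widetilde Y}\setminus C_y)={\widetilde L}\setminus C_y$. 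This is exactly the claimed dichotomy.

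For part (b) I would twist the lower row of the diagram of Proposition~\ref{P:omegap1} by ${\mathcal O}_{\widetilde Y}(-C_y)$; using ${\mathcal O}_{\widetilde Y}(C_y)\simeq{\sigma}_y^{\ast}{\mathcal O}_Y(1)\otimes{\pi}_y^{\ast}{\mathcal O}_{L_y}(-2)$ one obtains
\[
0\longrightarrow{\mathcal O}_{\widetilde Y}\longrightarrow F\otimes{\mathcal O}_{\widetilde Y}(-C_y)\longrightarrow{\sigma}_y^{\ast}{\mathcal O}_Y(-1)\otimes{\pi}_y^{\ast}{\mathcal O}_{L_y}(4)\longrightarrow 0
\]
in which the left arrow is a sub-bundle inclusion (its cokernel being a line bundle). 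Since ${\sigma}_y^{\ast}{\mathcal O}_Y(-1)$ is the tautological line bundle of the ${\mathbb P}^1$-bundle ${\pi}_y$, it restricts to ${\mathcal O}_{{\mathbb P}^1}(-1)$ on every fibre, so $\text{R}{\pi}_{y\ast}$ of it vanishes and, by the projection formula, the quotient above has no cohomology at all; hence $\text{H}^0({\mathcal O}_{\widetilde Y})\overset{\sim}{\rightarrow}\text{H}^0(F\otimes{\mathcal O}_{\widetilde Y}(-C_y))$, giving $\text{h}^0=1$, and the (up to scalar unique) non-zero section is the image of a nowhere-vanishing constant section of ${\mathcal O}_{\widetilde Y}$ under a sub-bundle inclusion, hence vanishes nowhere. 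I do not expect a genuine obstacle here: the only slightly delicate point is correctly extracting $\gamma$ and its cokernel from the pushout description --- equivalently, keeping careful track of the twists by ${\sigma}_y^{\ast}{\mathcal O}_Y(\pm 1)$ and ${\pi}_y^{\ast}{\mathcal O}_{L_y}(k)$ on the Hirzebruch surface ${\widetilde Y}$ --- after which every cohomology computation reduces to one on ${\mathbb P}^1$.
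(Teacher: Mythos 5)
Your argument is correct and follows essentially the same route as the paper: for (a) you extract the exact sequence $0 \to {\pi}_y^{\ast}({\mathcal O}_{L_y}(1)^{\oplus 2}) \to F \to {\mathcal O}_{\widetilde Y}(C_y)\vert_{C_y} \to 0$ (the paper gets it via the Snake Lemma applied to the diagram of Proposition~\ref{P:omegap1}, you via the equivalent pushout description) and lift $s$ to a pair of linear forms on $L_y$, and for (b) you twist the bottom row by ${\mathcal O}_{\widetilde Y}(-C_y)$ and kill the cohomology of the quotient line bundle exactly as the paper does. Your explicit case analysis of $(u_1,u_2)$ in (a) just spells out what the paper leaves as ``obvious''.
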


\begin{proof}
(a) Applying the Snake Lemma to the commutative diagram in the statement of 
Prop.~\ref{P:omegap1} one gets an exact sequence: 
\begin{equation*}
0 \longrightarrow {\pi}_y^{\ast}({\mathcal O}_{L_y}(1)^{\oplus 2}) 
\longrightarrow F \longrightarrow {\mathcal O}_{\widetilde Y}(C_y)\vert_{C_y}  
\longrightarrow 0\, .
\end{equation*}
Since ${\mathcal O}_{\widetilde Y}(C_y)\vert_{C_y} \simeq 
{\mathcal O}_{{\mathbb P}^1}(-2)$, $s$ comes from a global section $s^{\prime}$ 
of ${\pi}_y^{\ast}({\mathcal O}_{L_y}(1)^{\oplus 2})$. The assertion from the 
statement becomes, now, obvious. 

(b) Tensorizing the bottom line of the diagram in the statement of 
Prop.~\ref{P:omegap1} by ${\mathcal O}_{\widetilde Y}(-C_y) \simeq 
{\sigma}_y^{\ast}{\mathcal O}_{\widetilde Y}(-1) \otimes {\pi}_y^{\ast} 
{\mathcal O}_{L_y}(2)$ one gets an exact sequence: 
\begin{equation*}
0 \longrightarrow {\mathcal O}_{\widetilde Y} \longrightarrow 
F\otimes {\mathcal O}_{\widetilde Y}(-C_y) \longrightarrow 
{\sigma}_y^{\ast}{\mathcal O}_{\widetilde Y}(-1) \otimes {\pi}_y^{\ast} 
{\mathcal O}_{L_y}(4) \longrightarrow 0\, .
\end{equation*} 
Since ${\pi}_{y\ast}({\sigma}_y^{\ast}{\mathcal O}_{\widetilde Y}(-1) \otimes 
{\pi}_y^{\ast}{\mathcal O}_{L_y}(4)) = 0$ it follows that 
$\text{H}^0({\sigma}_y^{\ast}{\mathcal O}_{\widetilde Y}(-1) \otimes 
{\pi}_y^{\ast}{\mathcal O}_{L_y}(4)) = 0$ and the assertion from the statement 
becomes, now, obvious. 
\end{proof}

\subsection{Use of the Standard Construction}\label{SS:proof2} 
We are, now, ready to give the 
\begin{proof}[Proof of Theorem~\ref{T:singnonstblhyp}] 
By \cite[Prop.~1.3]{ES84}, for a general line $L \subset Q$, one has 
${\mathcal E}\vert_L \simeq {\mathcal O}_L \oplus {\mathcal O}_L(c_1)$. 
By \cite[Cor. 1.5]{ES84} (see, also, Prop.~\ref{P:restrconics}), for a general 
conic $C \subset Q$, one has ${\mathcal E}\vert_C \simeq 
{\mathcal O}_{{\mathbb P}^1}(c_1)^{\oplus 2}$. Moreover, every line $L\subset Q$ 
and every conic $C \subset Q$ is contained in a singular hyperplane section 
of Q. 
Assume now that, for a general point $y \in Q$ as in the statement, the 
restriction of $\mathcal E$ to $Y := Q\cap \text{T}_yQ$ is not stable. Then,  
by Prop.~\ref{P:nonstblonthecone}, there exists a non-empty open subset 
$\mathcal V$ of $Q\setminus \bigcup_{x\in \text{Sing}\, {\mathcal E}}
\text{T}_xQ$ such that, for $y \in {\mathcal V}$, the rank 2 vector bundle 
${\sigma}_y^{\ast}{\mathcal E}$ on $\widetilde Y$ can be realized, in the case 
$c_1(\mathcal E) = 0$, as an extension: 
\begin{equation*}
0 \longrightarrow {\mathcal O}_{\widetilde Y} \longrightarrow 
{\sigma}_y^{\ast}{\mathcal E} \longrightarrow {\mathcal I}_Z \longrightarrow 0 
\end{equation*}
where $Z$ is a 0-dimensional subscheme of $\widetilde Y$, of length 
$c_2(\mathcal E)$, and such that $Z\cap C_y = \emptyset$, and, in the case 
$c_1(\mathcal E) = -1$, it can be realized as an extension:
\begin{equation*}
0 \longrightarrow {\pi}_y^{\ast}{\mathcal O}_{L_y}(-1) \longrightarrow 
{\sigma}_y^{\ast}{\mathcal E} \longrightarrow {\mathcal I}_Z\otimes 
{\sigma}_y^{\ast}{\mathcal O}_Y(-1) \otimes {\pi}_y^{\ast}{\mathcal O}_{L_y}(1) 
\longrightarrow 0
\end{equation*}
where $Z$ is a 0-dimensional subscheme of $\widetilde Y$, of length 
$c_2(\mathcal E)-1$, and such that $Z\cap C_y = \emptyset$ or 
$Z\cap C_y$ consists of a simple point. 
Consider the diagram: 
\begin{equation*}
\begin{CD}
{\overline X} @>{{\overline p}_2}>> {\mathcal V}\\
@V{{\overline p}_1}VV\\
Q\setminus \text{Sing}\, {\mathcal E}
\end{CD}
\end{equation*}
where ${\overline X} := {\widetilde p}_2^{\, -1}(\mathcal V) \subset 
{\widetilde X}$ and ${\overline p}_i$ is the restriction of 
${\widetilde p}_i$, $i = 1,2$. In the case $c_1(\mathcal E) = 0$, 
${\overline p}_1^{\ast}{\mathcal E}$ can be realized as an extension: 
\begin{equation*}
0 \longrightarrow {\mathcal L} \longrightarrow 
{\overline p}_1^{\ast}{\mathcal E} \longrightarrow {\mathcal I}_{\mathcal Z} 
\otimes {\mathcal L}^{-1} \longrightarrow 0
\end{equation*} 
where $\mathcal L$ is the line bundle ${\overline p}_2^{\ast}
{\overline p}_{2\ast}{\overline p}_1^{\ast}{\mathcal E}$ and $\mathcal Z$ is a 
closed subscheme of $\overline X$ of codimension $\geq 2$. 
In the case $c_1(\mathcal E)= -1$, ${\overline p}_1^{\ast}{\mathcal E}$ can 
be realized as an extension: 
\begin{equation*}
0 \longrightarrow 
({\pi}^{\ast}{\mathcal O}_{{\mathbb P}^3}(-1)\vert_{{\overline X}})
\otimes {\mathcal L} \longrightarrow 
{\overline p}_1^{\ast}{\mathcal E} \longrightarrow 
{\mathcal I}_{\mathcal Z}\otimes {\overline p}_1^{\ast}{\mathcal O}_Q(-1)\otimes 
({\pi}^{\ast}{\mathcal O}_{{\mathbb P}^3}(1)\vert_{{\overline X}})
\otimes {\mathcal L}^{-1} \longrightarrow 0
\end{equation*}
where $\mathcal L$ is the line bundle 
${\overline p}_2^{\ast}{\overline p}_{2\ast}({\overline p}_1^{\ast}{\mathcal E} 
\otimes ({\pi}^{\ast}{\mathcal O}_{{\mathbb P}^3}(1)\vert_{{\overline X}}))$ 
(see Cor.~\ref{C:nonstblonthecone}) and $\mathcal Z$ is a 
closed subscheme of $\overline X$ of codimension $\geq 2$. 

Since there exists no saturated subsheaf ${\overline{\mathcal E}}^{\, \prime}$ 
of ${\mathcal E}\vert_{Q\setminus \text{Sing}\, {\mathcal E}}$ such that, 
for every $y \in {\mathcal V}$, 
${\sigma}_y^{\ast}({\overline{\mathcal E}}^{\, \prime}\vert_Y) \simeq 
{\mathcal O}_{\widetilde Y}$ in the case $c_1(\mathcal E) = 0$, and such that 
${\sigma}_y^{\ast}({\overline{\mathcal E}}^{\, \prime}\vert_Y) \simeq 
{\pi}_y^{\ast}{\mathcal O}_{L_y}(-1)$ in the case $c_1(\mathcal E) = -1$, one 
deduces, from Prop.~\ref{P:standconstr}, the existence of a non-zero morphism: 
\begin{gather*}
{\mathcal L} \longrightarrow ({\Omega}_{{\widetilde p}_1}\vert_{{\overline X}}) 
\otimes {\mathcal I}_{\mathcal Z} \otimes {\mathcal L}^{-1} \  
\  (c_1 = 0)\\
({\pi}^{\ast}{\mathcal O}_{{\mathbb P}^3}(-1)\vert_{\overline X})\otimes {\mathcal L} 
\longrightarrow ({\Omega}_{{\widetilde p}_1}\vert_{{\overline X}}) 
\otimes {\mathcal I}_{\mathcal Z} \otimes 
{\overline p}_1^{\ast}{\mathcal O}_Q(-1)\otimes 
({\pi}^{\ast}{\mathcal O}_{{\mathbb P}^3}(1)\vert_{{\overline X}})\otimes 
{\mathcal L}^{-1}\  \  (c_1 = -1)
\end{gather*} 
The restriction of such a morphism to a general fibre of ${\overline p}_2 : 
{\overline X} \rightarrow {\mathcal V}$ is generically non-zero. One deduces, 
for a general $y \in {\mathcal V}$, the existence of a non-zero morphism: 
\begin{gather*}
{\mathcal O}_{\widetilde Y} \longrightarrow 
({\Omega}_{{\widetilde p}_1}\vert_{{\widetilde p}_2^{\, -1}(y)}) \otimes 
{\mathcal I}_{{\mathcal Z}_y} \  \  (\text{for}\  c_1(\mathcal E) = 0)\\
{\pi}_y^{\ast}{\mathcal O}_{L_y}(-1) \longrightarrow 
({\Omega}_{{\widetilde p}_1}\vert_{{\widetilde p}_2^{\, -1}(y)}) \otimes 
{\mathcal I}_{{\mathcal Z}_y}\otimes {\sigma}_y^{\ast}{\mathcal O}_Y(-1)\otimes 
{\pi}_y^{\ast}{\mathcal O}_{L_y}(1) \  \  (\text{for}\  c_1(\mathcal E) = -1)
\end{gather*} 
This means that, in the case $c_1(\mathcal E) = 0$, the rank 2 vector bundle 
${\Omega}_{{\widetilde p}_1}\vert_{{\widetilde p}_2^{\, -1}(y)}$ on 
${\widetilde p}_2^{\, -1}(y) \simeq {\widetilde Y}$ has a non-zero global 
section vanishing on the scheme ${\mathcal Z}_y$, and that, in the case 
$c_1(\mathcal E) = -1$, the vector bundle 
$({\Omega}_{{\widetilde p}_1}\vert_{{\widetilde p}_2^{\, -1}(y)}) \otimes 
{\sigma}_y^{\ast}{\mathcal O}_Y(-1)\otimes {\pi}_y^{\ast}{\mathcal O}_{L_y}(2) 
\simeq ({\Omega}_{{\widetilde p}_1}\vert_{{\widetilde p}_2^{\, -1}(y)}) 
\otimes {\mathcal O}_{\widetilde Y}(-C_y)$ 
has a global section vanishing on the scheme ${\mathcal Z}_y$. 

One uses, now, Cor.~\ref{C:omegap1}. In the case $c_1(\mathcal E) = 0$, it 
follows that ${\mathcal Z}_y$ is a subscheme of a fibre ${\widetilde L} =  
{\pi}_y^{-1}(\ell)$ of ${\pi}_y : {\widetilde Y} \rightarrow L_y$ (recall that 
${\mathcal Z}_y \cap C_y = \emptyset$). ${\sigma}_y : {\widetilde Y} 
\rightarrow Y$ maps $\widetilde L$ isomorphically onto a line $L \subset Y$. 
Since ${\mathcal Z}_y$ has length $c_2 := c_2(\mathcal E)$, one deduces that 
${\mathcal E}\vert_L \simeq {\mathcal O}_L(c_2) \oplus 
{\mathcal O}_L(-c_2)$. Consequently, through any general point $y \in Q$ 
passes a line $L\subset Q$ such that 
${\mathcal E}\vert_L \simeq {\mathcal O}_L(c_2) \oplus 
{\mathcal O}_L(-c_2)$. But this contradicts Lemma~\ref{L:dimjump}. 

Finally, in the case $c_1(\mathcal E) = -1$, it follows that ${\mathcal Z}_y 
= \emptyset$, i.e., that ${\sigma}_y^{\ast}{\mathcal E}$ can be realized as 
an extension: 
\begin{equation*}
0 \longrightarrow {\pi}_y^{\ast}{\mathcal O}_{L_y}(-1) \longrightarrow 
{\sigma}_y^{\ast}{\mathcal E} \longrightarrow  
{\sigma}_y^{\ast}{\mathcal O}_Y(-1) \otimes {\pi}_y^{\ast}{\mathcal O}_{L_y}(1) 
\longrightarrow 0 \, .
\end{equation*}
On the other hand, if $L$ is a line contained in $Y$ then, using the fact 
that there is an epimorphism ${\mathcal S} \rightarrow {\mathcal I}_L$, one 
gets an exact sequence: 
\begin{equation*}
0 \longrightarrow {\mathcal I}_{L,Y} \longrightarrow {\mathcal S}\vert_Y  
\longrightarrow {\mathcal I}_{L,Y} \longrightarrow 0\, .
\end{equation*}
One derives, as in the proof of Prop.~\ref{P:nonstblonthecone}(ii), the 
existence of an exact sequence: 
\begin{equation*}
0 \longrightarrow {\pi}_y^{\ast}{\mathcal O}_{L_y}(-1) \longrightarrow 
{\sigma}_y^{\ast}{\mathcal S} \longrightarrow  
{\sigma}_y^{\ast}{\mathcal O}_Y(-1) \otimes {\pi}_y^{\ast}{\mathcal O}_{L_y}(1) 
\longrightarrow 0 \, .
\end{equation*}  
Using the first part of Remark~\ref{R:h1el} one gets that:  
\begin{equation*}
\text{H}^1(\sigma_y^\ast{\mathcal O}_Y(1)\otimes \pi_y^\ast{\mathcal O}_{L_y}(-2)) 
\simeq \text{H}^1({\mathcal O}_{L_y}(-2) \oplus {\mathcal O}_{L_y}) 
\simeq {\mathbb C}\, .
\end{equation*} 
It follows that $\sigma_y^\ast{\mathcal E} \simeq \sigma_y^\ast{\mathcal S}$ 
hence ${\mathcal E}\vert_Y \simeq {\mathcal S}\vert_Y$.  
This implies, as in the last part of the proof of 
Remark~\ref{R:correctioneinsols}, that $\mathcal E$ is isomorphic to the 
spinor bundle $\mathcal S$. 
\end{proof}

\section*{Acknowledgements} 

Research for this work was initiated during the first named author's visit 
at the ``Universit\'{e} de Pau et des Pays de l'Adour'', in June 2010. 
The first named author is indebted to the University of Pau for hospitality 
and to CNRS, France, for financial support. Research of the second named 
author was partially supported by the ANR projects 
INTERLOW (ANR-09-JCJC-0097-01) and GEOLMI.

\end{document}